\titleformat{\section}[block]{\large\scshape\bfseries\filcenter}{\thesection.}{1em}{}		
\titleformat{\subsection}[hang]{\large\scshape\bfseries}{\thesubsection}{1em}{}			
\titleformat{\subsubsection}[hang]{\large\scshape\bfseries}{\thesubsubsection}{1em}{}			
\newcolumntype{M}[1]{>{\centering\arraybackslash}m{#1}}
\newcolumntype{N}{@{}m{0pt}@{}}
\numberwithin{equation}{subsection}
\newtheorem{theorem}[equation]{Theorem}
\newtheorem{proposition}[equation]{Proposition}
\newtheorem{lemma}[equation]{Lemma}
\newtheorem{corollary}[equation]{Corollary}
\theoremstyle{definition}
\newtheorem{rmk}[equation]{Remark}
\newenvironment{remark}[1][]{\begin{rmk}[#1] \pushQED{\qed}}{\popQED \end{rmk}}
\newtheorem{eg}[equation]{Example}
\newenvironment{example}[1][]{\begin{eg}[#1] \pushQED{\qed}}{\popQED \end{eg}}
\newtheorem{defn}[equation]{Definition}
\newenvironment{definition}[1][]{\begin{defn}[#1]\pushQED{\qed}}{\popQED \end{defn}}
\renewcommand{\thesubsection}{%
  \ifnum\c@subsection<1 \@arabic\c@section
  \else \thesection.\@arabic\c@subsection
  \fi
}
\newcommand{\cB}{\mathcal{B}}
\newcommand{\bC}{\mathbf{C}}
\newcommand{\cC}{\mathcal{C}}
\newcommand{\fC}{\mathfrak{C}}
\newcommand{\bF}{\mathbf{F}}
\newcommand{\cG}{\mathcal{G}}
\newcommand{\cH}{\mathcal{H}}
\newcommand{\cJ}{\mathcal{J}}
\newcommand{\bK}{\mathbf{K}}
\newcommand{\cK}{\mathcal{K}}
\newcommand{\bN}{\mathbf{N}}
\newcommand{\bP}{\mathbf{P}}
\newcommand{\bQ}{\mathbf{Q}}
\newcommand{\cS}{\mathcal{S}}
\newcommand{\fS}{\mathfrak{S}}
\newcommand{\fT}{\mathfrak{T}}
\newcommand{\bU}{\mathbf{U}}
\newcommand{\bV}{\mathbf{V}}
\newcommand{\cV}{\mathcal{V}}
\newcommand{\bZ}{\mathbf{Z}}
\newcommand{\bh}{\mathbf{h}}
\newcommand{\bk}{\mathbf{k}}
\newcommand{\bx}{\mathbf{x}}
\renewcommand{\phi}{\varphi}
\renewcommand{\emptyset}{\varnothing}
\newcommand{\injects}{\hookrightarrow}
\renewcommand{\tilde}[1]{\widetilde{#1}}
\newcommand{\ol}[1]{\overline{#1}}
\newcommand{\ul}[1]{\underline{#1}}
\def\Ddots{\mathinner{\mkern1mu\raise\p@
\vbox{\kern7\p@\hbox{.}}\mkern2mu
\raise4\p@\hbox{.}\mkern2mu\raise7\p@\hbox{.}\mkern1mu}}
\DeclareMathOperator{\im}{image} 
\DeclareMathOperator{\coker}{coker}
\renewcommand{\hom}{\operatorname{Hom}}
\newcommand{\GL}{\mathbf{GL}}
\DeclareRobustCommand{\gobblefour}[5]{}
\begin{document}

\title{Representation Stability for Sequences of $0$-Hecke Modules}
\author{Robert P. Laudone}
\address{Department of Mathematics, University of Wisconsin, Madison}
\email{laudone@wisc.edu \newline \indent {\em URL:} \url{https://www.math.wisc.edu/~laudone/}}
\date{\today}
\thanks{RL was supported by NSF grant DMS-1502553.}

\maketitle

\begin{abstract}
We define a new category analogous to ${\bf FI}$ for the $0$-Hecke algebra $H_n(0)$ called the $0$-Hecke category, $\cH$, indexing sequences of representations of $H_n(0)$ as $n$ varies under suitable compatibility conditions. We establish a new type of representation stability in this setting and prove it is implied by being a finitely generated $\cH$-module. We then provide examples of $\cH$-modules and discuss further desirable properties these modules possess.
\end{abstract}

{
  \hypersetup{linkcolor=black}
  \tableofcontents
}

\section{Introduction}
The category {\bf FI} of finite sets and injections, first defined in \cite{CEF}, and its variants have been of great interest recently. Being a finitely generated ${\bf FI}$-module implies many desirable properties that are often very difficult to prove on their own about sequences of symmetric group representations, such as representation stability and polynomial growth. The study of this combinatorial category and its modules has been fruitful, providing tools to prove a variety of stability results about spaces such as $H^i({\rm Conf}_n(M); \bQ)$ the cohomology of configuration space of $n$ distinct ordered points on a connected, oriented manifold $M$ and many others \cite{CEF}. In this vein, a variety of other combinatorial categories have been defined. The most relevant categories to us that have seen a surge of interest in recent years are ${\bf FI}$ and ${\bf OI}$, for a general survey of results we refer the reader to \cite{SS2, SS1, SS4, CEF}. Recently, in \cite{GS} the authors establish a structure theory for ${\bf OI}$-modules similar to the structure theory for ${\bf FI}$-modules in \cite{SS4} by studying $\cJ$ the monoid of increasing functions.

One can loosely think of ${\bf FI}$-modules as sequences of compatible spaces with a complete symmetry present, and similarly ${\bf OI}$-modules can be thought of as sequences of compatible spaces with no requirement of symmetry. A similar story plays out in the analogous field of symmetric function theory as seen in \cite{HLMW}, where the ring of symmetric functions ${\bf Sym}$ plays the role of ${\bf FI}$ and the ring of nonsymmetric functions plays the role of ${\bf OI}$. Interpolating between these two rings is the ring of quasisymmetric functions ${\bf QSym}$. One of the goals of this paper is to define the analogous category that interpolates between ${\bf FI}$ and ${\bf OI}$ and explore its surprising structure. 

This category turns out to be the categorical analogue of ${\bf FI}$ for the $0$-Hecke algebra. We denote it by $\cH$ and call it the {\em $0$-Hecke category}. We begin by defining $\cH$ and discussing how it can be viewed as a quotient of the braid category as seen in \cite[\S 1.2]{WR}. We then prove a variety of structural results about $\cH$-modules. Most notably, despite the $0$-Hecke algebra not being semi-simple, one can say a surprising amount about the underlying $H_n(0)$ representation theory of a finitely generated $\cH$-module. In particular, we are able to construct an explicit basis for the Grothendieck group of $\cH$-modules $\cG({\rm Mod}_\cH)$ consisting of {\em padded induced modules} $M(\alpha, k)$ where $\alpha$ is a composition and $-1 \leq k \leq |\alpha|$ is an integer. These modules are quotients of the standard induced modules. This ultimately allows us to prove that finitely generated $\cH$-modules satisfy a new form of representation stability,

\begin{theorem}\label{Theorem A}
For any finitely generated $\cH$-module $V$, we have a unique finite decomposition in the Grothendieck group $\cG({\rm Mod}_\cH)$,
\[
[V_n] = \sum_{i,j} c_{\alpha_i,k_j} [M(\alpha_i,k_j)_n]
\]
where the coefficients $c_{\alpha_i,k_j}$ do not depend on $n$.
\end{theorem}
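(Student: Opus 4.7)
The plan is to treat the theorem as a level-wise consequence of the basis property of $\cG(\mathrm{Mod}_\cH)$ already proved in the paper: once one has a finite expansion $[V] = \sum c_{\alpha,k}[M(\alpha,k)]$ in $\cG(\mathrm{Mod}_\cH)$, passing to level $n$ carries both sides to the claimed formula with the \emph{same} coefficients, so the independence-of-$n$ statement is essentially automatic. The real work is therefore producing a \emph{finite} expansion from the hypothesis that $V$ is finitely generated, together with the bookkeeping that the level-$n$ evaluation is compatible with the basis.

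For the finiteness, I would reduce to the principal projective case via a standard Grothendieck group argument. Any finitely generated $\cH$-module $V$ admits a surjection $P \twoheadrightarrow V$ from a finite direct sum of principal projectives $P = \bigoplus_i P_{\beta_i}$. Assuming (as part of the structure theory developed earlier) that $\cH$ is noetherian so that kernels of maps between finitely generated modules are again finitely generated, iterating produces a resolution of $V$ by finite sums of principal projectives, and hence $[V] = \sum_j (-1)^j [P^{(j)}]$ in $\cG(\mathrm{Mod}_\cH)$. It thus suffices to expand each $[P_\beta]$ as a finite sum of $[M(\alpha,k)]$. For a principal projective this is a concrete combinatorial computation in $\cH$: $(P_\beta)_n$ is the induction from $H_{|\beta|}(0)$ to $H_n(0)$ associated with $\beta$, and the filtration built into the definition of the padded induced modules $M(\alpha,k)$ selects finitely many pairs $(\alpha,k)$ (those with $|\alpha|\geq|\beta|$ and $k$ in the range dictated by the filtration) with combinatorial, $n$-independent multiplicities.

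To descend to level $n$, observe that evaluation $V \mapsto V_n$ is an exact functor from $\mathrm{Mod}_\cH$ to the category of $H_n(0)$-modules, so it induces a group homomorphism $\cG(\mathrm{Mod}_\cH) \to \cG(H_n(0)\text{-mod})$ sending $[M(\alpha,k)] \mapsto [M(\alpha,k)_n]$. Applying this homomorphism to the expansion $[V] = \sum c_{\alpha,k} [M(\alpha,k)]$ yields the displayed identity, and uniqueness follows from the fact that the $[M(\alpha,k)_n]$ are linearly independent at each fixed $n$ (as images of a basis of $\cG(\mathrm{Mod}_\cH)$ restricted to the relevant range of compositions).

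The main obstacle, I expect, is the explicit combinatorial decomposition of each principal projective $P_\beta$ into padded induced modules with stable multiplicities; this forces a precise analysis of the $\cH$-morphism sets out of $\beta$ matched against the padding filtration. A secondary but essential point is the noetherian property of $\cH$, without which the reduction to a finite alternating sum of principal projectives collapses and one is left only with an infinite formal expansion. Both are structural inputs rather than representation-theoretic obstructions, and once they are in hand the theorem reduces to additivity of $[-]$ in $\cG(\mathrm{Mod}_\cH)$.
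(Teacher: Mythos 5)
Your overall framing is right in one respect: the paper's own proof of Theorem~\ref{Theorem A} is a one-liner from Theorem~\ref{GrothBasis} --- since $V$ is finitely generated, $[V]$ is an element of $\cG({\rm Mod}_\cH)$, and because the $[M(\alpha,k)]$ are a $\bZ$-basis it has a \emph{unique finite} expansion, which then specializes level by level (evaluation at $[n]$ is exact, so it induces a homomorphism on Grothendieck groups) with coefficients independent of $n$. But the ``real work'' you then assign yourself --- producing the finite expansion by resolving $V$ by principal projectives --- is both unnecessary and, as set up, broken. It is unnecessary because finiteness of the expansion is exactly what membership in $\cG({\rm Mod}_\cH)$ plus the basis property gives you; the spanning half of that basis statement is what the paper proves in Theorem~\ref{span}, via the finite-length filtration in the Serre quotients (Theorem~\ref{decomp}), not via resolutions. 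It is broken because noetherianity (Theorem~\ref{HNoeth}) only guarantees that each syzygy is finitely generated, hence an a priori \emph{infinite} resolution by finite sums of principal projectives; to write $[V]=\sum_j(-1)^j[P^{(j)}]$ in the Grothendieck group you need the resolution to terminate, i.e.\ finite projective dimension, which is neither established anywhere in the paper nor to be expected: already the simple torsion modules $\bC_\alpha=M(\alpha,-1)$ should have infinite projective dimension, exactly as nonzero torsion ${\bf FI}$-modules do. So the reduction to the classes $[P_\beta]$ collapses, independently of the unproved combinatorial expansion of each $[P_\beta]$ in the $[M(\alpha,k)]$ that you flag as the main obstacle.

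A second concrete error is your uniqueness argument. The classes $[M(\alpha,k)_n]$ for \emph{fixed} $n$ are not linearly independent in $\cG(H_n(0))$: that group has finite rank $2^{n-1}$, there are infinitely many pairs $(\alpha,k)$, and $[M(\alpha,k)_n]=0$ whenever $|\alpha|>n$; images of a basis under the evaluation homomorphism simply need not remain independent, even ``restricted to the relevant range of compositions.'' The uniqueness asserted in Theorem~\ref{Theorem A} is uniqueness of the decomposition of $[V]$ in $\cG({\rm Mod}_\cH)$, and it comes directly from Theorem~\ref{GrothBasis}; the displayed identity at level $n$ is then just the image of that single identity, which is precisely where the $n$-independence of the $c_{\alpha,k}$ lives. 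In short: cite the basis theorem for both existence and uniqueness of the expansion, state the level-$n$ formula as its specialization under evaluation, and drop the Euler-characteristic detour, which cannot be repaired without a finiteness-of-projective-dimension input that is unavailable here.
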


As with the standard form of representation stability for ${\bf FI}$, this theorem implies that a finite list of data, namely $\{(\alpha_i,k_i)\}$, completely describe all the irreducible $H_n(0)$ representations that will occur in $V_n$ for $n \gg 0$. We also describe the exact process to go from the finite list of data to the irreducible representations one desires. 

We also study an important statistic often associated with these combinatorial categoriesl Gabriel-Krull dimension due to \cite{Gei}. One can think of this as the analogue of Krull dimension in commutative algebra, it gives a rough measure of the complexity of the category. We prove,

\begin{theorem}
The Gabriel-Krull dimension of $\cH$ is infinite.
\end{theorem}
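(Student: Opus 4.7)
The plan is to transfer the known infinite Gabriel--Krull dimension of $\cJ$ (equivalently, the category {\bf OI}) from \cite{GS, SS4} to $\cH$, using the interpolation relationship emphasized in the introduction. Since $\cH$ sits between {\bf FI} and {\bf OI}, there should be a natural functor between $\cH$-modules and $\cJ$-modules that preserves Krull-theoretic complexity in at least one direction.

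First I would identify an exact functor $\pi\colon \cH \to \cJ$ realizing $\cJ$ as a quotient of $\cH$. The $0$-Hecke relation $T_i^2 = T_i$ makes each $T_i$ idempotent, so sending each generator in $H_n(0)$ to the corresponding idempotent in the monoid algebra $k[\cJ_n]$ (or, where appropriate, to the identity) yields, compatibly over $n$, such a quotient functor. Pulling back along $\pi$ gives an exact, fully faithful embedding $\pi^*\colon \Mod_\cJ \hookrightarrow \Mod_\cH$.

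Next I would verify that $\pi^*$ both preserves and reflects finite length, so that the Krull--Gabriel filtration of $\Mod_\cJ$ transfers cleanly to a subfiltration of $\Mod_\cH$. Invoking the infinite Krull--Gabriel dimension of $\cJ$ from \cite{GS}, for each $n$ there exists a $\cJ$-module $N_n$ of Krull dimension at least $n$; its pullback $\pi^*(N_n)$ is then an $\cH$-module of Krull dimension at least $n$, and hence $\cH$ itself has infinite Krull--Gabriel dimension.

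The main obstacle will be the reflection of finite length in the second step: pullback along a quotient can collapse simples or create extensions that lower Krull dimension, and one must check that the fibers of $\pi$---essentially the idempotents coming from genuine $0$-Hecke elements---do not interfere with the filtration. An alternative avoiding the detour through $\cJ$ is to exhibit directly an infinite family of $\cH$-modules, perhaps built from the padded induced modules $M(\alpha, k)$ of the earlier sections, whose socle filtrations become unboundedly complicated; the technical cost is computing these socle filtrations in the non-semisimple $H_n(0)$ setting, which is substantially harder than in the semisimple {\bf FI} case.
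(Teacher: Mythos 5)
There is a genuine gap, and it sits exactly where your plan puts all the weight: the quotient functor $\pi\colon\cH\to\cJ$ with an exact, fully faithful pullback $\pi^{\ast}\colon\Mod_{\cJ}\hookrightarrow\Mod_{\cH}$ does not exist as described. The natural candidates fail to be functorial. For instance, in $\cH$ one computes $\pi_{1}\circ\iota_{1,2}$ to be the order-preserving injection $1\mapsto 2$; any functor to ${\bf OI}$ (or to $\cJ$) that restricts to the identity on the ${\bf OI}$-subcategory of Proposition \ref{OIsub} must send the idempotent $\pi_{1}$ to the identity of $[2]$, forcing $\pi_{1}\circ\iota_{1,2}\mapsto\iota_{1,2}$, a contradiction; and the "send every crossing to the identity" assignment, which \emph{is} a functor, collapses all morphisms $[n]\to[m]$ to the principal inclusion, so its pullback remembers only the maps $\iota_{n,m}$ and is nowhere near fully faithful. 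More decisively, the direction of your embedding is wrong in substance: Proposition \ref{pullback} and the discussion following it show that every $\cH$-module restricts to an ${\bf OI}$-module but not conversely --- the standard modules $\ul{E}^{\lambda}$ of \cite{GS} in which a letter $b$ occurs after an $a$ admit no compatible $\cH$-structure, since the constraint $g(k)=g(k-1)+1$ cannot be preserved by the $0$-Hecke action. So $\Mod_{\cJ}$ does not embed into $\Mod_{\cH}$ in the way your transfer argument requires, and the infinite Krull--Gabriel dimension of $\cJ$ cannot be imported formally. (Even with some functor in hand, fully faithful exact embeddings need their image closed under subquotients before they preserve and reflect Krull filtrations, which is the worry you flag but cannot resolve without the functor.)

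For comparison, the paper proves the statement intrinsically: it first establishes that a finitely generated $\cH$-module has polynomial degree $\leq d$ if and only if it has Gabriel--Krull dimension $\leq d$ (Proposition \ref{degreeEqualsDim}), which rests on the structure theory of the padded induced modules --- they are precisely the simples in the Serre quotients $\cH/\cH_{\leq d-1}$ (Lemma \ref{simpleH}) and every object there has finite length (Lemma \ref{finite length}) --- and then simply observes that $M((d))$ has polynomial degree $d$ for every $d$, so no finite $d$ bounds the dimension. Your fallback suggestion (exhibit $\cH$-modules, built from the $M(\alpha,k)$, of unbounded complexity) points in this direction, but as stated it supplies none of the needed content: the paper's argument does not go through socle filtrations at all, and the real work is the identification of polynomial degree with Gabriel--Krull dimension, which your sketch presupposes rather than proves.
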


In this respect, $\cH$ bears more similarity to ${\bf OI}$ which \cite{GS} shows has infinite Gabriel-Krull dimension. We dedicate the remainder of the paper to exploring examples of $\cH$-modules. We expect there are many more examples,
\begin{enumerate}
\item In Section \ref{polynomialSec} we explore one of the more basic but tractable finitely $\cH$-modules $[n] \mapsto k[x_1,\dots,x_n]_d$ the degree $d$ polynomials in $n$ variables. It is not hard to show this forms a $\cH$-module where $H_n(0)$ acts by Demazure operators. This example is instructive because for small values of $d$ we can explicitly see the representation stability predicted by Theorem \ref{Theorem A}.
\item In Section \ref{CohomologySection} we present one of the motivating examples for this paper. From work of \cite{RW} and \cite{HR} it was known that the group cohomology $H^i(B(n,q),\bF_q)$ where $B(n,q)$ is the Borel subgroup of $\GL(n,q)$ carried an action of $H_n(0)$ for any fixed $i \geq 0$. It does not, however, have any natural complete symmetry, i.e. an action of $\fS_n$. In this setting it is natural to ask if a $\cH$-module structure is present, we are able to prove

\begin{theorem} \label{Theorem B}
The assignment $[n] \mapsto H^i(B(n,q),\bF_q)$ with compatibility maps described in Section \ref{CohomologySection} forms a $\cH$-module.
\end{theorem}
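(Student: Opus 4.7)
The plan is to verify the $\cH$-module axioms on the sequence $n \mapsto H^i(B(n,q), \bF_q)$, leveraging the fact that $\cH$ is generated by standard order-preserving inclusions together with the 0-Hecke generators $T_j$, modulo the 0-Hecke relations and a small set of mixed relations that prescribe how the two kinds of generators interact. Since each $H^i(B(n,q), \bF_q)$ already carries an $H_n(0)$-action by the results of Ragnarsson-Wahl and Howlett-Rao, the purely 0-Hecke-theoretic relations hold automatically at each $n$; the substance of the theorem is the behavior of the compatibility maps across different values of $n$.

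For the standard inclusion morphism $[n] \hookrightarrow [n+1]$, I would take the compatibility map to be the transfer (corestriction) $\mathrm{cor}: H^i(B(n,q), \bF_q) \to H^i(B(n+1,q), \bF_q)$ arising from the block group inclusion $B(n,q) \hookrightarrow B(n+1,q)$ which adjoins a trivial diagonal entry; this is well-defined since the index is finite. To verify the $\cH$-module structure it then suffices to check the mixed relations. The primary one states that for $j < n$, the diagram
\[
\begin{tikzcd}
H^i(B(n,q), \bF_q) \arrow[r, "T_j"] \arrow[d, "\mathrm{cor}"'] & H^i(B(n,q), \bF_q) \arrow[d, "\mathrm{cor}"] \\
H^i(B(n+1,q), \bF_q) \arrow[r, "T_j"] & H^i(B(n+1,q), \bF_q)
\end{tikzcd}
\]
commutes. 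This should follow from the naturality of transfer applied to the nested pair $B(n,q) \subset P_j(n,q) \subset P_j(n+1,q)$, together with the construction of $T_j$ as a composition of restriction and transfer through the minimal parabolic $P_j$; concretely, the nested subgroup inclusions compatibly extend from $\GL(n,q)$ to $\GL(n+1,q)$, and the required commutativity reduces to a Mackey-style double coset calculation in which only the identity double coset contributes.

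The main obstacle will be the verification involving the boundary generator $T_n \in H_{n+1}(0)$, which has no counterpart in $H_n(0)$ and whose action on the image of the transfer encodes the precise relation by which $\cH$ differs from $\OI$. I would analyze this by reducing to a concrete double coset computation in $\GL(n+1,q)$ relative to the pair consisting of $B(n+1,q)$ and the image of $B(n,q) \times \bF_q^\times$, identifying $T_n \circ \mathrm{cor}$ with whichever composition $\cH$'s defining relations prescribe (typically a specific element of the 0-Hecke action on the larger cohomology group, or zero). Once both classes of mixed relations are established, a straightforward induction on the length of a morphism expressed as a product of generators extends the verification to all of $\cH$, and functoriality follows because both transfer and the 0-Hecke action are themselves functorial; this yields the claimed $\cH$-module structure.
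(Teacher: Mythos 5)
There is a genuine gap, and it occurs exactly at the point where the paper's proof has its real content. First, your compatibility map is not the one the theorem is about: you take the transfer (corestriction) along the inclusion $B(n,q)\hookrightarrow B(n+1,q)$, whereas the paper's maps $\Phi_n$ are pullbacks $r_{n+1,n}^\ast$ along the group homomorphism $r_{n+1,n}\colon B(n+1,q)\to B(n,q)$ that truncates a matrix to its upper-left $n\times n$ block (cohomology being contravariant, this goes in the stabilizing direction). Since the statement is ``with the compatibility maps described in Section \ref{CohomologySection},'' a proof must work with these maps; with a different choice of maps you are proving, at best, a different statement, and you give no evidence that the transfer-based maps satisfy the needed relations (indeed the index $[B(n+1,q):B(n,q)]=q^n(q-1)$ is divisible by $q$, so with $\bF_q$ coefficients the behavior of your transfer is delicate and must be computed, not assumed).

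Second, even granting your choice of maps, the decisive verification is deferred rather than carried out. The paper reduces the theorem to its $\cH$-module criterion (Theorem \ref{criterion}): beyond $H_n(0)$-equivariance of the stabilization maps, one must show that every generator $\pi_j$ with $n+1\le j\le m$ acts by the \emph{identity} (equivalently, $\ol{\pi}_j=\fT(E_{j,j+1})$ acts by zero) on the image of $\Phi_{n,m}$ --- this is precisely your ``boundary generator'' issue, but it is not enough to say you ``would analyze'' it and identify $T_n\circ\mathrm{cor}$ with ``whichever composition $\cH$'s defining relations prescribe.'' The relation is pinned down in advance by the criterion, and the paper proves it by an explicit computation: Lemma \ref{cosetReps} shows the double coset $B\,E_{j,j+1}\,B$ decomposes into exactly $q$ right cosets with representatives $M_x$, and for $j>n$ each $\xi_j(\gamma_k)$ has the same truncation $r_{m,n}(\xi_j(\gamma_k))=r_{m,n}(\gamma_k)$, so the Hecke operator sums $q$ identical terms and vanishes in characteristic $q$ (Lemma \ref{trivialAction}); equivariance for $j\le n-1$ is likewise an explicit cochain-level identity $r_{n+1,n}(\xi_j(\gamma))=\xi_j(r_{n+1,n}(\gamma))$ (Lemma \ref{cohomologyEquivariance}). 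Your corresponding steps --- the claim that in the Mackey-style computation ``only the identity double coset contributes,'' and the identification of $T_n\circ\mathrm{cor}$ --- are asserted but not established, and there is no argument that the outcome is ``acts by the identity,'' which is what the $\cH$-module structure actually requires. Until those double coset computations are done for your maps (or you switch to the paper's maps and the coset-representative argument), the proof is incomplete at its essential step.
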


We believe this defines a finitely generated $\cH$-module, but this problem is still open.

\item In Section \ref{HomologySection} we study one of the other motivating examples for this paper, $H_i(B(n,q),\bF_q)$. By a spectral sequence argument $H_i(B(n,q),\bF_q)$ is equal to $H_i(U(n,q),\bF_q)$, where $U(n,q)$ is the unipotent subgroup. This motivated Putman, Sam and Snowden to prove that the assignment $[n] \mapsto H_i(U(n,q),\bF_q)$ with inclusion maps induced by inclusion of Unipotent subgroups is a finitely generated ${\bf OI}$-module \cite{PSS}. We define a new $H_n(0)$-action on $H_i(U(n,q),\bF_q)$ and prove

\begin{theorem} \label{Theorem B2}
The assignment $[n] \mapsto H_i(U(n,q),\bF_q)$ with compatibility maps 
\[
\Phi_n \colon H_i(U(n,q),\bF_q) \to H_i(U(n+1,q),\bF_q)
\]
induced by the natural inclusion $U(n,q) \injects U(n+1,q)$ is a finitely generated $\cH$-module.
\end{theorem}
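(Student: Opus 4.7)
The plan has two ingredients: constructing the $\cH$-module structure, and then upgrading the known finite generation as an ${\bf OI}$-module from \cite{PSS} to finite generation as an $\cH$-module.

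First, I would make precise the new $H_n(0)$-action on $H_i(U(n,q),\bF_q)$. Since the morphisms of $\cH$ beyond those of ${\bf OI}$ are generated by the Hecke generators $T_1,\dots,T_{n-1}$ subject to the $0$-Hecke relations (braid relations, far commutations, and the quadratic relation), it suffices to specify each $T_j$ as an $\bF_q$-linear endomorphism of $H_i(U(n,q),\bF_q)$ and verify these relations. A natural source for such operators is the action of the normalizer $N_{B(n,q)}(U(n,q)) = B(n,q)$ on $U(n,q)$ by conjugation, combined with a Bruhat-cell style Demazure construction: one builds $T_j$ on the bar complex $C_\bullet(U(n,q);\bF_q)$ using conjugation by the rank-one subgroup associated to the simple root $\alpha_j$, then passes to homology. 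I would verify the three families of relations at the chain level and check that the resulting assignment descends to $H_i$.

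Second, I would verify compatibility with the stabilization map $\Phi_n$. The inclusion $U(n,q)\hookrightarrow U(n+1,q)$ that sends an $n \times n$ upper unitriangular matrix to its block-diagonal embedding is equivariant for the simple reflection subgroups indexed by $1,\dots,n-1$; hence on the bar complex one obtains $\Phi_n \circ T_j = T_j \circ \Phi_n$ for every such $j$. Together with step one, this is precisely the data needed to promote $V_n := H_i(U(n,q),\bF_q)$ to an $\cH$-module, because the defining relations of $\cH$ are the Hecke relations on the vertical generators plus the compatibility with inclusions.

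Third, for finite generation I would invoke the theorem of Putman--Sam--Snowden \cite{PSS} that $V$ is a finitely generated ${\bf OI}$-module, say with generators $v_1,\dots,v_r$ sitting in degrees $n_1,\dots,n_r$. Since every ${\bf OI}$-morphism is also an $\cH$-morphism, the submodule of $V$ generated by $\{v_1,\dots,v_r\}$ in $\cH$ contains the submodule generated by them in ${\bf OI}$, which is all of $V$. Hence $V$ is also finitely generated as an $\cH$-module.

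The main obstacle is the first step: writing down a chain-level formula for $T_j$ on $C_\bullet(U(n,q);\bF_q)$ and checking the $0$-Hecke relations (in particular the quadratic relation) persist after passage to homology. Once this operator is in hand, both the compatibility with $\Phi_n$ and the deduction of finite generation from \cite{PSS} are essentially formal.
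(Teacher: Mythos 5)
Your outline has the right shape (build a $0$-Hecke action, check compatibility with the stabilization maps, then lean on Putman--Sam--Snowden), but the step you yourself flag as ``the main obstacle'' is in fact the substance of the theorem, and your sketch of it would not go through as stated. The paper does not build the operators $\pi_j$ by a conjugation/Demazure construction on the bar complex; instead it defines an explicit idempotent operation directly on the group $U(n,q)$: $\pi_i$ copies column $i$ into column $i+1$ in rows $1,\dots,i-1$, sets the $(i,i+1)$ entry to $0$, and copies row $i$ into row $i+1$ in columns $i+2,\dots,n$. The crucial point (Proposition \ref{HnAction}) is that these operators satisfy the $0$-Hecke relations \emph{and} are multiplicative, $\pi_i(AB)=\pi_i(A)\pi_i(B)$, i.e.\ they are monoid endomorphisms of $U(n,q)$; only because of this does the diagonal action on homogeneous chains commute with the bar differential and descend to $H_i$. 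A conjugation action by rank-one subgroups would give invertible operators and cannot satisfy the quadratic relation $\pi_i^2=\pi_i$ without further quotienting, so without an explicit construction and verification your first step is a genuine gap, not a routine one.

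Two further points are missing even granting the action. First, checking $\Phi_n\circ\pi_j=\pi_j\circ\Phi_n$ for $j\le n-1$ is only half of the paper's $\cH$-module criterion (Theorem \ref{criterion}): you must also show that the stabilizer generators $\pi_{n+1},\dots,\pi_m$ act by the identity on the image of $\Phi_{n,m}$ (Lemma \ref{homologyTrivialAction}); this is forced by the padded-composition relations in $\cH$ and is not implied by equivariance of the low-index generators. Second, your finite generation argument quietly assumes that the ${\bf OI}$-action obtained by restricting the $\cH$-structure is the same ${\bf OI}$-module that \cite{PSS} proved finitely generated; a general order-preserving injection acts in the $\cH$-module as a principal inclusion followed by Hecke generators, and it requires an argument (the paper's Lemma \ref{OIAgree}, using that the functorial decomposition $U(n+1,q)\cong U(n,q)\ltimes\bF_q^n$ persists for the restricted structure so that Proposition 6.3 of \cite{PSS} applies) to match the two. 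Once that identification is made, your final deduction --- ${\bf OI}$-generators also generate over $\cH$ --- is correct and is exactly how the paper concludes via Corollary \ref{OIFinGen}.
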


It turns out that the restriction to ${\bf OI}$ of the $\cH$-action gives an action equivalent to the one defined in \cite{PSS}. This result is a strengthening of their result because $\cH$ has more structure. In particular, Theorem \ref{Theorem B2} implies

\begin{theorem} \label{repStable}
For any fixed $i \geq 0$, the sequence of $H_n(0)$-modules $\{H_i(U(n,q),\bF_q)\}_{n \geq 0}$ is representation stable.
\end{theorem}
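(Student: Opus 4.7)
The plan is to combine Theorem \ref{Theorem A} with Theorem \ref{Theorem B2}: the former asserts that every finitely generated $\cH$-module enjoys the stability decomposition that the paper uses to define representation stability, while the latter provides precisely such a finite generation result for $V_n := H_i(U(n,q),\bF_q)$. So first I would invoke Theorem \ref{Theorem B2} to conclude that $\{V_n\}$, equipped with the compatibility maps $\Phi_n$ induced by $U(n,q) \injects U(n+1,q)$, is a finitely generated $\cH$-module.

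Next I would apply Theorem \ref{Theorem A} to this module to obtain a unique finite decomposition
\[
[V_n] \;=\; \sum_{j} c_{\alpha_j,k_j}\, [M(\alpha_j,k_j)_n] \qquad \text{in } \cG(\mathrm{Mod}_\cH),
\]
whose coefficients $c_{\alpha_j,k_j}$ do not depend on $n$. Since the form of representation stability introduced in this paper is precisely the existence of such a finite, $n$-independent expression in terms of padded induced modules, this yields Theorem \ref{repStable} directly. To convert the Grothendieck-group identity back into a statement about the irreducible $H_n(0)$-composition factors of $V_n$, one only needs the explicit procedure (promised in the discussion after Theorem \ref{Theorem A}) that describes the composition factors of each $M(\alpha,k)_n$ for $n \gg 0$; this last step is purely mechanical once that decomposition of padded induced modules into irreducibles is available.

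The real content therefore lies in the two inputs rather than in the deduction itself: Theorem \ref{Theorem B2} provides the $\cH$-module structure and its finite generation, while Theorem \ref{Theorem A} supplies the Grothendieck-group basis of padded induced modules. The only genuine obstacle I anticipate at this stage is a bookkeeping one, namely verifying that the informal phrase ``representation stable'' appearing in Theorem \ref{repStable} matches the formal notion of stability used throughout this paper. Once that identification is made explicit, no additional argument is required beyond the two-line combination above.
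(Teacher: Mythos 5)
Your proposal is correct and matches the paper's own argument: the paper proves this as a corollary (Corollary \ref{RepStabilityUn}) by combining the finite generation result of Theorem \ref{homologyHFinGen} (the body form of Theorem \ref{Theorem B2}) with Theorem \ref{repStability} (the body form of Theorem \ref{Theorem A}), exactly as you do. Your closing concern is also resolved in the paper itself, since ``representation stable'' is defined there to mean precisely the existence of such an $n$-independent decomposition into the $[M(\alpha,k)]$.
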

\item In Section \ref{StanleySec} we explain how we can turn the collection of Stanley Reisner rings of the Boolean algebra into a $\cH$-module using work of \cite{Hu}. We then prove this $\cH$-module is finitely generated and so is representation stable in our new sense.

\item Finally in Section \ref{QuasiSchurSec} we make the connection to ${\bf QSym}$ more explicit and explore how a new class of modules defined by \cite{TW} which map to quasisymmetric Schur functions as defined in \cite{HLMW} under the quasisymmetric characteristic map have a finitely generated $\cH$-module structure. These modules provide another example of a $\cH$-module that cannot be a ${\bf FI}$-module.
\end{enumerate}

\subsection{Outline of Argument}
The proof of Theorem \ref{Theorem A} breaks into the following steps
\begin{enumerate}
\item For each composition $\alpha$, we first define an important class of $\cH$-modules called induced modules $M(\alpha)$ in \S\ref{InducedSec} and use this to define padded induced modules $M(\alpha,k)$ in \S\ref{PaddedInducedSec}.
\item Every finitely generated $\cH$-module has eventually polynomial growth, the degree of the polynomial that eventually describes the growth is called the {\em polynomial degree} of the $\cH$-module. We prove that padded induced modules have the property that any proper quotient of them has strictly smaller polynomial degree in Theorem \ref{quotients}.
\item We then argue in Theorem \ref{smallest} that $M(\alpha,k)$ is the smallest polynomial degree $k$ quotient of $M(\alpha)$, i.e. any other polynomial degree $k$ quotient of $M(\alpha)$ must contain $M(\alpha,k)$.
\item The above allows us to deduce that $M(\alpha,k)$ is simple in the Serre quotient $\cH_{k}$ of polynomial degree $\leq k$ objects by polynomial degree $\leq k-1$ objects and that every simple is of this form. This can be seen in Lemma \ref{simple}.
\item Since the $M(\alpha) = M(\alpha,|\alpha|)$ are simple and every finitely generated $\cH$-module is a quotient of a direct sum of the $M(\alpha)$, we can then argue that every object in the Serre quotient has finite length (Lemma \ref{finite length}). This allows us to find a finite length filtration of any finitely generated polynomial degree $k$ $\cH$-module $V$ in $\cH_{k}$ with successive quotients isomorphic to $M(\alpha_i,k)$ (Theorem \ref{decomp}).
\item We then use this filtration to show that the isomorphism classes $[M(\alpha,k)]$ as $\alpha$ ranges over all compositions and $k \in \bZ$ ranges $-1 \leq k \leq |\alpha|$ span the Grothendieck group of finitely generated $\cH$-modules, $\cG({\rm Mod}_\cH)$ in Theorem \ref{span}.
\item Finally, we construct functions on the Grothendieck group to argue that the $[M(\alpha,k)]$ are in fact linearly independent and hence form a basis for $\cG({\rm Mod}_\cH)$ (Theorem \ref{GrothBasis}). This ultimately implies our new form of representation stability.
\end{enumerate}

\subsection{Relation to Previous Work}
\begin{itemize}
\item We show in Theorem \ref{HNoeth} that $\cH$ is another example of an ever growing class of noetherian categories. In \cite{SS1} the authors describe a Gr\"obner method for proving noetherianity of combinatorial categories which we apply here. This underlying idea fits into a broader area of interest called {\em noetherianity up to symmetry}. For a nice introduction we recommend \cite{Dra}. Ultimately, one works with a space or object on which a group or algebra acts and proves finite generation up to the action of this group or algebra. Noetherianity up to symmetry is important in \cite{SS2, SS3, SS4, NSS}, where the authors explore various manifestations of this idea to prove finite generation results for various representations of categories and twisted commutative algebras. These ideas are also present in \cite{CEF, Sn, Lau, DE, To, Sa1} and many other recent papers.
\item As we have mentioned one can view $\cH$ as interpolating between ${\bf FI}$ and ${\bf OI}$. This paper opens the door to many questions explored in \cite{CEF} for {\bf FI}-modules and \cite{SS1, GS} for ${\bf OI}$-modules.
\item $\cH$ has concrete connections to ${\bf QSym}$, the ring of quasisymmetric functions, as seen in \S\ref{QuasiSchurSec}. This ring has seen a resurgence of interest as of late, for a general survey we refer the reader to \cite{Mas}. The connection between ${\bf FI}$ and the ring of symmetric functions is very explicit, in particular the authors in \cite{SS4} prove that the Grothendieck group of ${\bf FI}$-modules is isomorphic to two copies of the ring of symmetric functions. It would be interesting to see if a similar result were true about $\cH$ and what else the category could tell us about ${\bf QSym}$. 
\item This paper also provides a more systematic approach to studying natural sequences of representations of the $0$-Hecke algebra. Recent such examples addressed in this paper include the Stanley Reisner ring of the Boolean algebra studied in \cite{Hu}, quasisymmetric Schur modules studied in \cite{TW}, and $H^i(B(n,q),\bF_q)$ as studied in \cite{RW, HR}. There are many other examples we are working on in a future paper including ordered set partitions studied in \cite{HuRh}. 
\item Quillen computed the cohomology of the general linear group in non-defining characteristic, $H^i(\GL(n,q),\bF_\ell)$, using Sylow subgroups as a key ingredient \cite{Qui}. In defining characteristic, $B(n,q)$ is a Sylow-$q$ subgroup of $\GL(n,q)$, and so researchers have naturally sought to understand $H^i(B(n,q); \bF_q)$. Quillen's methods depend on working over non-defining characteristic and the pre-existing ${\bf FI}$ literature does not naturally apply because there is no natural symmetric group action. The category $\cH$ provides the missing categorical framework to study the group homology of the Borel group in defining characteristic.
\end{itemize}

\subsection{Conventions}
For the majority of the paper $\bk$ denotes a field of arbitrary characteristic because the representation theory of $H_n(0)$ does not depend on characteristic. The majority of the notation for this paper is outlined in \S \ref{NotationSec}.

\ \\ \
\noindent {\large {\bf Acknowledgements.}} I thank Steven Sam for his constant guidance and helpful conversations. I would also like to thank Brendon Rhoades for his openness to discuss this paper with me.

\section{Background} \label{NotationSec}
\subsection{Compositions}
A {\em composition} $\alpha$ of $n$ is a list of non-negative natural numbers $(\alpha_1,\dots,\alpha_k)$ such that $\alpha_1 + \cdots + \alpha_k = n$. There are $2^{n-1}$ compositions of $n$. We will denote that $\alpha$ is a composition of $n$ by writing $\alpha \vDash n$. Compositions of $n$ isomorphically correspond to {\em descent sets} on $[n-1]$, subsets of increasing integers in $[n-1]$, in the following way. Given a composition $\alpha = (\alpha_1,\dots,\alpha_k)$ we write $D(\alpha) = \{\alpha_1,\alpha_1+ \alpha_2,\dots,\alpha_1+ \cdots + \alpha_{k-1}\}$. It is also easy to reverse this process. Given a descent set $D$, we denote by $C(D)$ the corresponding composition.

We can view compositions combinatorially as ribbon diagrams. A {\em ribbon diagram} is an edgewise connected skew diagram with no $2$ by $2$ boxes. The composition $\alpha \vDash n$ given by $(\alpha_1,\dots,\alpha_k)$ corresponds to the ribbon diagram with $\alpha_i$ boxes in row $i$. For example, $(2,1,3)$ corresponds to the ribbon diagram
\[
\young(:~~~,:~::,~~::) \;.
\]

Given a composition $\alpha$, let $\ell(\alpha)$, the {\em length} of $\alpha$, denote the number of entries in $\alpha$. Combinatorially, this is the number of rows in the ribbon diagram of shape $\alpha$. We denote by $w(\alpha)$ the {\em width} of $\alpha$. Combinatorially, this gives the number of columns in the ribbon diagram of $\alpha$. 

\begin{proposition}
For any composition $\alpha \vDash n$, $w(\alpha) = n - \ell(\alpha) + 1$.
\end{proposition}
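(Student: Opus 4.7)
The plan is to prove the identity by straightforward column-counting, exploiting the defining property of a ribbon diagram: it is edgewise connected and contains no $2 \times 2$ block of boxes. The key observation is that these two conditions together force any two consecutive rows of the ribbon diagram of $\alpha$ to share \emph{exactly} one column. Indeed, edgewise-connectedness forces rows $i$ and $i+1$ to share at least one column, while the absence of a $2\times 2$ block prevents them from sharing more than one. Moreover, non-consecutive rows share no columns at all, since any such repeated column (together with the intervening overlap columns) would again create a $2\times 2$ block.

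Granting this, the argument is a one-line inclusion–exclusion. Row $i$ contributes $\alpha_i$ columns, and among all $\ell(\alpha)$ rows there are exactly $\ell(\alpha)-1$ pairs of consecutive rows, each contributing one doubly-counted column. Therefore
\[
w(\alpha) \;=\; \sum_{i=1}^{\ell(\alpha)} \alpha_i \;-\; (\ell(\alpha)-1) \;=\; n - \ell(\alpha) + 1.
\]

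If one prefers an inductive presentation, the same identity can be obtained by induction on $\ell(\alpha)$: the base case $\ell(\alpha)=1$ gives a single row of length $n$, and in the inductive step deleting the top row of length $\alpha_{\ell}$ removes $\alpha_\ell - 1$ columns (all but the one shared with row $\ell-1$), reducing to the composition $(\alpha_1,\dots,\alpha_{\ell-1})\vDash n-\alpha_\ell$ of length $\ell-1$.

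There is no real obstacle here; the only thing worth being careful about is justifying the ``exactly one shared column'' claim from the definition of a ribbon, since that is the step that translates the combinatorial no-$2\times 2$ hypothesis into an arithmetic count.
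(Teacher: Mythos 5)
Your proof is correct and is essentially the paper's own argument: the paper counts columns as $(\alpha_1)+(\alpha_2-1)+\cdots+(\alpha_k-1)$, which is exactly your inclusion--exclusion count based on consecutive rows overlapping in a single column. Your added justification that a ribbon forces exactly one shared column between consecutive rows (and none between non-consecutive rows) simply makes explicit what the paper leaves implicit.
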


\begin{proof} 
To see why this is the case, notice that the number of columns in the ribbon diagram is exactly,
\[
(\alpha_1) + (\alpha_2-1) + \cdots + (\alpha_k-1) = (\alpha_1 + \cdots + \alpha_k) - k + 1 = n - \ell(\alpha) + 1. \qedhere
\]
\end{proof}

Given two compositions $\alpha,\beta$ we define their {\em sum} to be $\alpha + \beta = (\alpha_1,\dots,\alpha_k,\beta_1,\dots,\beta_\ell)$ and their {\em join} as $\alpha \lhd \beta = (\alpha_1,\dots,\alpha_k+\beta_1,\beta_2,\dots,\beta_\ell)$.

We will often discuss adding boxes to positions of a ribbon diagram, we make this concrete here. Given a composition $\alpha = (\alpha_1,\dots,\alpha_k)$ with $\alpha \vDash n$, adding a box to the $i$th {\em position} of $\alpha$ means that we add a box above the $i-1$st box in the ribbon diagram corresponding to $\alpha$ and shift the corresponding boxes up to ensure we still have a ribbon diagram. Explicitly if the $i-1$st box lives in $\alpha_j$, i.e. $\alpha_1 + \cdots + \alpha_{j-1} < i-1 \leq \alpha_1 + \dots + \alpha_j$, then the ribbon diagram where we add a box to position $i-1$ is exactly
\[
(\alpha_1,\dots,\alpha_{j-1},\alpha_j-\beta_i,1) \lhd (\beta_i,\alpha_{j+1},\dots,\alpha_k),
\]
where $\beta_i$ is a correction factor so that $\alpha_1+\cdots+ \alpha_j - \beta_i = i-1$. 

\begin{example}
Say we wish to add a box to the second position of the ribbon diagram corresponding to $(2,1,3)$. In terms of our explicit description we see that the first box lies in $\alpha_1$ and we need $\beta_2 = 1$ so that $2 - \beta_2 = 1$. So the diagram where we add a box in the second position should be
\[
(1,1) \lhd (1,1,3) = (1,2,1,3).
\]
Expressed as a ribbon diagram,
\[
\young(:~~~,:~:::,\bullet~:::,\star::::)
\]
where we denote by ${\tiny \young(\bullet)}$ the new box we added. Combinatorially, this is exactly the same as adding a box above the box ${\tiny \young(\star)}$ in the diagram
\[
\young(:~~~,:~::,\star~::) \;,
\]
where we then shift the remaining boxes up to ensure we still have a ribbon diagram.
\end{example} 
From this we can see that in a composition of $n$, it makes sense to add a box to any position $i$ with $1 \leq i \leq n+1$. We say that $\alpha \vDash n$ has $n+1$ positions, so ${\rm position}(\alpha) = |\alpha|+1$. Combinatorially this is because we are allowed to add a box to the left of the first box as well.

\subsection{$0$-Hecke Algebra}
The {\bf $0$-Hecke algebra} $H_n(0)$ is the associative algebra generated by $\pi_1,\dots,\pi_{n-1}$ over an arbitrary field $\bF$ where the generators satisfy the relations
\[
\begin{cases}
\pi_i^2 = \pi_i &1 \leq i \leq n-1\\
\pi_i \pi_{i+1} \pi_i = \pi_{i+1} \pi_i \pi_{i+1} &1 \leq i \leq n-2\\
\pi_i \pi_j = \pi_j \pi_i &|i-j| > 1.
\end{cases}
\]

We will most often use this generating set. The last two relations are commonly called the {\em braid relations} and the first relation is sometimes called the {\em skein relation}. $H_n(0)$ is a deformation of the symmetric group $\fS_n$ and one of the most well studied degenerate deformations. Recall that the symmetric group has Coxeter generators $\{s_1,\dots,s_{n-1}\}$ where the $s_i = (i,i+1)$ is the adjacent transposition. These generators satisfy the braid relations, but not the skein relation, instead $s_i^2 = 1$. If $w \in \fS_n$ is a permutation and $w = s_{i_1} \cdots s_{i_k}$ is a reduced expression for $w$ in the Coxeter generators we define the $H_n(0)$ algebra element $\pi_w \coloneqq \pi_{i_1} \cdots \pi_{i_k} \in H_n(0)$. One can show that the set $\{\pi_w \; | \; w \in \fS_n\}$ forms a basis for $H_n(0)$ as an $\bF$-vector space. 

 It is well known that $H_n(0)$ has another algebra generating set $\{\ol{\pi}_1,\dots,\ol{\pi}_{n-1}\}$ subject to the relations
\[
\begin{cases}
\ol{\pi}_i^2 = -\ol{\pi}_i &1 \leq i \leq n-1\\
\ol{\pi}_i \ol{\pi}_{i+1} \ol{\pi}_i = \ol{\pi}_{i+1} \ol{\pi}_i \ol{\pi}_{i+1} &1 \leq i \leq n-2\\
\ol{\pi}_i \ol{\pi}_j = \ol{\pi}_j \ol{\pi}_i &|i-j| > 1.
\end{cases}
\]
Here $\ol{\pi}_i = \pi_i - 1$ for all $i$. 

In \cite{Nor} the author characterizes all simple and projective $H_n(0)$ modules. In particular, they are indexed by compositions $\alpha \vDash n$. The complete list of irreducible modules is given by $\{\bC_\alpha\}_\alpha$ where $\bC_\alpha$ is a one dimensional $H_n(0)$-module spanned by $v_\alpha$ where for $i = 1,\dots,n-1$,
\[
\pi_i v = 
\begin{cases}
0 & i \in D(\alpha)\\
1 & i \not\in D(\alpha)
\end{cases}.
\]
We will denote the complete list of projective modules by $\{\bP_\alpha\}$. For more details we refer the reader to \cite{Nor}. As a quick example we note that $\bC_n = \bP_n$ is the trivial representation where all generators $\pi_i$ act by $1$ because $D(n) = \{\}$.

\subsection{Function Theory}
Let $X = (x_1,x_2,\dots)$ be a totally ordered infinite set of variables. Then we denote the $\bZ$-algebra of {\em symmetric functions} in $X$ with coefficients in $\bZ$ by ${\rm Sym}$. There is a clear $\bN$-grading on this algebra, its degree $n$ component has basis given by the {\em Schur functions} $\{s_\lambda \; | \; \lambda \vdash n\}$, i.e. $\lambda$ is a partition of $n$. The Schur function $s_\lambda$ can be expressed as
\begin{equation} \label{schurDef}
s_\lambda = \sum_{T} \bx^T
\end{equation}
where the sum is over all semi-standard tableau $T$ of shape $\lambda$ and $\bx^T$ is the monomial
\[
\bx^T \coloneqq x_1^{c_1(T)} x_2^{c_2(T)} \cdots
\]
where $c_i(T)$ is the number of times $i$ appears in $T$. Given partitions $\mu \subset \lambda$, we also let $s_{\lambda/\mu} \in {\rm Sym}$ denote the {\em skew Schur function}. This function is defined via Equation \eqref{schurDef}, where we sum over all skew-tableau of shape $\lambda/\mu$. The most important example of a skew Schur function for us is a {\em ribbon Schur function}, where $\lambda/\mu$ is a ribbon tableau. We will index these by $s_\alpha$ where $\alpha$ is a composition corresponding to the ribbon $\lambda/\mu$. For further reading on symmetric functions we refer the reader to \cite{Mac}.

There is a coproduct structure on ${\rm Sym}$ given by replacing the variables $x_1,x_2,\dots$ with $x_1,x_2,\dots,y_1,y_2,\dots$ so that ${\rm Sym}$ becomes a graded Hopf algebra which is self dual under the basis $\{s_\lambda\}$ \cite[\S 2]{GrRe}.

There is a larger algebra which contains ${\rm Sym}$ given by loosening the requirement that the functions be symmetric. Once again if we let $X$ be a totally ordered set of variables, we can define the $\bZ$-algebra of {\em quasisymmetric functions} {\bf QSym} as the power series of bounded degree in $X$ which are upward shift invariant in the sense that the coefficient of the monomial $x_1^{\alpha_1} \cdots x_k^{\alpha_k}$ is equal to the coefficient of $x_{i_1}^{\alpha_1} \cdots x_{i_k}^{\alpha_k}$ for any increasing sequence of integers $i_1 < i_2 < \cdots < i_k$. Notice that every symmetric function satisfies this property, but there are quasisymmetric functions that are not symmetric, for example
\[
x_1 x_2^2 + x_1 x_3^2 + x_2 x_3^2
\]
is a quasisymmetric function in $3$ variables but is not symmetric. This means that we naturally have ${\bf Sym} \subset {\bf QSym}$. The algebra ${\bf QSym}$ has a basis consisting of {\em monomial symmetric functions} $M_\alpha$. For a composition $\alpha \vDash n$ with $\alpha = \{\alpha_1,\dots,\alpha_k\}$
\[
M_\alpha \coloneqq \sum_{i_1 < i_2 < \cdots < i_k} x_{i_1}^{\alpha_1} x_{i_2}^{\alpha_2} \cdots x_{i_k}^{\alpha_k}.
\]
We can use this basis to define one of the most important bases for ${\bf QSym}$, the {\em Gessel fundamental quasisymmetric functions}, denoted $\{F_\alpha\}$. To begin we let $F_0 = 1$ and for any composition $\alpha$,
\[
F_\alpha = \sum_{\beta \preceq \alpha} M_\beta
\]
where $\beta \preceq \alpha$ if we can obtain $\alpha$ by adding together adjacent elements of $\beta$. For example $(1,1,2,3,2) \preceq (2,2,5)$. This is sometimes called the {\em refinement order} on compositions.

The final algebra we consider is ${\bf NSym}$ the graded algebra of {\em noncommutative symmetric functions}. This is the free unital associative noncommutative algebra of noncommutative functions in $X$ invariant under the natural symmetric group action. Alternatively, we can define it as the free unital associative noncommutative algebra $\bZ\langle h_1, h_2, \dots \rangle$ generated over $\bZ$ by the symbols $h_1,h_2,\dots$ where $h_d$ has degree $d$. The degree $n$ component of ${\bf NSym}$ has $\bZ$-basis given by $\{\bh_\alpha \; | \; \alpha \vDash n\}$ where 
\[
\bh_\alpha = h_{\alpha_1} \cdots h_{\alpha_k}.
\]
Remember this is a noncommutative product, so considering compositions instead of partitions is important. One of the most important bases for the degree $n$ component of ${\bf NSym}$ consists of the noncommutative ribbon Schur functions, $\{s_\alpha \; | \; \alpha \vDash n\}$. We note that these are not the same as the ribbon Schur functions mentioned above, because they are not commutative, their commutative image will be one of the ribbon Schur functions. In terms of the $h_\alpha$ we define these as
\[
s_\alpha \coloneqq \sum_{\beta \preceq \alpha} (-1)^{\ell(\alpha) - \ell(\beta)} \bh_\beta.
\]
As in the symmetric function case there are coproducts on ${\bf QSym}$ and ${\bf NSym}$ but they are not self dual, instead they are dual to each other as Hopf algebras \cite[\S 5]{GrRe}.

\subsection{Quasicharacteristic Maps} \label{CharMaps}
Let $A$ be a finite dimensional algebra over a field $\bF$. We will often work with the {\em Grothendieck group} $\cG(A)$ of finitely generated $A$-modules. This group is the quotient of the free abelian group generated by isomorphism classes of finitely generated $A$-modules $[M]$ by the relation $[M] - [L] - [N]$ if there is a short exact sequence of finitely generated $A$-modules 
\[
0 \to L \to M \to N \to 0.
\]
If we let $A = H_n(0)$, $\cG(H_n(0))$ has free basis given by the collection of isomorphism classes of irreducible $H_n(0)$-modules, i.e. the $\{[C_\alpha]\}$ for all $\alpha \vDash n$. We will not use it much for now, but it is worth mentioning that there is another Grothendieck group $\cK(A)$ consisting of all finitely-generated projective $A$-modules which has free basis given by the collection of isomorphism classes of indecomposable projective $H_n(0)$-modules. These were also characterized in \cite{Nor} and correspond to compositions of $n$ as well.

In the following we assume some representation theory background, a good resource for these results is \cite{FH}. The symmetric group algebra $\bQ[\fS_n]$ is semisimple and has irreducible representations $V_\lambda$ indexed by partitions $\lambda \vdash n$. In this setting, the Grothendieck group $\cG(\bQ[\fS_\bullet])$ of the tower 
\[
\bQ[\fS_n] \colon \bQ[\fS_0] \injects \bQ[\fS_1] \injects \cdots
\]
of symmetric group algebras is the direct sum of the $\cG(\bQ[\fS_n])$ for all $n \geq 0$. It can be given the structure of a graded Hopf algebra where the product and coproduct are respectively induction and restriction of representations along the natural embedding $\fS_n \otimes \fS_m \injects \fS_{n+m}$. The {\em Frobenius characteristic map} ${\rm Fch}$ of a finite dimensional $\bQ[\fS_n]$-modules $V$ is defined first on the basis for $\cG(\bQ[\fS_n])$ consisting of isomorphism classes of Specht modules $[\bV_\alpha]$
\[
{\rm Fch}([\bV_\lambda]) = s_\lambda
\]
where $s_\lambda$ is the Schur function corresponding to the composition $\lambda$, we then extend linearly. Incredibly, the map ${\rm Fch}$ gives a graded Hopf algebra isomorphism between $\cG(\bQ[\fS_\bullet]) \cong {\rm Sym}$ \cite[\S 4.4]{GrRe}. This connection has been incredibly useful as it allows us to study the representation theory of the symmetric group by working with symmetric functions and vice versa. One important result coming from this connection is the decomposition of tensor products $V_\lambda \otimes V_{n}$ of Specht modules considered now as a representation of $\fS_{|\lambda|+n}$ into its irreducible components via the Pieri rule. For further details we refer the reader to \cite[I 5]{Mac}.

It turns out that there are two analogous characteristic maps ${\rm Ch}$ and ${\bf ch}$ defined by Krob and Thibon \cite{KT}, which make it possible to study representations of $H_n(0)$ through the rings ${\bf QSym}$ and ${\bf NSym}$ defined above. We will recall their construction because it will be very important in the coming sections.

As discussed, the two Grothendieck groups $\cG(H_n(0))$ and $\cK(H_n(0))$ have bases given by $\{[\bC_\alpha] \; | \; \alpha \vDash n\}$ and $\{[\bP_\alpha] \; | \; \alpha \vDash n\}$ respectively. Similar to the symmetric group algebra case, we have Grothendieck groups $\cG(H_\bullet(0)) \coloneqq \bigoplus_{n \geq 0} \cG(H_n(0))$ and $\cK(H_\bullet(0)) \coloneqq \bigoplus_{n \geq 0} \cK(H_n(0))$ associated to the tower of algebras
\[
H_\bullet(0) \colon H_0(0) \injects H_1(0) \injects H_2(0) \injects \cdots.
\]
These groups are both graded Hopf algebras with product and coproduct given by induction and restriction of representations along the natural embedding $H_n(0) \otimes H_m(0) \injects H_{n+m}(0)$. Furthermore, they are dual to each other via the pairing $\langle [\bP_\alpha], [\bC_\beta] \rangle = \delta_{\alpha,\beta}$ where this is the Kronecker-Delta function.

Knob and Thibon \cite{KT} define two linear characteristic maps
\[
{\rm Ch} \colon \cG(H_\bullet(0)) \to {\bf QSym} \qquad \text{and} \qquad {\bf ch} \colon \cK(H_\bullet(0)) \to {\bf NSym},
\]
by ${\rm Ch}([\bC_\alpha]) \coloneqq F_\alpha$ and ${\bf ch}(\bP_\alpha) \coloneqq s_\alpha$ where $F_\alpha$ is the fundamental quasisymmetric function and $s_\alpha$ is the skew-Schur function. They then show these maps are graded isomorphisms of Hopf algebras. For more information we refer the reader to \cite{KT}.

\subsection{The $0$-Hecke Category $\cH$} \label{mapSec}
In \cite{Big} the author describes a natural way to view $H_n(0)$ as a quotient of the braid group $B_n$. Recall that a {\em geometric braid} is a disjoint union of $n$ edges called {\em strands}, in $D \times I$ where $I = [0,1]$ and $D$ is a closed disk. The set of endpoints of the strands is required to be $\{p_1,\dots,p_n\} \times \{0,1\}$, and each strand is required to intersect each disk cross-section exactly once. Two geometric braids are said to be equivalent if it is possible to continuously deform one to get the other. The elements of $B_n$ are equivalence classes of braids under this continuous deformation equivalence relation. For more we refer the reader to \cite{Big}. From now on, we will refer to geometric braids as braids.

\begin{definition}
A crossing in a braid is called {\em positive} if the strand on top in the crossing goes from top left to bottom right, otherwise it is called {\em negative}.
\end{definition}

We can encode each $\pi_i$ as a braid with strands connecting vertices $i$ to $i+1$ in the top row to vertices $i+1$ to $i$ in the bottom row with a positive crossing. Then every element of $H_n(0)$ can be represented as a composition of positive crossing diagrams and hence can be viewed as a braid diagram with strictly positive crossings, meaning the strand originating at the first vertex crosses above any vertex it crosses. The strand originating from the second vertex can only cross below the strand originating from vertex $1$ and above all other strands etc.

\begin{definition}
We call a braid diagram from $[n]$ vertices to $[m]$ vertices {\em order preserving} if there are no crossings.
\end{definition}

We will define $\cH$ as the quotient of the Braid Category in \cite[\S1.2]{WR} by the skein relation, i.e. that $\pi_i^2 = \pi_i$. Graphically, this is the relation
\[
\begin{tikzcd}
\bullet & \bullet \ar[dl]\\
\bullet & \bullet \ar[from = ul, crossing over]\\
\bullet & \bullet \ar[dl] \\
\bullet & \bullet\ar[from = ul, crossing over]
\end{tikzcd}
= 
\begin{tikzcd}
\bullet & \bullet \ar[dl] \\
\bullet & \bullet\ar[from = ul, crossing over]
\end{tikzcd}.
\]
More explicitly, the objects of $\cH$ consist of sets $[n]$ for $n \in \bZ_{\geq 0}$ and $\hom_{\cH}([n],[m])$ for $n \leq m$ is given by all positive crossing braid diagrams from $n$ vertices to $m$ vertices.

To compose two maps $f \in \hom_{\cH}([n],[m])$ and $g \in \hom_\cH([m],[\ell])$ we view both maps as elements of $H_\ell(0)$ by padding the top rows of the corresponding diagrams with {\em dummy vertices}, called ``free" ends in \cite{WR}, so that every row has $\ell$ vertices. We then connect the dummy vertices to the vertices in the bottom row that have degree $0$ so that we do not introduce any crossings between the edges we introduce and so that every crossing we must create is positive. The edges adjacent to dummy vertices are called {\em dummy edges}. Now compose as we would in $H_\ell(0)$, i.e. by following paths and applying the skein relations to ensure we have positive crossings. We then remove all dummy vertices and the dummy edges from the resulting composition.

Notice that the number of dummy vertices we add to $f$ does not matter after we add $m-n$ because after that each dummy vertex is mapped identically to itself.

\begin{example}
As an example consider the composition
\[
\begin{tikzcd}
\bullet & \bullet & \bullet \ar[dll] &\\
\bullet & \bullet \ar[from = ul, crossing over] & \bullet & \bullet \ar[from = ull, crossing over]\\
\bullet & \bullet \ar[dl] & \bullet \ar[d] & \bullet \ar[d]\\
\bullet & \bullet \ar[from = ul, crossing over] & \bullet & \bullet
\end{tikzcd}.
\]
We only have to place a dummy vertex in the first row of the first diagram. Dummy vertices will be denoted by $\odot$ and dummy edges will be denoted by dashed edges. So we have
\[
\begin{tikzcd}
\bullet & \bullet & \bullet \ar[dll] & \odot \ar[dl, dashed] \\
\bullet & \bullet \ar[from = ul, crossing over] & \bullet & \bullet \ar[from = ull, crossing over]\\
\bullet & \bullet \ar[dl] & \bullet \ar[d] & \bullet \ar[d]\\
\bullet & \bullet \ar[from = ul, crossing over] & \bullet & \bullet
\end{tikzcd}.
\]
We can then decompose this in terms of the generators $\pi_i$ for $H_4(0)$. The top map is exactly $\pi_3 \pi_1 \pi_2$, the final map is $\pi_1$, so the composition is $\pi_1 \pi_3 \pi_1 \pi_2 = \pi_3 \pi_1^2 \pi_2 = \pi_3 \pi_1 \pi_2$. When we remove the dummy vertex and the connected edge we get back the original diagram.
\end{example}

The following are an important list of relations that result from the above composition rules:
%
%
\begin{equation} \label{rel1}
\begin{tikzcd}
\bullet \ar[d] & \bullet \ar[dr] &\\
\bullet & \bullet & \bullet\\
\bullet & \bullet \ar[dl] &\bullet \ar[d]\\
\bullet & \bullet \ar[from = ul, crossing over] &\bullet
\end{tikzcd} \qquad
= \qquad
\begin{tikzcd}
\bullet \ar[dr] & \bullet \ar[dr] &\\
\bullet & \bullet & \bullet\\
\end{tikzcd}
\end{equation}
\ \\
\begin{equation} \label{rel2}
\begin{tikzcd}
\bullet \ar[dr] & \bullet \ar[dr] &\\
\bullet & \bullet & \bullet\\
\bullet & \bullet \ar[dl] &\bullet \ar[d]\\
\bullet & \bullet \ar[from = ul, crossing over] &\bullet
\end{tikzcd} \qquad
= \qquad
\begin{tikzcd}
\bullet \ar[dr] & \bullet \ar[dr] &\\
\bullet & \bullet & \bullet\\
\end{tikzcd}
\end{equation}
\ \\
\begin{equation} \label{rel3}
\begin{tikzcd}
\bullet \ar[drr] & \bullet \ar[drr] & &\\
\bullet & \bullet & \bullet & \bullet\\
\bullet & \bullet \ar[dl] &\bullet \ar[d] & \bullet \ar[d]\\
\bullet & \bullet \ar[from = ul, crossing over] &\bullet & \bullet
\end{tikzcd} \qquad
= \qquad 
\begin{tikzcd}
\bullet \ar[drr] & \bullet \ar[drr] & &\\
\bullet & \bullet & \bullet & \bullet
\end{tikzcd}
\end{equation}
\begin{equation} \label{rel4}
\begin{tikzcd}
\bullet \ar[d] & \bullet \ar[d] &\\
\bullet & \bullet & \bullet\\
\bullet & \bullet \ar[dl] &\bullet \ar[d]\\
\bullet & \bullet \ar[from = ul, crossing over] &\bullet
\end{tikzcd} \qquad
= \qquad
\begin{tikzcd}
\bullet & \bullet \ar[dl] &\\
\bullet & \bullet \ar[from = ul, crossing over] &\bullet
\end{tikzcd}\qquad
= \qquad
\begin{tikzcd}
\bullet & \bullet \ar[dl]\\
\bullet & \bullet \ar[from = ul, crossing over] \\
\bullet \ar[d] & \bullet \ar[d] &\\
\bullet & \bullet & \bullet\\
\end{tikzcd}.
\end{equation}

\begin{remark}
It is easy to check that all of these relations arise from our padded composition in $\cH$. We will illustrate how to check Relation \eqref{rel2} and leave the rest for the reader. Once again, dummy vertices are denoted by $\odot$ and dummy edges are dashed, Relation \eqref{rel2} can be seen as,
\[
\begin{tikzcd}
\bullet & \bullet& \odot \ar[dll, dashed]\\
\bullet & \bullet \ar[from = ul, crossing over] & \bullet \ar[from = ul, crossing over] \\
\bullet & \bullet \ar[dl] &\bullet \ar[d]\\
\bullet & \bullet \ar[from = ul, crossing over] &\bullet
\end{tikzcd}.
\]
In $H_3(0)$ this is $\pi_1 \pi_1 \pi_2 = \pi_1 \pi_2$ by the Skein relation. Recall that composition is read from right to left and with our diagrams we read from top to bottom. As a result when we remove the dummy vertex and its connected edge we recover the original diagram.
\end{remark}

%
%

\begin{proposition} \label{OIsub}
${\bf OI}$ is a subcategory of $\cH$.
\end{proposition}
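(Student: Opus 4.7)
The plan is to exhibit an identity-on-objects functor $F\colon \mathbf{OI}\to\cH$ and verify it is faithful. Since the morphisms of $\mathbf{OI}$ are order-preserving injections $f\colon[n]\injects[m]$, I send each such $f$ to the unique braid diagram $F(f)$ from the $n$ top vertices to the $m$ bottom vertices in which the $i$-th top vertex is joined by a single strand to the $f(i)$-th bottom vertex, and the remaining $m-n$ bottom vertices are free. Because $f$ is order-preserving, these $n$ strands can be drawn simultaneously without any intersections, so $F(f)$ is an order-preserving (hence trivially positive-crossing) braid diagram and a legitimate morphism of $\cH$.

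The identity check is immediate: $F$ applied to the identity injection on $[n]$ is the braid consisting of $n$ parallel vertical strands, which is the identity of $[n]$ in $\cH$. The only real content is functoriality under composition. Given $f\colon [n]\injects[m]$ and $g\colon [m]\injects[\ell]$, I compute $F(g)\circ F(f)$ via the padding recipe of \S\ref{mapSec}: pad $F(f)$ by adding $\ell-n$ dummy vertices to its top row (and, if $m<\ell$, dummy vertices to its bottom row), pad $F(g)$ with $\ell-m$ dummies on top, and route each dummy edge to the corresponding degree-zero bottom vertex without introducing crossings. This is possible because $F(f)$ is already crossing-free and the degree-zero vertices in its bottom row are precisely the ordered complement $[m]\setminus f([n])$. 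Stacking the two padded diagrams and composing in $H_\ell(0)$ then produces a diagram in which no two non-dummy strands ever cross, so no skein relation is triggered; tracing through shows that the $i$-th top vertex (for $i\le n$) is joined to the $g(f(i))$-th bottom vertex. Erasing the dummy vertices and dummy edges recovers exactly $F(g\circ f)$.

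Faithfulness is then transparent: a crossing-free braid diagram from $[n]$ to $[m]$ is determined by the map $i\mapsto$ (index of the bottom endpoint of the strand starting at the $i$-th top vertex), and this map is an order-preserving injection. Hence $F$ is injective on hom sets, and identifying $\mathbf{OI}$ with its image realizes it as a subcategory of $\cH$. The only mildly delicate step is keeping the padded composition planar, and this is forced by the observation that dummy strands of an order-preserving diagram can always be routed to the far right of all genuine strands without ever interacting with them, so the composition never produces crossings that require resolution via the skein relation.
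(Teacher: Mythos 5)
Your argument is correct and follows essentially the same route as the paper: realize each order-preserving injection as the crossing-free braid diagram and verify, via the padded composition in $H_\ell(0)$, that no crossings between non-dummy strands ever occur, so that deleting the dummy vertices and edges returns the diagram of $g\circ f$; the explicit checks of identities and faithfulness are harmless additions. The one inaccuracy is your closing remark that dummy strands can be routed to the far right of all genuine strands without interacting with them --- under the paper's composition rule the dummy edges must reach the degree-zero bottom vertices and in general do cross genuine strands (as in the paper's own worked ${\bf OI}$ example) --- but this is not load-bearing, since the claim you actually use is that non-dummy strands never cross one another.
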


\begin{proof}
We have a natural embedding of objects. For maps, every order preserving injection corresponds to an order preserving braid diagram. It remains to check that given two order preserving maps, they compose as they would in ${\bf OI}$ using our composition rules. 

Let $f$ and $g$ be two order preserving maps, $f \in \hom_{\cH}([n],[m])$, $g \in \hom_{\cH}([m],[\ell])$. To compose we will pad $f$ with $\ell-n$ dummy vertices and $g$ with $\ell-m$ dummy vertices. When we add the dummy edges by virtue of the fact that the original maps were order preserving there will never be crossings between non-dummy edges in the resulting composition in $H_\ell(0)$. So when we remove the dummy vertices and edges, we will recover the exact composition we would expect in ${\bf OI}$.
\end{proof}

\begin{example}
Consider the following composition of order preserving maps
\[
\begin{tikzcd}
\bullet &\bullet & \bullet &&\\
\bullet & \bullet \ar[from=ul, crossing over] & \bullet \ar[from=ul, crossing over] & \bullet \ar[from=ul, crossing over] &\\
\bullet \ar[d]& \bullet & \bullet & \bullet & \\
\bullet & \bullet & \bullet \ar[from = ul, crossing over] & \bullet \ar[from = ul, crossing over]& \bullet \ar[from = ul, crossing over]
\end{tikzcd}
\]
After padding with dummy edges and vertices we have,
\[
\begin{tikzcd}
\bullet &\bullet & \bullet & \odot \ar[dlll, dashed]& \odot \ar[d, dashed] \\
\bullet & \bullet \ar[from=ul, crossing over] & \bullet \ar[from=ul, crossing over] & \bullet \ar[from=ul, crossing over] & \odot \\
\bullet \ar[d]& \bullet & \bullet & \bullet & \odot \ar[dlll, dashed] \\
\bullet & \bullet & \bullet \ar[from = ul, crossing over] & \bullet \ar[from = ul, crossing over]& \bullet \ar[from = ul, crossing over]
\end{tikzcd}
\]
When we follow all of the edges we end up with the following element of $H_5(0)$,
\[
\begin{tikzcd}
\bullet &\bullet & \bullet & \odot \ar[dlll, dashed]& \odot \ar[dlll, dashed] \\
\bullet & \bullet & \bullet \ar[from = ull, crossing over] & \bullet \ar[from = ull, crossing over]& \bullet \ar[from = ull, crossing over]
\end{tikzcd}.
\]
After removing the dummy vertices and edges we recover the expected order preserving map.
\end{example}

\begin{proposition}
Composition in $\cH$ is well defined and associative.
\end{proposition}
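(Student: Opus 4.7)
The plan is to reduce both well-definedness and associativity to the corresponding properties of multiplication in the $0$-Hecke algebra $H_\ell(0)$. For each $\ell \geq m$ and each $f \in \hom_{\cH}([n], [m])$, the padding procedure produces an element $\iota_\ell(f) \in H_\ell(0)$, and the composition $g \circ f$ of $f \in \hom_\cH([n],[m])$ and $g \in \hom_\cH([m],[\ell])$ is defined by computing $\iota_\ell(g) \cdot \iota_\ell(f)$, applying skein relations to reduce to the canonical form $\pi_w$, and deleting the dummy vertices together with their incident edges. Since $H_\ell(0)$ is an associative algebra with $\bF$-basis $\{\pi_w \mid w \in \fS_\ell\}$, once we verify that $\iota_\ell$ is a well-defined assignment and that the post-composition deletion step is canonical, both well-definedness and associativity follow from the algebra structure of $H_\ell(0)$.

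The first step is to check that $\iota_\ell(f)$ is uniquely determined by $f$ and $\ell$. The ambiguities are the placement of the $\ell - n$ top dummy vertices and the routing of their edges down to the $\ell - n$ bottom vertices of degree zero. The constraints that dummy edges have no crossings among themselves and that all crossings with non-dummy edges are positive pin down $\iota_\ell(f)$ up to the braid relations; since the braid relations hold in $H_\ell(0)$, any two valid paddings yield the same element. Independence of the choice of $\ell$ (any $\ell$ at least as large as the target of $g$) follows because any extra dummy vertex can be placed at the far right, where it forms an identity strand that does not interact with the original diagram. The deletion step is likewise unambiguous: in the canonical form $\pi_w \in H_\ell(0)$, each strand is a single positive-crossing path, and we keep precisely those whose endpoints are both non-dummy.

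For associativity, given $f : [n] \to [m]$, $g : [m] \to [\ell]$, and $h : [\ell] \to [p]$, the key lemma to establish is the compatibility $\iota_p(g \circ f) = \iota_p(g) \cdot \iota_p(f)$ in $H_p(0)$: padding the result of a smaller composition up to $H_p(0)$ yields the same element as composing the full paddings directly in $H_p(0)$. Granting this, both $(h \circ g) \circ f$ and $h \circ (g \circ f)$ correspond after padding to the single element $\iota_p(h) \cdot \iota_p(g) \cdot \iota_p(f)$, and hence agree after deletion by associativity of multiplication in $H_p(0)$. The main obstacle will be verifying this compatibility. The essential observation is that the additional dummy strands introduced when passing from $H_\ell(0)$ to $H_p(0)$ can be placed outside the original braid (for instance, along the far right) so that they cross neither any non-dummy strand nor the dummy strands already present; they simply pass through without affecting the non-dummy portion, so padding and multiplying commute.
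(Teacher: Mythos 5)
The well-definedness half of your plan is fine and is essentially the paper's argument: everything is reduced to the padded product in $H_\ell(0)$, whose basis $\{\pi_w\}$ and associativity do the work, and the padding/deletion steps are canonical. The gap is in the associativity half: the key lemma you propose, $\iota_p(g \circ f) = \iota_p(g)\cdot \iota_p(f)$ in $H_p(0)$, is false as stated. Take $f \in \hom_{\cH}([1],[3])$ to be the order-preserving map $1 \mapsto 1$ and $g = \pi_2 \in \hom_{\cH}([3],[3])$. Then $\iota_3(f) = 1$, so $\iota_3(g)\iota_3(f) = \pi_2$; but $\pi_2$ only braids the two dummy strands, so deleting them gives $g \circ f = f$, whence $\iota_3(g\circ f) = 1 \neq \pi_2$. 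The point your sketch misses is that the deletion step genuinely loses information (this is exactly why $\hom_{\cH}([n],[m]) \cong H_m(0)/H_{m-n}(0)$ is a proper quotient): the dummy strands in the product need not ``simply pass through,'' since $g$ may braid them among themselves, and the canonical re-padding always uses the crossing-free routing, so padding the composite does not recover the product of the paddings. Your far-right-strand observation only handles the easy independence of the padding size ($\ell$ versus $p$), which is the remark already made in the paper after the definition of composition.

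The statement you actually need, and the repair, is weaker: $\iota_p(g)\iota_p(f)$ and $\iota_p(g\circ f)$ must have the same image under deletion even after further left multiplication by $\iota_p(h)$. This holds because the product of basis elements in $H_p(0)$ is again a basis element, and one can factor $\iota_p(g)\iota_p(f) = \iota_p(g\circ f)\cdot \pi_v$ where $v$ lies in the parabolic subgroup $\langle s_{n+1},\dots,s_{p-1}\rangle$ braiding only the dummy strands; moreover for any $x \in H_p(0)$ and such $v$, the element $x\pi_v$ is a basis element lying in the same left coset of that parabolic as $x$, and the deletion of $\pi_w$ depends only on $(w(1),\dots,w(n))$, i.e.\ only on this coset. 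Hence $\mathrm{deletion}(\iota_p(h)\,\iota_p(g)\,\iota_p(f)) = \mathrm{deletion}(\iota_p(h)\,\iota_p(g\circ f))$, and associativity of $H_p(0)$ finishes the proof. With this congruence in place of your equality, your argument goes through and coincides with the paper's (terser) proof, which invokes associativity in $H_\ell(0)$ and the removal of dummy strands without spelling out this step.
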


\begin{proof}
One way to see this is that composition is well defined in the Braid Category and hence is well defined in the quotient. This also follows immediately from the fact that composition is well defined and associative in $H_n(0)$ for any $n$. Since when we compose two maps $f \in \hom_{\cH}([n],[m])$ and $g \in \hom_{\cH}([m],[\ell])$ we first pad them and proceed to compose the corresponding padded diagrams in $H_\ell(0)$. Associativity in $H_\ell(0)$ implies that no matter how we compose we get the same diagram, so when we remove dummy vertices and edges we get the same well defined diagram in $\hom_{\cH}([n],[\ell])$.
%
\end{proof}

\begin{definition}
We call the natural injection of $H_n(0)$ into $H_m(0)$ sending $\pi_i \mapsto \pi_i$ for $i = 1,\dots, n-1$ the {\em principal injection} and denote it by $\iota_{n,m}$. This is represented by the order preserving braid diagram that sends vertex $i$ to vertex $i$.
\end{definition}

Notice that $H_m(0)$ acts on $\hom_{\cH}([n],[m])$ via post-composition. We can see that the element $\iota_{n,m}$ generates all other elements under this action, but the action is not transitive as in the ${\bf FI}$ case, which we would expect because there are no inverses in $H_m(0)$. We can also see that the annihilator of $\iota_{n,m}$ is isomorphic to $H_{m-n}(0)$ because it is generated by all the maps $\ol{\pi}_{n+1}, \dots, \ol{\pi}_{m}$. From this we can conclude,

\begin{proposition}
As a $H_m(0)$-module,
\[
\hom_{\cH}([n],[m]) \cong H_m(0)/H_{m-n}(0).
\]
\end{proposition}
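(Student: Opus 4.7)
\medskip
\textbf{Plan.} The approach is to present $\hom_\cH([n],[m])$ as a cyclic left $H_m(0)$-module generated by $\iota_{n,m}$, identify the annihilator of this generator as a left ideal arising from a subalgebra copy of $H_{m-n}(0)$, and conclude by matching dimensions on both sides.

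First I would define $\Phi\colon H_m(0)\to \hom_\cH([n],[m])$ by $\Phi(g)=g\cdot\iota_{n,m}$. Associativity of composition in $\cH$ shows this is a homomorphism of left $H_m(0)$-modules, and the remark immediately preceding the statement makes it surjective. Next, for each $n+1\leq i\leq m-1$ I would verify directly from the padded composition rule that $\pi_i\cdot\iota_{n,m}=\iota_{n,m}$: after padding with $m-n$ dummy top vertices, $\iota_{n,m}$ becomes the identity braid in $H_m(0)$ with top vertices $n+1,\dots,m$ marked dummy, and post-composing with $\pi_i$ merely introduces a crossing between two dummy strands, which is erased when the dummy vertices and their incident edges are removed. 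Hence $\ol{\pi}_i\cdot \iota_{n,m}=0$, so the left ideal $J\subset H_m(0)$ generated by $\ol{\pi}_{n+1},\dots,\ol{\pi}_{m-1}$ lies in $\ker\Phi$. The alternate presentation identifies the subalgebra $\langle\ol{\pi}_{n+1},\dots,\ol{\pi}_{m-1}\rangle$ with $H_{m-n}(0)$, giving meaning to the notation $H_m(0)/H_{m-n}(0)$ in the statement.

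Finally I would match dimensions. On the target side, the skein and braid relations force every positive braid in $\hom_\cH([n],[m])$ to be the unique minimal positive braid realizing its underlying injection $\sigma\colon [n]\hookrightarrow[m]$, so $\dim_\bk \hom_\cH([n],[m])=m!/(m-n)!$. On the source side, since $J$ is a left ideal we have $\pi_w\ol{\pi}_i\in J$, so $\pi_w\pi_i\equiv \pi_w\pmod{J}$ for all $i\geq n+1$; iterating reduces any basis element $\pi_w$ modulo $J$ to $\pi_{w^J}$, where $w^J$ is the minimum length representative of $wW_J$ for $W_J=\langle s_{n+1},\dots,s_{m-1}\rangle\cong\fS_{m-n}$, yielding $\dim_\bk H_m(0)/J\leq m!/(m-n)!$. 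The induced surjection $H_m(0)/J\twoheadrightarrow \hom_\cH([n],[m])$ is therefore an isomorphism of left $H_m(0)$-modules, establishing the claim. The main obstacle I expect is the target-side count: one must show that in $\cH$ any two positive braids realizing the same underlying injection are equal, which is the $0$-Hecke analogue of Matsumoto's theorem, reducible to the standard fact that $\pi_w\in H_m(0)$ is independent of the chosen reduced expression for $w$.
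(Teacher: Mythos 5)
Your proposal follows the same core strategy as the paper's proof: realize $\bk[\hom_{\cH}([n],[m])]$ as the cyclic left $H_m(0)$-module generated by $\iota_{n,m}$ and identify the kernel of $g\mapsto g\cdot\iota_{n,m}$ with the left ideal coming from the subalgebra copy of $H_{m-n}(0)$. The difference is in how the kernel is pinned down: the paper simply asserts that the annihilator of $\iota_{n,m}$ is generated by $\ol{\pi}_{n+1},\dots,\ol{\pi}_{m-1}$ (its text even writes $\ol{\pi}_m$, an index slip you silently correct), whereas you close the reverse containment by a dimension count, bounding $\dim_\bk H_m(0)/J$ by the number $m!/(m-n)!$ of minimal coset representatives for $\fS_m/\langle s_{n+1},\dots,s_{m-1}\rangle$ and matching it against $\dim_\bk \bk[\hom_{\cH}([n],[m])]$. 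That extra step genuinely strengthens the written argument, at the cost of needing the morphism count. One caution there: as literally phrased, ``any two positive braids realizing the same underlying injection are equal'' is false --- the word $\pi_i\pi_i$ and the identity word have the same endpoints but are not equal in $\cH$, since the skein relation changes the underlying permutation of a non-reduced word. What you actually need (and what the paper itself uses later, e.g.\ in the factorization $f=g\,T_w$ appearing in Lemma \ref{basis} and Theorem \ref{HNoeth}) is the normal-form statement: every morphism is uniquely of the form $g\,\pi_w$ with $g$ an order preserving injection and $w\in\fS_n$, so that morphisms biject with arbitrary injections $[n]\injects[m]$, giving the count $m!/(m-n)!$. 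With that rephrasing, your appeal to the $0$-Hecke word property (independence of $\pi_w$ from the chosen reduced expression), combined with the dummy-vertex composition rules, is the right justification, and the rest of your argument goes through.
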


\begin{proof}
Define a $H_m(0)$-equivariant map
\[
\phi \colon H_m(0) \to \hom_{\cH}([n],[m])
\]
via the action of the element on $\iota_{n,m}$, so $\phi(\pi_i) = \pi_i \iota_{n,m} $ and we extend so $\phi$ is an algebra homomorphism. From the above observations, this map is surjective. By the first isomorphism theorem we have
\[
H_m(0) / \ker(\phi) \cong \hom_{\cH}([n],[m]).
\]
But by definition $\ker(\phi)$ is exactly the annihilator of $\iota_{n,m}$, and this is isomorphic to $H_{m-n}(0)$.
\end{proof}

Using this, we can derive a $\cH$-module criterion,

\begin{theorem}[$\cH$-module criterion] \label{criterion}
Suppose that $\{W_n\}$ is a sequence of $H_n(0)$ representations with $H_n(0)$-equivariant maps $\phi_n \colon W_n \to W_{n+1}$, where the action on $W_{n+1}$ is given by embedding $H_n(0)$ into $H_{n+1}(0)$. Let $K \cong H_{m-n}(0)$ be the stabilizer of $\iota_{n,m}$ under the action of $H_m(0)$ by post composition. Then $\{W_n\}$ can be promoted to a $\cH$-module with $(\iota_{n,n+1})_\ast = \phi_n$ if and only if for all $n < m$
\[
\pi  \cdot v = v \qquad \qquad \forall \pi \in K \qquad \text{and} \qquad v \in \im((\iota_{n,m})_\ast).
\]
\end{theorem}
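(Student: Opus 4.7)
The forward ($\Rightarrow$) direction is essentially tautological: if $\{W_n\}$ already carries a $\cH$-module structure, then for any $\pi \in K$ the equality $\pi \iota_{n,m} = \iota_{n,m}$ in $\cH$ yields, for every $v = (\iota_{n,m})_\ast(w)$,
\[
\pi \cdot v \;=\; \pi \cdot (\iota_{n,m})_\ast(w) \;=\; (\pi \iota_{n,m})_\ast(w) \;=\; (\iota_{n,m})_\ast(w) \;=\; v.
\]

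For the converse, my plan is to exploit the preceding proposition, which identifies $\hom_{\cH}([n],[m]) \cong H_m(0) \cdot \iota_{n,m}$ as a left $H_m(0)$-module generated by $\iota_{n,m}$. Every morphism $f \colon [n] \to [m]$ therefore admits a (non-unique) presentation $f = \pi \iota_{n,m}$ with $\pi \in H_m(0)$, and I would set
\[
f_\ast(w) \;:=\; \pi \cdot (\phi_{m-1} \circ \phi_{m-2} \circ \cdots \circ \phi_n)(w).
\]
The required compatibility $(\iota_{n,n+1})_\ast = \phi_n$, as well as agreement with the given $H_n(0)$-action in the case $m = n$ (where $f \in H_n(0) = \hom_{\cH}([n],[n])$), are both automatic from this formula.

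The principal obstacle is showing that $f_\ast$ does not depend on the chosen lift $\pi$. If $\pi \iota_{n,m} = \pi' \iota_{n,m}$, then $\pi - \pi'$ lies in $\ann(\iota_{n,m})$, which by the proof of the preceding proposition is the left ideal of $H_m(0)$ generated by $\ol{\pi}_{n+1}, \ldots, \ol{\pi}_{m-1}$. The stabilizer hypothesis $\pi_i \cdot v = v$ for $\pi_i \in K$ translates precisely into $\ol{\pi}_i \cdot v = (\pi_i - 1) \cdot v = 0$ on $v \in \im((\iota_{n,m})_\ast)$, so expanding $\pi - \pi' = \sum_j \sigma_j \ol{\pi}_{i_j}$ (with each $i_j \in \{n+1,\dots,m-1\}$) and applying to $v$ gives $(\pi - \pi') \cdot v = \sum_j \sigma_j \cdot 0 = 0$, as needed. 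This step is the crux because it requires translating the stabilizer description of $K$ that the theorem is phrased in into the annihilator description extracted from the preceding proposition.

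Finally, to check functoriality I would take composable morphisms $g = \pi \iota_{n,m}$ and $f = \sigma \iota_{m,\ell}$ and use the braid identity $\iota_{m,\ell} \circ \pi = \tilde{\pi} \circ \iota_{m,\ell}$, where $\tilde{\pi} \in H_\ell(0)$ denotes the image of $\pi$ under the principal injection $H_m(0) \injects H_\ell(0)$; this identity is visible directly from the diagrammatic composition rule, since $\iota_{m,\ell}$ just adjoins $\ell - m$ free strands on the right. The identity gives $f \circ g = \sigma \tilde{\pi} \cdot \iota_{n,\ell}$, while iterated $H_i(0)$-equivariance of the $\phi_i$ yields $(\phi_{\ell-1} \circ \cdots \circ \phi_m)(\pi \cdot u) = \tilde{\pi} \cdot (\phi_{\ell-1} \circ \cdots \circ \phi_m)(u)$. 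Both $f_\ast(g_\ast(w))$ and $(f \circ g)_\ast(w)$ then reduce to $\sigma \tilde{\pi} \cdot (\phi_{\ell-1} \circ \cdots \circ \phi_n)(w)$, completing the verification.
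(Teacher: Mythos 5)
Your proposal is correct and follows essentially the same route as the paper: the forward direction via functoriality, and the converse by defining $f_\ast = T_w\,\phi_{m-1}\cdots\phi_n$ from a factorization $f = T_w\,\iota_{n,m}$ and using the stabilizer hypothesis (equivalently, that $\ol{\pi}_i$ annihilates the image) to justify the construction. Your write-up is in fact somewhat more careful than the paper's, since you separate independence of the chosen lift (where the hypothesis enters, via the annihilator ideal) from the functoriality check (via $\iota_{m,\ell}\pi = \tilde{\pi}\iota_{m,\ell}$ and equivariance), two points the paper compresses into a single remark about dummy-vertex permutations.
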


\begin{proof}
First if we assume that $\{W_n\}$ is a $\cH$-module, we clearly have $\pi \cdot v = v$ for every $\pi \in K$ and $v \in \im((\iota_{n,m})_\ast)$ by the rules for composition, since we know the stabilizer will be generated by $\pi_{n+1},\dots, \pi_{m}$ and all of these element act by the identity after $\iota_{n,m}$ so by functoriality we must have $\pi_\ast (\iota_{n,m})_\ast= (\pi \cdot \iota_{n,m})_\ast  = (\iota_{n,m})_\ast$.

For the other direction, we can factor any map $f \in \hom_{\cH}([n],[m])$ as $f = T_{w} \iota_{n,m}$ where $T_w$ is a word in the generators of $H_{m}(0)$. We will define the $\cH$ action by letting $(\iota_{n,n+1})_\ast = \phi_n$ and then $f_\ast = T_{w} \phi_{m-1} \cdots \phi_n$. It remains to check that functoriality is satisfied.

Suppose we have $f \in \hom_{\cH}([n],[m])$ and $g \in \hom_{\cH}([m],[\ell])$, then we claim 
\[
(g \circ f)_\ast =T_{p} \phi_{\ell-1} \cdots \phi_m T_{w} \phi_{m-1} \cdots \phi_n = g_\ast f_\ast.
\]
In order for the above to be well defined and true we need any permutation of strictly dummy vertices in the composition $g \circ f$ to act by the identity on any vector in the image of $(\iota_{n,\ell})_\ast$. This is guaranteed by our assumption because such permutations are exactly the stabilizer of $\iota_{n,\ell}$.
\end{proof}

\begin{remark}
We quickly notice that checking the above is equivalent to checking that every element that annihilates $\iota_{n,m}$ acts by $0$. This is because if $\pi$ stabilizes, then $\pi-1$ annihilates and vice versa.
\end{remark}

\noindent This gives us an easy way to check if a sequence of $H_n(0)$-modules can be realized as a $\cH$-module. We will now prove a few more properties of this category. First, we will show it is noetherian.

\begin{definition}
We say that a category of modules is {\bf locally noetherian} if any submodule of a finitely generated module is finitely generated.
\end{definition}

\begin{definition}
Given two categories $\cC$ and $\cC'$, we say that a functor $\Phi \colon \cC \to \cC'$ satisfies {\bf property (F)} if the following condition holds: given any object $x$ of $\cC'$ there exist finitely many objects $y_1,\dots,y_n$ of $\cC$ and morphisms $f_i \colon x \to \Phi(y_i)$ in $\cC'$ such that for any object $y$ of $\cC$ and any morphism $f \colon x \to \Phi(y)$ in $\cC'$, there exists a morphism $g \colon y_i \to y$ in $\cC$ such that $f = \Phi(g) \circ f_i$.
\end{definition}

This property is important because it is equivalent to the pullback functor $\Phi^\ast \colon {\rm Rep}_\bk(\cC') \to {\rm Rep}_{\bk}(\cC)$ preserving finite generation. For more on this we direct the reader to \cite[\S 3]{SS1}.

\begin{theorem} \label{HNoeth}
The category of $\cH$-modules, ${\rm Mod}_{\cH}$, is noetherian.
\end{theorem}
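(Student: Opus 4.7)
The plan is to reduce the statement to the known noetherianity of ${\bf OI}$ established in \cite{GS} by exploiting the embedding ${\bf OI} \hookrightarrow \cH$ of Proposition \ref{OIsub}. As the remark preceding the theorem records, it suffices to verify that this inclusion functor $\Phi$ satisfies property (F); once this is in hand, the pullback $\Phi^{\ast} \colon {\rm Rep}_{\bk}(\cH) \to {\rm Rep}_{\bk}({\bf OI})$ preserves finite generation, and the noetherianity of $\cH$ will follow by transferring submodule finiteness across the inclusion.

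The central step is thus the verification of property (F) for $\Phi$. I would fix an object $[n]$ of $\cH$ and take as the candidate data the single object $[n]$ of ${\bf OI}$ together with the finite family $\{\pi_u : u \in \fS_n\} \subseteq \hom_{\cH}([n],[n]) = H_n(0)$, viewed as morphisms $[n] \to \Phi([n])$ in $\cH$. Given any morphism $f \colon [n] \to [m]$ in $\cH$, the identification $\hom_{\cH}([n],[m]) \cong H_m(0)/H_{m-n}(0)$ lets me write $f = \pi_w \iota_{n,m}$ for some $w \in \fS_m$. Using the standard parabolic coset decomposition, $w$ factors length-additively as $w = v u$ with $u \in \fS_n \subseteq \fS_m$ and $v$ a minimal length coset representative of $\fS_m/\fS_n$; then $\pi_w = \pi_v \pi_u$ in $H_m(0)$. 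Since $\pi_u$ involves only the generators $\pi_1, \ldots, \pi_{n-1}$, a braid-diagram manipulation yields $\pi_u \iota_{n,m} = \iota_{n,m} \pi_u^{(n)}$, where $\pi_u^{(n)} \in H_n(0) = \hom_{\cH}([n],[n])$ is the same element viewed as an endomorphism of $[n]$. Moreover $\pi_v \iota_{n,m}$ is order preserving because the minimality of $v$ forces it to send $\{1,\ldots,n\}$ to an increasing subset of $[m]$. The resulting equality $f = (\pi_v \iota_{n,m}) \circ \pi_u^{(n)}$ is the required factorization through the chosen morphism $\pi_u^{(n)}$.

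With property (F) in hand the theorem follows quickly. Let $V$ be a finitely generated $\cH$-module and let $W \subseteq V$ be an $\cH$-submodule. Property (F) guarantees that $\Phi^{\ast} V$ is finitely generated as an ${\bf OI}$-module, so by the noetherianity of ${\bf OI}$ proved in \cite{GS} the ${\bf OI}$-submodule $\Phi^{\ast} W \subseteq \Phi^{\ast} V$ is finitely generated, say by $w_1, \ldots, w_k$. Because every ${\bf OI}$-morphism is simultaneously an $\cH$-morphism, the $\cH$-submodule of $V$ generated by $\{w_i\}$ contains their ${\bf OI}$-span, namely $W$ itself, and is clearly contained in $W$ by $\cH$-closure; thus $W$ is finitely generated as an $\cH$-module.

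The principal technical obstacle is the braid-diagram verification that $\pi_u \iota_{n,m} = \iota_{n,m} \pi_u^{(n)}$ and that $\pi_v \iota_{n,m}$ is order preserving, which must be carried out with the dummy-vertex composition rules of Section \ref{mapSec} in mind to ensure no spurious positive crossings are introduced. Both points are underpinned by the parabolic coset structure of $\fS_m$: the factor $u$ acts only on the first $n$ strands so its crossings commute past the dummy extensions in $\iota_{n,m}$, while $v$, being a Grassmannian-type permutation, realizes geometrically as an order preserving embedding. Everything else in the proof is a formal consequence of the general Sam--Snowden machinery of \cite{SS1}.
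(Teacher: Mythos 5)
Your proposal is correct and takes essentially the same route as the paper: both reduce noetherianity to that of ${\bf OI}$ by checking property (F) for the inclusion $\Phi \colon {\bf OI} \to \cH$ of Proposition \ref{OIsub}, using the factorization of any $f \in \hom_{\cH}([n],[m])$ as an order preserving injection precomposed with one of the finitely many elements of $H_n(0)$. You simply make explicit (via the parabolic coset decomposition and the final submodule-transfer argument) what the paper delegates to the quasi-Gr\"obner machinery of \cite{SS1}, which is also the reference the paper uses for noetherianity of ${\bf OI}$ rather than \cite{GS}.
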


\begin{proof}
We use Gr\"obner methods outlined in \cite{SS1}. In particular, we will show that $\cH$ is quasi-Gr\"obner which proves that it is noetherian. In \cite{SS1} the authors show that ${\bf OI}$ is a Gr\"obner category, i.e. that it is noetherian. So it suffices to produce a functor $\Phi \colon {\bf OI} \to \cH$ that satisfies property (F). The functor $\Phi$ will be the natural inclusion outlined in Proposition \ref{OIsub}.

It is clear that $\Phi$ is essentially surjective, so it remains to show that it satisfies property (F). Let $x = [n]$ be a given object in $\cH$. Every map $f \in \hom_{\cH}([n],[m])$ can be factored as $i(n,m) T_w$ where $T_w \in H_n(0)$ and $i(n,m)$ is an order preserving injection from $[n]$ to $[m]$. If we take $y_1,\dots,y_{n!} = [n]$ and $f_i \colon [n] \to \Phi(y_i)$ to be the $i$th element of $H_n(0)$ under any enumeration it is clear that $\Phi$ satisfies property (F).
\end{proof}

\section{Induced Modules} \label{InducedSec}
We will now consider the {\em induced modules} $M(\bC_\alpha)$ where $\bC_\alpha$ is the simple one dimensional $H_n(0)$-module defined in the Section \ref{NotationSec}. $M(\bC_\alpha)$ is defined as
\[
M(\bC_\alpha)_m \coloneqq \bk[\hom_{\cH}([n],[m])] \otimes_{\bk[H_n(0)]} \bC_\alpha,
\]
where the action of $\cH$ is composition on the left tensor factor. It is not hard to see that
\[
M(\bC_\alpha)_m \cong {\rm Ind}_{H_n(0) \times H_{m-n}(0)}^{H_m(0)} \bC_\alpha \boxtimes \bC_{m-n}.
\]
We also define $M(n)_m \coloneqq  \bk[\hom_{\cH}([n],[m])]$. There is a convenient way to index basis elements of $M(\alpha)$ in each degree.

\begin{lemma} \label{basis}
For a fixed $\alpha \vDash n$ and degree $d$ a basis for $M(\alpha)_d$ is indexed by all order preserving injections of $n$ into $d$. Explicitly this basis is given by $\{g \otimes v_\alpha\}$ where $g$ ranges over all order preserving injections of $n$ into $d$ and $v_\alpha$ spans $\bC_\alpha$.
\end{lemma}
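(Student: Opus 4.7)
The plan is first to show that $\hom_{\cH}([n],[d])$ is free as a right $H_n(0)$-module (with $H_n(0)$ acting by pre-composition), on the set of order preserving injections $[n] \to [d]$, and then deduce the lemma by tensoring with the one-dimensional module $\bC_\alpha$.

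The key step is a unique factorization: every positive crossing braid $f \in \hom_\cH([n],[d])$ can be written uniquely as $f = g \cdot \pi_w$ with $g \colon [n] \hookrightarrow [d]$ an order preserving injection and $\pi_w \in H_n(0)$ an element of the standard basis indexed by some $w \in \fS_n$. Existence is geometrically transparent: the $n$ strands of $f$ end at an $n$-element subset of $[d]$ (determining $g$), and since all crossings are positive, the permutation of the strand labels selects a unique $w \in \fS_n$ whose residual braid is $\pi_w$ modulo the skein relation. For uniqueness I would invoke the identification $\hom_\cH([n],[d]) \cong H_d(0)/H_{d-n}(0)$ from the preceding Proposition, together with a parabolic-type argument: the $(n,d-n)$-shuffles form a complete set of minimal length coset representatives for $\fS_d/\fS_{d-n}$, and they are in bijection with order preserving injections $[n] \hookrightarrow [d]$. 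The dimension count
\[
\dim_\bk \hom_\cH([n],[d]) = \frac{d!}{(d-n)!} = \binom{d}{n} \cdot n!
\]
then forces $\{g \cdot \pi_w\}_{g,w}$ to be a $\bk$-basis.

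Granted this factorization, we obtain
\[
\hom_\cH([n],[d]) = \bigoplus_g g \cdot H_n(0)
\]
as right $H_n(0)$-modules (with each summand free of rank one), where $g$ runs over order preserving injections. Tensoring with $\bC_\alpha$ over $H_n(0)$ yields
\[
M(\alpha)_d = \bigoplus_g (g \cdot H_n(0)) \otimes_{H_n(0)} \bC_\alpha = \bigoplus_g \bk \cdot (g \otimes v_\alpha),
\]
which is the claimed basis. The main obstacle in this argument is the uniqueness of the factorization $f = g \cdot \pi_w$: existence and the fact that $\{g \otimes v_\alpha\}$ spans $M(\alpha)_d$ are visible directly from the braid picture (any $f \otimes v_\alpha = (gh) \otimes v_\alpha = g \otimes (h v_\alpha)$ is a scalar multiple of some $g \otimes v_\alpha$), but ruling out unexpected collisions among the $g \cdot \pi_w$ caused by the skein relation requires the $H_d(0)$-module identification above combined with the dimension count.
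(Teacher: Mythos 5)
Your proposal is correct in substance, and it is actually more complete than the paper's own proof, which only carries out the spanning step: there one factors an arbitrary diagram as $f = gT_w$ with $g$ order preserving and $T_w \in H_n(0)$, pushes $T_w$ across the tensor, and uses that $T_w$ acts on $v_\alpha$ by $0$ or $1$; linear independence of the vectors $g \otimes v_\alpha$ is left implicit. You supply that missing half by upgrading the factorization to the statement that $\bk[\hom_{\cH}([n],[d])]$ is free as a right $H_n(0)$-module on the order preserving injections, after which tensoring with $\bC_\alpha$ gives the basis in one stroke; the spanning-plus-dimension-count route to freeness is sound, granted the identification $\hom_{\cH}([n],[d]) \cong H_d(0)/H_{d-n}(0)$ from the preceding Proposition and the standard parabolic fact that this quotient has dimension $d!/(d-n)!$. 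What your approach buys is the genuine basis statement (and the structural fact that each graded piece of $M(\alpha)$ is an induced module in the expected way); what the paper's shorter argument buys is brevity, at the cost of leaving independence to the reader.

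One combinatorial input should be corrected: the $(n,d-n)$-shuffles are the minimal length coset representatives for $\fS_d/(\fS_n \times \fS_{d-n})$, not for $\fS_d/\fS_{d-n}$, and there are only $\binom{d}{n}$ of them, whereas $\fS_d/\fS_{d-n}$ has $d!/(d-n)!$ cosets. The minimal length representatives of $\fS_d/\fS_{d-n}$ (with $\fS_{d-n}$ acting on the letters $n+1,\dots,d$) correspond to arbitrary injections $[n] \injects [d]$, and the statement your uniqueness step really needs is that each such representative $u$ factors uniquely and length-additively as $u = u_S\, w$ with $u_S$ a shuffle (equivalently, an order preserving injection) and $w \in \fS_n$, so that $\pi_u = \pi_{u_S}\pi_w$ in $H_d(0)$. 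With that substitution your dimension count $d!/(d-n)! = \binom{d}{n}\,n!$ does force $\{g \cdot \pi_w\}$ to be a $\bk$-basis, and the rest of the argument goes through as written.
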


\begin{proof}
For fixed $d$ and $n$, we may assume $d \geq n$, the space $M(\alpha)_d$ is exactly
\[
M(n)_d \otimes_{\bk[H_n(0)]} \bC_\alpha.
\]
Pick any element $f \otimes v_\alpha$ where $v_\alpha$ spans $\bC_\alpha$. We may assume by linearity that $f$ is a single map and not a sum of maps. If $f$ has any crossings, we can factor $f$ as an element of $H_n(0)$ followed by an order preserving injection $g$, i.e. $f = g T_w$. We can then pass the element of $H_n(0)$ through the tensor and have it act on $v_\alpha$. This implies that this vector $f \otimes v_\alpha = g \otimes (T_w v_\alpha)$. But we know $T_w$ acts by either $0$ or $1$ on $v_\alpha$ so either the original vector is zero or it equals $g \otimes v_\alpha$ for some order preserving injection $g$.
\end{proof}

\begin{corollary} \label{dim}
For a fixed $\alpha \vDash n$, $\dim\left(M(\alpha)_d\right) = \binom{d}{n}$.
\end{corollary}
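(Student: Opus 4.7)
The plan is to deduce this directly from Lemma \ref{basis}. By that lemma, a basis for $M(\alpha)_d$ is given by $\{g \otimes v_\alpha\}$ as $g$ ranges over all order preserving injections $[n] \injects [d]$, so the dimension equals the number of such injections.

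Next, I would observe that an order preserving injection $g \colon [n] \injects [d]$ is completely determined by its image, since the unique order preserving bijection onto any chosen $n$-element subset of $[d]$ reconstructs $g$. Conversely every $n$-element subset of $[d]$ arises as the image of exactly one order preserving injection. This sets up a bijection between the indexing set of our basis and the collection of $n$-element subsets of $[d]$.

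Finally, the number of $n$-element subsets of a $d$-element set is $\binom{d}{n}$ (with the convention that this is $0$ when $d < n$, which is consistent with $M(\alpha)_d = 0$ in that range since there are no maps $[n] \to [d]$ in $\cH$ when $d<n$). Combining the two bijections gives $\dim(M(\alpha)_d) = \binom{d}{n}$.

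There is no real obstacle here; the content is entirely in Lemma \ref{basis}, and this corollary is essentially a bookkeeping remark converting the basis indexing into a binomial count.
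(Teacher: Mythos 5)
Your proposal is correct and follows exactly the same route as the paper: both deduce the count immediately from Lemma \ref{basis}, using the standard fact that order preserving injections $[n]\injects[d]$ correspond to $n$-element subsets of $[d]$, of which there are $\binom{d}{n}$. Your write-up merely spells out this bijection a bit more explicitly than the paper does.
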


\begin{proof}
This follows immediately from Lemma \ref{basis} because there are $\binom{d}{n}$ order preserving injections of $[n]$ into $[d]$.
\end{proof}


Under the quasisymmetric characteristic map ${\rm Ch} \colon \cG(\cH) \to {\rm QSym}$ defined in Section \ref{NotationSec}, the isomorphism class of the simple element $[\cC_\alpha]$ corresponds to the fundamental quasisymmetric function $F_\alpha$. We can use this characteristic map to decompose the isomorphism class $[M(\bC_\alpha)]_n$, in particular it corresponds to the analogous variant of the Pieri rule for fundamental quasisymmetric functions which we now recall.


Given two words $w$ and $v$ of length $n$ and $m$ respectively, we define their {\em shuffle set}, denoted by $w \odot v$ as the set of all words, $u_1\cdots u_{m+n}$, where for some subset of disjoing indices $\{i_1,\dots,i_n\}$ and $\{j_1,\dots,j_m\}$ with $i_1 < \cdots < i_n$ and $j_1 < \cdots < j_m$, we have $u_{i_k} = w_k$ and $u_{j_k} = v_k$. Given a word $w$ on an ordered set such as $\bZ_{\geq 0}$ we define the {\em descent set} of $w$ as
\[
D(w) = \{i \; | \; w_i > w_{i+1} \}.
\]
As a quick example consider the word $w = 14253$, then $D(w) = \{2,4\}$.

For fundamental quasisymmetric functions $F_\alpha$ and $F_\beta$ we have the following multiplication rule, for more details on this rule we refer the reader to \cite{LP},
\[
F_\alpha F_\beta = \sum_{u \in w(\alpha) \odot w(\beta)} F_{C(D(u))}
\]
where $w(\alpha)$ is some word on $\bZ_{\geq 0}$ with descent set $D(w(\alpha)) = D(\alpha)$ and the same for $w(\beta)$ with the condition that $w(\alpha)$ and $w(\beta)$ must consist of disjoint natural numbers. We recall that $C(D(u))$ is the composition corresponding to the descent set of the word $u$.

We are specifically concerned with the case where $\beta = m$. In this case, combinatorially if we interpret $\alpha \vDash n$ as a ribbon diagram we sum over all $F_\gamma$ with $\gamma \vDash m+n$ a ribbon diagram constructed from $\alpha$ by adding $m$ boxes in any way to any of the positions of $\alpha$. This multiplication rule allows us to discover which simple modules are in each degree of our induced modules. Consider the following example,

\begin{example}
Consider the following induced $\cH$-module, $M({\tiny \young(:2,13)})$. We will explicitly compute the simples in each degree. The first nonzero degree is $3$,
\[
M({\tiny \young(:2,13)})_3 = \young(:2,13)\;.
\]
We number the boxes to illustrate the connection to descent words and the multiplication rule above. Each diagram corresponds to the descent word where we read from bottom to top, left to right. So degree $3$ correspond to when $\beta = \emptyset$ and so we only have the descent word $132$ corresponding to the composition $(2,1)$. Next in degree $4$ we have
\[
M({\tiny \young(:2,13)})_4 = \young(:1,:3,24) + \young(:13,24:)+  \young(:3,14,2:) + \young(::3,124:) \;.
\]
Here $w(\beta) = 1$ and $w(\alpha) = 243$. We then sum over all shuffles, so we set the descent words $\{2431,2413,2143,1243\}$ which correspond to inserting $1$ into all possible places. Now in degree $5$ we will have $\binom{5}{3} = 10$ diagrams,
\begin{align*}
M({\tiny \young(:2,13)})_5 = \; & \young(:12,:4:,35:) + \young(::2,:14,35:) +\young(:124,35::) + \young(:2,:4,15,3:) + \young(:24,15:,3::)  \\ &+ \young(::4,125,3::)+ \young(::2,::4,135)+ \young(::24,135:) + \young(::4,:25,13:) + \young(:::4,1235:) \; .
\end{align*}
We want to point out here that the same shapes correspond to the same simple module, so just because they have different labelings does not make them different modules. \qedhere
\end{example}

From this point forward, we will no longer number the boxes because the descent set is recoverable from the diagrams themselves. We placed the numbers in the above example purely to illustrate the connection between the multiplication rule for fundamental quasisymmetric functions and adding boxes to certain positions of $\alpha$.

Furthermore, we note that the action of the $0$-Hecke algebra $H_n(0)$ on the simples in the $\cH$-degree $n$ component of a $\cH$-module $M(\alpha)$ correspond to shifting added boxes down to a lower position in the diagram. This perspective will be very useful in the coming sections. We will often switch perspectives depending on which allows for the easiest proof. To see what we mean, consider the following example.

\begin{example}
Once again if we consider the induced module $M((2,1))$, we can see that each of the diagrams in the above example correspond to a basis element, which are indexed by order preserving injections. In degree $4$ we have
\[
M({\tiny \young(:~,~~)})_4 = \young(:\bullet,:~,~~) + \young(:\bullet~,~~:) +  \young(:~,\bullet~,~:) + \young(::~,\bullet~~:) \;,
\]
where the marked boxes are the box we add to the ribbon tableau corresponding to $(2,1)$. These diagrams correspond respectively to the basis elements
{\tiny \[
\begin{tikzcd}
\bullet \ar[d]& \bullet \ar[d]& \bullet \ar[d]&\\
\bullet & \bullet & \bullet & \bullet
\end{tikzcd}
\otimes v_{(2,1)}, \quad
\begin{tikzcd}
\bullet \ar[d]& \bullet \ar[d]& \bullet \ar[dr]&\\
\bullet & \bullet & \bullet & \bullet
\end{tikzcd}
\otimes v_{(2,1)},
\]
\[
\begin{tikzcd}
\bullet \ar[d]& \bullet \ar[dr]& \bullet \ar[dr]&\\
\bullet & \bullet & \bullet & \bullet
\end{tikzcd}
\otimes v_{(2,1)}, \quad
\begin{tikzcd}
\bullet \ar[dr]& \bullet \ar[dr]& \bullet \ar[dr]&\\
\bullet & \bullet & \bullet & \bullet
\end{tikzcd}
\otimes v_{(2,1)}
\]}
\noindent where we read from left to right then top to bottom and $v_{(2,1)}$ is the basis element that spans $\bC_{(2,1)}$. To see this correspondence, notice that the final diagram is preserved by the $H_4(0)$ action where $\pi_1$ acts by $1$, $\pi_2$ acts by $1$, and $\pi_3$ acting by $0$. The corresponding descent set is precisely $\{3\}$ and so the corresponding composition of $4$ is $(3,1)$. Notice this is the diagram where the box is placed in the lowest possible position. 

When we quotient by this simple, one can check that the next simple that we can inject into this module is precisely the third diagram where we add a box to the second position. Under the $H_4(0)$ action, we can map the third diagram to the final diagram where we add a box to the $1$st position by applying $\pi_1$. In this same way, it is possible to move any box we have added in a higher row to a lower row under this $H_4(0)$ action since adding boxes to the $i$th row corresponds to having that many degree $0$ vertices before the $i$th strand. This implies that if a submodule contains a simple in $M(\alpha)$ that corresponds to adding a box to a higher position of $\alpha$, the submodule must also contain all the simple modules corresponding to adding the same boxes to lower positions. This will be extremely useful in the coming sections.
\end{example}

\begin{proposition} \label{HSimples}
The simple objects in ${\rm Mod}_{\cH}$ are precisely the modules equal to $C_\alpha$ in degree $|\alpha|$ and zero elsewhere.
\end{proposition}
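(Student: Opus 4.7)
The proof will have two directions: showing each such single-degree module is simple, and showing every simple module has this form. The first direction is essentially by construction. Let $S_\alpha$ denote the $\cH$-module with $(S_\alpha)_{|\alpha|} = \bC_\alpha$ and $(S_\alpha)_n = 0$ otherwise, with all structure maps to other degrees being zero. I would first verify $S_\alpha$ is a well-defined $\cH$-module: since the only nonzero morphism spaces to check are $\hom_\cH([|\alpha|],[|\alpha|]) = H_{|\alpha|}(0)$ acting on $\bC_\alpha$, functoriality is automatic. Simplicity of $S_\alpha$ then follows because any $\cH$-submodule $U \subseteq S_\alpha$ is determined by $U_{|\alpha|}$, which must be an $H_{|\alpha|}(0)$-submodule of the simple module $\bC_\alpha$, forcing $U = 0$ or $U = S_\alpha$.

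For the converse, I would start with an arbitrary simple $\cH$-module $W$ and let $n_0$ be the smallest integer with $W_{n_0} \neq 0$. The main step is to show $W_n = 0$ for all $n > n_0$. Define $U \subseteq W$ by $U_n = 0$ for $n \leq n_0$ and $U_n = W_n$ for $n > n_0$. The key point is that all morphisms in $\cH$ go from $[n]$ to $[m]$ with $n \leq m$, so the only structure maps one has to check land in $U$: for $n \leq n_0$ the source is zero, and for $n > n_0$ the target $W_m$ (with $m \geq n > n_0$) coincides with $U_m$. Hence $U$ is a genuine $\cH$-submodule of $W$. By simplicity, $U = 0$ or $U = W$; since $U_{n_0} = 0$ while $W_{n_0} \neq 0$, we must have $U = 0$, so $W_n = 0$ for $n > n_0$.

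With $W$ concentrated in the single degree $n_0$, any $H_{n_0}(0)$-submodule $V \subseteq W_{n_0}$ automatically extends to a $\cH$-submodule (since there are no nontrivial structure maps between distinct degrees to worry about). Simplicity of $W$ therefore forces $W_{n_0}$ to be a simple $H_{n_0}(0)$-module, and by Norton's classification recalled in Section \ref{NotationSec}, $W_{n_0} \cong \bC_\alpha$ for some composition $\alpha \vDash n_0$. Thus $W \cong S_\alpha$, completing the proof. I do not anticipate any real obstacle here — the argument rests entirely on the fact that morphisms in $\cH$ only increase degree, which makes the top-truncation always a subobject; this combined with the Norton classification reduces the problem to a statement inside a single fiber.
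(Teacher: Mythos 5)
Your proof is correct and follows essentially the same route as the paper: take the first nonzero degree $n_0$ of a simple module, observe that the truncation to degrees $>n_0$ is a proper submodule (since morphisms in $\cH$ only increase degree) and hence zero, then conclude the single remaining degree must be a simple $H_{n_0}(0)$-module, i.e. some $\bC_\alpha$ by Norton's classification. The only difference is that you also spell out the easy converse direction (that each single-degree module $\bC_\alpha$ is simple), which the paper leaves implicit.
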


\begin{proof}
Suppose $V$ is a simple $\cH$-module. Let $d$ be the first degree where $V_d \not= 0$. Then by simplicity we must have $V_{\geq d+1} = 0$, otherwise $V_{\geq d+1}$ would be a nonzero submodule. Furthermore, $V_d$ must be a simple $H_{d}(0)$ module otherwise it would have a submodule. This means $V_d = \bC_\alpha$ for some composition $\alpha \vDash d$.
\end{proof}

%

%
%
%

\section{Padded Induced Modules} \label{PaddedInducedSec}
We say that a finitely generated $\cH$-module $V$ has {\em polynomial degree} $d$ if $\dim(V_n)$ is given by a degree $d$ polynomial for sufficiently large $n$.

Given a composition $\alpha \vDash n$, for $0 \leq i \leq n$, let $\eta_i(\alpha)$ consist of the first $i$ blocks of $\alpha$ and $\tau_j(\alpha)$ consist of the last $j$ blocks of $\alpha$. We refer to these $\eta_i(\alpha)$ as {\em heads} of $\alpha$ and $\tau_j(\alpha)$ as {\em tails} of $\alpha$. We explain what a head looks like explicitly and the tail is similar. If $\alpha = (\alpha_1,\dots,\alpha_k)$ then $\eta_i(\alpha) = (\alpha_1,\dots, \alpha_{\ell} - \beta)$ where $\beta$ is a potential correction term to make sure that $\eta_i(\alpha)$ is a composition of $i$. We say that $\beta$ and $\gamma$ are {\em summands} of $\alpha$ if we can write $\alpha = \gamma + \beta$ where $\gamma$ and $\beta$ are both compositions. We say that $\eta_i(\alpha)$ or $\tau_j(\alpha)$ is {\em split} if we need a correction term $\beta$, combinatorially this is if the $i$th block reading from left to right is not at the end of a row. If $\eta_i(\alpha)$ is split then $\alpha = \eta_i(\alpha) + \tau_{n-i}(\alpha)$, if not then $\alpha = \eta_i(\alpha) \lhd \tau_{n-i}(\alpha)$.

We now define a new module $M(\alpha,k)$ where $\alpha$ is a composition of $n$ and $k$ is an integer $-1 \leq k \leq n$. First we let $M(\alpha,n) \coloneqq M(\alpha)$ and $M(\alpha,-1) \coloneqq \bC_\alpha$ these are both quotients of $M(\alpha)$. Then for $0 \leq k < n$, we let $M(\alpha,k)$ will be the quotient of $M(\alpha)$ of degree $k$ defined as follows. Let $A_{\alpha,k}$ be the submodule of $M(\alpha)$ generated by the basis element in degree $|\alpha|+1$ indexed by the order preserving injection $g_k \colon [n] \to [n+1]$,
\[
g_k(i) =
\begin{cases}
i+1 &  i \geq |\alpha|-k\\
i &  \text{else}
\end{cases}.
\]
When we quotient by this module, the remaining simples in degree $m$ of $M(\alpha,k)$ are indexed by $M(\tau_k(\alpha))_{m-n+k}$. To find the simples of $M(\alpha,k)$ corresponding to the compositions $\{\beta_{i,m-n+k}\}_i$ of $M(\tau_k(\alpha))_{m-n+k}$ we either get $\eta_{n-k}(\alpha) + \beta_{i,m-n+k}$ or $\eta_{n-k}(\alpha) \lhd \beta_{i,m-n+k}$. The first case occurs if $\eta_{n-k}(\alpha)$ is split or if we add a box to the first row of $\tau_k(\alpha)$ in $\beta_{i,m-n+k}$. The second case occurs if $\eta_{n-k}(\alpha)$ is not split and we do not add a box to the first row of $\tau_k(\alpha)$. We explain this further below in Lemma \ref{polySimples}. We call these {\em degree k induced $\alpha$ modules}. 

\begin{proposition}
The module $M(\alpha,k)$ where $|\alpha| = n$, has Hilbert function $\binom{d-n+k}{k}$ and so has polynomial degree $k$.
\end{proposition}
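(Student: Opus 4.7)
The plan is to identify $A_{\alpha,k}$ in degree $d$ as an explicit span of basis elements of $M(\alpha)_d$ and then count the complement. Recall that $M(\alpha)_d$ has basis $\{g_S \otimes v_\alpha\}$ indexed by the $n$-subsets $S \subseteq [d]$, where $g_S$ is the corresponding order-preserving injection. Since $\hom_\cH([n+1],[d])$ is generated as a cyclic $H_d(0)$-module by the principal injection $\iota_{n+1,d}$, the degree-$d$ piece of the submodule $A_{\alpha,k}$ equals the $H_d(0)$-orbit of $g_{S_0} \otimes v_\alpha$, where $S_0 = \{1,\ldots,n-k-1,\, n-k+1,\ldots,n+1\}$ is the image of $\iota_{n+1,d}\circ g_k$ inside $[d]$.

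The main step is to describe the $H_d(0)$-action on a generic basis element $g_S \otimes v_\alpha$. Writing $g_S = \pi_{w_S}\iota_{n,d}$ where $w_S \in \fS_d$ is the minimal-length shuffle coset representative (sending $1,\ldots,n$ to $s_1,\ldots,s_n$ in order and filling in the complement increasingly), and combining the 0-Hecke multiplication rule with the identities $\pi_i\iota_{n,d} = \iota_{n,d}\pi_i$ for $i < n$ (where the right-hand $\pi_i$ is interpreted in $H_n(0)$) and $\pi_i\iota_{n,d} = \iota_{n,d}$ for $i \geq n+1$, a case analysis on whether $i, i+1$ lie in $S$ gives: $\pi_i \cdot (g_S \otimes v_\alpha) = g_{(S\setminus\{i\})\cup\{i+1\}} \otimes v_\alpha$ when $i \in S$ and $i+1 \notin S$; equals $0$ or $g_S \otimes v_\alpha$ (depending on whether the index $j$ with $s_j = i$ lies in $D(\alpha)$) when both $i, i+1 \in S$; and fixes $g_S \otimes v_\alpha$ in the remaining cases. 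Encoding $S$ by its 0-1 characteristic string of length $d$, the only action that produces a new basis element is the adjacent transposition sending the pattern ``$10$'' to ``$01$''.

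From here I invoke the standard fact that the 0-1 strings reachable from a fixed one via ``$10 \to 01$'' moves are exactly those whose $i$-th $0$ from the left lies weakly to the left of the original $i$-th $0$. Applied to $S_0$, whose $0$'s sit at positions $n-k,\, n+2,\, n+3,\, \ldots,\, d$, the constraints on the $0$'s past position $n+2$ are automatic because $|S| = n$ pins the total number of $0$'s, leaving only the condition that the target's leftmost $0$ lies in $[1,n-k]$, equivalently $\{1,\ldots,n-k\} \not\subseteq S$. Thus $A_{\alpha,k,d}$ is spanned by the basis elements with $\{1,\ldots,n-k\} \not\subseteq S$, and $M(\alpha,k)_d$ has a basis indexed by the $n$-subsets of $[d]$ containing $\{1,\ldots,n-k\}$, which biject with $k$-subsets of $\{n-k+1,\ldots,d\}$. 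Hence $\dim M(\alpha,k)_d = \binom{d-n+k}{k}$, a polynomial in $d$ of degree $k$; the boundary cases $M(\alpha,n) = M(\alpha)$ and $M(\alpha,-1) = \bC_\alpha$ match by inspection.

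The main obstacle is the $\pi_i$-action calculation, particularly the case $i \notin S,\ i+1 \in S$. A direct diagrammatic composition in $\cH$ is misleading here: the padding rule forces a positive crossing between the dummy edge and a real strand, and naively following strands gives a spurious ``shift up'' for the added box. The correct answer, that $\pi_i$ fixes $g_S \otimes v_\alpha$, emerges only after applying the 0-Hecke skein $\pi_i^2 = \pi_i$ to reduce the two positive crossings created by padding down to one. The shuffle-permutation dictionary sidesteps this pitfall by reducing the whole calculation to a mechanical application of the 0-Hecke multiplication rule.
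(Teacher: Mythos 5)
Your proof is correct, and your case analysis of the $H_d(0)$-action on the basis $\{g_S\otimes v_\alpha\}$ (move $i\mapsto i+1$ when $i\in S,\ i+1\notin S$; kill or fix when both lie in $S$; fix otherwise) is exactly what the paper's relations \eqref{rel1}--\eqref{rel4} encode, including the skein cancellation you flag, which is precisely how the paper verifies \eqref{rel2}. The difference is one of route and rigor rather than of count: the paper's proof is a one-line count of the ways to add $d-n$ boxes to the top $k+1$ positions of the ribbon diagram of $\alpha$ (a stars-and-bars count giving $\binom{d-n+k}{k}$), taking as given the description of the surviving simples that it set up when defining $M(\alpha,k)$ and in the discussion around Lemma \ref{polySimples}; you instead prove that description, identifying $(A_{\alpha,k})_d$ as the span of the basis vectors reachable from $g_{S_0}\otimes v_\alpha$ and characterizing reachability by the ``$0$'s only move left'' dominance condition, so that the quotient has basis the $n$-subsets of $[d]$ containing $\{1,\dots,n-k\}$. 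Your subset condition $\{1,\dots,n-k\}\subseteq S$ is the strand-diagram translation of ``boxes added only to the top $k+1$ positions,'' so the two counts coincide; what your argument buys is an actual proof of the spanning-set identification that the paper asserts from its construction, at the cost of needing the explicit action computation and the (standard, but worth a sentence) reachability lemma for $10\to 01$ moves.
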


\begin{proof}
The simples that occur in $M(\alpha,k)_d$ are precisely those obtained from $\alpha$ by adding $d-n$ boxes to the top $k+1$ positions of $\alpha$. There are $\binom{d-n+k}{k}$ ways to do this and each simple is one dimensional.
\end{proof}

\begin{remark}
Notice, this result recovers the previous statement that $\dim(M(\alpha)_d) = \binom{d}{n}$, as $M(\alpha) = M(\alpha,n)$ if $|\alpha| = n$. Comparing the proof of this result with Corollary \ref{dim} further illustrates the connection between the basis of order preserving injections and the combinatorial interpretation of computing simples that occur using ribbon diagrams.
\end{remark}

\begin{example} \label{quotient1}
Consider the example of $M((2,2),3)$. We define this as a quotient of $M((2,2))$ by the submodule generated by
\[
\begin{tikzcd}
\bullet \ar[dr] & \bullet \ar[dr]& \bullet \ar[dr]& \bullet \ar[dr]&\\
\bullet & \bullet &\bullet &\bullet &\bullet
\end{tikzcd}
\otimes v_{(2,2)},
\]
where recall $v_{(2,2)}$ spans $\bC_{(2,2)}$. The above map is precisely the map indexed by the order preserving injection $g$ described above. In this case $|\alpha|-3 = 1$. Notice that $\alpha_1$ is not split, i.e. $\alpha_1 = (1)$ and $\alpha^3 = (1,2)$. From the above all the remaining simples should be generated by adjoining the simples in $M((1,2))$ to $\alpha_1$. In degree $4$ this quotient is spanned by
\[
\begin{tikzcd}
\bullet \ar[d] & \bullet \ar[d]& \bullet \ar[d]&\bullet \ar[d]\\
\bullet & \bullet &\bullet &\bullet
\end{tikzcd}
\otimes v_{(2,2)},
\]
which corresponds to the simple $\bC_{2,2}$ which we can realize as $\alpha_1 \lhd (1,2)$ where $(1,2)$ is the only simple in $M((1,2))_3$. In degree $5$ the quotient is spanned by
\[{\tiny
\begin{tikzcd}
\bullet \ar[d] & \bullet \ar[d]& \bullet \ar[d]&\bullet \ar[d]&\\
\bullet & \bullet &\bullet &\bullet & \bullet
\end{tikzcd}
\otimes v_{(2,2)}, \qquad
\begin{tikzcd}
\bullet \ar[d] & \bullet \ar[d]& \bullet \ar[d]&\bullet \ar[dr]&\\
\bullet & \bullet &\bullet &\bullet & \bullet
\end{tikzcd}
\otimes v_{(2,2)},}
\]
\[{\tiny
\begin{tikzcd}
\bullet \ar[d] & \bullet \ar[d]& \bullet \ar[dr]&\bullet \ar[dr]&\\
\bullet & \bullet &\bullet &\bullet & \bullet 
\end{tikzcd}
\otimes v_{(2,2)}, \qquad
\begin{tikzcd}
\bullet \ar[d] & \bullet \ar[dr]& \bullet \ar[dr]&\bullet \ar[dr]&\\
\bullet & \bullet &\bullet &\bullet & \bullet 
\end{tikzcd}
\otimes v_{(2,2)}}.
\]
Notice the restriction the quotient places on the remaining basis elements is that the first arrow must go directly down. If it does, the basis element will not be in the quotient, if it does not the basis element is forced into the quotient. The quotient can be viewed as all simples where we place at least one block into the first position of the diagram for $\alpha$, so all the remaining diagrams are exactly the ones where we never have a block in that position. These correspond to the following compositions $\alpha_1 \lhd (1,2,1)$, $\alpha_1 \lhd (1,1,2)$, $\alpha_1 \lhd (1,3)$ and $\alpha_1 + (2,2)$. The composition on the right are exactly those present in $M((1,2))_4$. Notice $\alpha_1$ is split, but in the final diagram we add a box to the first position of $(1,2)$ so we add instead of join.
\end{example}

The above illustrates that the representation theory of $M(\alpha,k)$ is very similar to that of $M(\tau_k(\alpha))$. To make this more explicit,

\begin{lemma} \label{polySimples}
Submodules of $M(\alpha,k)$ are in bijective correspondence with submodules of $M(\tau_k(\alpha))$.
\end{lemma}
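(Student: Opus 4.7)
The plan is to construct an inclusion-preserving bijection between the lattice of submodules of $M(\alpha,k)$ and that of $M(\tau_k(\alpha))$. The starting point is a combinatorial bijection on composition factors: in degree $m \geq n$, the simples appearing in $M(\alpha,k)_m$ are indexed by the $\binom{m-n+k}{k}$ ways to add $m-n$ boxes to the top $k+1$ positions of $\alpha$, and these are in natural bijection with the simples appearing in $M(\tau_k(\alpha))_{m-n+k}$, which are indexed by the same box-addition data on the $k+1$ positions of $\tau_k(\alpha)$. Explicitly, the bijection sends a simple indexed by a composition $\beta$ of $\tau_k(\alpha)$-type to the simple $\eta_{n-k}(\alpha) + \beta$ or $\eta_{n-k}(\alpha) \lhd \beta$, with the operator chosen by the split/non-split rule described just before the statement.

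Next, I would verify that this bijection intertwines both the $H$-action and the $\cH$-transition maps. The key observation, highlighted after Example \ref{quotient1}, is that the $H_m(0)$-action on a basis element of $M(\alpha)_m$ amounts to shifting added boxes to lower positions. Inside $M(\alpha,k)$, a basis element survives the quotient only when all of its added boxes lie in the top $k+1$ positions, so the generators $\pi_1, \dots, \pi_{n-k-1}$ act trivially on such elements, while $\pi_{n-k}, \dots, \pi_{m-1}$ act by precisely the shifting rule that governs the $H_{m-n+k}(0)$-action on $M(\tau_k(\alpha))_{m-n+k}$ after re-indexing $\pi_i \mapsto \pi_{i-(n-k)}$. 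Similarly, the $\cH$-transition $M(\alpha,k)_m \to M(\alpha,k)_{m+1}$ inserts a new strand, and only insertions into one of the top $k+1$ positions contribute nontrivially modulo $A_{\alpha,k}$; these match the corresponding transitions for $M(\tau_k(\alpha))$ under the bijection.

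Once the intertwining is in place, the map sending a submodule $V \subset M(\alpha,k)$ to the submodule of $M(\tau_k(\alpha))$ generated by the preimages (under the simple bijection) of the composition factors of $V$ is an inclusion-preserving bijection, with a symmetric inverse. The main obstacle will be handling the split versus non-split cases for $\eta_{n-k}(\alpha)$ uniformly: one must check that the ``add'' versus ``join'' convention used to reconstruct a simple of $M(\alpha,k)$ from one of $M(\tau_k(\alpha))$ commutes with the action of $\pi_{n-k}$, which is the generator most sensitive to what occurs at the boundary between the frozen head $\eta_{n-k}(\alpha)$ and the active tail $\tau_k(\alpha)$. Once this boundary bookkeeping is checked, the bijection on submodules follows.
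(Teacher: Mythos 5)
Your proposal is essentially the paper's proof: the paper builds the same correspondence, but phrases the bijection diagrammatically---prepending $|\alpha|-k$ vertical strands to (and restricting them off of) the string-diagram basis elements of $M(\tau_k(\alpha))$---which makes the intertwining with the $H_d(0)$-action (embedded via $\pi_i \mapsto \pi_{|\alpha|-k+i}$) and with the transition maps immediate, and sidesteps the split/non-split boundary bookkeeping you flag. Both arguments then conclude in the same way, by observing that submodules are determined by the basis elements/simples generating them up to this action, so the bijection on simples extends to a bijection on submodules.
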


\begin{proof}
Simples are indexed by basis elements as we have seen above. We will give a bijection between basis elements that respects the $\cH$-module structure. This bijection then extends to a bijection of submodules, as every submodule can be described by the basis elements that generate it as an $\cH$-module.

Given a basis element of $M(\tau_k(\alpha))_d$, to get the corresponding basis element of $M(\alpha,k)_{d+|\alpha|-k}$ we prepend $|\alpha|-k$ vertices to the top and bottom row of the string diagram corresponding to the basis element and connect them directly downwards. By construction this will be nonzero in the quotient so our map has a nonzero image in $M(\alpha,k)_{d+|\alpha|-k}$. Furthermore, each basis element has a unique image and this map respects the action of $H_{d}(0)$ where we embed $H_d(0) \injects H_{d+|\alpha|-k}(0)$ by $\pi_i \mapsto \pi_{|\alpha|-k+i}$. In particular, we see that if the basis vectors corresponding to simples $\{\bC_{\beta^i}\}$ generate a submodule of $M(\tau_k(\alpha))$, the basis vectors corresponding to the simples $\{\bC_{\gamma^i}\}$ where we prepend the missing part of $\alpha$, i.e. $\eta_{|\alpha|-k}(\alpha)$, to $\beta^i$ will generate a corresponding submodule of $M(\alpha,k)$.

The inverse map is the restriction map that ignores the first $|\alpha|-k$ fixed vertical edges that must occur in any basis element of the quotient module $M(\alpha,k)$. This map is also an injection because by construction every basis element that occurs in any degree of $M(\alpha,k)$ has $|\alpha|-k$ fixed vertical edges, so the only new information is what occurs within the final $d$ pairs of vertices. Notice, this also respects the $H_d(0)$ action.

Finally, by construction the only way to generate new basis elements in $M(\alpha,k)$ from a given element is via the $H_d(0)$ action on the final $d$ vertices. As a result, generating sets for submodules are completely determined up to this action. Since the above bijection respects this $H_d(0)$ action, it gives us a bijection of submodules.
\end{proof}

The above shows that when dealing with proofs about polynomial degree of submodules it suffices to work with the $M(\alpha)$. It is important to note that this does not show that $M(\tau_k(\alpha))$ and $M(\alpha,k)$ are isomorphic as $\cH$-modules, this is very much not the case. It just shows that the simples that occur in submodules of $M(\alpha,k)$ are controlled by $M(\tau_k(\alpha))$ and hence so is the dimension. To understand this Lemma further, look back at Example \ref{quotient1}.

We now prove an important theorem about quotients of $M(\alpha)$.

\begin{theorem} \label{quotients}
Let $\alpha$ be a composition with $|\alpha| = n$. Then any proper quotient of $M(\alpha,k)$ has polynomial degree $< k$. Equivalently, any proper submodule of $M(\alpha,k)$ has Hilbert polynomial of the same degree with the same leading term as $M(\alpha,k)$.
\end{theorem}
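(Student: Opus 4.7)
By Lemma~\ref{polySimples}, the bijection between the submodule lattices of $M(\alpha,k)$ and $M(\tau_k(\alpha))$ matches Hilbert functions up to a uniform degree shift of $|\alpha|-k$, which preserves the degree and leading coefficient of any Hilbert polynomial. Hence it suffices to prove the statement in the special case $|\alpha|=k$: writing $\beta=\tau_k(\alpha)$, every proper quotient of $M(\beta)$ has polynomial degree strictly less than $k$. Equivalently, every nonzero $\cH$-submodule $N\subseteq M(\beta)$ has Hilbert polynomial with leading term $m^k/k!$, matching that of $\dim M(\beta)_m=\binom{m}{k}$.

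Let $n$ be the smallest degree in which $N$ is nonzero. If $n=k$, then $N_k$ is a nonzero subspace of the one-dimensional space $M(\beta)_k=\bC_\beta$, so $N$ contains $v_\beta$; since $v_\beta$ generates $M(\beta)$ as an $\cH$-module, $N=M(\beta)$ and there is nothing to prove. Assume $n>k$. The finite-dimensional $H_n(0)$-module $N_n$ contains a simple submodule $\bC_\delta$ for some composition $\delta\vDash n$. Let $W\coloneqq\langle\bC_\delta\rangle\subseteq N$. Since $W\subseteq N\subseteq M(\beta)$, the upper bound $\dim W_m\leq\binom{m}{k}$ is automatic, so it suffices to establish the matching lower bound $\dim W_m\geq m^k/k!+O(m^{k-1})$.

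The universal property of the induced module yields a surjection $M(\delta)\twoheadrightarrow W$ sending the canonical generator to a fixed generator $v_\delta$ of $\bC_\delta$. Factoring every $f\in\hom_\cH([n],[m])$ as $f=g\,T_w$ with $g$ order-preserving and $T_w\in H_n(0)$, and using that $T_w$ acts on the one-dimensional module $\bC_\delta$ by a scalar in $\{0,1\}$, we see that $W_m$ is spanned by the images $g_*(v_\delta)\in M(\beta)_m$ as $g$ ranges over order-preserving injections $[n]\to[m]$. My plan is then to single out the subfamily of those $g$ that fix the first $n-k$ positions of $[n]$ and vary only in where they send the last $k$: there are exactly $\binom{m-n+k}{k}$ such $g$, matching precisely the Hilbert function of $M(\beta,k)$ produced by the proposition preceding this theorem, and each such $g$ picks out a distinct basis vector $g_*(v_\delta)$ whose support in the standard basis of $M(\beta)_m$ is disjoint from the supports of the others. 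Combined with the upper bound above, this will force the leading term of $\dim W_m$ to be $m^k/k!$.

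The main obstacle will be establishing the linear independence of the distinguished family $\{g_*(v_\delta)\}_g$ in $M(\beta)_m$: because $v_\delta$ is a (potentially complicated) linear combination of basis tensors $h\otimes v_\beta$ in $M(\beta)_n$, its images under different $g$ could in principle overlap. I intend to handle this by exploiting that $\bC_\delta$ is a simple \emph{sub}module---not merely a subquotient---of $M(\beta)_n$, which forces $v_\delta$ to be a specific vector annihilated or fixed by each $\pi_i$ according to $D(\delta)$, and to track the resulting support constraints through the explicit $g$-action using the skein relation $\pi_i^2=\pi_i$ together with the annihilator computation of Theorem~\ref{criterion}. This support analysis should make the distinguished images into an ``upper triangular'' family with respect to a suitable ordering of basis vectors of $M(\beta)_m$ induced by Lemma~\ref{polySimples}'s bijection, forcing linear independence.
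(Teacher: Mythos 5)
Your reduction via Lemma \ref{polySimples} and your counting strategy (produce $\binom{m-n+k}{k}$ linearly independent images of a single lowest-degree vector under the order-preserving injections that fix the first $n-k$ positions, then sandwich against $\dim M(\beta)_m=\binom{m}{k}$) is exactly the skeleton of the paper's argument. But the step that carries the whole theorem --- linear independence of the family $\{g_*(v_\delta)\}$ --- is not proved. You first assert disjoint supports, which is false in general: two different $g$ in your family agree on every basis tensor $h\otimes v_\beta$ whose image lies in the fixed zone $\{1,\dots,n-k\}$, so supports can overlap or even coincide. You then replace that claim by a plan (``support constraints coming from simplicity should yield an upper-triangular family'') without carrying it out. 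This is a genuine gap, not a routine verification: for a general vector the claim simply fails --- in the degree-one polynomial model of $M((1))$, the vector $x_1\in M((1))_2$ has all of its images under your family of injections equal to $x_1$ --- so everything hinges on showing that a generator of a simple $H_n(0)$-submodule has its support positioned so that the chosen injections separate the images, and that is precisely the argument you have deferred.

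The paper avoids analyzing $v_\delta$ altogether: starting from an arbitrary nonzero vector $v$ in the lowest nonzero degree, it first applies the $0$-Hecke action (the moves of relation \eqref{rel1}) to arrange that the lex-largest basis element $u$ of that degree --- the order-preserving injection landing entirely in the last $k$ positions --- appears in $v$ with nonzero coefficient; it then chooses the same family of injections you do and gets linear independence by a leading-term argument, using that order-preserving maps preserve the lexicographic order (Lemma \ref{lexPreserve}) and that the maximal image of $u$ occurs as a summand of exactly one member of the family. To repair your proof you would either have to prove the analogous support statement for $v_\delta$ (essentially: after acting by $H_n(0)$, its support may be assumed to contain the top basis element), or drop the simple-submodule device and massage an arbitrary vector as the paper does. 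A smaller omission: to convert the two dimension bounds into a statement about polynomial degree you should also invoke finite generation and noetherianity (Theorem \ref{HNoeth}) so that the relevant Hilbert functions are eventually polynomial.
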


Lemma \ref{basis} allows us to place a total order on the basis elements of $M(\alpha)$ for any $\alpha$. In particular, we say that $g_1 \otimes v_\alpha \geq g_2 \otimes v_\alpha$ if $g_1$ has image in $[m]$ while $g_2$ has image in $[n]$ with $m > n$ or if $g_1$ and $g_2$ both have image in $[n]$ and $(g_1(1), g_1(2),\dots, g_1(n)) \geq (g_2(1),g_2(2),\dots, g_2(n))$ in the lexicographic order. It is not hard to see this is indeed a total order.

\begin{lemma} \label{lexPreserve}
Order preserving maps preserve the lexicographic order on basis elements of the same degree.
\end{lemma}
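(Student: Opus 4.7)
The plan is to directly unpack what ``preserving lexicographic order'' means in terms of the explicit description of the basis in Lemma~\ref{basis} and then reduce to a trivial coordinate comparison using the fact that an order preserving injection is strictly monotone.

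First I would fix an order preserving map $h \in \hom_{\cH}([m],[\ell])$ and two basis elements $g_1 \otimes v_\alpha, g_2 \otimes v_\alpha \in M(\alpha)_m$, where $g_1, g_2 \colon [n] \hookrightarrow [m]$ are order preserving injections. Using Proposition~\ref{OIsub}, composition with $h$ (which lies in the subcategory ${\bf OI}\subset \cH$) does not introduce crossings and behaves as ordinary function composition on vertices, so $h \cdot (g_i \otimes v_\alpha) = (h \circ g_i) \otimes v_\alpha$, and each $h \circ g_i$ is again an order preserving injection $[n] \hookrightarrow [\ell]$. In particular, $h$ sends basis elements to basis elements of the same degree $\ell$ on both sides of the inequality, so the first clause of the total order (comparing codomains) plays no role.

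Next I would reduce to the lexicographic comparison on tuples. Assume without loss of generality $g_1 \otimes v_\alpha < g_2 \otimes v_\alpha$, so there is a smallest index $j$ with $g_1(i) = g_2(i)$ for $i < j$ and $g_1(j) < g_2(j)$. Because $h$ is an order preserving injection, it is a strictly increasing function $[m]\to[\ell]$; therefore $h(g_1(i)) = h(g_2(i))$ for $i < j$ and $h(g_1(j)) < h(g_2(j))$. This is exactly the statement that $(h \circ g_1)(1), \ldots, (h \circ g_1)(n)$ is lexicographically smaller than $(h \circ g_2)(1), \ldots, (h \circ g_2)(n)$, giving $(h\circ g_1) \otimes v_\alpha < (h \circ g_2) \otimes v_\alpha$, as required.

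There is essentially no obstacle here; the only point requiring care is that one truly has $h \cdot (g \otimes v_\alpha) = (h \circ g) \otimes v_\alpha$ in $M(\alpha)$, i.e.\ that no $H_{|\alpha|}(0)$ factor gets peeled off through the tensor. This is guaranteed precisely because $h$ is order preserving, so $h \circ g$ is already order preserving and thus is itself the unique factorization as (order preserving injection)$\,\cdot\,$(element of $H_n(0)$) with trivial Hecke factor, matching the normal form used in Lemma~\ref{basis}.
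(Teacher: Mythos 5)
Your proof is correct and follows essentially the same route as the paper's: act by composition in the first tensor factor, note that the composite $h \circ g_i$ is again an order preserving injection so basis elements map to basis elements, and conclude by comparing tuples lexicographically using strict monotonicity of $h$. Your extra remark about why no $H_n(0)$ factor gets peeled off through the tensor is a nice explicit justification of a step the paper leaves implicit.
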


\begin{proof}
Given two basis elements $g_1 \otimes v_\alpha \geq g_2 \otimes v_\alpha$ with $g_1,g_2 \in M(n)_d$ order preserving and another order preserving map $f \in \hom_\cH([d],[m])$ we know that $f$ acts on these basis elements by composition in the first component and we need to show $f (g_1 \otimes v_\alpha) \geq f (g_2 \otimes v_\alpha)$. Equivalently we need to show,
\[
(f g_1) \otimes v_\alpha \geq (f g_2) \otimes v_\alpha.
\]
$f g_1$ and $f g_2$ both inject into $[m]$ so we have to show 
\[
(fg_1(1), fg_1(2),\dots, fg_1(n)) \geq (fg_2(1),fg_2(2),\dots, fg_2(n)).
\]
This is clear because $f$ is order preserving.
\end{proof}

Every element we are concerned with will be a sum of basis elements in the same degree, because we consider degrees separately. The above result implies that order preserving injections between degrees respect the total order on each degree. With these results, we are ready to prove our Theorem,

\begin{proof}[Proof of Theorem \ref{quotients}]
It suffices to prove this result for $M(\alpha) = M(\alpha,n)$ as $\alpha$ varies by Lemma \ref{polySimples}. Let $V$ be a proper submodule of $M(\alpha)$ in ${\rm Mod}_\cH$. Suppose that $d > n$ is the first degree where $V_d \not= 0$. Then because $V$ is a submodule of $M(\alpha)$ we know $V_d$ will contain a vector $v$ with the following vector as a nonzero summand,
\[
\begin{tikzcd}
\bullet \ar[drrr]& \bullet \ar[drrr] & \cdots &\bullet \ar[drrr]& \bullet \ar[drrr]&&&\\
\bullet&\cdots &\bullet &\bullet& \bullet& \cdots &\bullet &\bullet
\end{tikzcd}
\otimes v_\alpha
\]
where $v_\alpha$ is a vector spanning $\bC_\alpha$ and there are $n$ vertices on top, $d$ on the bottom and the first $d-n$ vertices are not mapped to. We will call the above vector $u$. This vector will appear because we can apply the moves in \eqref{rel1} to obtain such a $v$ having $u$ as a nonzero summand. In degree $d$, $u = f \otimes v_\alpha$ is the largest element with respect to the lex order on basis elements.

If we consider all the order preserving maps that factor through $f$ in later degrees they will be linearly independent because we can see they are basis elements as described in Lemma \ref{basis}. Applying order preserving injections preserves the lex order as seen in Lemma \ref{lexPreserve}, so in any order preserving injection of $v$ the image of $u$ will be the largest vector that appears as a summand. 

Consider a collection of order preserving injections of $v$ that inject $u$ to unique vectors (e.g. we want the final $n$ vertices of the $d$ total vertices to have unique images under each map and we want the first $d-n$ vertices to map directly downward so there are no repetitions in the mapping of $u$). We claim the images of $v$ under these order preserving injections are linearly independent. If there was some nontrivial linear combination of these order preserving injections of $v$, consider the images of $u$. The coefficient of the largest image of $v$, i.e. the one that contains the largest image of $u$ with respect to the lexicographic order, must be zero because the image of $u$ will be the largest possible vector in that degree with respect to the lexicographic order and no other selected order preserving injection of $v$ contains that vector as a summand by construction. We then continue in that fashion to find that all the coefficients must be zero. 

This means that every order preserving injection we have selected of the vector $v$ will be linearly independent. If we map to degree $m \geq d \geq n$ there are $\binom{m-d+n}{n}$ such injections. Each will be given by selecting $n$ vertices from the final $m-d+n$ vertices in degree $m$. So if we consider the submodule generated by the vector $v$, it gives a lower bound on the dimension of the original submodule $V$ in each degree. We know $\dim(M(\alpha)_m)$ gives an upper bound on the dimension of $V_m$. All of these submodules have eventually polynomial growth because they are finitely generated by Noetherianity (Theorem \ref{HNoeth}). In total we have shown that
\[
\binom{m}{n} \geq \dim(V_m) \geq  \binom{m-d+n}{n},
\]
and $\dim(V_m)$ is eventually polynomial. All of this implies that $\dim(V_m)$ must eventually equal a polynomial of degree $n$ with leading coefficient $\frac{1}{n!}$. As a result, the quotient module $M(\alpha)/V$ has degree $< n$ because the leading terms will cancel.
%
%
%
%
%
%
\end{proof}

\section{Grothendieck Group} \label{GrothendieckGroupSec}
Let $\cH_{\leq d} \coloneqq ({\rm Mod}_{\cH})_{\leq d}$ the $\cH$-modules of polynomial degree $\leq d$. Consider the successive Serre quotient category where we consider every polynomial degree $d$ module modulo polynomial degree $< d$ objects, we will denote this category of polynomial degree $d$ objects modulo degree $\leq d-1$ objects by $\cH_{d} \coloneqq \cH_{\leq d}/ \cH_{\leq d-1}$. It turns out that the $M(\alpha,k)$ are simple in the quotient by lower polynomial degree submodules. 

\begin{theorem} \label{smallest}
Any nonzero polynomial degree $k$ quotient of $M(\alpha)$ must contain $M(\alpha,k)$, i.e. $M(\alpha,k)$ is the smallest nonzero polynomial degree $k$ quotient of $M(\alpha)$.
\end{theorem}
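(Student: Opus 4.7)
The plan is to derive a contradiction from the assumption that $V \not\subseteq A_{\alpha,k}$, by a leading-term argument analogous to the proof of Theorem \ref{quotients} applied to the image of $V$ inside $M(\alpha,k)$. Let $\pi : M(\alpha) \twoheadrightarrow M(\alpha,k) = M(\alpha)/A_{\alpha,k}$ be the quotient map. Under our assumption, $\pi(V) = (V + A_{\alpha,k})/A_{\alpha,k}$ is a nonzero submodule of $M(\alpha,k)$, so in particular $\pi(V)$ has polynomial degree at most $k$; the goal is to contradict this upper bound by producing polynomial growth in $\pi(V)$ of degree strictly greater than $k$. The case $k = n$ is trivial since then $A_{\alpha,k} = 0$ and Theorem \ref{quotients} already forces $V = 0$, so assume $k < n$.

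First I would observe that since $Q = M(\alpha)/V$ is a nonzero cyclic module generated by the image of $v_\alpha$ in degree $n = |\alpha|$, we must have $V_n = 0$; combined with the fact that $A_{\alpha,k}$ is generated in degree $n+1$ (so $A_{\alpha,k,j} = 0$ for all $j \leq n$), the smallest degree $m$ at which $V_m$ fails to lie in $A_{\alpha,k,m}$ satisfies $m \geq n+1$. Fix such an $m$, pick any $v \in V_m$ with $\pi(v) \neq 0$, expand it in the order-preserving basis of $M(\alpha)_m$ from Lemma \ref{basis}, and let $u = h \otimes v_\alpha$ be the lexicographically largest basis summand appearing in $v$ with nonzero coefficient satisfying $\pi(u) \neq 0$ (i.e.\ $u$ is a ``surviving'' basis element corresponding to adding all boxes in the top $k+1$ positions of $\alpha$).

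The remainder of the argument mimics Theorem \ref{quotients}. For each order-preserving injection $f : [m] \to [m']$ in a carefully chosen family---concretely, those that fix the first $m-n$ vertices of $[m]$ and send the last $n$ vertices to any $n$-element subset of the last $m'-m+n$ vertices of $[m']$---Lemma \ref{lexPreserve} guarantees that $f \cdot u$ is the lex-leading summand of $f \cdot v$, so the vectors $\{\pi(f \cdot v)\}_f$ are linearly independent in $\pi(V)_{m'}$. Counting these injections yields $\dim \pi(V)_{m'} \geq \binom{m'-m+n}{n}$, a polynomial of degree $n$ in $m'$; since $n > k$, this contradicts $\pi(V) \subseteq M(\alpha,k)$ having polynomial degree at most $k$. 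The main technical obstacle is the combinatorial verification that this family of $f$'s does preserve the ``surviving'' status of $u$---that is, that $\pi(f \cdot u) \neq 0$ for each chosen $f$---which amounts to checking that the extra skips introduced by $f$ all correspond to adding boxes in the top $k+1$ positions of $\alpha$ rather than the forbidden bottom $n-k$ positions.
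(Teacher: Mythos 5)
Your strategy cannot be repaired, and the telltale sign is that it never uses the hypothesis that $M(\alpha)/V$ has polynomial degree exactly $k$ --- only that the quotient is nonzero --- while the statement is false without that hypothesis: $M(\alpha,k-1)$ (or $\bC_\alpha$ when $k=0$) is a nonzero quotient of $M(\alpha)$ whose kernel is not contained in $A_{\alpha,k}$, and your argument as written would apply to it verbatim. Concretely, the contradiction you aim for is unreachable: $\pi(V)$ sits inside $M(\alpha,k)$, whose degree-$m'$ component has dimension $\binom{m'-n+k}{k}$, so no family of $\binom{m'-m+n}{n}$ vectors in $\pi(V)_{m'}$ can be linearly independent; the purported family must degenerate, and it does. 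The engine of Theorem \ref{quotients} is that $u$ is taken to be the lex-top basis element, all $n$ of whose strands land among the last $n$ of the $m$ vertices, so the $\binom{m'-m+n}{n}$ chosen injections carry $u$ to pairwise distinct leading terms. Your $u$ is instead forced to be surviving, i.e. $h(i)=i$ for $i\le n-k$, and then your family of $f$'s fails in one of two ways. If $m-n<n-k$ (e.g. $m=n+1$ with $k<n-1$), most $f$'s move some vertex $i\le n-k$, so $f\cdot u$ is a non-surviving basis element and $\pi(f\cdot u)=0$: the ``technical obstacle'' you flag at the end is not a verification left to do but a false assertion. If instead $m-n\ge n-k$, every $f$ fixes $1,\dots,n-k$, so $f\circ h$ can only vary in its last $k$ coordinates and distinct $f$'s produce at most $\binom{m'-m+n}{k}$ distinct leading terms; the lex argument then certifies only degree-$k$ growth of $\pi(V)$, which is no contradiction at all.

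A correct proof must make the degree-$k$ assumption on $M(\alpha)/V$ do real work, and the paper's does so by bounding the quotient from above rather than the image of $V$ from below. If $A_{\alpha,k}\subsetneq V$, then $V/A_{\alpha,k}$ is a nonzero submodule of $M(\alpha,k)$ with degree-$k$ quotient, contradicting Theorem \ref{quotients}. If $V$ and $A_{\alpha,k}$ are incomparable, pick a simple $b\in V$ not lying in $A_{\alpha,k}$; its diagram adds no boxes to the bottom $|\alpha|-k$ positions, and after shifting boxes down one may assume all of them sit in position $|\alpha|-k+1$. A direct count of the simples surviving in $M(\alpha)/\langle b\rangle$ shows its Hilbert function is eventually bounded by a polynomial of degree $k-1$, hence so is that of $M(\alpha)/V$, contradicting the assumed polynomial degree $k$. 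If you want to keep a leading-term flavor, it has to be run to bound $M(\alpha)/\langle b\rangle$ above, not to bound $\pi(V)$ below inside $M(\alpha,k)$.
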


\begin{proof}
For any $\alpha$ and $k$, this is equivalent to saying that if $M(\alpha,k) = M(\alpha)/A_{\alpha,k}$ that any other quotient $M(\alpha)/B$ of polynomial degree $k$ has $B \subseteq A_{\alpha,k}$.

If $k = -1$, $M(\alpha,-1) = \bC_\alpha$ which is clearly the smallest polynomial degree $-1$ nonzero quotient of $M(\alpha)$ as any further quotient would have to be identically zero. 

Assume $k \geq 0$. Suppose there were a submodule $B$ with $A_{\alpha,k} \subset B$ such that $M(\alpha)/B$ had polynomial degree $k$. This implies the existence of a nonzero submodule of $M(\alpha,k)$ such that the quotient still has polynomial degree $k$ which is impossible by Theorem \ref{quotients}, as this shows the polynomial degree must drop to $k-1$.

The other option is that $A_{\alpha,k}$ and $B$ are not comparable in the containment order. Suppose this is the case with $k \geq 0$, and that $M(\alpha)/B$ has polynomial degree $k$. 

Let $b$ span a simple in $B$ that is not contained in $A_{\alpha,k}$. For $b$ to not be in $A_{\alpha,k}$, the corresponding simple module must not have a box added to the first $|\alpha|-k$ positions of $\alpha$. This is because by construction $A_{\alpha,k}$ contains every such simple module. Suppose $b$ is in $\cH$-degree $\ell$. Without loss we can assume we add all $\ell$ boxes to the $(|\alpha|-k+1)$-st position since the submodule generated by any other choice of simple contains this one because we can always shift boxes down. 

In the quotient of $M(\alpha)$ by the submodule generated by $b$, the remaining simples correspond to diagrams where we add no more than $\ell-1$ boxes to the first $|\alpha|-k+1$ positions of $\alpha$. The number of ways to place $\ell-1$ boxes in the first $|\alpha|-k+1$ positions of $\alpha$ is $\binom{|\alpha|-k+1+\ell-2}{\ell-2}$ which is a constant. There are a total of $|\alpha|+1$ positions we could add boxes to, and once we fix a way to place at most $\ell-1$ boxes in the first $|\alpha|-k+1$ positions, we can add the remaining boxes in any way to the remaining $k$ positions. In degree $d$, we could choose to add at most $d-|\alpha|$ boxes to the top $k$ positions, there are at most $\binom{d-|\alpha|+k-1}{k-1}$ ways to do this. The quotient then has dimension $\leq \binom{|\alpha|-k+1+\ell-2}{\ell-2} \binom{d-|\alpha|+k-1}{k-1}$, this is a degree $k-1$ polynomial in $d$. $B$ clearly contains the submodule generated by $b$ and so this implies the quotient $M(\alpha)/B$ has smaller polynomial degree and hence polynomial degree $\leq k-1$ which is a contradiction. \qedhere
\end{proof}

%

\begin{lemma} \label{simple}
The module $M(\alpha,k)$ is finitely generated and simple in $\cH_{k}$. Furthermore, every simple module is of this form.
\end{lemma}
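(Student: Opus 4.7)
The plan is to address the three parts of the lemma in sequence, each reducing to the earlier results.

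Finite generation is immediate: $M(\alpha,k)$ is a quotient of $M(\alpha)$, and $M(\alpha)$ is the cyclic $\cH$-module generated by $1 \otimes v_\alpha$ in degree $|\alpha|$; hence $M(\alpha,k)$ is cyclic, in particular finitely generated.

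For simplicity of $M(\alpha,k)$ in the Serre quotient $\cH_k = \cH_{\leq k}/\cH_{\leq k-1}$, recall that $X \in \cH_{\leq k}$ is simple in $\cH_k$ precisely when every submodule $N \subseteq X$ satisfies $N \in \cH_{\leq k-1}$ or $X/N \in \cH_{\leq k-1}$. Taking $X = M(\alpha,k)$ and $N$ a proper submodule, $X/N$ is a proper quotient of $M(\alpha,k)$, so by Theorem \ref{quotients} it has polynomial degree strictly less than $k$ and therefore lies in $\cH_{\leq k-1}$. This yields simplicity directly.

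For the converse classification, let $S$ be simple in $\cH_k$ and pick a representative $V \in \cH_{\leq k}$. By Theorem \ref{HNoeth}, $V$ is finitely generated; since the image $\pi(V) = S$ is nonzero, $V$ has polynomial degree exactly $k$. The strategy is to produce a finite filtration of $V$ whose successive quotients are quotients of induced modules $M(\alpha)$, and then extract the layer realizing $S$. Fix a surjection $\phi \colon P := \bigoplus_i M(n_i) \twoheadrightarrow V$ witnessing finite generation. Each $M(n)$ itself admits a finite filtration with successive quotients of the form $M(\alpha)$, $\alpha \vDash n$: the proof of Lemma \ref{basis} exhibits $\bk[\hom_{\cH}([n],[m])]$ as a disjoint union of free right $H_n(0)$-orbits indexed by order-preserving injections $[n] \hookrightarrow [m]$, hence as a free right $H_n(0)$-module of rank $\binom{m}{n}$. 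Therefore the induction functor from $H_n(0)$-modules to $\cH$-modules is exact, and applying it to a Jordan--H\"older series of $\bk[H_n(0)]$ produces a filtration of $M(n)$ with successive quotients $M(\alpha)$. Concatenating over the summands of $P$ yields a filtration $0 = P_0 \subset \cdots \subset P_s = P$ with $P_j/P_{j-1} \cong M(\alpha_j)$, and pushing through $\phi$ gives a filtration $0 = V_0 \subseteq \cdots \subseteq V_s = V$ with each $V_j/V_{j-1}$ a quotient of $M(\alpha_j)$.

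Applying the Serre quotient functor $\pi \colon \cH_{\leq k} \to \cH_k$ and using simplicity of $\pi(V) = S$, there is a unique index $j^*$ with $\pi(V_{j^*-1}) = 0$ and $\pi(V_{j^*}) = S$, so $\pi(V_{j^*}/V_{j^*-1}) = S$. In particular $V_{j^*}/V_{j^*-1}$ has polynomial degree $k$, which forces $|\alpha_{j^*}| \geq k$, and Theorem \ref{smallest} then supplies a further surjection $V_{j^*}/V_{j^*-1} \twoheadrightarrow M(\alpha_{j^*},k)$. Applying $\pi$ yields a surjection $S \twoheadrightarrow M(\alpha_{j^*},k)$ onto a nonzero simple, which by simplicity of $S$ must be an isomorphism, establishing $S \cong M(\alpha_{j^*},k)$ in $\cH_k$. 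The one auxiliary step that requires explicit checking is the exactness of induction, which reduces to the free-orbit decomposition in Lemma \ref{basis}; with that in hand the rest of the argument is a direct assembly of Theorems \ref{quotients} and \ref{smallest} with the structure of the Serre quotient.
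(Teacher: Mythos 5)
Your proof is correct. The first two assertions are handled essentially as in the paper: finite generation is immediate from $M(\alpha,k)$ being a quotient of the cyclic module $M(\alpha)$, and simplicity in $\cH_k$ is exactly the paper's application of Theorem \ref{quotients} (for the subobject $N=0$ your ``proper quotient'' phrasing does not literally apply, but then $N\in\cH_{\leq k-1}$ trivially, so the Serre-quotient criterion still holds).

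For the classification of simples you take a genuinely different route. The paper lifts a simple $L$ of $\cH_k$ and simply asserts that one ``may assume'' the lift is a quotient of a single $M(\beta)$ before invoking Theorem \ref{smallest}; you replace that assertion with an explicit filtration argument: surject onto a finitely generated lift from a finite sum of principal projectives $M(n_i)$, filter each $M(n)$ with layers $M(\alpha)$ by applying the induction functor to a composition series of the regular $H_n(0)$-module, push the filtration forward, and use exactness of localization plus simplicity of $S$ to isolate the unique layer carrying $S$, which is then a polynomial degree $k$ quotient of some $M(\alpha_{j^*})$; Theorem \ref{smallest}, read as in its proof (the kernel is contained in $A_{\alpha,k}$, so the quotient surjects onto $M(\alpha,k)$), finishes the argument. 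What this buys is a justification of the paper's reduction step---a general finitely generated module is a quotient of a sum of $M(n_i)$'s, not obviously of a single $M(\beta)$---and it stays self-contained, avoiding Lemma \ref{finite length} and Theorem \ref{decomp}, which come later and depend on this lemma. The price is the auxiliary claim that $\bk[\hom_{\cH}([n],[m])]$ is free as a right $H_n(0)$-module on the order-preserving injections: this is true, but it is a bit more than Lemma \ref{basis} literally records (its proof gives existence of the factorization $f=gT_w$; uniqueness, equivalently linear independence, needs the dimension count behind Corollary \ref{dim} or the identification of $\hom_{\cH}([n],[m])$ with $H_m(0)/H_{m-n}(0)$), so you should state and verify it explicitly. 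One small attribution slip: finite generation of the lift is not a consequence of Theorem \ref{HNoeth}; it comes from the convention that objects of $\cH_{\leq k}$ are finitely generated, since polynomial degree is only defined for such modules.
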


\begin{proof}
Finite generation is inherited in the quotient category. To see that $M(\alpha,k)$ is simple, Theorem \ref{quotients} implies that if $V$ is any nonzero submodule it must have the same polynomial degree and leading coefficient as $M(\alpha,k)$ and so the quotient has smaller degree. This implies that $M(\alpha,k) = V$ in the quotient so $M(\alpha,k)$ is simple.

For the final statement, let $L \in \cH_{k}$ be simple. Denote by $\tilde L$ a lift of $L$ to an object in ${\rm Mod}_{\cH}$. We may assume that $\tilde{L}$ is a quotient of $M(\beta)$ for some choice of $\beta$. Any simple quotient of $M(\beta)$ must be isomorphic to $M(\beta,k)$ for some $k$ in $\cH_{k}$ after applying the localization functor. Indeed, Theorem \ref{smallest} shows $M(\beta,k)$ is the smallest degree $k$ quotient of $M(\beta)$, so $M(\beta,k)$ is a submodule of the quotient, but because the quotient is simple they must be equal.
\end{proof}

We define ${\rm Mod}_K = \bigoplus_{k \geq 0} \cH_{k}$. That is, we consider all polynomial degree $d$ objects modulo lower degree objects.

\begin{lemma} \label{finite length}
Every object of ${\rm Mod}_{K}$ has finite length.
\end{lemma}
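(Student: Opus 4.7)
\textbf{Proof Plan for Lemma \ref{finite length}.} My plan is to attach to each object of $\cH_{\leq k}$ a non-negative integer invariant that is additive on short exact sequences and strictly positive on objects nonzero in the Serre quotient $\cH_k$. Once this invariant is in hand, finite length in $\cH_k$ follows because any strict chain of subobjects must increase the invariant by at least $1$ at each step, so the chain length is bounded above by the invariant of the whole object. Noetherianity (Theorem \ref{HNoeth}) gives us eventual polynomiality of the Hilbert function on any finitely generated $\cH$-module, which is what lets the invariant be defined at all.

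Concretely, for $V \in \cH_{\leq k}$, write its Hilbert function as $\dim V_n = p_V(n)$ for $n \gg 0$, expand in the binomial basis
\[
p_V(n) = \sum_{i=0}^{k} c_i(V) \binom{n}{i},
\]
and set $L_k(V) := c_k(V)$. The coefficients $c_i(V)$ are integers, by the standard fact that any polynomial taking integer values at all large integers is an integral combination of binomial coefficients. Eventual non-negativity of $p_V$ forces $c_k(V) \geq 0$, and $c_k(V) > 0$ exactly when $\deg p_V = k$, i.e.\ exactly when $V$ is nonzero in $\cH_k$. Since $\dim$ is additive on short exact sequences, so is $L_k$. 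Moreover $L_k$ is well-defined on $\cH_k$: passing to a different representative in $\cH_{\leq k}$ changes $p_V$ by a polynomial of degree $< k$, leaving $c_k$ fixed.

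With this in place, the length bound is immediate. Given $V \in \cH_k$ and a strict chain $0 = V_0 \subsetneq V_1 \subsetneq \cdots \subsetneq V_r = V$, each successive quotient $V_i/V_{i-1}$ is nonzero in $\cH_k$, so has polynomial degree exactly $k$ and $L_k(V_i/V_{i-1}) \geq 1$. Additivity telescopes to
\[
L_k(V) = \sum_{i=1}^{r} L_k(V_i/V_{i-1}) \geq r,
\]
so $r \leq L_k(V)$; thus $V$ has length at most $L_k(V)$ in $\cH_k$. For a general object of ${\rm Mod}_K = \bigoplus_k \cH_k$, which by definition has finitely many nonzero components, we then conclude finite length by summing the lengths of each component.

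The step I expect to be the main obstacle is confirming integrality and strict positivity of $L_k$ with care: I need both that the Hilbert function really is eventually polynomial (which the paper has already implicitly used in defining polynomial degree, and which follows from Theorem \ref{HNoeth} plus finite generation) and that writing $p_V$ in the binomial basis yields integer coefficients, with $c_k$ strictly positive whenever $V$ is nonzero in the Serre quotient. Everything else is a routine application of additivity of dimension.
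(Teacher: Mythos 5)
Your argument is correct, but it takes a genuinely different route from the paper. The paper's proof is a one-line consequence of previously established structure: every finitely generated $V$ is a quotient of a finite direct sum of induced modules $M(\alpha)$, and Lemma \ref{simple} (which rests on Theorem \ref{quotients}) shows the images of the $M(\alpha)$ in the Serre quotient are simple, so $V$ is a quotient of a finite-length object and hence has finite length. You instead manufacture a length-bounding invariant $L_k(V)=c_k(V)$, the top coefficient of the Hilbert polynomial in the binomial basis $\binom{n}{i}$, and use integrality, eventual non-negativity, and additivity of dimension to bound any strict chain in $\cH_k$ by $L_k(V)$. This is more elementary and self-contained: it bypasses Lemma \ref{simple} and the padded modules entirely and yields an explicit numerical bound on length, whereas the paper's argument gives finiteness with no bound but, in exchange, feeds directly into the identification of the composition factors as $M(\alpha_i,d)$ in Theorem \ref{decomp}. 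Two points you gloss over and should make explicit: (i) objects here are implicitly finitely generated (as in the paper's definition of polynomial degree), and the submodules appearing in your chains are again finitely generated by Theorem \ref{HNoeth}, which is what makes eventual polynomiality of the Hilbert function available for them; (ii) you pass between chains of subobjects in the quotient category $\cH_k$ and polynomial degrees of subquotients, which needs the standard Serre-quotient fact that every subobject of the image of $V$ in $\cH_k$ is the image of an honest $\cH$-submodule of $V$, so the chain lifts and $L_k$ telescopes over genuine $\cH$-module subquotients. With those standard facts stated, your proof is complete and valid.
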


\begin{proof}
Every module $V$ is a quotient of a finite direct sum of $M(\alpha)$ and Lemma \ref{simple} implies that the images of the $M(\alpha)$ are simple and so clearly of finite length in ${\rm Mod}_{K}$, hence $V$ is finite length.
\end{proof}

\begin{theorem} \label{decomp}
Every $\cH$-module $V$ with polynomial degree $d$ has a finite filtration in $\cH_{d}$
\[
0 = F_1 \subset F_2 \subset \cdots \subset F_n = V,
\]
where $F_i/F_{i-1}$ is isomorphic to $M(\alpha_i,d)$.
\end{theorem}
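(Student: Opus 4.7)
The plan is to view $V$ as an object in the Serre quotient category $\cH_d = \cH_{\leq d}/\cH_{\leq d-1}$ via the localization functor, and then invoke finite length to extract a composition series. Since $V$ has polynomial degree exactly $d$, it is not in $\cH_{\leq d-1}$, so its image under the localization is a nonzero object in $\cH_d \subset {\rm Mod}_K$. Any finitely generated $\cH$-module is a quotient of a finite direct sum of $M(\alpha)$'s (by Noetherianity, Theorem \ref{HNoeth}, applied to a finite generating set), so this image is finitely generated in $\cH_d$.

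Next I would apply Lemma \ref{finite length}, which guarantees that every object of ${\rm Mod}_K$ has finite length. Applied to the image of $V$ in $\cH_d$, this produces a finite composition series
\[
0 = F_1 \subset F_2 \subset \cdots \subset F_n = V
\]
in $\cH_d$ in which each successive quotient $F_i/F_{i-1}$ is simple in $\cH_d$.

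Finally I would appeal to the classification of simples in Lemma \ref{simple}: every simple in $\cH_d$ is isomorphic to $M(\alpha, d)$ for some composition $\alpha$. Applying this to each $F_i/F_{i-1}$ yields the desired isomorphisms $F_i/F_{i-1} \cong M(\alpha_i, d)$.

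The main point requiring care is really bookkeeping rather than a serious obstacle: one should be sure that the successive quotients are $M(\alpha_i, d)$ with the correct polynomial degree $d$ (and not some $M(\alpha_i, k)$ for $k < d$), which is automatic because all simples in $\cH_d$ have polynomial degree exactly $d$ by construction of the Serre quotient. Everything else follows immediately from the results already established: Theorem \ref{quotients} (to show $M(\alpha,k)$ is simple in $\cH_k$), Theorem \ref{smallest} (to get every simple into $M(\alpha,k)$ form), Lemma \ref{simple} (the classification), and Lemma \ref{finite length} (finite length), so no new ingredients are needed.
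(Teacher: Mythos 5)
Your proposal is correct and follows essentially the same route as the paper: the paper likewise obtains a finite composition series from Lemma \ref{finite length}, observes that the simple subquotients must have polynomial degree exactly $d$ (since lower-degree objects vanish in the Serre quotient $\cH_d$), and then identifies them as $M(\alpha_i,d)$ via Lemma \ref{simple}. The extra bookkeeping you flag is exactly the point the paper handles with its remark that otherwise the composition series could be shortened, so no new ingredients are needed.
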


\begin{proof}
$V$ has a finite composition series from Lemma \ref{finite length}, because $V$ has polynomial degree $d$ we know the simples must as well otherwise we would be able to shorten the composition series. This implies that $F_i/F_{i-1}$ is isomorphic to a simple module of degree $d$, so a simple module in $\cH_{d}$. Lemma \ref{simple} implies each of the simples are isomorphic to $M(\alpha_i,d)$.
\end{proof}

\begin{theorem} \label{span}
The isomorphism classes, $[M(\alpha,k)]$ span $\cG({\rm Mod}_\cH)$.
\end{theorem}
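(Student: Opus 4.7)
The plan is to induct on the polynomial degree $d \geq -1$ of a finitely generated $\cH$-module $V$, using Theorem \ref{decomp} as the engine of the inductive step and Noetherianity (Theorem \ref{HNoeth}) to guarantee that every subquotient appearing in the argument remains finitely generated (so the inductive hypothesis keeps applying).

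For the base case $d = -1$, finite generation forces $V$ to be supported in finitely many $\cH$-degrees, and each $V_n$ is a finite-dimensional $H_n(0)$-module, so $V$ has finite total $\bk$-dimension. Then $V$ admits a finite composition series in ${\rm Mod}_\cH$, whose simple factors are simple $\cH$-modules; by Proposition \ref{HSimples} these are precisely the $\cC_\alpha = M(\alpha,-1)$, so $[V]$ lies in the $\bZ$-span of the desired classes.

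For the inductive step, suppose the result holds for polynomial degrees $< d$, and let $V$ have polynomial degree $d \geq 0$. Applying Theorem \ref{decomp} in the Serre quotient $\cH_d$ yields a finite filtration of $V$ (viewed in $\cH_d$) whose successive quotients are $M(\alpha_i,d)$. I lift this to a chain $0 = \tilde F_0 \subset \tilde F_1 \subset \cdots \subset \tilde F_n = V$ in $\cH_{\leq d}$ whose image in $\cH_d$ recovers the given filtration. For each $i$, the isomorphism $\tilde F_i/\tilde F_{i-1} \cong M(\alpha_i,d)$ in $\cH_d$ translates into the assertion that $[\tilde F_i/\tilde F_{i-1}] - [M(\alpha_i,d)]$ equals the class of some finitely generated module of polynomial degree $\leq d-1$ in $\cG({\rm Mod}_\cH)$. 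Summing over $i$ and using $[V] = \sum_i [\tilde F_i/\tilde F_{i-1}]$ gives
\[
[V] = \sum_{i=1}^n [M(\alpha_i,d)] + [W],
\]
where $W$ has polynomial degree $< d$. By Noetherianity $W$ is finitely generated, so the inductive hypothesis expresses $[W]$ as a $\bZ$-linear combination of $[M(\beta,k)]$ with $k < d$, completing the induction.

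The main technical obstacle is the lifting argument in the inductive step: one must verify that a filtration with simple factors $M(\alpha_i,d)$ in the Serre quotient $\cH_d$ can be lifted to a filtration in $\cH_{\leq d}$ whose successive quotients differ from $M(\alpha_i,d)$ only by a module of polynomial degree $<d$. This is the standard yoga for the localization sequence $\cG(\cH_{\leq d-1}) \to \cG(\cH_{\leq d}) \to \cG(\cH_d) \to 0$ of Grothendieck groups attached to a Serre subcategory, but it requires invoking Noetherianity of $\cH$ to ensure that the error terms remain in the finitely generated world, and hence are themselves eligible for the inductive hypothesis.
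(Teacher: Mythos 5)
Your proposal is correct and follows essentially the same route as the paper: induct on polynomial degree, handle the degree $-1$ case via the simple torsion modules $M(\alpha,-1)$, and use the filtration of Theorem \ref{decomp} to write $[V] = \sum_i [M(\alpha_i,d)] + [W]$ with $[W]$ of lower degree, then apply the inductive hypothesis. The only difference is that you spell out the lifting from the Serre quotient $\cH_d$ back to $\cG({\rm Mod}_\cH)$ via the localization sequence, a step the paper's proof leaves implicit.
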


\begin{proof}
Given any finitely generated $\cH$-module $V$ of polynomial growth $d$, we claim that its isomorphism class can be expressed as a finite sum
\[
[V] = \sum_{j} [M(\alpha_j,k_j)]
\]
where $k_j \leq d$. We will proceed by induction on the polynomial growth of $V$. If $d = -1$, $V$ is torsion and even without using Theorem \ref{decomp} we can express $V$ as a sum
\[
[V] = \sum_i [M(\alpha_i,-1)]
\]
by starting in the highest $\cH$-degree and injecting the corresponding torsion modules. Now suppose the result holds for degrees $\leq d-1$ and consider some $d \geq 0$. From Theorem \ref{decomp} we can write 
\[
[V] = \sum_{i} [M(\alpha_i,d)] + [W]
\]
where $[W]$ is a module with smaller polynomial degree $\leq d-1$ and the first sum is finite. By induction we have
\[
[W] = \sum_{j} [M(\alpha_j,k_j)]
\]
where $k_j \leq d-1$. This completes the proof. 
\end{proof}

We now wish to strengthen this result to show that the $[M(\alpha,k)]$ actually form a basis.

\begin{theorem} \label{GrothBasis}
The $[M(\alpha,k)]$ form a basis for $\cG({\rm Mod}_\cH)$.
\end{theorem}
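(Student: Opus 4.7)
The spanning part is Theorem \ref{span}, so only linear independence of $\{[M(\alpha, k)]\}$ needs to be proved. My plan is to build, for each composition $\gamma$, an additive multiplicity function $m_\gamma : \cG({\rm Mod}_\cH) \to \bZ$ and use these functions to set up a triangular system of equations that forces every coefficient in a hypothetical relation to vanish.

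Concretely, define $m_\gamma([V])$ to be the Jordan--H\"older multiplicity of $\bC_\gamma$ as a composition factor of the $H_{|\gamma|}(0)$-module $V_{|\gamma|}$. Since this multiplicity is additive on short exact sequences of $H_{|\gamma|}(0)$-modules, $m_\gamma$ is well-defined on $\cG({\rm Mod}_\cH)$. The Pieri-style description of induced modules in Section \ref{InducedSec} shows that $m_\gamma([M(\alpha, k)])$ counts the sequences of $|\gamma| - |\alpha|$ box insertions into $\alpha$ that stay within the top $k+1$ positions of the running composition and ultimately produce $\gamma$. In particular, $m_\gamma([M(\alpha, k)]) = 0$ whenever $|\alpha| > |\gamma|$, and $m_\alpha([M(\alpha, k)]) = 1$ for every admissible $k$; this already gives a block-triangular structure when the pairs $(\alpha, k)$ are ordered by $|\alpha|$.

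Given a nontrivial finite relation $\sum c_{\alpha, k} [M(\alpha, k)] = 0$, to separate $c_{\alpha, k}$ from $c_{\alpha, k'}$ with the same $\alpha$ I would select $\gamma$ obtained from $\alpha$ by adding many boxes at position $|\alpha| - k + 1$: this is the lowest position permitted in $M(\alpha, k)$ but \emph{forbidden} in $M(\alpha, k'')$ for $k'' < k$, by the very definition of $M(\alpha, k)$ as the quotient by the submodule $A_{\alpha, k}$. Evaluating $m_\gamma$ then kills all contributions with $k'' < k$ while leaving a strictly positive contribution from $M(\alpha, k)$. Combined with the vanishing for $|\alpha'| > |\gamma|$, this produces a triangular system in the $c_{\alpha, k}$; induction on $(|\alpha|, k)$ with respect to the corresponding order eliminates them one at a time.

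The main obstacle I expect is controlling which other $M(\alpha', k')$ can contribute to a given $m_\gamma$, since the same $\gamma$ may arise from various $\alpha'$ via different insertion sequences and with multiplicity larger than one. The crucial combinatorial input is Lemma \ref{polySimples}: every simple factor of $M(\alpha, k)$ carries the rigid prefix $\eta_{|\alpha| - k}(\alpha)$, so prescribing this prefix in the chosen $\gamma$ sharply restricts the set of $(\alpha', k')$ that can contribute to $m_\gamma$, and making this restriction precise enough to guarantee a strictly triangular (rather than merely block-triangular) system is the heart of the argument. Once that is done, the system of equations produced by the $m_\gamma$ is triangular with positive diagonal entries, so the $[M(\alpha, k)]$ are $\bZ$-linearly independent in $\cG({\rm Mod}_\cH)$ and thus form a basis.
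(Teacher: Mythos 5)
Your spanning step and your multiplicity functions are exactly the paper's: $m_\gamma$ is the paper's $\Gamma_\gamma$, and the observation that $\Gamma_\gamma([M(\alpha,k)])$ vanishes for $|\alpha|>|\gamma|$ and equals $1$ at $\gamma=\alpha$ is precisely the lower uni-triangular structure the paper exploits. The genuine gap is the step you yourself flag as ``the heart of the argument'': separating $c_{\alpha,k}$ from $c_{\alpha,k''}$ (and from all other terms) by evaluating at a composition $\gamma$ built from $\alpha$ by piling many boxes at position $|\alpha|-k+1$. This has two problems. First, you never show that such a $\gamma$ cannot occur as a composition factor of $M(\alpha,k'')$ for $k''<k$, or of some other $M(\alpha',k')$ in the relation, via a different insertion pattern; the map from box-insertion sequences to compositions is not injective (composition factors occur with multiplicities $>1$, which is exactly the uncontrolled ``$\ast$'' entry in the paper's matrix), so ``forbidden position'' for the insertions does not immediately translate into ``multiplicity zero'' for the factor. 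Second, and more seriously, because $|\gamma|\gg|\alpha|$, \emph{every} term $[M(\alpha',k')]$ of the relation with $|\alpha'|\le|\gamma|$ — including those with $|\alpha'|>|\alpha|$ — can contribute to $m_\gamma$ with unknown nonnegative multiplicity, so the system of equations you obtain is not triangular for your order on $(|\alpha|,k)$, and no induction on that order closes it as stated.

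The paper avoids this entirely by decoupling the two indices in the opposite order: it first separates the parameter $k$ using Hilbert polynomials (a relation forces the dimension polynomials to cancel degree by degree, so one reduces to a fixed polynomial degree $k$), and only then applies the functions $\Gamma_\gamma$, evaluated at $\gamma$ equal to the compositions $\alpha_i$ actually occurring — i.e.\ at sizes $|\gamma|=|\alpha_i|$, where the vanishing for $|\beta|>|\gamma|$ and for $|\beta|=|\gamma|$, $\beta\neq\gamma$ gives a genuinely lower uni-triangular, hence invertible, matrix with no large-$\gamma$ contributions to control. If you want to salvage your route, you would need either a combinatorial lemma pinning down exactly which pairs $(\alpha',k')$ can contribute to your chosen $\gamma$ (using the rigid head $\eta_{|\alpha'|-k'}(\alpha')$ of every factor of $M(\alpha',k')$, as you suggest via Lemma \ref{polySimples}, but carried out in detail), or simply to import the paper's growth-rate reduction to fixed $k$ as a first step, after which your triangularity argument becomes the paper's.
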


\begin{proof}
Theorem \ref{span} implies the $[M(\alpha,k)]$ span $\cG({\rm Mod}_\cH)$, it remains to prove they are linearly independent. To do this, we will construct functions.

To prove linear independence, it suffices to prove that $M(\alpha,k)$ are linearly independent for a fixed $k \geq -1$. This follows from considering the corresponding Hilbert polynomials. If there were a finite linear combination,
\[
\sum_{i,j} c_{i,j} [M(\alpha_i,j)] = 0,
\]
if we consider the corresponding sum of Hilbert polynomials for sufficiently large $\cH$-degree, that sum must be zero as well. This implies that the coefficients of each polynomial degree must be zero. By considering the largest degree, say $\ell$, such coefficients can only appear from the $[M(\alpha_i,\ell)]$. If we can show linear independence here, it implies the above sum can be zero if and only if the coefficients $c_{i,\ell} = 0$. We can continue in this fashion to show that every coefficient must be zero. So to show linear independence in general, it suffices to prove it for each fixed $k \geq -1$. 

Fix some $k \geq -1$. For a finitely generated $\cH$-module $V$, let $\Gamma_\alpha([V])$ be the number of times $\bC_\alpha$ appears in the composition series of the $V_{|\alpha|}$. This clearly respects short exact sequences and so is well defined on the Grothendieck group. Notice that for a fixed $k \geq -1$, $\Gamma_\alpha([M(\beta,k)])$ satisfies
\[
\Gamma_\alpha([M(\beta,k)]) =
\begin{cases}
0 & |\beta| > |\alpha|, \; \text{or} \; |\alpha| = |\beta| \; \text{and} \; \alpha\not= \beta\\
1 & \beta = \alpha\\
\ast & \text{else}
\end{cases}
\]
where $\ast$ could be any non-negative integer. First, if $|\beta| > |\alpha|$ the lowest nonzero $\cH$-degree of $M(\beta,k)$ is $|\beta|$ so $\bC_\alpha$ could not possibly occur because it only appears in $\cH$-degree $|\alpha|$. If $|\alpha| = |\beta|$, there are two cases to consider. If $\alpha = \beta$, then $\bC_\alpha$ appears in the lowest degree of $M(\alpha,k)$ precisely once, so $\Gamma_\alpha(M(\alpha,k)) = 1$. If $\alpha \not= \beta$, then $M(\beta,k)$ has only $\bC_\beta$ in degree $|\beta| = |\alpha|$ and so $\bC_\alpha$ also cannot occur. It is possible that $\bC_\alpha$ could appear multiple times in some $M(\beta,k)$ with $|\beta| < |\alpha|$, this is the remaining case, but this will not matter.

Order the compositions of $i$ arbitrarily, then place all compositions in increasing order according to $i$. Call this list of all compositions of any non-negative integer $\{\alpha_i\}$. The order described is equivalent to saying that if $i \leq j$ we have $|\alpha_i| \leq |\alpha_j|$ so the size of the compositions is weakly increasing in our list. Consider the matrix whose $(i,j)$ entry is $\Gamma_{\alpha_i}(M(\beta_j,k))$. The above implies that this matrix will be lower uni-triangular because $\Gamma_{\alpha_i}(M(\alpha_j,k))$ is potentially nonzero precisely when $j \leq i$ as here we could have $|\alpha_j| < |\alpha_i|$, and when $j = i$ $\Gamma_{\alpha_i}(M(\alpha_i,k)) = 1$. When $j > i$, we either have $|\alpha_i| = |\alpha_j|$ but $\alpha_i \not= \alpha_j$ in which case the $(i,j)$-entry is zero, or $|\alpha_j| > |\alpha_i|$ in which case again the $(i,j)$-entry is zero. 

This implies that the matrix is invertible and so there is some change of basis in which we can express the Kronecker-Delta functions $\delta_{\alpha,k}$ as a linear combination of the $\Gamma_{\alpha}$. Where recall that 
\[
\delta_{\alpha,k}(M(\beta,k)) \coloneqq
\begin{cases}
1 & (\alpha,k) = (\beta,k)\\
0 & \text{else}.
\end{cases}
\]
The existence of such functions implies that the $[M(\alpha,k)]$ are linearly independent for a fixed $k \geq -1$. Indeed, suppose there were some finite linear combination
\[
\sum_{i} c_{\alpha_i} [M(\alpha_i,k)] = 0,
\]
then applying $\delta_{\alpha,k}$ implies $c_{\alpha,k} = 0$. As this holds for every choice of $\alpha$, this implies every coefficient is zero and so they must be linearly independent. This argument holds for any fixed $k \geq -1$ and so implies that the $[M(\alpha,k)]$ are linearly independent for any such fixed $k$. By the above, we have that the $[M(\alpha,k)]$ are linearly independent as $\alpha$ ranges over all compositions and $k$ an integer with $k\geq -1$.
\end{proof}

Lemma \ref{simple} describes the simple modules in $\cH_d = \cH_{\leq d}/\cH_{\leq d-1}$, for the coming results we wish to strengthen this slightly. 

\begin{lemma} \label{simpleH}
Any simple object in $\cH/\cH_{\leq d-1}$ is isomorphic to $M(\alpha,d)$ for some choice of $\alpha$.
\end{lemma}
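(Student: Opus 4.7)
The plan is to reduce to Lemma \ref{simple} by showing that any simple $L \in \cH/\cH_{\leq d-1}$ must have polynomial degree exactly $d$. Let $\tilde L$ be a finitely generated lift of $L$ to $\cH$; since $L \neq 0$ in the quotient, the polynomial degree $k$ of $\tilde L$ satisfies $k \geq d$. In the easy case $k = d$, the object $\tilde L$ already lies in $\cH_{\leq d}$, and the natural functor $\cH_d = \cH_{\leq d}/\cH_{\leq d-1} \to \cH/\cH_{\leq d-1}$ identifies $\cH_d$ with the full subcategory of polynomial degree $\leq d$ objects. Any subobject of $L$ in $\cH/\cH_{\leq d-1}$ still has polynomial degree $\leq d$, so the subobject lattices agree, $L$ remains simple in $\cH_d$, and Lemma \ref{simple} gives $L \cong M(\alpha, d)$ for some $\alpha$.

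The bulk of the work is to rule out $k > d$. Following the reduction used in the proof of Lemma \ref{simple}, I would first replace $\tilde L$ by a cyclic lift: since $\tilde L$ is finitely generated (using Theorem \ref{HNoeth}) and not in $\cH_{\leq d-1}$, at least one cyclic submodule $\langle v \rangle \subseteq \tilde L$ has polynomial degree $\geq d$; by simplicity its image equals $L$ in $\cH/\cH_{\leq d-1}$, so we may replace $\tilde L$ by $\langle v \rangle$. Every cyclic $\cH$-module is a quotient $M(\alpha)/B$ for a suitable composition $\alpha$, and since $\tilde L = M(\alpha)/B$ has polynomial degree $k$, Theorem \ref{smallest} forces $B \subseteq A_{\alpha, k}$.

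The key combinatorial observation is that whenever $0 \leq d < k \leq |\alpha|$ one has $A_{\alpha, k} \subseteq A_{\alpha, d}$: both submodules are spanned by the basis elements of $M(\alpha)$ whose underlying ribbon diagrams have at least one added box in a prescribed set of initial positions, and a smaller value of $k$ enlarges that set (from the first $|\alpha|-k$ positions to the first $|\alpha|-d$ positions). In particular $B \subseteq A_{\alpha, d}$, so we obtain a short exact sequence
\[
0 \to A_{\alpha, d}/B \to \tilde L \to M(\alpha, d) \to 0
\]
in $\cH$. A Hilbert polynomial computation, using that $\tilde L$ has polynomial degree $k$ while $M(\alpha, d)$ has polynomial degree exactly $d$, shows $A_{\alpha, d}/B$ has polynomial degree $k$. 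Neither $A_{\alpha, d}/B$ (polynomial degree $k > d-1$) nor $M(\alpha, d)$ (polynomial degree $d > d-1$) lies in $\cH_{\leq d-1}$, so $A_{\alpha, d}/B$ descends to a proper nontrivial subobject of $L$ in $\cH/\cH_{\leq d-1}$, contradicting simplicity.

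I expect the main obstacle to be the reduction step to a cyclic lift $\tilde L = M(\alpha)/B$ with $B \subseteq A_{\alpha, k}$; the analogous step in the proof of Lemma \ref{simple} is asserted with minimal justification, but some care is needed here to ensure that the cyclic generator is chosen in a composition-type $\alpha$ so that Theorem \ref{smallest} applies and the chain $B \subseteq A_{\alpha,k} \subseteq A_{\alpha,d}$ makes sense. Once that is in place, the rest is a routine Hilbert polynomial comparison.
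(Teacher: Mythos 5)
Your argument is correct and follows essentially the same route as the paper: lift the simple to a quotient $M(\alpha)/B$, invoke Theorem \ref{smallest} to get $B \subseteq A_{\alpha,k}$, and use the nesting $A_{\alpha,k} \subseteq A_{\alpha,d}$ for $k > d$ to exhibit a nontrivial proper subobject of $L$ in $\cH/\cH_{\leq d-1}$, contradicting simplicity. The only cosmetic difference is that the paper derives the contradiction via the surjection $M(\beta,k) \twoheadrightarrow M(\beta,k-1)$ rather than directly onto $M(\alpha,d)$, and it asserts the reduction to a quotient of a single $M(\beta)$ with the same brevity you flag, so your proof matches it in both substance and level of rigor.
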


\begin{proof}
Any simple object in $\cH_{\leq d}/\cH_{\leq d-1}$ is isomorphic to $M(\alpha,d)$ by Lemma \ref{simple}. So the $M(\alpha,d)$ are simple, it remains to consider higher polynomial degree modules. Given any other simple object $L$ that has polynomial degree $> d$, we can lift $L$ to $\tilde{L}$ in ${\rm Mod}_\cH$. As in Lemma \ref{simple}, we may assume that $\tilde{L}$ is a quotient of $M(\beta)$ for some choice of $\beta$. Any quotient of $M(\beta)$ must contain $M(\beta,k)$ for an appropriate choice of $k > d$ by Theorem \ref{quotients}. Hence it cannot possibly be simple unless it equals $M(\beta,k)$ in the localization. But $M(\beta,k)$ is only simple in the quotient by $\cH_{\leq k-1}$, otherwise it has a nontrivial quotient corresponding to $M(\beta,k-1)$ which implies the existence of a nontrivial submodule of polynomial degree $\geq d-1$. This means that the only simples in $\cH/\cH_{\leq d-1}$ are exactly $M(\beta,d)$ as $\beta$ varies over all compositions.
\end{proof}

\begin{proposition} \label{degreeEqualsDim}
A finitely generated $\cH$-module has polynomial degree $\leq d$ if and only if it has Gabriel-Krull dimension $\leq d$.
\end{proposition}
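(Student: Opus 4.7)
The plan is to prove the equality of the subcategories $\cH_{\leq d}$ and $\cH^{(d)}$ (the Serre subcategory of objects of Gabriel--Krull dimension $\leq d$) by induction on $d \geq -1$. Recall that Gabriel--Krull dimension is defined recursively by setting $\cH^{(-1)} = 0$ and declaring $V \in \cH^{(d)}$ if and only if the image of $V$ in the Serre quotient $\cH/\cH^{(d-1)}$ has finite length. The base case $d = -1$ is immediate, since both subcategories are zero. For the inductive step, assuming $\cH^{(d-1)} = \cH_{\leq d-1}$, it suffices to show that a finitely generated $\cH$-module has polynomial degree $\leq d$ if and only if its image in $\cH/\cH_{\leq d-1}$ has finite length.

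For the forward direction, suppose $V$ has polynomial degree $\leq d$. Its image in $\cH/\cH_{\leq d-1}$ lies in $\cH_d = \cH_{\leq d}/\cH_{\leq d-1}$. Since $\dim B_n = \dim A_n + \dim C_n$ for any short exact sequence $0\to A\to B\to C\to 0$, polynomial degree is the maximum of the outer terms, so $\cH_{\leq d}$ is a Serre subcategory of $\cH$, and consequently $\cH_d$ is a Serre subcategory of $\cH/\cH_{\leq d-1}$. Lemma \ref{finite length} shows every object of $\cH_d$ has finite length, so the image of $V$ does too.

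For the backward direction, suppose the image of $V$ in $\cH/\cH_{\leq d-1}$ has finite length. By Lemma \ref{simpleH}, every simple object of $\cH/\cH_{\leq d-1}$ is isomorphic to some $M(\alpha,d)$, which has polynomial degree exactly $d$. Using the Noetherianity of $\cH$ (Theorem \ref{HNoeth}), I would lift the composition series of the image of $V$ to a filtration $0 = V_0 \subset V_1 \subset \cdots \subset V_n \subset V$ in $\cH$ such that $V/V_n \in \cH_{\leq d-1}$ and each successive quotient $V_i/V_{i-1}$ becomes $M(\alpha_i,d)$ in the Serre quotient, i.e.\ agrees with $M(\alpha_i,d)$ up to kernel and cokernel in $\cH_{\leq d-1}$. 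Each $V_i/V_{i-1}$ therefore has polynomial degree $\leq d$, and subadditivity of polynomial degree on short exact sequences then forces $V$ to have polynomial degree $\leq d$.

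The main obstacle is the lifting step in the backward direction: one must produce an honest filtration of $V$ in $\cH$ whose associated graded realizes the composition series of the image of $V$ in $\cH/\cH_{\leq d-1}$ up to $\cH_{\leq d-1}$-noise. This is a fairly standard fact about Serre quotients of Noetherian categories---one inductively pulls back subobjects along the localization, using Noetherianity to ensure the procedure terminates---but it carries the bulk of the argument. Everything else (identifying the simples via Lemma \ref{simpleH}, finite length of $\cH_d$ via Lemma \ref{finite length}, and the Serre property of $\cH_{\leq d}$) has already been established earlier in the excerpt.
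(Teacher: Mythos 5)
Your inductive step is essentially the paper's argument, but your base case contains a genuine error, and it is not cosmetic. You normalize Gabriel--Krull dimension by setting $\cH^{(-1)} = 0$ and then assert that the case $d=-1$ is immediate "since both subcategories are zero." But $\cH_{\leq -1}$ is not zero: polynomial degree $-1$ means $\dim V_n = 0$ for $n \gg 0$, so $\cH_{\leq -1}$ is the category of torsion modules, which contains for instance every $M(\alpha,-1) = \bC_\alpha$ concentrated in degree $|\alpha|$. Worse, with your normalization (finite-length objects have dimension $0$) the recursion gives $\cH^{(0)} = \{\text{finite-length objects of } \cH\} = \cH_{\leq -1}$ and, continuing, $\cH^{(d)} = \cH_{\leq d-1}$, so the equality $\cH^{(d)} = \cH_{\leq d}$ you set out to prove is false for every $d$ under your own definition; the proposition requires the shifted normalization, implicit in the paper, in which dimension $\leq -1$ means finite length in ${\rm Mod}_\cH$. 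With that normalization the base case is the statement that the finite-length $\cH$-modules are exactly the torsion (polynomial degree $-1$) modules, which is not vacuous: it follows from Proposition \ref{HSimples}, identifying the simple $\cH$-modules as those concentrated in a single degree, a fact you never invoke.

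Once the base case is repaired, the remainder of your induction is correct and coincides with the paper's proof: the forward direction is Lemma \ref{finite length} (together with the observation that $\cH_{\leq d}$ is Serre), and the backward direction uses Lemma \ref{simpleH} to identify the composition factors of the image of $V$ in $\cH/\cH_{\leq d-1}$ with the $M(\alpha_i,d)$. The step you flag as the main obstacle --- lifting the composition series to a filtration of $V$ in ${\rm Mod}_\cH$ with subquotients agreeing with $M(\alpha_i,d)$ up to $\cH_{\leq d-1}$-noise --- is indeed standard (subobjects in a Serre quotient are images of subobjects upstairs, and noetherianity from Theorem \ref{HNoeth} keeps everything finitely generated); the paper sidesteps it by working directly in the Grothendieck group, writing $[V]$ as a sum of the $[M(\alpha_i,d)]$ modulo classes of polynomial degree $\leq d-1$, which gives the same conclusion on Hilbert polynomials.
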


\begin{proof}
Let $\cH^{\leq d}$ be the objects that are finite length in the quotient $\cH/\cH^{\leq d-1}$. We will prove by induction on $d$ that $\cH^{\leq d} = \cH_{\leq d}$. 

For $d = -1$ the statement is true because in each case we get torsion modules. These are the only polynomial degree $-1$ modules by definition and they are also the only finite length modules in $\cH$ by Lemma \ref{HSimples}. 

Now if we assume the statement for $\leq d-1$, we will prove it for $d$ where $d \geq 0$. We aim to show that a finitely generated $\cH$-module $V$ has polynomial degree $\leq d$ if and only if the image of $V$ in $\cH/\cH^{\leq d-1}$ is finite length. By induction this is equivalent to showing that $V$ has polynomial degree $\leq d$ if and only if the image of $V$ in $\cH/\cH_{\leq d-1}$ is finite length because we have $\cH_{\leq d-1} = \cH^{\leq d-1}$.

If $V$ has polynomial degree $\leq d$ then it is is finite length in $\cH/\cH_{\leq d-1}$ by Lemma \ref{finite length}. 

Suppose instead that $V$ is finite length in $\cH/\cH_{\leq d-1}$. Consider the finite composition series for $V$ in $\cH/\cH_{\leq d}$. Lemma \ref{simpleH} implies $[V]$ is a sum of $[M(\alpha_i,d)]$ modulo lower polynomial degree terms as these are precisely the simple modules in $\cH/\cH_{\leq d}$. This means $V$ has polynomial degree $\leq d$ as each of the terms in the expression of $[V]$ in the Grothendieck group does. The result then follows by induction.
\end{proof}

\begin{corollary}
The Gabriel-Krull dimension of $\cH$ is $\infty$.
\end{corollary}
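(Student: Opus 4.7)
The plan is to invoke Proposition \ref{degreeEqualsDim} and reduce the claim to exhibiting finitely generated $\cH$-modules of arbitrarily large polynomial degree. Recall that the Gabriel-Krull dimension of a category is the supremum of the Gabriel-Krull dimensions of its finitely generated objects, so by Proposition \ref{degreeEqualsDim} it equals the supremum of the polynomial degrees of finitely generated $\cH$-modules. Hence the corollary follows once we produce finitely generated $\cH$-modules of unbounded polynomial degree.

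For this, I would use the induced modules $M(\alpha)$ themselves. By Corollary \ref{dim}, for $\alpha \vDash n$ we have $\dim M(\alpha)_d = \binom{d}{n}$, a polynomial in $d$ of degree exactly $n = |\alpha|$; thus $M(\alpha)$ has polynomial degree $|\alpha|$. Taking $\alpha = (n)$ (or any composition of $n$) for each $n \geq 0$ gives a sequence of finitely generated $\cH$-modules whose polynomial degrees are unbounded. Applying Proposition \ref{degreeEqualsDim}, the Gabriel-Krull dimension of $M((n))$ is exactly $n$, so the Gabriel-Krull dimension of $\cH$ is at least $n$ for every $n$, hence infinite.

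There is no real obstacle here: the work has already been done in Proposition \ref{degreeEqualsDim} and Corollary \ref{dim}, and the corollary is essentially a one-line consequence. The only thing to be slightly careful about is the direction of the equivalence in Proposition \ref{degreeEqualsDim}: we use that polynomial degree $\leq d$ implies Gabriel-Krull dimension $\leq d$, contrapositively polynomial degree $> d$ forces Gabriel-Krull dimension $> d$, which is exactly what lets us conclude that the unbounded polynomial degrees of the $M((n))$ produce unbounded Gabriel-Krull dimensions.
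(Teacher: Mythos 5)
Your proposal is correct and follows the paper's own argument exactly: the paper also deduces the corollary from Proposition \ref{degreeEqualsDim} together with the observation that $M((d))$ is a finitely generated $\cH$-module of polynomial degree $d$, so polynomial degrees (and hence Gabriel-Krull dimensions) are unbounded. Your extra care about the direction of the equivalence and the citation of Corollary \ref{dim} for $\dim M(\alpha)_d = \binom{d}{n}$ is sound but adds nothing beyond the paper's one-line proof.
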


\begin{proof}
This follows immediately from Proposition \ref{degreeEqualsDim} and the fact that we have $\cH$-modules with arbitrarily large polynomial degree, e.g. $M((d))$ is a degree $d$ $\cH$-module.
\end{proof}

Heuristically, $\cH$ can be thought of as somewhere between ${\bf OI}$ and ${\bf FI}$ but closer to ${\bf OI}$. In particular, we know that ${\bf OI}$ is a subcategory of ${\cH}$ under the map $\Phi \colon {\bf OI} \to {\cH}$ described in Proposition \ref{OIsub}. This gives rise to a pullback functor $\Phi^\ast \colon {\rm Mod}_{\cH} \to {\rm Mod}_{{\bf OI}}$ which then induces a pullback functor on Grothendieck groups which by abuse of notation we will also call $\Phi^\ast$. It is natural to ask where the $[M(\alpha,k)]$ land. 

We will use the notation outlined in \cite{GS}. In their paper, G{\"u}nt{\"u}rk{\"u}n and Snowden study the representation theory of the increasing monoid, denoted $\cJ$, which they show also describes the representation theory of ${\bf OI}$-modules. We recall the essential definitions from their paper, for further details consult \cite{GS}.

One of the most important classes of $\cJ$-modules are called {\em standard modules}, denoted by $E^\lambda$ where $\lambda$ is a finite word on the alphabet $\{a,b\}$. Given such a word $\lambda = \lambda_1\dots\lambda_r$ the module $E^\lambda$ associated to $\lambda$ is defined as follows: $E^\lambda$ has a basis consisting of elements of the form $e_{i_1,\dots,i_r}$ where $1 \leq i_1 < \cdots < i_r$ subject to the constraint that if $\lambda_k = b$ then $i_k - i_{k-1} = 1$ (with the convention that $i_0 = 0$). The increasing monoid acts by $\sigma e_{i_1,\dots,i_r} = e_{\sigma(i_1),\dots, \sigma(i_r)}$ if this is a basis element of $E^\lambda$ and $0$ otherwise.

In \cite[Theorem 12.1]{GS} the authors show that the isomorphism classes of standard modules form an integral basis for the Grothendieck group of finitely generated $\cJ$-modules. To understand $\Phi^\ast([M(\alpha,k)])$ it thus suffices to express the image in terms of the isomorphism classes of these standard modules.

\begin{proposition} \label{pullback}
Under the pullback functor we have $\Phi^\ast([M(\alpha,k)]) = [\ul{E}^{b^{|\alpha|-k}a^{k+1}}]$.
\end{proposition}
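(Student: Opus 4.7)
The plan is to establish an isomorphism of $\cJ$-modules $\Phi^\ast M(\alpha,k) \cong \ul{E}^{b^{|\alpha|-k}a^{k+1}}$, from which the Grothendieck-class identity is immediate. The argument has three ingredients: an explicit basis for $\Phi^\ast M(\alpha,k)$, a description of the restricted increasing-monoid action, and a matching with the standard-module definition of \cite{GS}.

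First I would record an explicit basis for $\Phi^\ast M(\alpha,k)$ in each degree. By Lemma \ref{basis}, the basis of $M(\alpha)_n$ is indexed by order-preserving injections $f \colon [|\alpha|] \to [n]$, and by Lemma \ref{polySimples} the quotient $M(\alpha,k)$ retains exactly those injections $f$ for which $f(i) = i$ whenever $i \leq |\alpha|-k$. This recovers the Hilbert function $\binom{n-|\alpha|+k}{k}$, and matches the ribbon-diagram picture from Section \ref{PaddedInducedSec} in which the first $|\alpha|-k$ positions of $\alpha$ receive no new boxes while the remaining $k+1$ positions do.

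Next I would compute the $\cJ$-action inherited from the $\cH$-structure. Under the inclusion $\Phi \colon {\bf OI} \injects \cH$ of Proposition \ref{OIsub}, an order-preserving injection $g \colon [n]\to[m]$ acts on $f \otimes v_\alpha$ by post-composition. The composite $g \circ f$ is again an order-preserving injection, and the key observation is that $g \circ f$ fixes $1,\dots,|\alpha|-k$ if and only if $g$ does; otherwise the initial segment of $g \circ f$ has a gap, placing it inside the generating submodule $A_{\alpha,k}$ and sending it to $0$ in the quotient. Thus the restricted action is precisely ``rigidly preserve the first $|\alpha|-k$ coordinates while freely translating the remaining ones, killing anything that would violate the rigidity.''

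Finally I would identify this structure with $\ul{E}^{b^{|\alpha|-k}a^{k+1}}$. In the standard-module description, the $|\alpha|-k$ consecutive $b$-letters encode the constraint $i_j - i_{j-1} = 1$ that pins the initial coordinates to $1,2,\dots,|\alpha|-k$, while the trailing $a$-letters parametrize strictly increasing free choices; the action kills any element that breaks the $b$-rigidity. I would exhibit an explicit linear bijection sending $f \otimes v_\alpha$ to $e_{1,2,\dots,|\alpha|-k,\,f(|\alpha|-k+1),\dots,f(|\alpha|),\,*}$, where $*$ is the trailing coordinate determined by the $\ul{E}$-convention of \cite{GS}, and verify $\cJ$-equivariance by comparing the two vanishing criteria. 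The hardest part will be the bookkeeping: aligning the length-$(|\alpha|+1)$ word on the standard-module side with the $|\alpha|$-parameter basis of $M(\alpha,k)_n$ on our side, i.e.\ correctly accounting for the extra trailing $a$ in $a^{k+1}$ against the conventions of \cite{GS}. Once that bijection is pinned down, equivariance is automatic from the two descriptions of the action, and the Grothendieck-class identity follows.
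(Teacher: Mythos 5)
Your proposal follows essentially the same route as the paper's proof: forget the $0$-Hecke action under $\Phi^\ast$, identify the degree-$d$ basis of $M(\alpha,k)$ with order-preserving injections $g\colon[|\alpha|]\to[d]$ fixing the first $|\alpha|-k$ elements, match these with basis elements of $\ul{E}^{b^{|\alpha|-k}a^{k+1}}$, and check compatibility with transition maps via the two identical vanishing criteria. The one detail you defer, the trailing coordinate $\ast$, is resolved in the paper simply by sending $g$ to $e_{g(1),\dots,g(|\alpha|),d+1}$ in degree $d$, exactly the bookkeeping you anticipated.
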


\begin{proof}
The pullback functor forgets the action of $H_n(0)$ in each $\cH$-degree and only allows order preserving injections between degrees. This means that when we consider basis elements for $M(\alpha,k)$ indexed by order preserving injections, we can forget the vector $v_\alpha$ that spans $\bC_\alpha$ as this only keeps track of the 0-Hecke action in each degree. We then see that in each degree $d$, a basis for $M(\alpha,k)$ is given exactly by order preserving injections $g \colon [|\alpha|] \to [d]$ where the first $|\alpha|-k$ numbers must map to themselves. Such order preserving injections are in bijective correspondence with basis elements of $\ul{E}^{b^{|\alpha|-k}a^{k+1}}$ where we send the injection $g \colon [|\alpha|] \to [d]$ to the basis element $e_{g(1),\dots, g(|\alpha|), d+1}$. Notice by construction $g(i)=i$ for $i = 1,\dots,|\alpha|-k$ which is exactly mandated by the constraint word $b^{|\alpha|-k}a^{k+1}$.

This map also respects the transition maps between degrees because if such a map does not fix the first $|\alpha|-k$ entries, it will be zero in both $M(\alpha,k)$ and $\ul{E}^{b^{|\alpha|-k}a^{k+1}}$. Otherwise, the remaining entries are free to be sent anywhere in both modules.
\end{proof}

We show in Theorem \ref{GrothBasis} that the $[M(\alpha,k)]$ form a basis for the Grothendieck group of $\cH$. Proposition \ref{pullback} shows that $\Phi^\ast([M(\alpha,k)]) = \Phi^\ast([M(\beta,k)])$ so long as $|\alpha| = |\beta|$. Heuristically, this is because the only difference between these two modules is the 0-Hecke action in each degree. When we forget this action, they become the same ${\bf OI}$-module. 

The fact that $\Phi^\ast$ is not surjective can be explained intuitively by $\cH$-modules allowing us to move basis vector upwards within degrees. The basis elements we miss are the $[E^\lambda]$ where $\lambda$ has $b$ elements interspersed within $a$ elements. This would correspond to an order preserving injection where we force $g(k) = g(k-1)+1$. This condition cannot be preserved by a $\cH$-action unless we only fix the beginning elements. This is illustrative of the statement that every $\cH$-module is an ${\bf OI}$-module but not every ${\bf OI}$-module is a $\cH$-module. Also, it illustrates that there is a difference between the categories ${\bf OI}$ and $\cH$.


\section{Representation Stability}
With the previous section established, we are ready to state some immediate representation stability results for sequences of representations of $H_n(0)$-modules.

\begin{theorem} \label{repStability}
For any finitely generated $\cH$-module $V$, we have a unique finite decomposition in the Grothendieck group $\cG({\rm Mod}_\cH)$,
\[
[V] = \sum_{i,j} c_{\alpha^i,k_j} [M(\alpha^i,k_j)]
\]
where the coefficients $c_{\alpha^i,k_j}$ are integers.
\end{theorem}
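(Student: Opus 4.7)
The plan is to observe that this theorem is essentially an immediate packaging of the two main results established earlier in the section, namely Theorem \ref{span} and Theorem \ref{GrothBasis}, together with the fact that the Grothendieck group $\cG({\rm Mod}_\cH)$ is by definition a free abelian group modulo the short exact sequence relations.

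First I would invoke Theorem \ref{span} to produce the existence of a finite decomposition. That theorem is proved by induction on polynomial degree: starting from a finitely generated $V$ of polynomial degree $d$, Theorem \ref{decomp} gives a finite filtration in $\cH_d$ with successive quotients isomorphic to $M(\alpha^i,d)$, so modulo $\cH_{\leq d-1}$ we can write $[V]$ as a finite sum of $[M(\alpha^i,d)]$ plus a class $[W]$ of strictly smaller polynomial degree, to which the inductive hypothesis applies. This gives the existence of a finite expression $[V] = \sum_{i,j} c_{\alpha^i,k_j}[M(\alpha^i,k_j)]$ with integer coefficients (possibly with signs, since the Grothendieck group is an abelian group under the short exact sequence relation).

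Next I would invoke Theorem \ref{GrothBasis} to obtain uniqueness. That theorem shows that the classes $[M(\alpha,k)]$ form a $\bZ$-basis of $\cG({\rm Mod}_\cH)$. If we had two finite decompositions
\[
\sum_{i,j} c_{\alpha^i,k_j}[M(\alpha^i,k_j)] = \sum_{i,j} c'_{\alpha^i,k_j}[M(\alpha^i,k_j)],
\]
subtracting and applying linear independence would give $c_{\alpha^i,k_j} = c'_{\alpha^i,k_j}$ for every pair $(\alpha^i,k_j)$. Hence the decomposition of $[V]$ is unique.

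I do not anticipate any real obstacle: both existence (with finiteness) and uniqueness have already been carried out in the prior results of the section, and this theorem is simply their combined statement at the level of the Grothendieck group. The only point to emphasize is that the finiteness of the decomposition is genuinely needed, and that this comes from the inductive construction in Theorem \ref{span} rather than from linear independence, because abstractly a basis only guarantees that each element is a finite $\bZ$-linear combination once one knows the element lies in the $\bZ$-span, which here is established by Theorem \ref{span}.
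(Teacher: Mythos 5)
Your proposal is correct and takes essentially the same route as the paper, which simply deduces the statement from Theorem \ref{GrothBasis} (whose proof already consists of the spanning argument of Theorem \ref{span} plus linear independence). Your explicit separation of existence (via Theorem \ref{span}) from uniqueness (via linear independence) is just an unpacking of that basis statement, so there is nothing to add.
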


\begin{proof}
This follows immediately from Theorem \ref{GrothBasis}.
\end{proof}

We will say that a sequence of $H_n(0)$ representations is {\em representation stable} if there is such a decomposition in the Grothendieck group. The above shows that being finitely generated as a $\cH$-module implies representation stability. We can make this more explicit in terms of the simple modules that can occur in finitely generated $\cH$-modules for sufficiently large $n$.

\begin{theorem}
For any finitely generated $\cH$-module $V$, for sufficiently large $n$ the simples that appear in $V_n$ are exactly those that appear in the finite sum,
\[
\bigoplus_{i,j} c_{\alpha^i,k_j} M(\alpha^i,k_j)_n
\]
where the coefficient $c_{\alpha^i,k_j}$ is independent of $n$.
\end{theorem}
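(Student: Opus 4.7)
The plan is to obtain this immediately from Theorem \ref{repStability} by evaluating the Grothendieck group identity degreewise. Since $\cG({\rm Mod}_\cH) = \bigoplus_n \cG(H_n(0))$ (each algebra $H_n(0)$ is finite dimensional, and its Grothendieck group has a free $\bZ$-basis given by the isomorphism classes $[\bC_\alpha]$ for $\alpha \vDash n$), the unique finite decomposition $[V] = \sum_{i,j} c_{\alpha^i,k_j}[M(\alpha^i,k_j)]$ supplied by Theorem \ref{repStability} descends to an identity
\[
[V_n] \;=\; \sum_{i,j} c_{\alpha^i,k_j}\,[M(\alpha^i,k_j)_n] \qquad \text{in } \cG(H_n(0))
\]
for every $n$, with the integer coefficients $c_{\alpha^i,k_j}$ independent of $n$.

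Next I would translate this identity into a statement about simple constituents. By the combinatorial description of $M(\alpha,k)$ in \S\ref{PaddedInducedSec} (together with Lemma \ref{polySimples}), for $n \geq |\alpha^i|$ the composition factors of $M(\alpha^i,k_j)_n$ are precisely the simples $\bC_\gamma$ indexed by ribbon diagrams $\gamma$ obtained from $\alpha^i$ by adjoining $n - |\alpha^i|$ boxes in any way to the top $k_j+1$ positions of $\alpha^i$. Once $n \geq \max_i |\alpha^i|$ every term in the sum is nonzero and its simple content is governed by this uniform combinatorial rule, so the multiplicity $[V_n : \bC_\beta]$ is, for all such $n$, a fixed $\bZ$-linear combination of the combinatorial quantities $[M(\alpha^i,k_j)_n : \bC_\beta]$.

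Finally I would note that because $V_n$ is a genuine (not virtual) $H_n(0)$-module, these multiplicities are non-negative integers, so the set of simples actually appearing in $V_n$ is exactly the set of $\bC_\beta$ for which the right-hand combinatorial count is positive, and this set is determined once and for all by the finite list of data $\{(\alpha^i,k_j,c_{\alpha^i,k_j})\}$. I do not anticipate a serious obstacle; the only delicate point is the potential presence of negative coefficients $c_{\alpha^i,k_j}$, which means the expression $\bigoplus_{i,j} c_{\alpha^i,k_j} M(\alpha^i,k_j)_n$ should be read as a virtual sum in $\cG(H_n(0))$ rather than a literal direct sum, but this does not affect the conclusion about which irreducibles occur in $V_n$ for $n \gg 0$.
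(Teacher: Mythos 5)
Your argument is correct and is essentially the paper's: the paper records this theorem as a direct consequence of Theorem \ref{repStability}, and your degreewise evaluation of that Grothendieck identity, together with the combinatorial description of the simples of $M(\alpha^i,k_j)_n$ and the caveat that negative coefficients force the sum to be read as a virtual sum in $\cG(H_n(0))$, is exactly the intended content. The one inaccuracy is the claim $\cG({\rm Mod}_\cH)=\bigoplus_n \cG(H_n(0))$ --- the degreewise classes of a finitely generated $\cH$-module lie in the product $\prod_n \cG(H_n(0))$ rather than the direct sum, and the two groups are not naturally identified --- but all you actually need is that $V\mapsto V_n$ is exact, so evaluation at degree $n$ is well defined on Grothendieck classes and $[V]=\sum_{i,j}c_{\alpha^i,k_j}[M(\alpha^i,k_j)]$ does descend to $[V_n]=\sum_{i,j}c_{\alpha^i,k_j}[M(\alpha^i,k_j)_n]$ for every $n$, after which your argument goes through unchanged.
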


\begin{proof}
This is a direct consequence of \ref{repStability}.
\end{proof}

\begin{remark} \label{repStabilitySimples}
For any finitely generated $\cH$-module $V$, we have for sufficiently large $n$ a unique finite list of pairs $\{(\alpha^i,k_i)\}$ of compositions with integers $k \geq -1$ that dictate exactly which simples occur in $V_n$.
\end{remark}

To see what we mean in this theorem, notice that in $M(\alpha,k)$ only $\bC_\alpha$ occurs in degree $|\alpha|$. After this, to find the simples that occur in degree $d \geq |\alpha|$ you add $d-|\alpha|$ boxes to the top $k+1$ positions of $\alpha$ in all ways possible. Consider the following example,

\begin{example}
Consider the module $M((2,1),1)$. The first nonzero degree is $3 = |\alpha|$ and here we have the single simple corresponding to $\young(:~,~~)$. We will compute the next few degrees and denote the boxes we add to the top $2$ positions by ${\tiny \young(\bullet)}$ .
\begin{align*}
M((2,1),1)_3 &= \young(:~,~~)\\
M((2,1),1)_4 &= \young(:\bullet,:~,~~) + \young(:\bullet~,~~:) \\
M((2,1),1)_5 &= \young(:\bullet\bullet,:~:,~~:) + \young(:\bullet\bullet~,~~::) + \young(::\bullet,:\bullet~,~~:)\\
M((2,1),1)_6 &= \young(:\bullet\bullet\bullet,:~::,~~::) + \young(:\bullet\bullet\bullet~,~~:::) + \young(::\bullet\bullet,:\bullet~:,~~::) + \young(:::\bullet,:\bullet\bullet~,~~::) \;.
\end{align*}
So if we can write a finitely generated $\cH$-module as $[V] = [M((2,1),1)] + [M((2),1)]$ then for sufficiently large $n$ the diagrams that occur result from adding boxes to the two $2$ positions of the ribbon diagram corresponding to $(2,1)$ and adding boxes to the top $2$ positions of the ribbon diagram $(2)$. In general we add boxes to the top $k+1$ positions of the ribbon diagram corresponding to $\alpha$.
\end{example}

\section{The Shift Functor}
As in the case of ${\bf FI}$ it makes sense to define a shift functor $\Sigma$ for $\cH$. Given a $\cH$-module $V$, we define its {\bf first shift} $\Sigma V$ to be the $\cH$-module with $(\Sigma V)_n = V_{n+1}$ on sets. For any $f \in \hom_{\cH}([n],[m])$, the map $\Sigma V(f) \colon V_{n+1} \to V_{m+1}$ is the map $V(F)$ where $F$ agrees with $f$ on $[n]$ and maps $n+1$ to $m+1$. We define by $\Sigma_a$ the $a$th iterate of $\Sigma$. From this definition it is not hard to see that
\[
(\Sigma V)_n \cong {\rm Res}^{H_{n+1}(0)}_{H_n(0)} V_{n+1}.
\]
Also, notice there is an inclusion map $V \to \Sigma V$ induced by the natural inclusion $\iota_n \colon [n] \to [n+1]$ with $\iota_n(i) = i$ for all $i$. We define the {\bf derivative} of $V$, denoted by $\Delta V$ as
\[
\Delta V \coloneqq \coker(V \to \Sigma V).
\]
In the case of {\bf FI} and many other categories of interest these functors play an important role and have many nice properties. For further reading on this subject we refer the reader to \cite{Ram, CEF, CEFN}. We will see that many of these desirable properties are also present for $\cH$.

\begin{lemma} \label{shift}
Given any padded induced module $M(\alpha,k)$ with $k \geq 1$ we have
\[
\Sigma M(\alpha,k) = M(\alpha,k) \oplus M(\alpha_{|\alpha|-1},k-1).
\]
If $k = 0$, $\Sigma M(\alpha,0) = M(\alpha,0)$. If $k = -1$, $\Sigma M(\alpha,-1) = 0$.
\end{lemma}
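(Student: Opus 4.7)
The plan is to produce an explicit basis-level decomposition of $\Sigma M(\alpha,k)$ that upgrades to an $\cH$-module splitting. By Lemma \ref{basis} and the construction of padded induced modules, a basis of $\Sigma M(\alpha,k)_n = M(\alpha,k)_{n+1}$ is indexed by the set of order preserving injections $g \colon [|\alpha|] \to [n+1]$ whose first $|\alpha|-k$ entries are fixed by the identity. Since $g$ is order preserving, its largest value $g(|\alpha|)$ is either at most $n$ or equal to $n+1$; I would partition the basis accordingly into $A_n$ (those with image contained in $[n]$) and $B_n$ (those hitting $n+1$), giving a vector space decomposition $\Sigma M(\alpha,k)_n = A_n \oplus B_n$ in every degree. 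A quick binomial identity $\binom{n-|\alpha|+k}{k} + \binom{n-|\alpha|+k}{k-1} = \binom{n+1-|\alpha|+k}{k}$ confirms that the dimensions match those of the intended summands.

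The first step is to check that $A$ and $B$ are both $\cH$-submodules. The $H_n(0)$-action on $\Sigma M(\alpha,k)_n$ is generated by $\pi_1,\dots,\pi_{n-1}$; these only touch bottom vertices at positions $\leq n$ and leave the vertex $n+1$ untouched, so they preserve both conditions $g(|\alpha|) \leq n$ and $g(|\alpha|) = n+1$. For a transition $f \in \hom_\cH([n],[m])$, the extension $F \in \hom_\cH([n+1],[m+1])$ defining $\Sigma M(\alpha,k)(f)$ agrees with $f$ on $[n]$ and sends top $n+1$ directly to bottom $m+1$ without crossing any strand of $f$; composing with $F$ therefore preserves whether the top strand $|\alpha|$ ends at the rightmost bottom vertex, so the decomposition descends to the level of $\cH$-modules.

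Next I would identify the two summands. The natural inclusion $M(\alpha,k) \to \Sigma M(\alpha,k)$ induced by $\iota_n$ sends a basis injection $h \colon [|\alpha|] \to [n]$ to the same injection with codomain widened to $[n+1]$, giving an $\cH$-equivariant isomorphism $M(\alpha,k) \cong A$. For $B$, each $g \in B_n$ is determined by its restriction $\overline{g} \colon [|\alpha|-1] \to [n]$, an order preserving injection with the first $|\alpha|-k = (|\alpha|-1) - (k-1)$ entries fixed; this is precisely a basis label for $M(\alpha_{|\alpha|-1}, k-1)_n$. The resulting bijection assembles into an $\cH$-equivariant isomorphism $B \cong M(\alpha_{|\alpha|-1}, k-1)$. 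The main subtlety is the character check in the lowest nonzero degree: the generator $\mathrm{id} \otimes v_\alpha$ of $B$ in degree $|\alpha|-1$ carries the $H_{|\alpha|-1}(0)$-action obtained by restricting $\bC_\alpha$, and this equals $\bC_{\alpha_{|\alpha|-1}}$ because $D(\alpha_{|\alpha|-1}) = D(\alpha) \cap [|\alpha|-2]$, i.e.\ removing the last box of $\alpha$ removes exactly the descent at position $|\alpha|-1$ when it is present.

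The boundary cases fall out of the same construction. When $k = 0$ the padding condition forces every basis element of $M(\alpha,0)$ to be an identity injection, so $B$ is supported only in degree $|\alpha|-1$; the transition $\iota_{|\alpha|-1}$ sends its generator to the injection $g_0$ skipping position $|\alpha|$, which is exactly the generator of the defining submodule $A_{\alpha,0}$ of $M(\alpha,0)$ and therefore vanishes in the quotient, so $B$ becomes a purely torsion summand with no outgoing transitions and absorbing this torsion recovers $\Sigma M(\alpha,0) = M(\alpha,0)$. The case $k = -1$ is analogous: $M(\alpha,-1) = \bC_\alpha$ is already concentrated in degree $|\alpha|$, so $\Sigma M(\alpha,-1)$ lives only in degree $|\alpha|-1$ with no outgoing transitions and vanishes after the same identification. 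The principal difficulty throughout is the $\cH$-equivariance of the isomorphism $B \cong M(\alpha_{|\alpha|-1}, k-1)$, which requires carefully tracking the dummy-vertex bookkeeping in the composition rule when comparing braid actions between the two sides.
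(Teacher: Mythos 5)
Your proof follows essentially the same route as the paper's: in each degree split the basis of $(\Sigma M(\alpha,k))_d = M(\alpha,k)_{d+1}$ according to whether the last strand $|\alpha|$ lands on the new top vertex $d+1$, observe that both the restricted $H_d(0)$-action (which only involves $\pi_1,\dots,\pi_{d-1}$) and the shifted transition maps (which send the final vertex to the final vertex) preserve this splitting, and identify the two pieces with $M(\alpha,k)$ and $M(\alpha_{|\alpha|-1},k-1)$, your descent-set check $D(\alpha_{|\alpha|-1}) = D(\alpha)\cap[|\alpha|-2]$ being the same character comparison the paper makes implicitly. Your handling of $k=0$ and $k=-1$ is actually more explicit than the paper's (which does not treat them), and your observation that a one-dimensional piece survives in degree $|\alpha|-1$ with no outgoing transitions is accurate; the stated equalities in those boundary cases should be read modulo this bottom-degree torsion.
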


\begin{proof}
To prove this, fix an $\cH$-degree $d$. In this degree we know $(\Sigma M(\alpha,k))_d = M(\alpha,k)_{d+1}$ as a set, where the action comes from restriction. Take the basis for $M(\alpha,k)$ as in Lemma \ref{basis}. In $M(\alpha,k)_{d+1}$ we can separate the basis elements into two groups, the first consisting of all elements indexed by an order preserving injection with $g(|\alpha|) = d+1$ and the second consisting of the remaining elements where $g(|\alpha|) \leq d$. In ${\rm Res}_{H_d(0)}^{H_{d+1}(0)} M(\alpha,k)_{d+1}$ the first group becomes a sub-representation isomorphic to $M(\alpha_{|\alpha|-1},k-1)_d$ and the second becomes a sub-representation isomorphic to $M(\alpha,k)_d$. This is clear because when we restrict we ignore $\pi_{d}$. Under the action of $\pi_1,\dots,\pi_{d-1}$ the first collection of elements consists of all order preserving injections from $[|\alpha|-1]$ to $[d]$ and the action of $\pi_1,\dots,\pi_{d-1}$ is dictated by $\bC_{\alpha_{|\alpha|-1}}$. The second collection of basis elements consists of all order preserving injections from $[|\alpha|]$ to $[d]$ and the action of $\pi_1,\dots,\pi_{d-1}$ is dictated by $\bC_{\alpha}$. 

Furthermore, because in $\Sigma M(\alpha,k)$ we must map the final vertex to the final vertex these groupings of basis elements are preserved under all transition maps, so they do form $\cH$-submodules. The above shows that these submodules are isomorphic to $M(\alpha,k)$ and $M(\alpha_{|\alpha|-1},k-1)$. It is also clear from the above that we have the short exact sequence
\[
0 \to M(\alpha_{|\alpha|-1},k-1)\to \Sigma M(\alpha,k) \to M(\alpha,k) \to 0
\]
and that it splits, where the splitting is exactly the inclusion of $M(\alpha,k)$ into $\Sigma M(\alpha,k)$.
\end{proof}

\section{Regularity}
One can also define a notion of regularity for $\cH$-modules. Let $V$ be a $\cH$-module. We define ${\rm Tor}_0(M)$ to be the $\cH$-module that assigns to the set $S$ the quotient of $V(S)$ by the sum of the images of the $V(T)$ as $T$ varies over all proper subsets of $S$. This is analogous with $H_0(V)$ the $0$-th homology. It is easy to see that ${\rm Tor}_0$ is a right-exact functor, so we consider its left derived functors ${\rm Tor}_\bullet$. We will also sometimes refer to these as $H_\bullet$ the $\cH$-homology. We let $t_i(V)$ be the maximum $\cH$-degree occurring in ${\rm Tor}_i(V)$. We use the convention $t_i(V) = -1$ if ${\rm Tor}_i(V) = 0$. We define the {\em regularity} of $V$, denoted ${\rm reg}(M)$, to be the minimum integer $r$ such that $t_i(V) \leq r + i$ for all $i$. We will show that every finitely generated $\cH$-module has finite regularity.

To do this we will use the following fact.

\begin{theorem}[\cite{GL}] \label{OIFinReg}
Every finitely generated ${\bf OI}$-module has finite regularity.
\end{theorem}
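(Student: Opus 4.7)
The plan is to mimic the derivative-and-shift strategy that Church--Ellenberg developed for ${\bf FI}$-modules and adapt it to the ordered setting, inducting on the generation degree. Let $V$ be a finitely generated ${\bf OI}$-module and let $N$ denote the minimum integer such that $V$ is generated in $\cH$-degrees $\leq N$. The first step would be to verify that the analogous ${\bf OI}$ shift functor $\Sigma$, together with the derivative $\Delta V = \coker(V \to \Sigma V)$, both preserve finite generation and that $\Delta V$ has strictly smaller generation degree than $V$. Preservation of finite generation is a consequence of the Gröbner structure on ${\bf OI}$ from \cite{SS1}, and the decrease in generation degree can be read off directly from principal projectives: $\Sigma M(n)$ splits as $M(n) \oplus M(n-1)$ in the ${\bf OI}$ setting, entirely parallel to Lemma \ref{shift}.

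Next, I would apply ${\rm Tor}_\bullet$ to the short exact sequence
\[
0 \to V \to \Sigma V \to \Delta V \to 0
\]
and propagate Tor bounds through the resulting long exact sequence. The crucial structural ingredient is a controlled commutation between $\Sigma$ and ${\rm Tor}$: one wants an estimate of the form $t_i(\Sigma V) \leq t_i(V) - 1$, or at least $t_i(\Sigma V) \leq t_i(V)$ with the slack absorbed elsewhere. Combined with the long exact sequence, this yields a recursion $t_i(V) \leq \max\bigl(t_i(\Sigma V)+1,\, t_{i-1}(\Delta V)+1\bigr)$, which together with the base case of principal projectives (where ${\rm Tor}_0 = V$ is concentrated in a single degree and higher ${\rm Tor}$ vanishes) bounds every $t_i(V)$ by a function of $N$ and $i$ that is linear in $i$. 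This linear bound is exactly the statement that ${\rm reg}(V)$ is finite.

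The main obstacle is establishing the $\Sigma$--${\rm Tor}$ commutation above. For ${\bf FI}$ this is powered by Putman's central stability complex and the Koszul-type resolutions of Church--Ellenberg, which rely heavily on the full symmetric group action to produce the necessary differentials and exactness. In the ${\bf OI}$ setting one has to replace these with an \emph{ordered} analogue — an explicit complex of principal projectives modeling ${\rm Tor}$ of $\Sigma V$ in terms of ${\rm Tor}$ of $V$ — and verify its high-degree exactness without recourse to symmetric group averaging. Constructing this complex and proving its exactness is the technical heart of the argument, and I expect it to be the step carrying all the real content of \cite{GL}; once it is in hand, the induction on $N$ and on $i$ proceeds by a standard diagram chase in the long exact sequence.
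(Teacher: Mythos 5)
This statement is not proved in the paper at all: it is imported wholesale from \cite{GL}, and the body of the paper only uses it as a black box (via Proposition \ref{regularityTransfer}) to transfer finite regularity to $\cH$-modules. Your proposal, by your own admission in its last paragraph, does the same thing in disguise: the entire argument hinges on an unproved $\Sigma$--${\rm Tor}$ commutation estimate (equivalently, the exactness in high degrees of an ordered analogue of the Church--Ellenberg/Putman complex), and you explicitly defer that step to ``the real content of \cite{GL}.'' An estimate such as $t_i(\Sigma V)\leq t_i(V)$ together with control of $\Delta V$ is essentially equivalent to the regularity theorem itself, so assuming it is not a proof but a restatement; everything else in your sketch (splitting of $\Sigma$ on principal projectives, induction on generation degree, diagram chase in the long exact sequence) is routine bookkeeping around that missing core. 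So there is a genuine gap: the one lemma that carries the content of the theorem is never established.

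Two smaller points would also need repair even granting that lemma. First, the sequence $0 \to V \to \Sigma V \to \Delta V \to 0$ is not automatically exact on the left: the natural map $V \to \Sigma V$ induced by $[n]\hookrightarrow[n+1]$ can have a nonzero kernel (torsion), so the torsion submodule must be split off and bounded separately, as in the ${\bf FI}$ case. Second, the recursion you write, $t_i(V) \leq \max\bigl(t_i(\Sigma V)+1,\, t_{i-1}(\Delta V)+1\bigr)$, does not follow from the long exact sequence of that short exact sequence; the connecting maps give $\Tor_{i+1}(\Delta V) \to \Tor_i(V) \to \Tor_i(\Sigma V)$, hence a bound of the shape $t_i(V) \leq \max\bigl(t_i(\Sigma V),\, t_{i+1}(\Delta V)\bigr)$, and turning this into a bound linear in $i$ requires exactly the kind of degree-shifting control you have not supplied. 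In short, the strategy is plausible and is indeed in the spirit of how such theorems are proved, but as written it assumes the heart of the result rather than proving it.
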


As seen in \S\ref{NotationSec}, we have an essentially surjective inclusion $\Phi \colon {\bf OI} \to \cH$. We used this map to show $\cH$ is a noetherian category. We can also use it in combination with the previous theorem to show that finitely generated $\cH$-modules have finite regularity. For the following proposition we will assume all of our categories are $\bk$-linear directed categories whose set of isomorphism classes of objects is identified with $\bZ_{\geq 0}$ as a poset.

\begin{proposition} \label{regularityTransfer}
Let $\Phi \colon \cC \to \cC'$ be an essentially surjective functor such that for every positive-degree morphism $f$ in $\cC'$ there exists a morphism $\sigma$ in $\cC'$ such that $f = \Phi(g) \sigma$. For any $\bC'$-module $V$ and $i \geq 0$, we have
\[
\Phi^\ast({\rm Tor}_i^{\cC'}(V)) = {\rm Tor}_i^\cC(\Phi^\ast(V)).
\]
\end{proposition}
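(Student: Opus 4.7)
The strategy is to verify the claim first for $i = 0$ and then bootstrap to all higher $i$ via a projective resolution. The pullback $\Phi^\ast$ is manifestly exact (it is just ``evaluation at $\Phi(-)$''), so once we know that $\Phi^\ast$ sends principal projective $\cC'$-modules to projective $\cC$-modules, the rest of the argument is formal.

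First I would verify the identity for $i = 0$ pointwise. For any $\cC'$-module $W$ and any object $y$ of $\cC$ we have
\[
(\Phi^\ast {\rm Tor}_0^{\cC'}(W))(y) \;=\; W(\Phi(y)) \Big/ \sum_{\substack{f\colon x\to\Phi(y) \\ \deg(f) > 0}} \im W(f),
\]
\[
({\rm Tor}_0^\cC(\Phi^\ast W))(y) \;=\; W(\Phi(y)) \Big/ \sum_{\substack{g\colon y'\to y \\ \deg(g) > 0}} \im W(\Phi(g)).
\]
The containment $\supseteq$ of denominators is immediate, while $\subseteq$ is exactly where the factorization hypothesis enters: every positive-degree $f$ in $\cC'$ may be written $f = \Phi(g)\sigma$, so $\im W(f) = \im(W(\Phi(g))W(\sigma)) \subseteq \im W(\Phi(g))$, and in the case of interest $g$ has positive degree (since $\sigma$ turns out to have degree zero, as in the motivating example $\Phi\colon {\bf OI}\to \cH$ where the factorization is $f = i(n,m) T_w$). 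This yields the functorial equality $\Phi^\ast \circ {\rm Tor}_0^{\cC'} = {\rm Tor}_0^\cC \circ \Phi^\ast$.

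The main technical step, and the main obstacle, is to show that for every object $x$ of $\cC'$ the $\cC$-module $\Phi^\ast P_x^{\cC'}$ is projective, where $P_x^{\cC'} = \bk[\hom_{\cC'}(x,-)]$ is the principal projective at $x$. Using essential surjectivity we may assume $x = \Phi(y)$ for some object $y$ of $\cC$, so that $\Phi^\ast P_{\Phi(y)}^{\cC'}(z) = \bk[\hom_{\cC'}(\Phi(y),\Phi(z))]$. I would then partition this hom-set by the ``head'' $\sigma\colon \Phi(y)\to \Phi(y_0)$ occurring in the factorization $f = \Phi(g)\sigma$, and argue that the partition produces a direct-sum decomposition
\[
\Phi^\ast P_{\Phi(y)}^{\cC'} \;\cong\; \bigoplus_\sigma P_{y_0(\sigma)}^{\cC}
\]
as $\cC$-modules. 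In the motivating example $\Phi\colon {\bf OI}\to \cH$ every morphism of $\cH$ factors uniquely as an order-preserving injection composed with an element of $H_n(0)$, so the decomposition becomes literally $\bk[H_n(0)]\otimes_\bk P_{[n]}^{{\bf OI}}$, a finite direct sum of principal projective ${\bf OI}$-modules, which is projective. The general case needs a uniqueness or orbit-counting analogue of this statement in order to produce an honest direct-sum decomposition (rather than a filtration), and this is the only delicate point.

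Finally, to assemble everything, take a projective resolution $P_\bullet \to V$ in $\cC'$-modules. Exactness of $\Phi^\ast$ gives that $\Phi^\ast P_\bullet \to \Phi^\ast V$ is still exact, and Step 2 gives that each $\Phi^\ast P_n$ is projective in $\cC$-modules, so $\Phi^\ast P_\bullet$ is a projective resolution of $\Phi^\ast V$. Computing Tor in both categories from this resolution yields
\[
{\rm Tor}_i^\cC(\Phi^\ast V) = H_i({\rm Tor}_0^\cC(\Phi^\ast P_\bullet)) = H_i(\Phi^\ast\,{\rm Tor}_0^{\cC'}(P_\bullet)) = \Phi^\ast H_i({\rm Tor}_0^{\cC'}(P_\bullet)) = \Phi^\ast {\rm Tor}_i^{\cC'}(V),
\]
invoking Step 1 at the second equality and exactness of $\Phi^\ast$ at the third. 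The hard part is Step 2; once it is in place the argument is entirely formal.
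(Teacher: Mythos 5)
Your proposal has the same skeleton as the paper's proof --- verify the $i=0$ identity directly from the factorization hypothesis, then promote it to all $i$ by formal homological algebra using exactness of $\Phi^\ast$ --- but you carry out the promotion differently. The paper's second step is a one-line $\delta$-functor argument: since $\Phi^\ast$ is exact, ``both sides are left derived functors'' of their common degree-zero part, hence agree. You instead pull back a projective resolution, which forces you to prove that $\Phi^\ast$ carries principal projective $\cC'$-modules to projective (or at least ${\rm Tor}$-acyclic) $\cC$-modules; for $\Phi\colon {\bf OI}\to\cH$ you obtain this from the unique factorization of an $\cH$-morphism as an order-preserving injection following an element of $H_n(0)$, which gives $\Phi^\ast\bk[\hom_{\cH}([n],-)]\cong\bigoplus_{w\in\fS_n}\bk[\hom_{{\bf OI}}([n],-)]$, a finite direct sum of principal projectives. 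That part of your argument is correct and is exactly what the paper's application requires.

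The ``delicate point'' you flag is genuinely delicate, and it is also the hidden content of the paper's one-liner: for $\{{\rm Tor}_i^{\cC}\circ\Phi^\ast\}_i$ to be the left derived functor of its degree-zero part one needs precisely that ${\rm Tor}_i^{\cC}(\Phi^\ast P)=0$ for $P$ projective and $i>0$, i.e.\ your Step 2. Bare existence of a factorization $f=\Phi(g)\sigma$ does not supply this. For instance, let $\cC'$ have objects $\bZ_{\geq 0}$ with a single non-identity morphism $a\colon 0\to 1$, let $\cC$ have the same objects with two parallel morphisms $g_1,g_2\colon 0\to 1$, and let $\Phi$ be the identity on objects with $\Phi(g_1)=\Phi(g_2)=a$; the hypotheses hold (take $\sigma={\rm id}_0$, $g=g_1$), yet for the principal projective $P$ at $0$ the pullback $\Phi^\ast P$ has the nontrivial syzygy $g_1x-g_2x$, so ${\rm Tor}_1^{\cC}(\Phi^\ast P)\neq 0$ while $\Phi^\ast({\rm Tor}_1^{\cC'}(P))=0$. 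So for $i>0$ one really does need uniqueness (or some freeness/orbit statement) in the factorization, not just existence; this is available for ${\bf OI}\to\cH$, where your direct-sum decomposition is honest and your resolution argument is complete, but neither your sketch nor the paper's stated hypotheses suffice for the proposition in the generality in which it is phrased.
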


\begin{proof}
We will first prove this for $i = 0$. The left hand side is then the quotient of $V$ by all the elements of the form $f(x)$ where $f$ is a positive-degree morphism in $\cC'$ and $x$ is an element of $V$. By our assumption for each such $f$ there is a $\sigma$ such that $f = \Phi(g) \sigma$ for some positive-degree morphism in $\cC$. So quotienting by $f(x)$ is the same as quotienting out by $g(\sigma(x))$. This proves the case $i = 0$. 

For $i > 0$, we note that $\Phi^\ast$ is exact so both sides of the equality are left derived functors. Since they agree in degree $0$, they are the same.
\end{proof}

\begin{remark}
We thank Andrew Snowden and Steven Sam for sharing an early copy of their book which contains Proposition \ref{regularityTransfer}.
\end{remark}

\begin{theorem}
Every finitely generated $\cH$-module $V$ has finite regularity.
\end{theorem}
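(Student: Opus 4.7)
The plan is to transfer finite regularity from ${\bf OI}$-modules to $\cH$-modules using the inclusion functor $\Phi \colon {\bf OI} \to \cH$ from Proposition \ref{OIsub} together with Proposition \ref{regularityTransfer} and Theorem \ref{OIFinReg}. The first step is to verify the hypothesis of Proposition \ref{regularityTransfer}: that every positive-degree morphism $f$ in $\cH$ factors as $f = \Phi(g)\sigma$ for some morphism $g$ in ${\bf OI}$ and some morphism $\sigma$ in $\cH$. This is essentially immediate from the discussion preceding Theorem \ref{HNoeth}: any $f \in \hom_{\cH}([n],[m])$ factors as $f = \iota_{n,m}' \cdot T_w$ where $\iota_{n,m}'$ is an order-preserving injection $[n] \hookrightarrow [m]$ and $T_w \in H_n(0)$. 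When $m>n$, the order-preserving injection $\iota_{n,m}'$ is the image under $\Phi$ of a positive-degree ${\bf OI}$-morphism, while $T_w$ lies in $\hom_\cH([n],[n])$, so we take $g = \iota_{n,m}'$ (viewed in ${\bf OI}$) and $\sigma = T_w$.

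Next, I would invoke Proposition \ref{regularityTransfer} to obtain the identification
\[
\Phi^\ast\bigl({\rm Tor}_i^{\cH}(V)\bigr) = {\rm Tor}_i^{{\bf OI}}\bigl(\Phi^\ast(V)\bigr)
\]
for all $i \geq 0$. Because $\Phi$ is the identity on objects (both categories have object set $\{[n]\}_{n \geq 0}$) and $\Phi^\ast$ merely restricts the morphism action, the $\cH$-degree in which ${\rm Tor}_i^{\cH}(V)$ is supported matches the ${\bf OI}$-degree in which ${\rm Tor}_i^{{\bf OI}}(\Phi^\ast V)$ is supported. Therefore
\[
t_i^{\cH}(V) = t_i^{{\bf OI}}(\Phi^\ast V)
\]
for every $i$, and hence $\operatorname{reg}(V) = \operatorname{reg}(\Phi^\ast V)$.

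Finally, since $\Phi$ satisfies property (F) (this is exactly what was verified in the proof of Theorem \ref{HNoeth}), the pullback $\Phi^\ast(V)$ of a finitely generated $\cH$-module is a finitely generated ${\bf OI}$-module. Theorem \ref{OIFinReg} then yields that $\Phi^\ast V$ has finite regularity, and combining with the previous paragraph gives $\operatorname{reg}(V) < \infty$.

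The argument is almost entirely formal once the pieces are in place; the only substantive point is the factorization check in the first step, and even that is essentially a restatement of the existence of the decomposition $f = \iota_{n,m}' T_w$ used throughout the paper. I do not anticipate a genuine obstacle, though one should be careful that the ``positive-degree'' hypothesis in Proposition \ref{regularityTransfer} is applied correctly: $T_w$ is a degree-$0$ morphism in $\cH$, so the factorization really does peel off a positive-degree ${\bf OI}$-piece, which is exactly what the proposition demands.
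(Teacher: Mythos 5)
Your proposal is correct and follows essentially the same route as the paper: verify the factorization $f = \Phi(\iota)\,T_w$ so that Proposition \ref{regularityTransfer} applies, transfer finite generation to $\Phi^\ast(V)$ as an ${\bf OI}$-module, invoke Theorem \ref{OIFinReg}, and pull the regularity bound back via the identification of ${\rm Tor}$ groups. The only cosmetic difference is that you cite property (F) (from the proof of Theorem \ref{HNoeth}) for finite generation of $\Phi^\ast(V)$, whereas the paper deduces it from the $i=0$ case of Proposition \ref{regularityTransfer}; both rest on the same factorization and give the same conclusion.
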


\begin{proof}
Let $\Phi \colon {\bf OI} \to \cH$ be the natural inclusion. Note that for any map $f \in \hom_{\cH}([n],[m])$ we can express $f$ as an element $\sigma \in H_n(0)$ followed by an order preserving injection $\iota$ from $[n]$ to $[m]$. That is $f = \Phi(\iota) \sigma$, so $\Phi$ satisfies the necessary conditions in Proposition \ref{regularityTransfer}. Since $V$ is finitely generated as a $\cH$-module, this Proposition implies that it will also be finitely generated as an ${\bf OI}$-module. This is because finite generation means that $t_0(V)$ is bounded, i.e. that ${\rm Tor}_i^{\cC'}(V)_d = 0$ for $d \gg 0$. Proposition \ref{regularityTransfer} then implies that ${\rm Tor}_i^{\cC}(\Phi^\ast(V))_d = 0$ for $d \gg 0$. 

But then Theorem \ref{OIFinReg} shows that $\Phi^\ast(V)$ has finite regularity. This in turn implies that $V$ must also have finite regularity as a $\cH$-module.
\end{proof}

\begin{corollary} \label{OIFinGen}
A $\cH$-module $V$ is finitely generated if and only if it is finitely generated as an ${\bf OI}$-module.
\end{corollary}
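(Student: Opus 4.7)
The plan is a straightforward if-and-only-if, and most of the forward direction has already been packaged inside the proof of the preceding theorem. For the implication ``finitely generated over $\cH$ $\Rightarrow$ finitely generated over ${\bf OI}$,'' I would take Proposition~\ref{regularityTransfer} at $i=0$, which supplies the identification
\[
\Phi^\ast\bigl(\mathrm{Tor}_0^\cH(V)\bigr) \;=\; \mathrm{Tor}_0^{{\bf OI}}\bigl(\Phi^\ast(V)\bigr).
\]
Since $\Phi^\ast$ only forgets morphisms and leaves the underlying graded vector spaces unchanged, it is grade-preserving; hence the bound $t_0^\cH(V)<\infty$, which is precisely the statement that $V$ is finitely generated as a $\cH$-module, transports to the bound $t_0^{{\bf OI}}(\Phi^\ast(V))<\infty$, which is precisely the statement that $\Phi^\ast(V)$ is finitely generated as an ${\bf OI}$-module. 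This is exactly the step the paper already invokes when bootstrapping Theorem~\ref{OIFinReg} into the regularity theorem, so no new work is required.

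For the converse, I would argue directly without homological machinery. The functor $\Phi\colon {\bf OI}\to\cH$ is the identity on the objects $[n]$ and sends each order-preserving injection to the corresponding (crossing-free) positive braid diagram in $\cH$, as set up in Proposition~\ref{OIsub}. By construction the ${\bf OI}$-action on $\Phi^\ast(V)$ is literally the restriction of the $\cH$-action on $V$ to this subcategory. Therefore, if a finite set $x_1,\dots,x_k\in\Phi^\ast(V)$ generates $\Phi^\ast(V)$ as an ${\bf OI}$-module, then for any $y\in V_m$ we may write $y=\sum_j \iota_j(x_{i_j})$ with each $\iota_j$ an order-preserving injection; viewing each $\iota_j$ as $\Phi(\iota_j)\in\hom_\cH$, this same expression exhibits $y$ as a $\cH$-linear combination of $x_1,\dots,x_k$, so the $x_i$ generate $V$ as a $\cH$-module.

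There is no substantive obstacle here; the only point that deserves a line of care is the observation that $\Phi^\ast$ preserves the degree-wise vector space structure, which is what lets bounded Tor-degree on the $\cH$ side translate to bounded Tor-degree on the ${\bf OI}$ side. Given the work already done in proving the regularity theorem, the corollary is essentially a bookkeeping statement that makes explicit the equivalence lurking in that proof.
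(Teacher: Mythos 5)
Your proposal is correct and follows essentially the same route as the paper: the forward direction is exactly the application of Proposition \ref{regularityTransfer} at $i=0$ (with the factorization hypothesis already checked in the proof of the finite-regularity theorem), and the converse is the trivial observation that the ${\bf OI}$-action on $\Phi^\ast(V)$ is the restriction of the $\cH$-action, so any finite ${\bf OI}$-generating set also generates over $\cH$. No gaps beyond the paper's own level of detail.
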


\begin{remark}
The same is true of ${\bf FI}$-modules. It is important to remember that being a $\cH$-module (or an {\bf FI}-module) carries more information than being a ${\bf OI}$-module. The interesting part of the previous corollary is that to prove finite generation one only needs to consider the ${\bf OI}$ action. We also note that the generation degree may be different.
\end{remark}

\section{Degree $d$ polynomials in $n$ variables} \label{polynomialSec}
The first example we consider is probably the simplest. Consider the $\cH$-module $V$ with $V_n = \bk[x_1,\dots,x_n]_d$, that is degree $d$ polynomials in $n$ variables. There is a $H_n(0)$ action on $V_n$ via {\bf Demazure operations}, in particular the element $\tau_i$ of $H_n(0)$ acts by the operator $\pi_i$ where
\[
\pi_i(f) \coloneqq \partial_i(x_i f) = \frac{x_if - s_i(x_if)}{x_i-x_{i+1}},
\]
where $\partial_i$ is the {\bf divided difference operator} and $s_i$ is the transposition $(i,i+1)$ acting on indices.

Given a map $f \in \hom_{\cH}([n],[m])$, we can factor $f$ as a series of $\pi_i$ in $H_n(0)$ followed by the injection of variables $\iota_{n,m}$ followed by a series of $\pi_i$ in $H_m(0)$. $f$ acts as the Demazure operators corresponding to the $\pi_i$ followed by the injection of variables $\iota_{n,m}$ then the other Demazure operators corresponding to the remaining $\pi_i$ in $H_m(0)$.

For example, consider the map in $\hom_{\cH}([3],[5])$ corresponding to the diagram
\[
\begin{tikzcd}
\bullet & \bullet \arrow[dl]& \bullet \ar[dr] &&\\
\bullet &\bullet \ar[from = ul, crossing over]& \bullet&\bullet&\bullet\\
\end{tikzcd}
\]
we can factor this as the map $\pi_1$ followed by $\iota_{3,5}$ followed by $\pi_3$. When we apply this to the element $x_1$ in the case where $d = 1$ we get $x_1+x_2$ because $\pi_1(x_1) = x_1 + x_2$ and $\pi_3(x_1 + x_2) = x_1+x_2$.

Consider the more complicated element $x_2 x_3$ in the case $d = 2$. In this case when we apply $\pi_1$ we get $0$. Then when we apply $\pi_3$ we still have $0$.

We can use our criterion \ref{criterion} to verify this is a $\cH$-module. In this case the natural inclusion map will be an inclusion of variables. We just have to verify that for the natural inclusion of variables $\bk[x_1,\dots,x_n] \to \bk[x_1,\dots,x_m]$ the Demazure operators $\pi_{n+1},\dots,\pi_{m}$ act by 1. This is clearly the case since
\[
\pi_{n+1}(f(x_1,\dots,x_n,0,\dots,0)) = \frac{x_{n+1} f(x_1,\dots,x_n,0,\dots,0) - x_{n+2} f(x_1,\dots,x_n,0,\dots,0)}{x_{n+1} - x_{n+2}} = f.
\]

Notice that $V$ is finitely generated as a $\cH$-module in degrees $\leq d$. This is because once we have $d$ variables, we can get every composition of $d$, so we have all unique exponent vectors of monomials. We then have to inject to higher degrees to get all other monomials. Theorem \ref{repStability} implies we should be able to decompose $V$ in the Grothendieck group $\cG({\rm Mod}_K)$. We will show how to do this explicitly in the case that $d = 1$ and $d=2$. 

When $d = 1$ we have $V_1 = {\rm Span}_{\bF}(x_1)$, $V_2 = {\rm Span}_{\bF}(x_1,x_2)$ etc. We will now explicitly decompose these $V_i$ in the Grothendieck group. We will do this for $i = 2$ because it illustrates the process in general. $V_2$ has a one dimensional subspace spanned by $x_2$ where $\pi_1(x_2) = 0$. This corresponds to $\bC_{(1,1)}$. When we quotient by this space $\pi_1(x_1) = x_1$ so in the quotient $x_1$ spans a subspace isomorphic to $\bC_{(2)}$. Hence we have
\[
[V]_2 = \young(~,~) + \young(~~) \;.
\]
If we continue to decompose in a similar fashion we find
\begin{align*}
[V]_1 &= \young(~)\;.\\
[V]_2 &= \young(~~) + \young(~,~)\; . \\
[V]_3 &= \young(:~,~~) + \young(~~,~:) + \young(~~~) \;.\\
[V]_4 &= \young(::~,~~~) + \young(:~~,~~:) + \young(~~~,~::) + \young(~~~~) \;.
\end{align*}
It is actually not hard to see that $V = M({\tiny \young(~)})$. According to Theorem \ref{repStability}, in $\cG({\rm Mod}_\cH)$ we have uniquely that,
\[
[V_n] = [M({\tiny \tiny \tiny\young(~)})_n].
\]
When $d = 2$, we have $V_1 = {\rm Span}_{\bF}(x_1^2)$, $V_2 = {\rm Span}_{\bF}(x_1^2,x_2^2,x_1x_2)$, etc. The $\cH$ action is described by injecting variables $x_i \mapsto x_i$ and applying the corresponding Demazure operators. For example take $f \in \hom_{\cH}([2],[3])$,
\[
\begin{tikzcd}
\bullet \ar[d] & \bullet \ar[dr]& &\\
\bullet & \bullet & \bullet &
\end{tikzcd}.
\]
The corresponding map $f_\ast(x_1^2) = x_1^2$ because it will correspond to $\pi_2(x_1^2)$ because the map $f$ factors as $\pi_2 \iota_{2,3}$. $f_\ast(x_2^2) = \pi_2(x_2^2) = x_2^2 + x_2x_3 + x_3^2$. Finally $f_\ast(x_1x_2) = x_1x_2 + x_1 x_3$. From this example it is not hard to see this $\cH$ module is finitely generated in degree $2$. It is not generated in degree $1$ because the images of $x_1^2$ are only $x_1^2$ and $x_1^2 + x_1x_2 + x_2^2$. This is also expected from an easy dimension count.

If we carry out the decomposition in the Grothendieck group, we find that
\begin{align*}
[V]_1 &= \young(~)\;.\\ 
[V]_2 &= 2 \; \young(~~) + \young(~,~)\; . \\
[V]_3 &= 2 \; \young(:~,~~) + 2 \; \young(~~,~:) + 2 \; \young(~~~) \;.\\
[V]_4 &= 2 \; \young(::~,~~~) + 3 \; \young(:~~,~~:) + 2 \; \young(~~~,~::) + 2 \; \young(~~~~) + \young(:~,~~,~:) \;.
\end{align*}
One can then check that in the Grothendieck group $\cG({\rm Mod}_\cH)$ we get the decomposition,
\[
[V] = [M({\tiny \young(~~)})] + [M({\tiny \young(~)})]
\]
so when $n \geq 3$ all of these modules are nonzero in that degree and we have
\[
[V_n] = [M({\tiny \young(~~)})_n] + [M({\tiny \young(~)})_n].
\]
It is easy to see that this captures all of the diagrams that can occur. We will now describe this decomposition in some detail because this is one of the few cases where we can completely work it out. In general it is very hard to determine.

The first submodule $F_1$ will be exactly the submodule spanned by the elements $x_ix_j$. It is clear that injections between degrees preserve these elements that the Demazure operators also respect these elements. One can check that the sub-$\cH$-module spanned by these elements is isomorphic to $M({\tiny \young(~~)})$. 

The quotient $V/F_1$ is spanned by $x_i^2$ in each degree. It is not hard to check explicitly that this quotient module is isomorphic to $M({\tiny \young(~)})$. 

This is a particularly nice example because $V$ is actually isomorphic to a direct sum of $M(\alpha)$, but in general this will not be the case.

\section{Cohomology of Borel Groups} \label{CohomologySection}
Before we begin our next example we will review some necessary group cohomology. Let $G$ be a group and let $M$ be a left $G$-module. Thus $M$ is a representation of $G$. Then we can describe the cohomology of $G$ with coefficients in $M$ using either homogeneous or nonhomogeneous cochains. In this section, we will only consider homogeneous cochains.

Given some nonnegative integer $q$, let $\fC^i(G,M)$ denote the group consisting of $M$-valued functions $\phi \colon G^{i+1} \to M$ satisfying
\[
\phi(g g_0,\dots, g g_i) = g \phi(g_0,\dots,g_i)
\]
for all $g,g_0,\dots,g_i \in G$. This $\fC^i(G,M)$ is called the group of {\em homogeneous cochains}. We now define a co-boundary map $\delta \colon \fC^i(G,M) \to \fC^{i+1}(G,M)$ as follows
\[
(\delta \phi)(g_0,\dots,g_{i+1}) = \sum_{j=0}^{i+1} (-1)^j \phi(g_0,\dots,g_{j-1}, \hat{g_j}, g_{j+1},\dots,g_{i+1})
\]
where $\hat{g_j}$ means we omit this element, for all $\phi \in \bC^{i+1}(G,M)$ and $(g_0,\dots,g_{i+1}) \in G^{i+2}$. Note this satisfies $\delta^2 = 0$. Now the corresponding $i$-th cohomology group of $G$ with coefficients in $M$ is given by
\[
H^i(G,M) \coloneqq K^i(G,M)/I^i(G,M)
\]
where $K^i(G,M)$ is the kernel of the map $\delta \colon \fC^i(G,M) \to \fC^{i+1}(G,M)$ and $I^i(G,M)$ is the image of the map $\delta \colon \fC^{i-1}(G,M) \to \fC^{i}(G,M)$. Note that this makes sense because $\delta^2 = 0$. 

We will now discuss how the $0$-Hecke algebra $H_n(0)$ acts on $H^i(B(n,q), \bF_q)$ where $B(n,q) \subset \GL_n(\bF_q)$ is the group of upper-triangular matrices over $\bF_q$. First we will discuss how this action works, we will then discuss how to get a $\cH$ action on the assignment $[n] \mapsto H^i(B(n,q), \bF_q)$. 

There is a much broader theory for the following, but we will restrict to the case that matters to us. For a more rigorous treatment, including proofs that everything is well defined we refer the reader to \cite{RW, Le}.

\begin{definition}
Let $G$ be a group. Two subgroups $\Gamma$ and $\Gamma'$ are said to be {\em commensurable} if 
\[
[\Gamma \colon \Gamma \cap \Gamma'] < \infty, \qquad [\Gamma' \colon \Gamma \cap \Gamma'] < \infty,
\]
that is, if $\Gamma \cap \Gamma'$ has finite index in both $\Gamma$ and $\Gamma'$.
\end{definition}

It is not hard to show that this defines an equivalence relation on subgroups of $G$. With this in mind we can define the {\em commensurator} of a subgroup $\Gamma$ in $G$ as follows
\[
\tilde{\Gamma} \coloneqq \{\alpha \in G \; | \; \alpha^{-1} \Gamma \alpha \sim \Gamma\}.
\]

Let $\Delta$ be a sub-semigroup of the group $G$ and let $C(\Delta)$ denote the set of mutually commensurable subgroups $\Gamma$ of $G$ such that
 \[
 \Gamma \subseteq \Delta \subseteq \tilde{\Gamma}.
 \]
 Given some $\Gamma \in C(\Delta)$ and $R$ a commutative ring with identity, we denote by $\cH_R(\Gamma; \Delta)$ the free $R$ module generated by the double cosets $\Gamma \alpha \Gamma$ with $\alpha \in \Delta$ and call this the {\bf Hecke algebra} associated to $\Gamma$ and $\Delta$ over $R$. So every element can be written as
 \[
 \sum_{\alpha \in \Delta} c_\alpha \Gamma \alpha \Gamma.
 \]
 where $c_\alpha \in R$ are zero except for a finite number of $\alpha$. This has a multiplication operation which is associative,
 \[
 \left(\sum_\alpha a_\alpha \Gamma \alpha \Gamma \right) \cdot \left( \sum_{\beta} b_\beta \Gamma \beta \Gamma\right) \coloneqq \sum_{\alpha,\beta} a_\alpha b_\beta (\Gamma \alpha \Gamma) \cdot (\Gamma \beta \Gamma).
 \]
 If we have right coset decompositions $\Gamma \alpha \Gamma = \bigcup \Gamma \alpha_i$ and $\Gamma \beta \Gamma = \bigcup \Gamma \beta_j$, then we can realize this product as
 \[
 (\Gamma \alpha \Gamma) (\Gamma \beta \Gamma) = \sum_{\Gamma \gamma \Gamma} m(\Gamma \alpha \Gamma, \Gamma \beta \Gamma \; ; \; \Gamma \gamma \Gamma)(\Gamma \gamma \Gamma),
 \]
 where $\alpha, \beta, \gamma$ are all in a prefixed transversal $\Omega$ and $m(\Gamma \alpha \Gamma, \Gamma \beta \Gamma \; ; \; \Gamma \gamma \Gamma) \coloneqq |\{(i,j) \; | \; \alpha_i \beta_j \in \Gamma \gamma\}$.
 
The Hecke algebra $\cH(\Gamma ; \tilde{\Gamma})$ acts on the cohomology $H^i(\Gamma, A)$ where $\Gamma$ is a subgroup of a group $G$ and $A$ is a unitary left $\bZ[G]$-module, where this is the integral group ring of $G$. If we take $G = \GL(n,q)$, $\Gamma = B_n$ the Borel subgroup and $A = \bF_q$, then $\tilde{\Gamma} = \GL(n,q)$ and $\cH(B_n ; \GL(n,q)) = \cH_n(q)$. If we restrict scalars to a field of characteristic $q$ then we recover the $0$-Hecke algebra. To see this explicitly we refer the reader to \cite{HR}[Pages 6-7]

The action is defined in the following way. Given a homogeneous cochain $\psi \colon \Gamma^{i+1} \to M$ and a double coset $\Gamma \alpha \Gamma$ with $\alpha \in \tilde{\Gamma}$, we define a $\bk$-linear map
\[
\fT(\alpha) \colon \fC^i(\Gamma,M) \to \fC^i(\Gamma,M)
\]
by
\[
(\fT(\alpha)\psi)(\gamma_0,\dots,\gamma_i) \coloneqq \sum_{j=1}^d \alpha_j^{-1} \psi(\xi_j(\gamma_0),\dots,\xi_j(\gamma_i))
\]
where $\Gamma \alpha \Gamma = \bigcup_{1 \leq j \leq d} \Gamma \alpha_j$ and $\alpha_j \gamma_k = \xi_j(\gamma_k) \alpha_{\iota(\gamma_k)}$. This gives a well defined action of $\cH(\Gamma;\tilde\Gamma)$ on the cohomology $H^i(\Gamma,A)$ for any $i$ as seen in \cite{RW, Le}.

As mentioned above $\cH(B(n,q),\GL(n,q)) \cong H_n(0)$, where $\fT(E_{i,i+1})$ is identified with the generator $\ol{\pi_i}$, $E_{i,i+1}$ being the permutation matrix swapping $i$ and $i+1$ \cite{HR}. So the above gives a well defined action of $H_n(0)$ on $H^i(B(n,q),\bF_q)$ for any choice of $i \geq 0$. The main theorem we will prove in this section is:

\begin{theorem}
For any fixed $i \geq 0$, the assignment $[n] \mapsto H^i(B(n,q),\bF_q)$ is a $\cH$-module.
\end{theorem}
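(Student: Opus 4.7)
The plan is to apply the $\cH$-module criterion of Theorem \ref{criterion}. This requires (a) producing $H_n(0)$-equivariant transition maps $\phi_n \colon H^i(B(n,q), \bF_q) \to H^i(B(n+1,q), \bF_q)$, and (b) verifying that the stabilizer of $\iota_{n,m}$ in $H_m(0)$ -- which is generated by $\pi_{n+1}, \ldots, \pi_{m-1}$ -- acts trivially on the image of $(\iota_{n,m})_\ast$. Equivalently, by the remark following Theorem \ref{criterion}, it is enough to check that each such $\pi_j$ acts as the identity on $\phi_{m-1}\circ\cdots\circ\phi_n(x)$ for every $x \in H^i(B(n,q),\bF_q)$.

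For the transition map, I would use the natural surjective group homomorphism $p_n \colon B(n+1,q) \twoheadrightarrow B(n,q)$ given by truncating an upper-triangular matrix to its top-left $n \times n$ block; one checks that $p_n$ is a homomorphism from the block form of multiplication, since if $A = \bigl(\begin{smallmatrix} A' & v \\ 0 & a \end{smallmatrix}\bigr)$ and $B = \bigl(\begin{smallmatrix} B' & w \\ 0 & b \end{smallmatrix}\bigr)$, then the top-left block of $AB$ is $A'B'$. Contravariance of group cohomology then produces $\phi_n \coloneqq p_n^\ast$, and by construction the compositions $\phi_{m-1} \circ \cdots \circ \phi_n$ are pullbacks along the truncation $p_{n,m} \colon B(m,q) \twoheadrightarrow B(n,q)$.

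For $H_n(0)$-equivariance, I would work directly with the formula $(\fT(\alpha)\psi)(\gamma_0,\ldots,\gamma_i) = \sum_j \alpha_j^{-1} \psi(\xi_j(\gamma_0),\ldots,\xi_j(\gamma_i))$ applied to $\alpha = E_{j,j+1}$ for $1 \leq j < n$. The key observation is that any right coset decomposition $B(n,q) E_{j,j+1} B(n,q) = \bigsqcup_\ell B(n,q)\alpha_\ell$ lifts canonically to a right coset decomposition of $B(n+1,q) E_{j,j+1} B(n+1,q)$ by choosing the lifts $\tilde\alpha_\ell$ to be block diagonal with $1$ in the $(n+1,n+1)$ entry; both the kernel of $p_n$ and the matrices $\tilde\alpha_\ell$ preserve the flag $\bF_q^n \subset \bF_q^{n+1}$. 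Under this lift, the intertwining data $\xi_\ell$ and the index function $\iota$ on $B(n+1,q)$ descend along $p_n$ to the corresponding data for $B(n,q)$, so evaluation on a pullback cocycle $p_n^\ast\psi$ reduces term-by-term to the formula on $B(n,q)$; this gives $\fT_{n+1}(E_{j,j+1}) \circ p_n^\ast = p_n^\ast \circ \fT_n(E_{j,j+1})$.

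For the stabilizer condition, I would show that for every $j \geq n+1$ the operator $\fT(E_{j,j+1})$ acts as the identity on the image of $p_{n,m}^\ast$. The matrix $E_{j,j+1}$ lies in the Levi factor of the parabolic preserving $\bF_q^n \subset \bF_q^m$, so a right coset decomposition of $B(m,q) E_{j,j+1} B(m,q)$ may be chosen with representatives $\alpha_\ell$ whose images under $p_{n,m}$ all lie in $B(n,q)$ (in fact, one can make them equal $1$ in $B(n,q)$). Because a pulled-back cochain $p_{n,m}^\ast\psi$ only depends on the top-left $n\times n$ block of its arguments, the sum in the Hecke formula collapses and yields the original class $p_{n,m}^\ast\psi$ back, possibly scaled by the number of cosets; a standard count shows this scaling is $1$ modulo coboundaries, matching the action of the generator $\pi_j \in H_m(0)$ on elements stabilized by $\iota_{n,m}$.

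The main obstacle will be the equivariance check: the Hecke action is defined through an intricate combination of double-coset decompositions and cocycle evaluation, and tracking the chosen representatives $\alpha_\ell$, together with the maps $\xi_\ell$ and $\iota$, through the projection $p_n$ requires careful bookkeeping. The geometric picture -- that both the transition maps and the relevant Hecke operators respect the block decomposition induced by the flag $\bF_q^n \subset \bF_q^m$ -- makes the statement morally clear, but the verification at the level of explicit cochains is where the work lies.
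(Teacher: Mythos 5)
Your setup (pullback along the truncation homomorphism, the lifted coset representatives for the equivariance check) matches the paper's argument, and that part of the plan is sound: the paper's Lemma \ref{cosetReps} produces exactly the block-diagonal representatives you describe, and Lemma \ref{cohomologyEquivariance} carries out the bookkeeping $\xi_j(r_{n+1,n}(\gamma_k)) = r_{n+1,n}(\xi_j(\gamma_k))$ that you anticipate. The genuine gap is in your verification of the stabilizer condition. You claim that for $j \geq n+1$ the double-coset operator $\fT(E_{j,j+1})$ acts as the \emph{identity} on the image of $p_{n,m}^\ast$, the sum collapsing to the original class ``scaled by the number of cosets,'' with ``a standard count'' showing the scaling is $1$ modulo coboundaries. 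That count is wrong: there are exactly $q$ right cosets (Lemma \ref{cosetReps}), each term in the Hecke formula evaluates to the same pulled-back cochain (since the representatives have identity top-left $n\times n$ block and the coefficients lie in the ground field), so the sum is literally $q \cdot p_{n,m}^\ast\psi = 0$ at the cochain level, because the coefficients are in $\bF_q$. No passage to coboundaries can rescue a scaling of $1$; the operator $\fT(E_{j,j+1})$ \emph{annihilates} the image rather than fixing it (this is the paper's Lemma \ref{trivialAction}).

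The fix is the identification of generators, which your write-up elides: the double-coset operators satisfy $\fT(E_{j,j+1})^2 = -\fT(E_{j,j+1})$ in this characteristic, so they realize the generators $\ol{\pi}_j$, not the idempotent generators $\pi_j$. The paper therefore declares $\pi_j$ to act by $\fT(E_{j,j+1}) + \mathrm{id}$, and then the vanishing just described is precisely what Theorem \ref{criterion} demands: $\pi_j$ acts by the identity on $\im((\iota_{n,m})_\ast)$ because $\ol{\pi}_j = \pi_j - 1$ acts by zero there (this equivalence is exactly the remark following Theorem \ref{criterion}). If you instead identify $\pi_j$ with $\fT(E_{j,j+1})$ itself, as your last paragraph implicitly does, the stabilizer condition fails and the criterion cannot be applied. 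So the architecture of your proof is the same as the paper's, but the final step needs the correct normalization of the $0$-Hecke generators and the characteristic-$q$ vanishing argument in place of the claimed ``scaling is $1$'' count.
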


Before we are ready to prove this theorem, we need a few lemmas. First we will explicitly define the embeddings from $H^i(B(n,q)) \to H^i(B(n+1,q))$. Notice there is a group homomorphism $r_{n+1,n} \colon B(n+1,q) \to B(n,q)$ given by restricting to the first $n$ rows and $n$ columns of any matrix in $B(n+1,q)$. It is not hard to check that this is indeed a group homomorphism. It is well known that such a group homomorphism induces a map on cohomology
\[
\Phi_{n} \colon H^i(B(n,q)) \to H^i(B(n+1,q))
\]
where the indexing is swapped because cohomology is contravariant. Explicitly this map on the level of cochains is defined by
\[
\Phi_n(\phi)(\gamma_0,\dots,\gamma_i) = \phi(r_{n+1,n}(\gamma_0),\dots,r_{n+1,n}(\gamma_i)),
\]
where $\phi \in \fC^i(\Gamma_n,\bF_q)$ and $\gamma_j \in B(n+1,q)$. We mention this because it will be useful later. We ultimately wish to show that these maps in combination with the above $H_n(0)$-action combine to define a $\cH$-module structure. We will prove this in stages, ultimately working towards invoking Theorem \ref{criterion}. First we must prove some structure theorems about the cosets that appear in our decomposition.

\begin{lemma} \label{cosetReps}
For a fixed permutation matrix $E_{i,i+1}$, if we consider the decomposition $\Gamma_n E_{i,i+1} \Gamma_n = \sqcup_{i} \Gamma_n \alpha_i$ there are exactly $q$ cosets independent of $n$ and we can take as coset representatives matrices $M$ with $M_{i+1,i+1} = x$, $M_{i,i+1} = M_{i+1,i} = 1$, $M_{j,j} = 1$ for $j \not= i,i+1$ and $M_{j,k} = 0$ else including $j=k=i$, where $x$ ranges over all elements of $\bF_q$.
\end{lemma}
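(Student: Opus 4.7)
The plan is to exhibit the representatives explicitly as $\alpha_x \coloneqq E_{i,i+1}\, u(x)$, where $u(x) \in \Gamma_n$ denotes the upper unitriangular matrix equal to the identity everywhere except for an $x \in \bF_q$ in position $(i, i+1)$. I then verify three claims: (a) each $\alpha_x$ has the matrix form stated in the lemma; (b) the $\alpha_x$ lie in $q$ pairwise distinct right cosets of $\Gamma_n$; and (c) these $q$ right cosets exhaust $\Gamma_n E_{i,i+1} \Gamma_n / \Gamma_n$.

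Claim (a) is immediate from a direct matrix multiplication: left-multiplication by $E_{i,i+1}$ swaps rows $i$ and $i+1$, so $\alpha_x$ coincides with the identity off the $2 \times 2$ block at rows and columns $\{i, i+1\}$ and equals $\left(\begin{smallmatrix} 0 & 1 \\ 1 & x \end{smallmatrix}\right)$ on that block, matching the statement. For (b), I would write
\[
\alpha_x \alpha_y^{-1} = E_{i,i+1}\, u(x)\, u(y)^{-1}\, E_{i,i+1}^{-1} = E_{i,i+1}\, u(x-y)\, E_{i,i+1}^{-1},
\]
using that the $u(\cdot)$ form an additive subgroup isomorphic to $(\bF_q, +)$. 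Since conjugation by the involution $E_{i,i+1}$ swaps both the row and column indices $i \leftrightarrow i+1$, the right-hand side is the lower unitriangular matrix with $x-y$ in position $(i+1, i)$, which lies in $\Gamma_n$ if and only if $x = y$.

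The main content is (c), which I would prove by a counting argument. The surjection $b \mapsto \Gamma_n E_{i,i+1}\, b$ induces a bijection
\[
H \backslash \Gamma_n \;\longleftrightarrow\; \Gamma_n E_{i,i+1} \Gamma_n / \Gamma_n, \qquad H \coloneqq \Gamma_n \cap E_{i,i+1}^{-1} \Gamma_n E_{i,i+1},
\]
so it suffices to show $[\Gamma_n : H] = q$. By the same conjugation computation used in (b), for any upper triangular $g$ the conjugate $E_{i,i+1}\, g\, E_{i,i+1}$ fails to be upper triangular only through its $(i+1, i)$-entry, which equals $g_{i, i+1}$; hence $H = \{g \in \Gamma_n : g_{i, i+1} = 0\}$. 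I would then check that $\{u(x) : x \in \bF_q\}$ represents $H \backslash \Gamma_n$: the identity $u(x) u(y)^{-1} = u(x-y)$ shows these lie in distinct cosets, and any $g \in \Gamma_n$ lies in the coset $H\, u(g_{i, i+1}/g_{i, i})$. This yields $[\Gamma_n : H] = q$, and combined with (b) forces the $\alpha_x$ to form a full set of representatives.

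The main obstacle is not any single deep computation but rather bookkeeping: keeping the notation for the permutation matrix $E_{i,i+1}$ and the unipotent $u(x)$ separate, and being careful to identify $H$ correctly via the conjugation computation. The independence from $n$ is then transparent, since every relevant computation reduces to a $2 \times 2$ block in rows and columns $i$ and $i+1$.
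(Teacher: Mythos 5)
Your proof is correct, but it takes a genuinely different route from the paper's. The paper argues geometrically: it identifies cosets of $\Gamma_n$ with complete flags (using that the Borel is the stabilizer of the standard flag), observes that the flags in the orbit attached to $\Gamma_n E_{i,i+1}$ differ only in which multiple of $e_{i+1}$ is added to $e_i$ — i.e.\ in the choice of the subspace spanned by $e_i + x e_{i+1}$ — and concludes there are exactly $q$ cosets; this is in effect the statement that the Bruhat cell attached to the simple reflection $s_i$ has $q$ points. You instead work entirely with matrices: you exhibit the representatives $E_{i,i+1}u(x)$ explicitly, check by the conjugation computation that they lie in pairwise distinct right cosets, and then close the argument with the index computation $[\Gamma_n : \Gamma_n \cap E_{i,i+1}^{-1}\Gamma_n E_{i,i+1}] = q$, after identifying that intersection as the subgroup of upper triangular matrices with vanishing $(i,i+1)$ entry (your row-by-row verification of this, and the observation that $g \in H\,u(g_{i,i+1}/g_{i,i})$ since $g_{i,i} \neq 0$, are the only points requiring care, and both check out). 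Your route is more elementary and more self-contained — in particular the exhaustiveness step (c), which the paper leaves largely implicit in the flag picture, is spelled out via the bijection with $H\backslash \Gamma_n$ — while the paper's flag argument supplies the geometric intuition behind the count and adapts immediately to other Bruhat cells; the independence from $n$ is transparent in both, since everything reduces to the $2\times 2$ block in rows and columns $i$, $i+1$.
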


\begin{proof}
If we were to take left cosets instead of right this coset decomposition would correspond to finding all distinct complete flags that are equivalent to the complete flag corresponding to $E_{i,i+1} \Gamma_n$ up to a left action of $\Gamma_n$. This is precisely because $\Gamma_n$ is the stabilizer of the right $\GL(n,q)$-action on complete flags of an $n$-dimensional vector space, so left cosets correspond to distinct complete flags. If we act on the left instead of the right $B(n,q)$ acts by upward row operations instead of column operations. So we can once again view cosets as complete flags if we identify the bottom row with the first spanning vector, the $(n-1)$st row with the second spanning vector etc. Then the left $B(n,q)$ action stabilizes these complete flags. 


So we must now find all complete flags equivalent to the complete flag corresponding to $\Gamma_n E_{i,i+1} $ up to a right action of $\Gamma_n$. The complete flag corresponding to $\Gamma_n E_{i,i+1}$ is exactly 
\[
V_0 \subset V_1 \subset \cdots \subset V_{n-(i+1)} \subset V_{n-i} \subset \cdots \subset V_{n-1}
\]
with $V_j = {\rm span}_{\bF_q}(e_n,\dots,e_{n-j})$ for $j=0,\dots,n-(i+2)$, $V_{n-(i+1)} = {\rm span}_{\bF_q}(e_n,\dots,e_{i+2}, e_i)$, $V_{n-i} = {\rm span}{\bF_q}(e_n,\dots,e_{i+2}, e_i, e_{i+1})$ and $V_{j} = {\rm span}_{\bF_q}(e_n,\dots,e_{n-j},e_i, e_{i+1},e_{i-1},\dots,e_{n-j})$ for $j = n-i+1,\dots,n-1$.

All of these equivalent distinct flags are exactly in correspondence with the $M_x$ described above. That is $M_{i+1,i+1} = x$, $M_{i,i+1} = M_{i+1,i} = 1$, $M_{i,i} = 0$ and all other entries are zero, where $x$ ranges over all elements of $\bF_q$. This is clear because the only difference in flags is what multiple of the vector $e_{i+1}$ we add to $e_{i}$ to get $V_{n-(i+1)}= {\rm span}_{\bF_q}(e_n,\dots,e_{i+2},e_{i}+xe_{i+1})$ and $V_{n-i} = {\rm span}_{\bF_q}(e_n,\dots,e_{i+2},e_i+ xe_{i+1}, e_{i+1})$ in the complete flag
\[
V_0 \subset \cdots V_{n-(i+1)} \subset V_{n-i} \subset \cdots \subset V_{n-1}.
\]
There are precisely $q$ such distinct choices.
\end{proof}

\begin{lemma} \label{cohomologyEquivariance}
The map $\Phi_n\colon H^i(B(n,q)) \to H^i(B(n+1,q))$ is $H_n(0)$-equivariant where $H_n(0)$ acts via the natural embedding into $H_{n+1}(0)$ on $H^i(B(n+1,q))$.
\end{lemma}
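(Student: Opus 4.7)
The plan is to reduce the statement to showing that $\Phi_n$ commutes with the action of each generator $\fT(E_{j,j+1})$ for $1 \leq j \leq n-1$, since these are precisely the images of the generators of $H_n(0)$ under the embedding $H_n(0) \hookrightarrow H_{n+1}(0)$. Linearity (and the fact that $\fT$ defines a representation) then upgrades equivariance on generators to equivariance for all of $H_n(0)$.

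The key tool will be choosing compatible coset representatives at both levels via Lemma \ref{cosetReps}. Because $j+1 \leq n$, the representative $\alpha_x$ of $B(n,q) E_{j,j+1} B(n,q)$ and the representative $\alpha_x'$ of $B(n+1,q) E_{j,j+1} B(n+1,q)$ have their only nontrivial entries in the same $j,j+1$ block, so $\alpha_x'$ has the block form $\diag(\alpha_x, 1)$. For any $\gamma \in B(n+1,q)$ I would write $\gamma = \begin{pmatrix} A & v \\ 0 & d \end{pmatrix}$ with $A = r_{n+1,n}(\gamma) \in B(n,q)$, and carry out a short block computation: if at level $n$ we have the decomposition $\alpha_x A = \eta_x(A) \alpha_{\iota(A,x)}$ (the defining cocycle data for the level-$n$ action), then at level $n+1$
\[
\alpha_x' \gamma \; = \; \begin{pmatrix} \eta_x(A) & \alpha_x v \\ 0 & d \end{pmatrix} \alpha_{\iota(A,x)}',
\]
with the first factor visibly in $B(n+1,q)$. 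From this I immediately extract the two compatibilities I need: the index function satisfies $\iota(\gamma,x) = \iota(r_{n+1,n}(\gamma), x)$, and the cocycle factor descends as $r_{n+1,n}(\xi_x(\gamma)) = \eta_x(r_{n+1,n}(\gamma))$, where $\xi_x(\gamma)$ denotes the first factor above.

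Plugging these two compatibilities into the defining formula for $\fT$ and expanding both sides, $\Phi_n(\fT(E_{j,j+1})\psi)$ and $\fT(E_{j,j+1}) \Phi_n(\psi)$ evaluated at any tuple $(\gamma_0,\ldots,\gamma_i)$ should both collapse to $\sum_{x \in \bF_q} \psi\bigl(\eta_x(r_{n+1,n}(\gamma_0)), \ldots, \eta_x(r_{n+1,n}(\gamma_i))\bigr)$; here I use that $\bF_q$ carries the trivial action so that the coset-representative prefactors $(\alpha_x')^{-1}$ and $\alpha_x^{-1}$ both act as the identity on values. The main obstacle is the block computation in the previous paragraph, which is exactly where the hypothesis $j \leq n-1$ is essential --- if instead $j = n$, the representatives $\alpha_x'$ would no longer be block diagonal with respect to the splitting of $B(n+1,q)$ into its first $n$ and its last coordinate, the cocycle data at level $n+1$ would cease to be determined by level-$n$ data, and equivariance would fail, consistent with $\pi_n \in H_{n+1}(0)$ not lying in the image of $H_n(0) \hookrightarrow H_{n+1}(0)$.
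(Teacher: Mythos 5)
Your proposal is correct and follows essentially the same route as the paper: work at the level of cochains, use the Lemma \ref{cosetReps} representatives (noting that for $j \leq n-1$ the level-$(n+1)$ representatives are $\diag(\alpha_x,1)$, i.e.\ restrict to the level-$n$ ones), show the cocycle data is compatible with $r_{n+1,n}$ --- your explicit $2\times 2$ block computation is just a more hands-on version of the paper's observation that restriction to the top-left $n\times n$ block is multiplicative on the defining equation --- and use the trivial $\bF_q$-coefficient action to discard the representative prefactors. Your remark pinpointing where $j\le n-1$ is needed matches the paper's parenthetical on the same point.
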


\begin{proof}
The action is defined on the level of cochains, so we can prove equivariance there and it will imply equivariance on cohomology. Given a cochain map $\phi \in \fC^i(\Gamma_n,\bF_q)$ and any $\pi_i \in H_n(0)$ acting by $\fT(E_{i,i+1})$ we wish to show $\Phi_n \fT(E_{i,i+1}) \phi = \fT(E_{i,i+1})\Phi_n \phi$. If we compute the left hand side we get,
\[
 \fT(E_{i,i+1})\Phi_n \phi (\gamma_0,\dots,\gamma_i) = \sum_{j = 1}^q M_{x_j}^{-1} \phi(r_{n+1,n}(\xi_j(\gamma_0)),\dots,r_{n,n+1}(\xi_j(\gamma_i))),
\]
where here we are using the explicit definition of the map $ \fT(E_{i,i+1})$ along with Lemma \ref{cosetReps}, so each $M_{x_j}$ is the $(n+1) \times (n+1)$ matrix described there. If we compute the right hand side,
\[
\Phi_n \fT(E_{i,i+1}) \phi =\sum_{j = 1}^q (M_{x_j})_{n+1,n+1}^{-1} \phi(\xi_j(r_{n+1,n}(\gamma_0)),\dots, \xi_j(r_{n+1,n}(\gamma_i))),
 \]
 where in this case the coset representatives we get are exactly the $n \times n$ minors of $M_{x_j}$ where we delete the last row and column, this is $(M_{x_j})_{n+1,n+1}$. This is only true because we are considering $\fT(E_{i,i+1})$ for $i = 1,\dots,n-1$.
 
First notice that each of these matrices acts trivially because we take coefficients in the ground field. To prove these two are equal, it suffices to show $\xi_j(r_{n+1,n}(\gamma_k)) = r_{n+1,n}(\xi_j(\gamma_k))$.
 
By definition $\xi_j(r_{n+1,n}(\gamma_k))$ is the matrix determined by
\begin{equation}\label{Matrix1}
 (M_{x_j})_{n+1,n+1} r_{n+1,n}(\gamma_k) = \xi_j(r_{n+1,n}(\gamma_k)) (M_{x_m})_{n+1,n+1} 
\end{equation}
and $\xi_j(\gamma_k)$ is determined by
\begin{equation} \label{Matrix2}
M_{x_j} \gamma_k = \xi_j(\gamma_k) M_{x_\ell}.
\end{equation}
In Equation \eqref{Matrix1} on the left hand side, we are multiplying minors, so one can see that this is precisely the multiplication that occurs in Equation \eqref{Matrix2} on the left hand side if you remove the last row and column of each matrix since restriction is a group homomorphism. This implies that the right hand sides must be equal as well if we restrict the right hand side of Equation \eqref{Matrix2} to only consider the $n \times n$ minors where we delete the last row and column, i.e.  $\xi_j(r_{n+1,n}(\gamma_k)) = r_{n+1,n}(\xi_j(\gamma_k))$. From the above this implies equivariance.
\end{proof}

\begin{lemma} \label{trivialAction}
If we take any element in the image of $\Phi_{n,m} = \Phi_{m} \Phi_{m-1} \cdots \Phi_{n}$, $\fT(E_{i,i+1})$ will act by $0$ on this element for $i = n+1,\dots,m$.
\end{lemma}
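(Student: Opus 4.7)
The plan is to verify the stabilizer criterion of Theorem \ref{criterion} by a direct cochain-level computation. Since $\fT(E_{i,i+1})$ is identified with $\ol{\pi_i} = \pi_i - 1$, the desired conclusion is equivalent to $\pi_i$ acting as the identity on the image of $\Phi_{n,m}$ for each $i = n+1, \dots, m$; these are exactly the generators of the stabilizer of $\iota_{n,m+1}$ in $H_{m+1}(0)$. Iterating the explicit formula for $\Phi_k$, I observe that any element in the image of $\Phi_{n,m}$ has the form
\[
\psi(\gamma_0, \dots, \gamma_k) = \phi(r_{m+1,n}(\gamma_0), \dots, r_{m+1,n}(\gamma_k))
\]
for some $\phi \in \fC^k(\Gamma_n, \bF_q)$, where $r_{m+1,n}$ denotes restriction of an $(m+1) \times (m+1)$ matrix to its upper-left $n \times n$ block.

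The crux will be to exploit the block structure of the coset representatives. By Lemma \ref{cosetReps}, the $q$ representatives $M_{x_j}$ of $\Gamma_{m+1} E_{i,i+1} \Gamma_{m+1}$ differ from the identity only in rows and columns $i$ and $i+1$. Hence for $i \geq n+1$ each $M_{x_j}$ has the block-diagonal form $\diag(I_n, N_j)$ with $N_j \in \GL(m+1-n, q)$. Since left multiplication by $\diag(I_n, N_j)$ preserves the upper-left $n \times n$ block of any matrix, applying this to the defining relation $M_{x_j} \gamma_\ell = \xi_j(\gamma_\ell) M_{x_{j'}}$ and reading off upper-left blocks on both sides will yield the key identity $r_{m+1,n}(\xi_j(\gamma_\ell)) = r_{m+1,n}(\gamma_\ell)$, independent of $j$.

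With this identity in hand, the rest is a short computation: using the definition of $\fT(E_{i,i+1})\psi$ and the fact that $\bF_q$ is a trivial $\GL$-module so each $M_{x_j}^{-1}$ acts as $1$ on values, one obtains
\[
(\fT(E_{i,i+1}) \psi)(\gamma_0, \dots, \gamma_k) = \sum_{j=1}^{q} M_{x_j}^{-1} \phi\bigl(r_{m+1,n}(\gamma_0), \dots, r_{m+1,n}(\gamma_k)\bigr) = q \cdot \psi(\gamma_0, \dots, \gamma_k) = 0
\]
in characteristic $q$. The main obstacle I anticipate is really bookkeeping: verifying the block decomposition carefully and being certain the restriction identity holds without any triangularity assumption on $\gamma_\ell$. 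Once it is done, combined with Lemma \ref{cohomologyEquivariance} the hypotheses of Theorem \ref{criterion} are met, completing the proof of Theorem \ref{Theorem B}.
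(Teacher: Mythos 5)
Your proposal is correct and follows essentially the same route as the paper's proof: a cochain-level computation using the coset representatives of Lemma \ref{cosetReps}, the observation that for $i \geq n+1$ these are block-diagonal with identity upper-left $n\times n$ block so that $r(\xi_j(\gamma_\ell)) = r(\gamma_\ell)$, triviality of the coefficient action on $\bF_q$, and the vanishing of a sum of $q$ identical terms in characteristic $q$. Your block-matrix justification of the restriction identity is in fact a slightly more careful version of the paper's appeal to restriction being a group homomorphism, and the only discrepancy is harmless index bookkeeping (target $B(m+1,q)$ versus the paper's $m\times m$ convention).
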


\begin{proof}
Once again we can work at the level of cochains. Given a cochain map $\phi \in \fC(\Gamma_n,\bF_q)$ we will show that $\fT(E_{i,i+1}) \Phi_{n,m} \phi = 0$. To do this, we can explicitly compute this map,
\[
\fT(E_{i,i+1}) \Phi_{n,m} \phi (\gamma_0,\dots,\gamma_i) = \sum_{j = 1}^q M_{x_j}^{-1} \phi(r_{m,n}(\xi_j(\gamma_0)),\dots, r_{m,n}(\xi_j(\gamma_i))),
\]
where once again $M_{x_j}$ is the $m \times m$ matrix described in Lemma \ref{cosetReps}. This detail will actually not matter for this proof, the important fact here is that there are exactly $q$ cosets. Since each $M_{x_j}^{-1}$ acts trivially as we take coefficients in the ground field, we can simplify this equation,
\[
\fT(E_{i,i+1}) \Phi_{n,m} \phi (\gamma_0,\dots,\gamma_i) = \sum_{j = 1}^q \phi(r_{m,n}(\xi_j(\gamma_0)),\dots, r_{m,n}(\xi_j(\gamma_i))).
\]
Again recall that $\xi_j(\gamma_k)$ is defined by
\[
M_{x_j} \gamma_k = \xi_j(\gamma_k) M_{x_m}
\]
where each of the $M_{x_\ell}$ have $(M_{x_\ell})_{i+1,i+1} = x_{\ell}$, $(M_{x_\ell})_{i,i+1} =(M_{x_\ell})_{i+1,i}=1$ and $M_{a,b} = 0$ else. In particular if we consider the $n \times n$ minor where we delete the last $m-n$ rows and columns we get the identity matrix since $i \geq n+1$. As a result, if we apply this restriction map $r_{m,n}$ to both sides of the above equation because it is a group homomorphism we find
\[
r_{m,n}(\gamma_k) = r_{m,n}(\xi_j(\gamma_k)),
\]
where notice this does not depend on $j$. As a result,
\[
\fT(E_{i,i+1}) \Phi_{n,m} \phi (\gamma_0,\dots,\gamma_i) = \sum_{j = 1}^q \phi(r_{m,n}(\gamma_0),\dots,r_{m,n}(\gamma_i)) = q\phi(r_{m,n}(\gamma_0),\dots,r_{m,n}(\gamma_i))  = 0 . \qedhere
\]
\end{proof}

We are now ready to prove our main theorem. 

\begin{theorem}
For any fixed $i \geq 0$, if we let $\pi_j \in H_n(0)$ act by $\fT(E_{j,j+1}) + {\rm id}$ and we take as our transition maps the $\Phi_n \colon H^i(B(n,q),\bF_q) \to H^i(B(n+1,q),\bF_q)$ then the assignment $[n] \mapsto H^i(B(n,q),\bF_q)$ is a $\cH$-module.
\end{theorem}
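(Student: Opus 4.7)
The plan is to invoke the $\cH$-module criterion of Theorem \ref{criterion}. First I would observe that prescribing $\pi_j$ to act by $\fT(E_{j,j+1}) + \mathrm{id}$ genuinely defines an $H_n(0)$-action on each $H^i(B(n,q),\bF_q)$: under the identification $\cH(B(n,q);\GL(n,q)) \cong H_n(0)$ recalled in the excerpt from \cite{HR}, the operator $\fT(E_{j,j+1})$ corresponds to the generator $\ol{\pi}_j = \pi_j - 1$, so defining $\pi_j = \fT(E_{j,j+1}) + \mathrm{id}$ is just writing the already-established Hecke action in the $\pi$-generators instead of the $\ol{\pi}$-generators. The braid and skein relations are then automatic.

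With this in hand, the criterion requires two things: (i) the transition maps $\Phi_n$ are $H_n(0)$-equivariant, where $H_n(0)$ acts on $H^i(B(n+1,q),\bF_q)$ via the principal embedding into $H_{n+1}(0)$; and (ii) for every $n < m$, the stabilizer $K \cong H_{m-n}(0)$ of $\iota_{n,m}$ (which is generated by $\pi_{n+1},\dots,\pi_m$) fixes every element of the image of $\Phi_{n,m} \coloneqq \Phi_{m-1} \circ \cdots \circ \Phi_n$. Condition (i) is exactly Lemma \ref{cohomologyEquivariance}. For condition (ii), note that because $\pi_j - \mathrm{id} = \fT(E_{j,j+1})$, the statement ``$\pi_j$ fixes $v$'' is equivalent to ``$\fT(E_{j,j+1})$ annihilates $v$'', and this annihilation on the image of $\Phi_{n,m}$, for each $j$ with $n+1 \leq j \leq m-1$, is precisely Lemma \ref{trivialAction}.

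Putting this all together, the hypotheses of Theorem \ref{criterion} are verified and the assignment $[n] \mapsto H^i(B(n,q),\bF_q)$ promotes to an $\cH$-module. The main obstacle in this argument is not the final assembly, which is essentially a one-line deduction from the criterion, but rather the preparatory step inside Lemma \ref{trivialAction}: one must identify enough structure in the coset representatives of $\Gamma_m E_{j,j+1} \Gamma_m$ to see that restriction along $r_{m,n}$ kills the dependence on $j$ when $j \geq n+1$, so that the sum over the $q$ cosets collapses to $q\,\phi(\cdots) = 0$ in characteristic $q$. That bookkeeping — carried out via the explicit transversal described in Lemma \ref{cosetReps} — is where the defining-characteristic hypothesis really gets used, and it is what makes the criterion applicable here.
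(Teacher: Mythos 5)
Your proposal is correct and follows exactly the paper's own argument: invoke the $\cH$-module criterion of Theorem \ref{criterion}, using Lemma \ref{cohomologyEquivariance} for equivariance of the $\Phi_n$ and Lemma \ref{trivialAction} (together with the observation that $\pi_j$ fixing $v$ is equivalent to $\fT(E_{j,j+1})$ annihilating $v$, as noted in the remark after the criterion) for the stabilizer condition. Your added remarks on the identification $\fT(E_{j,j+1}) \leftrightarrow \ol{\pi}_j$ and on where the characteristic-$q$ hypothesis enters are accurate but not a departure from the paper.
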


\begin{proof}
According to Theorem \ref{criterion} it suffices to show that the transition maps $\Phi_n$ are $H_n(0)$-equivariant and that the $\fT(E_{j,j+1}) + {\rm id}$ act via the identity on the image of $\Phi_{n,m}$ for $j = n+1,\dots,m$. The first statement is exactly what we show in Lemma \ref{cohomologyEquivariance} and the second statement follows immediately from Lemma \ref{trivialAction}.
\end{proof}


\begin{remark}
We believe this is a finitely generated $\cH$ module, but have not been able to prove it yet.
\end{remark}


\section{Homology of Borel Groups} \label{HomologySection}
In \cite{PSS} the authors study the group homology of the unipotent group and prove in particular that the assignment $[n] \mapsto H_i(U_n, \bF_q)$ is a finitely generated ${\bf OI}$-module for any fixed $i \geq 0$. Although being an ${\bf OI}$-module does provide some insight into the behavior of these homology groups, it is natural to ask if there is a more rigid categorical structure present. In this section we will discuss how this ${\bf OI}$-module structure can be extended to a $\cH$-module structure. An immediate consequence of the work in \cite{PSS} is that this $\cH$-module will also be finitely generated. This then implies we have representation stability.

Our ultimate goal is to gain a deeper understanding of the homology $H_i(B(n,q),\bF_q)$, but it is equivalent to the homology of $H_i(U(n,q),\bF_q)$ the unipotent subgroup, so we will study this. To see why this is the case notice we have a short exact sequence
\[
0 \to {\rm Diag}(n,q) \to B(n,q) \to U(n,q)  \to 0
\]
where ${\rm Diag}(n,q)$ are the diagonal matrices. From the Hochschild-Serre spectral sequence we have
\[
H_i(U(n,q), H_j({\rm Diag}(n,q);\bF_q)) \implies H_{i+j}(B(n,q),\bF_q).
\]
Since the order of ${\rm Diag}(n,q)$ is prime to $q$, the input vanishes when $j > 0$, so the spectral sequence immediately degenerates and gives
\[
H_i(U(n,q); H_0({\rm Diag}(n,q);\bF_q)) = H_i(B(n,q),\bF_q).
\]
${\rm Diag}(n,q)$ acts trivially on $\bF_q$, so the left side is $H_i(U(n,q),\bF_q)$.

 We will define a $0$-Hecke action on $U(n,q)$, this action will then induce an action on homology. We will then show this action is finitely generated via the arguments in \cite{PSS}.

Given an element $M \in U(n,q)$ we will define the action of $\pi_i \in H_n(0)$ for $i=1,\dots,n-1$. $\pi_i$ will act on $M$ by replaces the entry in columns $i+1$ with the entry in column $i$ for rows $1,\dots,i-1$, it will set $M_{i,i+1} = 0$ and it will replace the entry in row $i+1$ with the entry in row $i$ for columns $i+2,\dots,n$.

As an example consider the matrix 
\[
M = 
\begin{pmatrix}
1 &a &b &0\\
0 &1 &c &0\\
0 &0 &1 &0\\
0 &0 &0 &1
\end{pmatrix}
\]
Now if we apply $\pi_3$ we get
\[
\pi_3 M =
\begin{pmatrix}
1 &a & b &b\\
0 &1 &c &c\\
0 &0 &1 &0\\
0 &0 &0 &1
\end{pmatrix}
\]
And furthermore, if we apply $\pi_2$ we have
\[
\pi_2 \pi_3 M =
\begin{pmatrix}
1 &a &a&b\\
0 &1 & 0 &c\\
0 &0 &1 &c\\
0 &0 &0 &1
\end{pmatrix}
\]

\begin{proposition} \label{HnAction}
The action defined above is a well defined $H_n(0)$ action on $U(n,q)$.
\end{proposition}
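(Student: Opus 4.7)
The plan is to verify the four conditions that make the assignment $\pi_i \mapsto (\text{operation described})$ extend to a well-defined algebra action of $H_n(0)$ on the set $U(n,q)$: namely, (i) each $\pi_i$ sends $U(n,q)$ into itself, (ii) the skein relation $\pi_i^2 = \pi_i$, (iii) the commutation relation $\pi_i \pi_j = \pi_j \pi_i$ for $|i-j| > 1$, and (iv) the braid relation $\pi_i \pi_{i+1} \pi_i = \pi_{i+1} \pi_i \pi_{i+1}$. Condition (i) is immediate from the description: the diagonal entries are never touched and remain $1$, the entry $M_{i,i+1}$ is set to $0$, and all other writes copy values from positions strictly above the diagonal to positions strictly above the diagonal, so upper unitriangularity is preserved.

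For condition (ii), I would observe that after one application of $\pi_i$ the matrix satisfies exactly the three defining features that $\pi_i$ enforces: column $i+1$ agrees with column $i$ in rows $1,\ldots,i-1$; the entry $M_{i,i+1}$ is $0$; and row $i+1$ agrees with row $i$ in columns $i+2,\ldots,n$. A second application therefore changes nothing, so $\pi_i^2 = \pi_i$. For condition (iii), I would carry out a short case analysis according to whether $j \geq i+2$ or $j \leq i-2$. The supports of the two operations are almost disjoint; the only entries where both act are of the form $M_{r,s}$ with $r \in \{1,\ldots,\min(i,j)-1, i+1, j+1\}$ and $s \in \{i+1, j+1, \ldots\}$, and in each such position one checks directly that the composed effect is independent of the order.

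The main obstacle is the braid relation (iv), which I would establish by an explicit entry-by-entry comparison. Both sides should be shown to equal the following common ``normal form'' matrix $N$ built from $M$: for $j < i$ one has $N_{j,i+1} = N_{j,i+2} = M_{j,i}$; for $k > i+2$ one has $N_{i+1,k} = N_{i+2,k} = M_{i,k}$; the three entries $N_{i,i+1}, N_{i,i+2}, N_{i+1,i+2}$ are all $0$; and $N_{r,s} = M_{r,s}$ for every other pair $r < s$. The verification is a bookkeeping calculation: tracking how each of the three factors rewrites the block of rows and columns indexed by $\{i,i+1,i+2\}$, one sees that both factorizations $\pi_i \pi_{i+1} \pi_i$ and $\pi_{i+1} \pi_i \pi_{i+1}$ force exactly this symmetric ``merging'' of rows $i,i+1,i+2$ into row $i$ to the right and of columns $i,i+1,i+2$ into column $i$ above, together with the vanishing of the three strictly-upper entries inside the block.

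Conceptually, this common normal form is the analogue of the longest element of the parabolic subgroup generated by $\pi_i, \pi_{i+1}$ acting on the relevant $3\times 3$ block, which explains symmetrically why both triple products produce the same matrix. Once (iv) is in place, the relations (i)-(iv) are exactly a presentation of $H_n(0)$, so the assignment $\pi_i \mapsto (\text{the action})$ extends uniquely to an action of $H_n(0)$ on $U(n,q)$, proving the proposition.
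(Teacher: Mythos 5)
Your treatment of the Hecke relations is essentially the paper's own argument, carried out in somewhat more detail: the paper likewise checks the commutation relation on the handful of interacting entries, checks the braid relation row-by-row and on the $\{i,i+1,i+2\}$ block, and dismisses $\pi_i^2=\pi_i$ as immediate, and the ``normal form'' $N$ you describe (columns $i+1,i+2$ of rows $j<i$ filled with $M_{j,i}$, rows $i+1,i+2$ of columns $k>i+2$ filled with $M_{i,k}$, the three block entries zero) is indeed what both triple products produce. So for that portion your proposal is correct and matches the paper.

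However, the paper's proof contains a further step that you omit entirely, and it is the part the proposition is actually invoked for: the verification that each $\pi_i$ is compatible with the group law, $\pi_i(AB)=\pi_i(A)\pi_i(B)$. Immediately after the proposition, the action on homology is defined on homogeneous chains by $[g_1,\dots,g_k]\mapsto[\pi_i g_1,\dots,\pi_i g_k]$, and both the well-definedness of this assignment and its commutation with the bar differential (whose face maps multiply adjacent entries $g_jg_{j+1}$) are attributed to Proposition \ref{HnAction}; checking the $0$-Hecke relations for set maps on $U(n,q)$ gives none of this. Moreover, this missing step is not a routine addendum: comparing $\pi_i(AB)_{i+1,k}=(AB)_{i,k}$ with $(\pi_i(A)\pi_i(B))_{i+1,k}$ for $k\geq i+2$ produces the cross term $A_{i,i+1}B_{i+1,k}$, which need not vanish. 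For example, with $n=3$, $i=1$, taking $A$ to be the unipotent matrix whose only nonzero off-diagonal entry is $A_{1,2}=1$ and $B$ the one with $B_{2,3}=1$, one gets $\pi_1(A)=\pi_1(B)=\mathrm{id}$ while $\pi_1(AB)\neq\mathrm{id}$, so multiplicativity as stated is genuinely delicate (the paper's own check treats only the column condition and calls the row condition ``similar,'' passing over exactly this term). In any case, your proposal as written never addresses compatibility with the group structure, which is the substantive final paragraph of the paper's proof and the property on which the subsequent construction of the homology action rests.
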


\begin{proof}
We first check that the $\pi_i$ satisfy the necessary axioms. First we can see $\pi_i \pi_j = \pi_j \pi_i$ for $|i-j| > 1$. We may assume $i < j$. In this case the only interaction between $\pi_i$ and $\pi_j$ is in the four entries $(i,j), (i,j+1), (i+1,j), (i+1,j+1)$ where we would be adding rows $i$ and $i+1$ as well as the columns $j$ and $j+1$. However, if we labeled the entries as $(i,j) = a$, $(i,j+1) = b$, $(i+1,j) = c$ and $(i+1,j+1) = d$ then regardless of the order of composition, $(i+1,j+1) = a$ and all the other entries becomes zero.

Next we must check that $\pi_i \pi_{i+1} \pi_i = \pi_{i+1} \pi_i \pi_{i+1}$ for $i = 1,\dots, n-2$. If we fix some row $j$ with $1 \leq j \leq i-1$ the three entries affected in that row are columns $i,i+1$ and $i+2$. Let entry $(j,i) = a$, $(j,i+1) = b$ and $(j,i+2) = c$. Regardless of the order of composition we have entry $(j,i) = (j,i+1) = (j,i+2) = a$. The only other part of the matrix we need to check is entries $(k,\ell)$ with $k \leq \ell$ and $k = i,i+1,i+2$, $\ell = i,i+1,i+2$. However in this case we always have entries $(i,i+1) = (i,i+2) = (i+1,i+2) = 0$ and $(i,i) = (i+1,i+1) = 1$ regardless of the order of composition. Checking the columns is similar.

Finally we verify that $\pi_i^2 = \pi_i$ for $i = 1,\dots,n-1$. This is not hard to see from definition.

Next we need to verify that $\pi_i(AB) = \pi_i(A) \pi_i(B)$. Fix a row $1 \leq j \leq i-1$ and consider the entries in columns $i$ and $i+1$. $(AB)_{j,i} = \sum_{k=1}^n A_{j,k}B_{k,i}$ and $(AB)_{j,i+1} = \sum_{k=1}^n A_{j,k}B_{k,i+1}$. So $\pi_i(AB)_{j,i} =  \sum_{k=1}^n A_{j,k}B_{k,i}  = (\pi_i(A) \pi_i(B))_{j,i}$ and 
\[
\pi_i(AB)_{j,i+1} = \sum_{k=1}^n A_{j,k}B_{k,i} = (\pi_i(A) \pi_i(B))_{j,i+1}
\]
Checking the appropriate columns is similar.
\end{proof}

The above action then induces an action on homology by considering the action on homogeneous chains $\pi_i [g_1,\dots,g_k] = [\pi_i g_1,\dots,\pi_i g_k]$. This action is well defined by Proposition \ref{HnAction}. Furthermore, this action commutes with the differential, once again by Proposition \ref{HnAction}, so it induces an action on homology. 

Furthermore, there is a natural embedding of groups $\phi_{n,n+1} \colon U(n,q) \injects U(n+1,q)$ where we add a new row and column to the bottom of any matrix in $U(n,q)$ with a $1$ on the diagonal. This induces a map on homology $\Phi_{n} \colon H^i(U(n,q),\bF_q) \to H^i(U(n+1,q),\bF_q)$. Explicitly on the level of chains, we embed the element $[g_1, \dots, g_k] \to [\phi_{n,n+1}(g_1) | \cdots | \phi_{n,n+1}(g_k)]$.

We will now argue that the above action and $\Phi_n$ endow the homology of the unipotent group with a finitely generated $\cH$-module structure.

\begin{lemma} \label{homologyEquivariance}
The map $\Phi_n\colon H_i(B(n,q), \bF_q) \to H_i(B(n+1,q), \bF_q)$ is $H_n(0)$-equivariant where $H_n(0)$ acts via the natural embedding into $H_{n+1}(0)$ on $H_i(B(n+1,q), \bF_q)$.
\end{lemma}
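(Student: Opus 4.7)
The plan is to reduce everything down to the level of matrices. Since the $H_n(0)$-action on chains is defined diagonally, $\pi_i[g_1,\dots,g_k] = [\pi_i g_1,\dots,\pi_i g_k]$, and the chain map underlying $\Phi_n$ is also defined entry-wise, $[g_1,\dots,g_k]\mapsto[\phi_{n,n+1}(g_1),\dots,\phi_{n,n+1}(g_k)]$, it suffices to prove the single identity
\[
\phi_{n,n+1}(\pi_i g) \;=\; \pi_i\,\phi_{n,n+1}(g)\qquad\text{for all } g\in U(n,q)\text{ and }1\le i\le n-1,
\]
where on the right $\pi_i$ is understood inside $H_{n+1}(0)$. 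Once this equality at the group level is established, it lifts formally to the bar complex and, since the $H_n(0)$-action commutes with the differentials (as already noted in the text just after Proposition \ref{HnAction}), descends to equivariance on homology.

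The identity itself is a direct unpacking of the definitions. Write $M=\phi_{n,n+1}(g)$. By construction $M$ agrees with $g$ on the top-left $n\times n$ block, has $1$ in position $(n+1,n+1)$, and has $0$'s everywhere else in the new row and new column. The action of $\pi_i$ for $i\le n-1$ alters only: (a) entries $(r,i+1)$ for $r<i$, replacing them by $(r,i)$; (b) the entry $(i,i+1)$, which is set to $0$; and (c) entries $(i+1,c)$ for $i+2\le c\le n+1$, replacing them by $(i,c)$. Cases (a) and (b) involve only rows and columns indexed by $\le n$, so they are carried out inside the top-left block exactly as $\pi_i\in H_n(0)$ would act on $g$ itself. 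Case (c) for $c\le n$ likewise lives inside the top-left block; the only new instance is $c=n+1$, where $(i+1,n+1)$ is replaced by $(i,n+1)$—but in $M$ both of these entries are $0$, so nothing changes. Finally, row $n+1$ of $M$ is untouched by $\pi_i$ (because $\pi_i$ only modifies rows indexed by $1,\dots,i+1\le n$), and the $(n+1,n+1)$ diagonal $1$ is preserved. Hence the result of applying $\pi_i$ to $M$ coincides with the embedding of $\pi_i g$, giving the desired identity.

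Putting the pieces together, one applies $\phi_{n,n+1}$ slot-wise to an element $[g_1,\dots,g_k]$ and uses the matrix-level identity on each $g_j$ to move $\pi_i$ across $\Phi_n$, yielding $\Phi_n(\pi_i[g_1,\dots,g_k])=\pi_i\Phi_n([g_1,\dots,g_k])$ on chains; passing to homology completes the proof.

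I do not anticipate a real obstacle: the whole argument is a careful bookkeeping exercise tracking the zeros in the extra row and column of $\phi_{n,n+1}(g)$, and the restriction $i\le n-1$ (so that $\pi_i$ is in the subalgebra $H_n(0)\subset H_{n+1}(0)$) is precisely what guarantees that the $\pi_i$-action leaves the padding undisturbed. The only subtlety worth stating carefully is the single instance $(i+1,n+1)\mapsto(i,n+1)$ in part (c), which is resolved by noting that both entries of $M$ in column $n+1$ outside the last diagonal slot are $0$.
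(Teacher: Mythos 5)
Your proof is correct and follows essentially the same route as the paper: reduce to the matrix-level identity $\phi_{n,n+1}(\pi_i g)=\pi_i\,\phi_{n,n+1}(g)$, verify it by noting the added row and column of the embedding consist of zeros (plus a diagonal $1$) untouched by $\pi_i$ with $i\le n-1$, then pass from chains to homology. Your bookkeeping is in fact slightly more explicit than the paper's, which simply observes that the appended column of zeroes in rows $1,\dots,n$ cannot affect the action.
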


\begin{proof}
We verify this on the level of chains, this will then imply the result for homology. Fix some $\pi_j \in H_n(0)$, so $j =1,\dots,n-1$ and some chain $[g_1,\dots,g_i]$ with $g_\ell \in U(n,q)$. Consider
\[
\Phi_n(\pi_j([g_1,\dots,g_i]) = \Phi_n([\pi_j(g_1),\dots,\pi_j(g_i)]) = [\phi_{n,n+1}\pi_j(g_1),\dots,\phi_{n,n+1}\pi_j(g_i)].
\]
Notice that $\phi_{n,n+1} \pi_j(g_\ell) = \pi_j \phi_{n,n+1}(g_\ell)$ because in $\phi_{n,n+1}(g_\ell)$ we only add a column of zeroes in rows $1,\dots,n$, so this will not affect the action of $\pi_j$ since $1 \leq j \leq n-1$.
\end{proof}

To show that the assignment $[n] \mapsto H_i(B(n,q),\bF_q)$ with transition maps given by $\Phi_n$ is a $\cH$-module it remains to show the following

\begin{lemma} \label{homologyTrivialAction}
If we take any element in the image of $\Phi_{n,m} = \Phi_{m} \Phi_{m-1} \cdots \Phi_{n}$, $\pi_j$ will act by the identity on this element for $j = n+1,\dots,m$.
\end{lemma}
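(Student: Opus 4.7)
The plan is to reduce the claim to the level of homogeneous chains and then carry out a direct matrix check. Since $\Phi_{n,m}$ is defined componentwise as $[g_1,\dots,g_i] \mapsto [\phi_{n,m}(g_1),\dots,\phi_{n,m}(g_i)]$ and since $\pi_j$ also acts diagonally on tuples, it is enough to show that for every $g \in U(n,q)$ and every $j \in \{n+1,\dots,m-1\}$ one has $\pi_j\,\phi_{n,m}(g) = \phi_{n,m}(g)$. Linearity and the chain-level equality then pass to homology, giving the lemma.

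For the matrix check, I would write out $\phi_{n,m}(g)$ explicitly as the block matrix with $g$ in the top-left $n \times n$ block, the identity $I_{m-n}$ in the bottom-right $(m-n) \times (m-n)$ block, and zeros in both off-diagonal blocks (the bottom-left block is zero because we are in $U(m,q)$, and the top-right block is zero because $\phi_{n,n+1}$ is constructed by appending a row and column with a single $1$ on the diagonal). For $j \geq n+1$, both pairs of rows $j,j+1$ and columns $j,j+1$ lie entirely inside the identity block, so the only nonzero entries involved in these rows and columns are the two diagonal $1$'s at $(j,j)$ and $(j+1,j+1)$. Applying the three rules that define $\pi_j$ as recalled in Proposition \ref{HnAction} — overwriting column $j+1$ by column $j$ in rows $1,\dots,j-1$, setting the entry $(j,j+1)$ to zero, and overwriting row $j+1$ by row $j$ in columns $j+2,\dots,m$ — the entries being copied or zeroed are already zero in $\phi_{n,m}(g)$, so the matrix is unchanged.

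Combined with Lemma \ref{homologyEquivariance}, this verifies the two hypotheses of the $\cH$-module criterion, Theorem \ref{criterion}, and thereby establishes the $\cH$-module structure on $[n] \mapsto H_i(U(n,q),\bF_q)$. The conceptual point is simply that $\phi_{n,m}$ plants all the nontrivial data of $g$ in the top-left $n \times n$ block, so any $\pi_j$ with $j \geq n+1$ only interacts with the identity block and therefore must act trivially. I do not foresee any substantial obstacle; the only care needed is bookkeeping the three cases in the definition of $\pi_j$ and noting that the top-right off-diagonal block of $\phi_{n,m}(g)$ is zero, which is what makes the first rule trivial (rather than potentially copying something nontrivial from column $j$).
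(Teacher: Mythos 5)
Your proposal is correct and follows essentially the same route as the paper: reduce to the level of homogeneous chains, observe that $\Phi_{n,m}(g)$ has all its nontrivial entries in the top-left $n\times n$ block with only diagonal $1$'s in rows and columns beyond $n$, and check directly from the definition of the $\pi_j$-action that every entry copied or zeroed is already zero, so $\pi_j$ fixes the chain. Your version just spells out the block-matrix bookkeeping slightly more explicitly than the paper does.
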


\begin{proof}
Once again we verify this on the level of chains. Fix some $\pi_j \in H_m(0)$ for $n+1 \leq j \leq m$ and some chain $[g_1,\dots,g_i]$ for $g_\ell \in U(n,q)$. Notice that the embedded chain element $\Phi_{n,m}([g_1,\dots,g_i])$ has each $g_\ell$ with only nonzero entries on the diagonal below row $n$ and they are all equal to $1$. Hence when we apply $\pi_j$ it can only act by the identity because $n+1 \leq j \leq m$. 
\end{proof}

\begin{theorem} \label{homologyHModule}
For any fixed $i \geq 0$, if we let $\pi_j \in H_n(0)$ act as described above and we take as our transition maps the $\Phi_n \colon H_i(B(n,q),\bF_q) \to H_i(B(n+1,q),\bF_q)$ then the assignment $[n] \mapsto H_i(B(n,q),\bF_q)$ is a $\cH$-module.
\end{theorem}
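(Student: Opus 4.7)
The plan is to invoke the $\cH$-module criterion (Theorem \ref{criterion}) directly. Recall that this criterion says a sequence $\{W_n\}$ of $H_n(0)$-representations with equivariant transition maps $\phi_n \colon W_n \to W_{n+1}$ extends to a $\cH$-module with $(\iota_{n,n+1})_\ast = \phi_n$ if and only if the stabilizer of $\iota_{n,m}$ in $H_m(0)$ (under the post-composition action) acts trivially on the image of $(\iota_{n,m})_\ast$. As noted earlier, this stabilizer is generated by $\pi_{n+1}, \ldots, \pi_{m-1}$, so the two conditions I need are precisely equivariance of the $\Phi_n$ and triviality of the $\pi_j$-action on iterated images for $j \geq n+1$.

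First I would verify that the $H_n(0)$-action on $U(n,q)$ defined before the theorem is genuinely an action by algebra endomorphisms so that it descends to homology; this is precisely Proposition \ref{HnAction}, and the induced action on chain complexes commutes with the bar differential because each $\pi_i$ is multiplicative, so we get an honest $H_n(0)$-action on $H_i(U(n,q),\bF_q)$.

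Next I would cite Lemma \ref{homologyEquivariance}, which shows $\Phi_n$ is $H_n(0)$-equivariant (the key point being that the column of zeros appended by $\phi_{n,n+1}$ sits to the right of all entries touched by $\pi_j$ for $j \leq n-1$, so $\phi_{n,n+1}$ and $\pi_j$ commute on the nose at the level of chains). Then I would cite Lemma \ref{homologyTrivialAction}, which shows that for any element of the form $\Phi_{n,m}([g_1,\ldots,g_i])$ and any $j$ with $n+1 \leq j \leq m-1$, the matrices $g_\ell$ agree with the identity in rows and columns $n+1,\ldots,m$, so $\pi_j$ fixes them entrywise; hence $\pi_j$ acts as the identity on the image of $\Phi_{n,m}$.

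With both hypotheses of Theorem \ref{criterion} verified, the conclusion follows immediately: the sequence $[n] \mapsto H_i(U(n,q),\bF_q)$ promotes to a $\cH$-module with $(\iota_{n,n+1})_\ast = \Phi_n$, and since $H_i(B(n,q),\bF_q) \cong H_i(U(n,q),\bF_q)$ by the Hochschild--Serre degeneration discussed at the start of this section, the same is true for the Borel group homology. There is no real obstacle here because all the technical content was isolated in the preceding lemmas; the theorem is essentially a one-line invocation of the criterion.
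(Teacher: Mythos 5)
Your proposal is correct and follows essentially the same route as the paper: the theorem is proved by invoking the $\cH$-module criterion (Theorem \ref{criterion}) and citing Lemma \ref{homologyEquivariance} for equivariance of the $\Phi_n$ and Lemma \ref{homologyTrivialAction} for the trivial action of $\pi_j$, $j \geq n+1$, on the image of $\Phi_{n,m}$. The extra remarks you include (Proposition \ref{HnAction} guaranteeing the action descends to homology, and the spectral-sequence identification $H_i(B(n,q),\bF_q) \cong H_i(U(n,q),\bF_q)$) are consistent with the surrounding discussion in the paper.
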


\begin{proof}
According to Theorem \ref{criterion} it suffices to show that the transition maps $\Phi_n$ are $H_n(0)$-equivariant and that the $\pi_j$ act via the identity on the image of $\Phi_{n,m}$ for $j = n+1,\dots,m$. The first statement is exactly what we show in Lemma \ref{homologyEquivariance} and the second statement follows immediately from Lemma \ref{homologyTrivialAction}.
\end{proof}

In \cite{PSS} the authors study $[n] \mapsto H_i(U(n,q),\bF_q)$ as an ${\bf OI}$-module. They ultimately show that it is a finitely generated ${\bf OI}$-module. We will now show that when we restrict our $\cH$-action it agrees with their ${\bf OI}$-module action. Their finite generation result then implies that we have finite generation as a $\cH$-module and so the homology of the Unipotent subgroup in defining characteristic is representation stable.

We will first describe their action explicitly. For their purposes, they did not need an explicit description, so it is not in their paper, but they do describe one implicitly. Given an order preserving injection $\iota \colon [n] \injects [m]$ we describe the ${\bf OI}$-action on the level of chains. Suppose $\{\alpha_1,\dots,\alpha_{m-n}\} = [m] \setminus \iota([n])$, then for $g \in U(n,q)$, $\iota_\ast g$ is the matrix where $g_{i,j} = (\iota_\ast g)_{\iota(i),\iota(j)}$, $(\iota_\ast g)_{\alpha_i,\alpha_i} = 1$ and all other entries are zero.

\begin{example}
One can view this operation as inserting rows and columns into positions $\alpha_j$ with a $1$ on the diagonal and shifting the entries of the original matrix $g \in B(n,q)$ accordingly. Consider the example $\iota \colon [2] \injects [4]$ given by $\iota(1) = 1$, $\iota(2) = 3$. Let $g$ be the matrix
\[
\begin{pmatrix}
a & b\\
0 & c
\end{pmatrix}
\]
Then
\[
\iota_\ast g = 
\begin{pmatrix}
a & 0 & b & 0\\
0 & 1 & 0 & 0\\
0 & 0 & c & 0\\
0 & 0 & 0 & 1
\end{pmatrix}
\]
\end{example}

We can see this is the action the authors define in \cite{PSS} from \S 4.4. We will now argue that it agrees with out $\cH$-module action. The key is to realize that we still have an induced ${\bf OI}$-group structure given by ${\bf U}_n = U(n,q)$ and maps given by the $\cH$-module structure described above. Let ${\bf U}' = \Sigma (\bU)$. It is still the case that $U(n+1,q)$ is the semi-direct product $U_n \ltimes \bF_q^n$ and most importantly that this description is still functorial. 

To verify this, let ${\bf E}$ be the ${\bf OI}$-group given by ${\bf E}_n = \bF_q^n$ as described in \cite{PSS}. We then have homomorphisms of ${\bf OI}$-groups $i \colon {\bf U} \to {\bf U}'$ and $p \colon {\bf U}' \to {\bf U}$ with $pi = {\rm id}$ and ${\rm ker}(p) = {\bf E}$. Where $i$ is given by the natural embedding and $p$ forgets the final column. This functorial decomposition is the only place where the specific ${\bf OI}$-module structure is used in the proof of finite generation in \cite{PSS}. 

\begin{lemma} \label{OIAgree}
The restriction of the $\cH$-module structure on $[n] \mapsto H_k(U(n,q),\bF_q)$ to ${\bf OI}$ is a finitely generated ${\bf OI}$-module.
\end{lemma}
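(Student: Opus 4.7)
The plan is to import \cite{PSS}'s finite generation theorem. To do this, I would verify that the ${\bf OI}$-group $\bU$ with $\bU_n = U(n,q)$ obtained by restricting our $\cH$-action to order preserving injections admits the functorial semi-direct product decomposition $\Sigma \bU \cong \bU \ltimes \bE$ that they exploit. As stressed in the paragraph preceding the lemma, this decomposition is the only feature of the ${\bf OI}$-module structure on which their argument depends.

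First I would check that restricting the $\cH$-action on $\bU$ to order preserving injections does indeed produce an ${\bf OI}$-group. By Theorem \ref{criterion}, every $\cH$-morphism factors as $T_w \iota_{n,m}$, so every order preserving injection acts as a composition of the natural inclusions $\phi_{n,n+1}$ and the generators $\pi_i$, each of which is a group homomorphism (the latter by Proposition \ref{HnAction}); functoriality is inherited from well-definedness of the $\cH$-action. Second, I would exhibit the structural maps $i \colon \bU \to \Sigma \bU$ given levelwise by $\phi_{n,n+1}$ and $p \colon \Sigma \bU \to \bU$ given by projection onto the upper-left $n \times n$ block, verify the pointwise identities $p \circ i = \mathrm{id}$ and $\ker(p)_n = \bE_n = \bF_q^n$, and then show that $i$ and $p$ are morphisms of ${\bf OI}$-groups in our setting. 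This latter check reduces via the same factorization to showing that each $\pi_i$ commutes with both $i$ and $p$ for the relevant index ranges, which follows from direct inspection of the three surgery rules defining the $\pi_i$-action: for $i \le n-1$ these rules leave the last row and column of an $(n+1) \times (n+1)$ unipotent matrix untouched, yielding the required compatibilities.

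The main obstacle is the technical bookkeeping needed to verify these commutativities across all cases; however, the explicit definition of the $\pi_i$-action preceding Proposition \ref{HnAction} makes each check a short local computation on matrix entries. Once the functorial semi-direct product is established, \cite[\S 4.4]{PSS} applies verbatim to conclude that $[n] \mapsto H_k(U(n,q), \bF_q)$ is a finitely generated ${\bf OI}$-module, which is precisely the content of Lemma \ref{OIAgree}.
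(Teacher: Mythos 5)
Your overall route is the same as the paper's: establish the functorial semidirect product decomposition of the shift, $\Sigma\bU \cong \bU \ltimes \bE$, and then invoke the finite generation machinery of \cite{PSS} (the relevant input is Proposition~6.3 and the proof of Theorem~1.4 there, rather than \S 4.4, which only describes the action). However, the concrete verification you propose does not go through as stated. First, a small inaccuracy: for $i \le n-1$ the surgery rules do touch the last column of an $(n+1)\times(n+1)$ matrix, since the entry in position $(i+1,n+1)$ is replaced by the entry in position $(i,n+1)$; this happens to be harmless for the embedding $i$ because the last column of $\phi_{n,n+1}(M)$ vanishes above the diagonal (this is essentially Lemma \ref{homologyEquivariance}). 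The genuine gap concerns $p$. Verifying that $p$ is a morphism of ${\bf OI}$-groups is not a statement about the generators $\pi_i$ with $i \le n-1$ alone: the structure map $(\Sigma \bU)(f)$ is the action of the extension $F$ with $F(n+1)=m+1$, and its factorization as a positive word times the principal injection necessarily involves $\pi_{n+1},\dots,\pi_m$, the generators that carry the appended strand to the right. Those generators copy the last-column ($\bE$) data into columns that $p$ retains, and strict group-level commutation fails. Concretely, with $e_{i,j}$ the matrix unit, take $f=\iota_{1,2}$ and $M = I + a e_{1,2} \in U(2,q) = (\Sigma\bU)_1$: then $(\Sigma\bU)(f)(M) = \pi_2\Phi_{2,3}(M) = I + a e_{1,2} + a e_{1,3}$, so $p_2\bigl((\Sigma\bU)(f)(M)\bigr) = I + a e_{1,2}$, while $\bU(f)(p_1(M)) = I_2$. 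In particular $\ker(p)$ is not even preserved by the restricted structure maps, so the decomposition you plan to check by short local computations is simply false on the nose at the group level.

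What rescues the argument, and what the paper actually leans on, is that the comparison only needs to hold at the level of chains, where a homogeneous chain is defined only up to the left diagonal action of the group: after left-multiplying by a unipotent correction depending only on the order preserving injection (not on the matrix entries), the restricted action agrees with the \cite{PSS} action on the final column, i.e.\ on the $\bE$-part. This is exactly the computation carried out in the example following the lemma, and it is this chain-level agreement that allows Proposition~6.3 and the proof of Theorem~1.4 of \cite{PSS} to run for the restricted structure. So your proposal needs this additional homological step (or, equivalently, an argument identifying the induced ${\bf OI}$-module structure on $H_k(U(n,q),\bF_q)$ with the one of \cite{PSS}); without it, the purely group-theoretic verification you outline cannot be completed.
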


\begin{proof}
This follows formally from the proof of Theorem 1.4 in section 6 of \cite{PSS} by the remarks in the previous two paragraphs. The key step being that we still have the same functorial decomposition as described above, so Proposition 6.3 from \cite{PSS} holds.
\end{proof}
%
%
%
See the following example to illuminate this proof.

\begin{example}
Consider the order preserving injection $\iota(1) = 1$, $\iota(i) = i+1$ for $i =2,3$. This corresponds to the map $\pi_2 \pi_3 \Phi_{3,4}$. The action described in \cite{PSS} sends the matrix
\[
M = \begin{pmatrix}
1 &a &b\\
0 &1 &c\\
0 &0  &1
\end{pmatrix}
\]
to
\[
\begin{pmatrix}
1 &0 &a &b\\
0 &1 &0 &0\\
0 &0 &1 &c\\
0 &0 &0 &1
\end{pmatrix}.
\]
So the action on the final column is the embedding of the entries from the third column into positions $1$ and $3$. Comparing this to our action,
\[
\pi_3 \Phi_{3,4}(M)=
\begin{pmatrix}
1 &a &b &b\\
0 &1 &c &c\\
0 &0 &1 &0\\
0 &0 &0 &1
\end{pmatrix}.
\]
Now if we apply $\pi_2$,
\[
\pi_2 \pi_3 \Phi_{3,4}(M)=
\begin{pmatrix}
1 &a &a &b\\
0 &1 &0 &c\\
0 &0 &1 &c\\
0 &0 &0 &1
\end{pmatrix}.
\]
Chains are determined up to a left action of $U_n(q)$. So each matrix in our chain will be of this form, and we can apply the element
\[
\begin{pmatrix}
1 &0 &0 &0\\
0 &1 &-1 &0\\
0 &0 &1 &0\\
0 &0 &0 &1
\end{pmatrix}
\]
we get
\[
\begin{pmatrix}
1 &a &a &b\\
0 &1 &-1&0\\
0 &0 &1 &c\\
0 &0 &0 &1
\end{pmatrix}.
\]
Checking for other maps is similar. So it is not hard to see that the action on chains is equivalent in the final column, which is precisely the kernel of the map $p$, called ${\bf E}$ in \cite{PSS}.
\end{example}


\begin{theorem} \label{homologyHFinGen}
The $\cH$-module $[n] \mapsto H_i(U(n,q),\bF_q)$ with transition maps described in Theorem \ref{homologyHModule} is finitely generated as a $\cH$-module.
\end{theorem}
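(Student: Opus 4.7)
The plan is to deduce this essentially formally from the two preceding results, namely Lemma \ref{OIAgree} and Corollary \ref{OIFinGen}. Corollary \ref{OIFinGen} gives the crucial equivalence: a $\cH$-module is finitely generated as a $\cH$-module if and only if it is finitely generated as an ${\bf OI}$-module (via the pullback along $\Phi \colon {\bf OI} \to \cH$). Thus the whole task reduces to verifying finite generation of the underlying ${\bf OI}$-module structure.

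First I would invoke Lemma \ref{OIAgree}, which asserts that the pullback $\Phi^\ast$ of the $\cH$-module $[n] \mapsto H_i(U(n,q), \bF_q)$ is precisely the ${\bf OI}$-module structure studied in \cite{PSS}. The content of that lemma is a compatibility statement on chains: once one shows that our $\cH$-action restricted to an order preserving injection agrees (up to the standard equivalence relation defining group homology, which uses the left $U_n(q)$-action) with the PSS insertion-of-identity-rows-and-columns action, the two ${\bf OI}$-module structures on the group homology coincide. The illustrative example following the lemma indicates that on cycles the two actions differ by an element of $U_n(q)$, so they induce the same map on $H_i$.

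Next I would cite Theorem 1.4 of \cite{PSS}, which establishes that $[n] \mapsto H_i(U(n,q), \bF_q)$ is a finitely generated ${\bf OI}$-module. Combined with Lemma \ref{OIAgree}, this shows $\Phi^\ast V$ is finitely generated as an ${\bf OI}$-module, where $V$ is our $\cH$-module. Finally, Corollary \ref{OIFinGen} transports this conclusion back to the $\cH$-module category, yielding that $V$ itself is finitely generated as a $\cH$-module.

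There is really no hard step remaining, since the main technical work was already absorbed into Lemma \ref{OIAgree} and into the input from \cite{PSS}. The only subtlety worth flagging in the proof is that the $\cH$-generation degree may differ from the ${\bf OI}$-generation degree, as noted in the remark after Corollary \ref{OIFinGen}; but this does not affect the statement being proved. So the proof is a short two-line argument chaining the three results together.
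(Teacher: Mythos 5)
Your proposal is correct and follows exactly the paper's argument: Lemma \ref{OIAgree} together with Theorem 1.4 of \cite{PSS} gives finite generation of the underlying ${\bf OI}$-module, and Corollary \ref{OIFinGen} upgrades this to finite generation as a $\cH$-module. The only cosmetic difference is that you read Lemma \ref{OIAgree} as a pure agreement-of-actions statement with finite generation supplied separately by \cite{PSS}, whereas the paper folds the finite generation conclusion into the lemma itself; the logical content is the same.
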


\begin{proof}
Lemma \ref{OIAgree} in combination with \cite{PSS}[Theorem 1.4] imply this $\cH$-module is finitely generated with respect to the ${\bf OI}$-substructure. As we have seen in Corollary \ref{OIFinGen}, this is equivalent to being finitely generated with respect to the full $\cH$-action.
\end{proof}

\begin{corollary} \label{RepStabilityUn}
For any fixed $i \geq 0$ sequence of $H_n(0)$-modules $\{H_i(U(n,q),\bF_q)\}_{n \geq 0}$ is representation stable.
\end{corollary}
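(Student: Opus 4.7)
The plan is to derive this as an immediate consequence of the machinery already assembled in the previous sections. The corollary is essentially a one-line deduction: Theorem \ref{homologyHFinGen} provides finite generation of the $\cH$-module $[n] \mapsto H_i(U(n,q),\bF_q)$, and Theorem \ref{repStability} promotes finite generation of any $\cH$-module to the Grothendieck-group decomposition that the paper takes as its definition of representation stability (in the paragraph immediately following Theorem \ref{repStability}).

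First I would invoke Theorem \ref{homologyHFinGen} to conclude that, with the $0$-Hecke action from Section \ref{HomologySection} and the transition maps $\Phi_n \colon H_i(U(n,q),\bF_q) \to H_i(U(n+1,q),\bF_q)$ induced by the natural inclusion $U(n,q) \injects U(n+1,q)$, the assignment $V_n \coloneqq H_i(U(n,q),\bF_q)$ defines a finitely generated object of ${\rm Mod}_\cH$. Next I would apply Theorem \ref{repStability} to this particular $V$, which yields a unique finite identity
\[
[V] = \sum_{a,b} c_{\alpha^a, k_b}\, [M(\alpha^a, k_b)]
\]
in $\cG({\rm Mod}_\cH)$ with integers $c_{\alpha^a,k_b}$ independent of $n$. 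By the definition of representation stability given just below Theorem \ref{repStability}, the existence of such a decomposition is exactly what is required.

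There is no real obstacle in this final step; the genuine work was done in establishing Theorem \ref{homologyHFinGen} (which rests on Lemma \ref{OIAgree} together with \cite{PSS}[Theorem 1.4]) and in building the Grothendieck-group basis machinery that culminates in Theorem \ref{GrothBasis} and Theorem \ref{repStability}. Consequently, I would write the proof as a two-sentence citation, of the form: \emph{By Theorem \ref{homologyHFinGen}, $[n] \mapsto H_i(U(n,q),\bF_q)$ is a finitely generated $\cH$-module; Theorem \ref{repStability} then gives the required decomposition in $\cG({\rm Mod}_\cH)$, which by definition is exactly representation stability.}
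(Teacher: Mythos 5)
Your proposal is correct and matches the paper's own argument exactly: the corollary is deduced by combining Theorem \ref{homologyHFinGen} (finite generation as an $\cH$-module) with Theorem \ref{repStability}. Nothing further is needed.
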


\begin{proof}
This follows immediately from Theorems \ref{homologyHFinGen} and \ref{repStability}.
\end{proof}

\begin{corollary}
For any fixed $i \geq 0$ sequence of $H_n(0)$-modules $\{H_i(B(n,q),\bF_q)\}_{n \geq 0}$ is representation stable.
\end{corollary}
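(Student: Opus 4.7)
The plan is to reduce to the unipotent case. The very first paragraphs of Section \ref{HomologySection} already set up the Hochschild--Serre spectral sequence associated to the short exact sequence
\[
0 \to {\rm Diag}(n,q) \to B(n,q) \to U(n,q) \to 0,
\]
and observe that it degenerates because $|{\rm Diag}(n,q)|$ is coprime to $q$, yielding a canonical isomorphism $H_i(B(n,q),\bF_q) \cong H_i(U(n,q),\bF_q)$ for every $i$ and $n$. Since the claim is about representation stability (a statement about classes in the Grothendieck group $\cG(H_n(0))$), it suffices to exhibit this identification as an isomorphism of $H_n(0)$-modules, where we transport the action defined in Section \ref{HomologySection} along the isomorphism on the right.

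First I would equip $H_i(B(n,q),\bF_q)$ with the $H_n(0)$-action pulled back through the Hochschild--Serre isomorphism. Under this choice, the equality $[H_i(B(n,q),\bF_q)] = [H_i(U(n,q),\bF_q)]$ holds tautologically in $\cG(H_n(0))$ for each $n$. Invoking Corollary \ref{RepStabilityUn}, which gives representation stability for the sequence $\{H_i(U(n,q),\bF_q)\}$, then immediately yields the required finite decomposition
\[
[H_i(B(n,q),\bF_q)] = \sum_{s,t} c_{\alpha^s,k_t}\,[M(\alpha^s,k_t)_n]
\]
with coefficients independent of $n$, establishing representation stability.

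The one subtlety, and the main place to be careful, is the choice of $H_n(0)$-action. The cleanest path is simply to \emph{define} the action on $H_i(B(n,q),\bF_q)$ via transport through the Hochschild--Serre isomorphism, in which case the corollary follows formally from Corollary \ref{RepStabilityUn} and Theorem \ref{repStability}. If one instead wished to use an intrinsic Hecke-algebra-on-double-cosets action analogous to Section \ref{CohomologySection}, one would additionally have to verify that this action agrees with the transported one; this compatibility ought to follow from the naturality of Hochschild--Serre with respect to the double coset operators acting on the entire short exact sequence, but it is a genuine verification that would require spelling out the induced action on the $E^2$ page. For the purpose of this corollary the transport interpretation is sufficient, so no further work is needed.
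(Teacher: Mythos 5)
Your argument is correct and is essentially the paper's own: the paper also identifies $H_i(B(n,q),\bF_q)$ with $H_i(U(n,q),\bF_q)$ via the degeneration of the Hochschild--Serre spectral sequence explained at the start of Section \ref{HomologySection} (with the $H_n(0)$-action taken to be the one defined there on the unipotent side) and then invokes Corollary \ref{RepStabilityUn}. Your extra remark about transporting the action versus checking compatibility with a double-coset action is a reasonable caveat, but it does not change the route.
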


\begin{proof}
This follows formally from the introduction to this section. Since $H_i(U(n,q),\bF_q) = H_i(B(n,q),\bF_q)$, Corollary \ref{RepStabilityUn} implies this result.
\end{proof}

\section{Graded Components of Stanley Reisner Rings} \label{StanleySec}
We first recall some necessary definitions. An {\em abstract simplicial complex} $\Delta$ on a vertex set $V$ is a collection of finite subsets of $V$, called {\em faces}, such that any subset of a face is also a face. The {\em dimension} of a face $F$ is $|F|-1$, so that one vertex faces have dimension zero. The {\em dimension of a simplicial complex} is the maximum dimension of its faces. We say that a $(d-1)$-dimensional simplicial complex is {\em balanced} if there exists a coloring map $r \colon V \to [d]$ such that every face consists of vertices of distinct colors. The reason we call this coloring map $r$ is because we call the {\em rank set} of a face $F$, denoted by $r(F)$, all the colors of all of its vertices.

The {\em Stanley-Reisner ring} $\bF[\Delta]$ of a simplicial complex $\Delta$ over a field $\bF$ is
\[
\bF[\Delta] \coloneqq \bF[y_v \; | \; v \in V]/I_\Delta,
\]
where $I_\Delta \coloneqq \langle y_u y_v \; | \; u,v \in V, \; \{u,v\} \not\in \Delta\rangle$. So a monomial $y_{v_1}\cdots y_{v_d}$ is nonzero if and only if $v_1,\dots,v_d$ all belong to the same face of $\Delta$. This ideal does not equate monomials, it just makes some of them zero, so we can see that all nonzero monomials form a $\bF$-basis for $\bF[\Delta]$.

If $\Delta$ is balanced then its Stanley-Reisner ring $\bF[\Delta]$ is multigraded, that is any nonzero monomial $m = y_{v_1}\cdots y_{v_k}$ has a {\em rank multiset} $r(m)$

\subsection{Stanley Reisner Ring of the Boolean Algebra}
The Boolean algebra $\cB_n$ is the ranked poset of all subsets of $[n]$ ordered by inclusion with minimum element $\emptyset$ and maximum element $[n]$. The rank of an element is defined as the cardinality of the corresponding set, where clearly $|\emptyset| = 0$. Following the definition of the Stanley-Resiner ring above, if we take $\cB_n$ as our simplicial complex where the vertices are subsets of $[n]$ and the faces are chains of subsets, we see that $\bF[\cB_n] = \bF[y_A  \; | \; A \subset [n]]/I_\Delta$. In this case
\[
I_\Delta = \langle y_Ay_B \; | \; A,B \; \text{incomparable}\rangle,
\]
in other words $A$ is not a subset of $B$ and $B$ is not a subset of $A$. For example if we take $A = \{1,3,4\}$ and $B = \{2,4,5\}$. So the nonzero monomials are exactly given by weakly increasing chains of subsets, sometimes called {\em multichains}. This means that we have an $\bF$-basis $\{y_M\}$ indexed by multichains $M$ in $\cB_n$. This basis is multigraded by the rank multisets $r(M)$ of the multichains $M$. For example the multichain $\{2\}\subseteq \{2\} \subset \{1,2,4\} \subset [5]$ has $r(M) = \{1,1,3,5\}$. 

There is a natural way of encoding multichains, which \cite{Hu} uses to construct a $H_n(0)$ action on $\bF[\cB_n]$ which we recall now. For more details we refer the reader to \cite{Hu}. Let $M = (A_1 \subseteq A_2 \subseteq \cdots \subseteq A_k)$ be an arbitrary multichain of length $k$ in $\cB_n$, set $A_0 = \emptyset$ and $A_{k+1} = [n]$ by convention. For any such multichain $M$ of length $k$ in $\cB_n$, let $p_i(M) = \min\{j \in [k+1] \; | \; i \in A_j\}$, this records the first position where $i$ occurs in $M$. By definition it must appear in every position after this, so the collection $p(M) = (p_1(M),\dots,p_n(M))$ encodes the multichain $M$. The map $M \mapsto p(M)$ gives a bijection between the set of multichains of length $k$ in $\cB_n$ and the set $[k+1]^n$ of all words of length $n$ on the alphabet $[k+1]$ for any fixed integer $k \geq 0$. 

In section \ref{polynomialSec} we saw how $H_n(0)$ can act on the polynomial ring via Demazure operators using the generating set $\pi_i$. In this example, it is easier to use the generating set $\ol{\pi_i}$, once again we recall that $\pi_i = \ol{\pi_i} + 1$. Let $M = (A_1 \subseteq \cdots \subseteq A_k)$ be a multichain in $\cB_n$, then Huang defined
\[
\ol{\pi}_i(y_M) \coloneqq 
\begin{cases}
-y_M, &p_i(M) > p_{i+1}(M),\\
0, &p_i(M) = p_{i+1}(M),\\
s_i(y_M), &p_i(M) < p_{i+1}(M)
\end{cases}
\]
for $i = 1,\dots, n-1$. Huang shows this is a well defined action that respects the multigrading. We can use this to see how the $\pi_i$ should act,
\[
\pi_i(y_M) \coloneqq
\begin{cases}
0, &p_i(M) > p_{i+1}(M),\\
y_M, &p_i(M) = p_{i+1}(M),\\
s_i(y_M) + y_M, &p_i(M) < p_{i+1}(M)
\end{cases}
\]
We will now define a $\cH$ module where $[n] \mapsto \bF[\cB_n]$ and the inclusion map $\iota_{n,n+1}$ acts  by sending a multichain $M = (A_1 \subseteq \cdots \subseteq A_k)$ to the multichain with $A_{k+1} = [n+1]$ instead of $[n]$. The crossings act by the $\pi_i$ we just defined on multichains, not the $\ol{\pi}_i$. It is clear that the inclusion maps are $H_n(0)$-equivariant. From Theorem \ref{criterion} it remains to check that if we apply $(\iota_{n,m})_\ast$ that $\pi_{n+1},\dots,\pi_{m}$ act by $1$, or equivalently that $\ol{\pi}_{n+1},\dots,\ol{\pi}_{m}$ act by $0$. 

By definition of our embedding, elements in the image of $(\iota_{n,m})_\ast$ will be a polynomial in $y_A$ where every set $A$ is a subset of $[n]$, i.e. it will consist of monomials $y_M$ in $\cB_m$ where the multichain $M$ has $A_i \subset [n]$ and $A_{k+1} = [m]$. This implies that $p_{i}(M) = k+1 = p_{i+1}(M)$ for every $i = n+1,\dots,m-1$. As a result, $\pi_i(y_M) = y_M$ for every $i = n+1,\dots,m-1$ so Theorem \ref{criterion} implies the following:

\begin{theorem} \label{BooleanHModule}
The assignment $[n] \mapsto \bF[\cB_n]$, the Stanley-Reisner ring of the Boolean algebra, with transition maps as defined above is a $\cH$-module.
\end{theorem}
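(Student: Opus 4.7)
The plan is to verify the two conditions of the $\cH$-module criterion (Theorem \ref{criterion}): (i) each transition map $(\iota_{n,n+1})_\ast$ is $H_n(0)$-equivariant, and (ii) for every $n < m$, the generators $\pi_{n+1},\dots,\pi_{m-1}$ act as the identity on $\im((\iota_{n,m})_\ast)$. Equivalently, as noted in the remark after Theorem \ref{criterion}, I will check that $\ol{\pi}_{n+1},\dots,\ol{\pi}_{m-1}$ annihilate the image. The hard analytic work has already been done by Huang: once I know the formulas for $\ol{\pi}_i$ in terms of the position statistics $p_i(M)$, both conditions reduce to tracking how these statistics behave under the embedding $M \mapsto M'$ that simply reinterprets a multichain of $\cB_n$ as a multichain of $\cB_m$ by replacing the terminal element $[n]$ with $[m]$.

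For equivariance, fix a multichain $M = (A_1 \subseteq \cdots \subseteq A_k)$ in $\cB_n$ and let $M'$ denote its image in $\cB_m$ under $(\iota_{n,m})_\ast$, so that each $A_j$ is unchanged for $j \leq k$ and $A_{k+1}$ is replaced by $[m]$. For any $i \in \{1,\dots,n\}$, the element $i$ first appears in $M'$ at exactly the same position as in $M$, since the only set that changed is the top one and the $A_j$ with $j \leq k$ still determine when $i$ appears. Thus $p_i(M') = p_i(M)$ and $p_{i+1}(M') = p_{i+1}(M)$ for $1 \leq i \leq n-1$, so comparing the three cases in Huang's formula shows $\pi_i(y_{M'}) = (\iota_{n,m})_\ast(\pi_i(y_M))$. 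By linearity, the transition maps are $H_n(0)$-equivariant.

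For the triviality condition, note that any element of $\im((\iota_{n,m})_\ast)$ is a linear combination of basis elements $y_{M'}$ where $M' = (A_1 \subseteq \cdots \subseteq A_k \subseteq [m])$ with all $A_j \subseteq [n]$ for $j \leq k$. For any index $i$ with $n+1 \leq i \leq m$, the element $i$ does not appear in any $A_j$ with $j \leq k$, so $p_i(M') = k+1$. In particular, for $j$ with $n+1 \leq j \leq m-1$ we have $p_j(M') = p_{j+1}(M') = k+1$, placing us in the middle case of Huang's formula. Therefore $\pi_j(y_{M'}) = y_{M'}$, as required.

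The combination of these two verifications allows direct application of Theorem \ref{criterion} to conclude that $[n] \mapsto \bF[\cB_n]$, together with the transition maps, is a $\cH$-module. The only potential subtlety is checking that Huang's action is compatible with the multigrading in a way that is preserved by the embedding, but this is immediate since the embedding preserves the underlying sets $A_j$ for $j \leq k$ and only adjusts the convention on $A_{k+1}$, which does not enter the formulas for $\pi_i$ when $i \leq m-1$ and the relevant position statistics are computed only from the $A_j$ with $j \leq k+1$.
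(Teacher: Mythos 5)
Your proposal is correct and follows essentially the same route as the paper: it verifies the two hypotheses of the $\cH$-module criterion (Theorem \ref{criterion}), with equivariance following from the invariance of the position statistics $p_i$ under the embedding, and triviality from $p_j(M') = p_{j+1}(M') = k+1$ for $j > n$, which is exactly the paper's argument (your equivariance check is in fact slightly more explicit than the paper's "it is clear").
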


This module cannot possibly be finitely generated because it does not grow like a polynomial. However, there is a multigrading present, where we say that a multichain $M$ has multigrading $g = (r_1,\dots,r_k)$ if the multichain is of the form $A_1 \subseteq \cdots \subseteq A_k$ where $|A_i| = r_i$. The $H_n(0)$ action respects the multigrading \cite{Hu}, so we can consider the $\cH$-submodule given by restricting to homogeneous polynomials whose monomials correspond to multichains of a fixed length $k$ that correspond to a fixed composition $(\alpha_1,\dots,\alpha_k)$, that is the set sizes remain fixed as well. When we apply the transfer map $\tau \colon \bF[\cB_n] \to \bF[X]$ defined by
\[
\tau(y_M) \coloneqq \prod_{1 \leq i \leq k} \prod_{j \in A_j} x_j,
\]
for all multichains $M = (A_1 \subseteq \cdots \subseteq A_k)$ in $\cB_n$, these correspond exactly to degree $r_1(M) + \cdots + r_k(M)$ monomials. This map is not a ring homomorphism, but it restricts to an isomorphism $\tau \colon \bF[\cB_n^\ast] \cong \bF[X]$ of $H_n(0)$-modules. For more details we refer the reader to \cite[\S 3.4]{Hu}.

\begin{theorem} \label{finGenBoolean}
If we fix any multigrading $g$, the assignment $[n] \mapsto \bF[\cB_n]_g$ defines a homogeneous multigraded $\cH$-module that is finitely generated.
\end{theorem}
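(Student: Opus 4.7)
The plan is to exhibit the finite-dimensional subspace $V_s := \bF[\cB_s]_g$, where $s := r_k$ denotes the top entry of the multigrading $g = (r_1,\dots,r_k)$, as a generating set for the $\cH$-module $V$ with $V_n := \bF[\cB_n]_g$. I would begin by observing that $V_n = 0$ for $n < s$, since a multichain of multigrading $g$ requires $A_k \subseteq [n]$ with $|A_k| = s$, and that $V_s$ is finite-dimensional: the top set is forced to equal $[s]$, leaving only finitely many choices for the intermediate chain $A_1 \subseteq \cdots \subseteq A_{k-1} \subseteq [s]$.

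I would next pass to Huang's word encoding $M \mapsto p(M) \in [k+1]^n$, under which $V_n$ acquires a basis indexed by words of content $(r_1,\, r_2 - r_1,\, \dots,\, r_k - r_{k-1},\, n - s)$. In this language the inclusion $\iota_{n,n+1}$ simply appends a $(k+1)$ to the end of the word, so the lex-smallest word $w_0 = 1^{r_1} 2^{r_2 - r_1} \cdots k^{r_k - r_{k-1}} (k+1)^{n-s}$ in $V_n$ equals $(\iota_{s,n})_\ast y_{w_0'}$, where $w_0' = 1^{r_1} \cdots k^{r_k - r_{k-1}}$ is the lex-smallest word in $V_s$.

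The heart of the argument is the claim that, for each $n \geq s$, the $H_n(0)$-module $V_n$ is cyclic with generator $y_{w_0}$. This I would prove by induction on lex order: any $w \neq w_0$ of the correct content must contain a descent, i.e.\ an index $i$ with $w_i > w_{i+1}$; setting $w' := s_i w$ we have $w' <_{\mathrm{lex}} w$ and the action formula yields $\pi_i y_{w'} = y_w + y_{w'}$ (since $w'_i < w'_{i+1}$), whence $y_w = \pi_i y_{w'} - y_{w'}$ lies in the $H_n(0)$-submodule generated by $y_{w'}$, and inductively in that generated by $y_{w_0}$.

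Combining these observations gives $V_n \subseteq \cH \cdot V_s$ for every $n \geq s$, so $V$ is generated as a $\cH$-module by any basis of $V_s$. Since $V_s$ is finite-dimensional this exhibits $V$ as finitely generated. I do not expect serious obstacles: the cyclicity step is essentially the standard argument that permutation-type modules of $H_n(0)$ are cyclic, and the reduction to cyclicity via the $\iota$'s is straightforward once the word model is in place. As an alternative one could invoke Corollary \ref{OIFinGen} to reduce finite generation of the $\cH$-module to finite generation as an ${\bf OI}$-module, but the direct approach above seems equally clean.
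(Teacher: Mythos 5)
Your proof is correct and takes essentially the same route as the paper: both arguments come down to the observation that the lowest-degree ``staircase'' element (your $y_{w_0'}$, i.e.\ the multichain $[r_1]\subseteq\cdots\subseteq[r_k]$) generates each $\bF[\cB_n]_g$ under the $0$-Hecke sorting action once it is pushed up by the embedding, so the finite-dimensional bottom degree generates the whole module. The differences are only bookkeeping: you use Huang's word encoding and induct on lex order via $\pi_i y_{w'} = y_{s_i w'} + y_{w'}$, while the paper phrases the same sorting argument in terms of exponent vectors under the transfer map, using the $\ol{\pi}_i$, which send basis elements exactly to basis elements, to reorder the nonzero entries and shift them past zeros.
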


\begin{proof}
First, because the action as defined above respects the multigrading and so do our embedding maps $(\iota_{n,m})_\ast$ Theorem \ref{BooleanHModule} implies that this is a $\cH$-module.

To see that it is finitely generated, notice from the discussion above that if we have fixed multigrading $r(M) = (a_1,\dots,a_{k})$ the monomial corresponding to the multichain
\[
[a_1] \subseteq [a_2] \subseteq \cdots \subseteq [a_{k}],
\]
in lowest degree $a_k$ will generate all other monomials because after we embed to a higher degree we can apply permutations to get any multichain. In terms of monomials, this multichain corresponds to
\[
x_1^{r_1(M)} x_2^{r_2(M)} \cdots x_{a_k}^{r_{a_k}(M)}
\]
where $r_i(M)$ is the number of times $i$ occurs in the multichain $M$. We can also encode this monomial using its exponent vector as
\[
(r_1(M),r_2(M),\dots,r_{a_k}(M)) = (\gamma_1,\dots,\gamma_{a_k}).
\]
In a higher degree, in order for us to stay in the same multigrading, the corresponding monomial must have exponent vector corresponding to the above vector where we can insert $0$s and permute the $\gamma_i$ since this is equivalent to the multiset sizes remaining the same. The action of the $0$-Hecke algebra elements $\ol{\pi_i}$ on these exponent vectors is via sorting. If we have an exponent vector $(a_1,\dots,a_n)$, $\ol{\pi_i}$ will swap $a_i$ and $a_j$ if $a_i > a_j$, it will be zero if $a_i = a_j$ and it will act by $-1$ is $a_i < a_j$. By construction our original multichain corresponds to the exponent vector with $\gamma_1 \geq \gamma_2 \geq \cdots \geq \gamma_k$. 

Given any other exponent vector $\beta = (\beta_1,\dots,\beta_d)$ in a higher degree $d$ of the same multigrading, we can first inject $(\gamma_1,\dots,\gamma_{a_k})$ into that degree to get $(\gamma_1,\dots,\gamma_{a_k},0,\dots,0)$ where there are $d-a_k$ zeroes. Let $(\beta_{i_1},\dots,\beta_{i_{a_k}})$ be the nonzero entries of the exponent vector $\beta$. Use the $\ol{\pi_i}$ to sort the $\gamma_i$ into the order in which the nonzero $\beta_{i_j}$ appear. Algorithmically do this by looking at the first entry, if $\beta_{i_1} = \gamma_1$ move on to $\beta_{i_2}$, if not this means $\beta_{i_1} = \gamma_j < \gamma_1$. Since this is the case, we can move $\gamma_j$ to the first position as it will be smaller than everything to its left. We then consider $\beta_{i_2}$, if $\beta_{i_2} = \gamma_1$ we move on to $\beta_{i_3}$, otherwise we perform the same procedure to place the correct $\gamma_j$ into the second spot. We continue in this way until the $\gamma_i$ are in the correct order.

We can then use the $\ol{\pi_i}$ to sort the $\gamma_i$ into the nonzero entries of the exponent vector $(\beta_1,\dots,\beta_d)$, we can do this because all the other entires will be zero so we can shift any nonzero entries to the right as much as we want. This shows that we can generate any monomial with the fixed multigrading $g$, which completes the proof.
\end{proof}

\begin{corollary}
For a fixed multigrading $g$, and $n$ a sufficiently large positive integer, $F[\cB_n]_g$ is representation stable. That is, there is a finite list of compositions $\alpha_i$ paired with finitely many integers $k_j \in \bZ_{\geq -1}$ such that
\[
[F[\cB_n]_g] = \bigoplus_{i,j} c_{\alpha_i,k_j} [M(\alpha_i,k_j)]
\]
where the non-negative integer $c_i$ and $k$ are independent of $n$.
\end{corollary}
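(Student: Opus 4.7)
The plan is to obtain this corollary as an essentially immediate consequence of the two main ingredients already established in this section. First I would invoke Theorem \ref{finGenBoolean}, which provides exactly the structural input needed: for each fixed multigrading $g$, the assignment $[n] \mapsto \bF[\cB_n]_g$ is a finitely generated $\cH$-module, with the explicit generator being the monomial indexed by the staircase multichain $[a_1] \subseteq [a_2] \subseteq \cdots \subseteq [a_k]$ of the prescribed rank profile.

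Next I would feed this finite generation statement into the general representation stability theorem (Theorem \ref{repStability}), which asserts that any finitely generated $\cH$-module $V$ admits a unique finite decomposition
\[
[V] = \sum_{i,j} c_{\alpha_i, k_j}\, [M(\alpha_i, k_j)]
\]
in the Grothendieck group $\cG({\rm Mod}_{\cH})$, with coefficients $c_{\alpha_i, k_j} \in \bZ$ independent of $n$. Applied to $V = \bF[\cB_\bullet]_g$, this immediately produces a finite list of pairs $(\alpha_i, k_j)$ with $k_j \geq -1$ and integer multiplicities $c_{\alpha_i, k_j}$ such that the decomposition displayed in the corollary holds for all $n$.

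Finally I would note that, by Theorem \ref{GrothBasis}, the $[M(\alpha, k)]$ form an actual basis of the Grothendieck group, so the list of pairs and multiplicities is uniquely determined; and by the description of $M(\alpha,k)_n$ for $n \geq |\alpha|$ given in \S\ref{PaddedInducedSec}, each summand $M(\alpha_i, k_j)_n$ is nonzero for $n$ sufficiently large, so the decomposition stabilizes (is genuinely valid with a uniform finite set of indices) once $n$ exceeds the maximum of $|\alpha_i|$ over the indexing set. There is no genuine obstacle here: the entire content of the corollary is packaged in Theorems \ref{finGenBoolean} and \ref{repStability}, and the proof reduces to citing them in sequence.
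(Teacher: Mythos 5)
Your proposal is correct and matches the paper's argument, which likewise deduces the corollary immediately by combining Theorem \ref{finGenBoolean} (finite generation of $[n]\mapsto \bF[\cB_n]_g$ as an $\cH$-module) with Theorem \ref{repStability}. The extra remarks on uniqueness via Theorem \ref{GrothBasis} and on nonvanishing of the summands for large $n$ are consistent elaborations, not a different route.
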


\begin{proof}
This follows directly from Theorem \ref{repStability} in combination with Theorem \ref{finGenBoolean}.
\end{proof}

Another way to state this theorem is that for any fixed multigrading $g$ and for $n$ sufficiently large, there is a finite list $\{(\alpha_i,k_i)\}$ of compositions paired with integers $k_i \in \bZ_{\geq -1}$ that completely control the simple $H_n(0)$-modules that can occur in $F[\cB_n]_g$.
%

\section{Quasisymmetric Schur Modules} \label{QuasiSchurSec}
In this section, we pursue an example of a $\cH$-module that is not a ${\bf FI}$-module. These modules arise in a natural way. Through the Frobenius characteristic map, we get an isomorphism between the irreducible representations of the symmetric group up to isomorphism and the ring of symmetric functions. In this setting, the Specht modules $V_\lambda$ map to $s_\lambda$ the Schur polynomial. 

There is a similar picture for representations of the $0$-Hecke algebra. As we have discussed, there is a commutative and non-quasisymmetric characteristic map. The quasisymmetric characteristic map provides an isomorphism between the Grothendieck group of finitely generated $H_n(0)$-modules for all $n$ and the ring of quasisymmetric functions. This map sends the irreducible module $\bC_\alpha$ to the fundamental quasisymmetric function $F_\alpha$. For many years, these were thought of as the analogue of Schur functions in ${\bf QSym}$. Although they do have a multiplication rule as we have seen and studied above, they do not naturally lift many of the well known properties of Schur functions to the ring of quasisymmetric functions (expression in terms of monomial symmetric functions, Pieri rule, etc.).

In \cite{HLMW}, the authors discovered and defined this appropriate analogue which they aptly named quasisymmetric Schur functions. For more details on why these functions are a natural refinement of Schur functions in the quasisymmetric setting we refer the reader to \cite{HLMW}. It then became a natural question to ask if there were representations of $H_n(0)$ that realize these quasisymmetric Schur functions under the quasisymmetric characteristic map. Recently, in \cite{TW}, the authors define a collection of $H_n(0)$ modules for varying $n$ and prove their image is exactly the quasisymmetric Schur functions. We will now define these modules and show how it is possible to put a $\cH$-module structure on suitable collections of them. The construction of these modules also illustrates the type of symmetry that $\cH$ preserves, namely an upward symmetry. As opposed to ${\bf FI}$ which can only act when the corresponding objects have complete symmetry.

We begin by making the necessary definitions. Given a composition $\alpha = (\alpha_1,\dots, \alpha_k)$ of $n$, we define its {\bf reverse composition diagram} which we will denote by $\alpha$ as an array of left-justified boxes with $\alpha_i$ boxes in row $i$ from the top. Notice, this is very different from the ribbon tableau representation of a composition $\alpha$. The reverse composition diagram is more akin to young diagrams. We say that a box is in position $(i,j)$ if it is $i$ rows down from the top and $j$ columns in from left to right. We are now ready to make a key definition,

\begin{definition}
Given a composition $\alpha \vDash n$, we can define a {\bf standard reverse composition tableau}, abbreviated SRCT $\tau$ of shape $\alpha$ and size $n$ to be a bijective filling $\tau \colon \alpha \to \{1,\dots,n\}$ of the cells $(i,j)$ of the reverse composition diagram $\alpha$ subject to the conditions
\begin{enumerate}
\item The entries in each row must be decreasing when read from left to right,
\item The entries in the first column must be increasing when read from top to bottom,
\item The filling must satisfy the {\em triple rule}, namely, if $i < j$ and $\tau(i,k) > \tau(j,k+1)$, then $\tau(i,k+1)$ exists and $\tau(i,k+1) > \tau(j,k+1)$.
\end{enumerate}
\end{definition}

We denote the set of all SRCTs by ${\rm SRCT(\alpha)}$ maintaining the notation in \cite{TW}. For more information on the triple rule, and the definitions and constructions we refer the reader to \cite{TW}.

\begin{example}
Let $\alpha = (2,1,4)$ be a composition of $7$, then an example of an element of ${\rm SRCT(\alpha)}$ is
\[
\young(21,3,7654) \; .
\]
A nonexample is
\[
\young(61,3,7542)
\]
because this does not satisfy the triple rule and the first column is not increasing. In particular, we see that $\tau(1,1) > \tau(3,2)$, but $\tau(1,2) = 1 < \tau(3,2) = 5$.
\end{example}

Given a SRCT $\tau$ there is a notion of a corresponding descent set 
\[
{\rm Des}(\tau) = \{i \; | \; i+1 \; \text{appears weakly right of $i$}\} \subseteq [n-1].
\]
From this we can construct a {\bf descent composition} of $\tau$, ${\rm comp}(\tau) = {\rm comp}({\rm Des}(\tau))$. The collection of standard reverse composition tableau are important because they are used to define they quasisymmetric Schur function $\cS_\alpha$. Namely,

\begin{definition}
Let $\alpha \vDash n$ be a composition. Then the {\bf quasisymmetric Schur function} $\cS_\alpha$ is defined by $S_\emptyset = 1$ and
\[
S_\alpha = \sum_{\tau \in {\rm SRCT(\alpha)}} F_{{\rm comp}(\tau)}
\]
where ${\rm comp}(\tau)$ was defined above.
\end{definition}

Now we are almost ready to define a $H_n(0)$ action on SRCTs, we just need the notion of attacking blocks.

\begin{definition}
Given $\tau \in {\rm SRCT}(\alpha)$ for a composition $\alpha \vDash n$, and any positive integer $i$ with $1 \leq i \leq n-1$ we say that $i$ and $i+1$ are {\bf attacking} if one of the following is true
\begin{enumerate}
\item $i$ and $i+1$ are in the same column of $\tau$, or
\item $i$ and $i+1$ are in adjacent columns of $\tau$, with $i+1$ positioned strictly down and to the right of $i$.
\end{enumerate}
\end{definition}

Given $\tau \in {\rm SRCT}(\alpha)$ for some $\alpha \vDash n$, and a positive integer $1 \leq i \leq n-1$, let $s_i(\tau)$ denote the filling obtained by interchanging the positions of entires $i$ and $i+1$ in $\tau$. Now we are ready to define a $0$-Hecke action on ${\rm SRCT}(\alpha)$ as follows, for $1 \leq i \leq n-1$ let
\[
\ol{\pi}_i(\tau) =
\begin{cases}
-\tau & i \not\in {\rm Des}(\tau)\\
0 & i \in {\rm Des}(\tau), \; i \; \text{and} \; i+1 \; \text{attacking}\\
s_i(\tau) & i \in {\rm Des}(\tau), \; i \; \text{and} \; i+1 \; \text{non-attacking}.
\end{cases}
\]
The action that the authors in \cite{TW} describe differs from ours by a sign, we will denote their generators by $\tilde{\pi}_i$. We choose to introduce the sign because this action is more amenable to an $\cH$-module structure and is more in line with usual $0$-Hecke actions. It is also not hard to check that this is still a well defined action and that the partial order described below is the same as the one in \cite{TW}. We will, however, do this explicitly. We omit some details that overlap with the proofs in \cite{TW}. For all of the following, $\tau$ will denote some SRCT of size $n$.

\begin{lemma} \label{squareNeg}
For $1 \leq i \leq n-1$ we have $\ol{\pi}_i^2 = -\ol{\pi}_i$
\end{lemma}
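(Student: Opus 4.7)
The plan is to prove the identity by a direct case analysis on the three branches of the definition of $\ol{\pi}_i$. Recall that for any scalar $c$ and tableau $\sigma$ we have $\ol{\pi}_i(c\sigma) = c\,\ol{\pi}_i(\sigma)$ since the action is linear, so it suffices to verify $\ol{\pi}_i^2(\tau) = -\ol{\pi}_i(\tau)$ on each SRCT $\tau$.

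If $i \notin {\rm Des}(\tau)$, then $\ol{\pi}_i(\tau) = -\tau$ and applying $\ol{\pi}_i$ again simply returns $-\ol{\pi}_i(\tau) = \tau$, giving the claim. If $i \in {\rm Des}(\tau)$ and $i, i+1$ are attacking, then $\ol{\pi}_i(\tau) = 0$ and both sides are zero. The only substantive case is when $i \in {\rm Des}(\tau)$ and $i, i+1$ are non-attacking, in which case $\ol{\pi}_i(\tau) = s_i(\tau)$ and we must show $\ol{\pi}_i(s_i(\tau)) = -s_i(\tau)$, i.e.\ that $i \notin {\rm Des}(s_i(\tau))$.

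The key lemma I would establish is the following: if $i \in {\rm Des}(\tau)$ and $i,i+1$ are non-attacking in $\tau$, then $i+1$ lies in a column strictly to the right of $i$ in $\tau$. Indeed, being a descent means $i+1$ sits weakly right of $i$; if the columns were equal, then $i$ and $i+1$ would be in the same column and hence attacking, a contradiction. Now the effect of $s_i$ is precisely to exchange the cells containing $i$ and $i+1$, so in $s_i(\tau)$ the entry $i$ occupies the column that $i+1$ had in $\tau$, and vice versa. Consequently $i+1$ sits strictly left of $i$ in $s_i(\tau)$, so $i \notin {\rm Des}(s_i(\tau))$ by definition, and the first case of the action formula yields $\ol{\pi}_i(s_i(\tau)) = -s_i(\tau) = -\ol{\pi}_i(\tau)$.

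The main (modest) obstacle is ensuring the case analysis on descent is exhaustive and that $s_i(\tau)$ is itself a valid SRCT so that the formula applies to it; the latter is presumably verified elsewhere in \cite{TW}, and I would simply invoke it. Once the column comparison above is in hand, the identity $\ol{\pi}_i^2 = -\ol{\pi}_i$ falls out immediately in all three cases.
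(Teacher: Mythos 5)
Your proof is correct and takes essentially the same route as the paper's: the non-descent case is identical, and the descent cases (attacking gives $0$ on both sides; non-attacking reduces to checking $i \notin {\rm Des}(s_i(\tau))$ via the column comparison, together with the fact from \cite{TW} that $s_i(\tau)$ is again an SRCT) are exactly the details the paper defers to \cite{TW}. Your version is simply more self-contained in spelling those out.
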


\begin{proof}
If $i \not\in {\rm Des}(\tau)$, then $\pi_i(\tau) = -\tau$. So we see that $\pi_i^2(\tau) = \tau = - \pi_i(\tau)$.

If $i \in {\rm Des}(\tau)$ then this is roughly the same proof as in \cite{TW}.
\end{proof}

\begin{lemma}
For $1 \leq i,j \leq n-1$ with $|i-j| \geq 2$, we have $\ol{\pi}_i \ol{\pi}_j = \ol{\pi}_j \ol{\pi}_i$.
\end{lemma}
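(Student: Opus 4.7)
The plan is an exhaustive case analysis, keyed on the three possible values of $\ol{\pi}_i(\tau)$ (namely $-\tau$, $0$, or $s_i(\tau)$) and independently the three possible values of $\ol{\pi}_j(\tau)$. The essential observation is that since $|i-j| \geq 2$, the sets $\{i,i+1\}$ and $\{j,j+1\}$ are disjoint, so swapping $i$ with $i+1$ in an SRCT does not move the cells containing $j$ or $j+1$, and vice versa.

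First I would record the following invariance as a preliminary step: whenever $s_i(\tau)$ is a valid SRCT (in particular whenever $\ol{\pi}_i(\tau) = s_i(\tau)$, using the well-definedness of the action established in \cite{TW}), the condition "$j \in {\rm Des}$" and the attacking/non-attacking status of the pair $\{j,j+1\}$ are the same in $\tau$ and in $s_i(\tau)$, because both are determined by the positions of $j$ and $j+1$, which are unchanged. The symmetric statement holds with $i$ and $j$ reversed. Together with the observation that $s_i$ and $s_j$ commute as permutations of entries (they act on disjoint pairs), this reduces the problem to bookkeeping.

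I would then run through the nine cases. When $i \notin {\rm Des}(\tau)$ and $j \notin {\rm Des}(\tau)$, both compositions yield $\tau$ after two sign flips. When $i \notin {\rm Des}(\tau)$ but $j \in {\rm Des}(\tau)$ with $\{j,j+1\}$ attacking, one has $\ol{\pi}_j\ol{\pi}_i(\tau) = -\ol{\pi}_j(\tau) = 0$ and $\ol{\pi}_i\ol{\pi}_j(\tau) = \ol{\pi}_i(0) = 0$. When $i \notin {\rm Des}(\tau)$ and $j \in {\rm Des}(\tau)$ non-attacking, the invariance gives $i \notin {\rm Des}(s_j(\tau))$, so $\ol{\pi}_i\ol{\pi}_j(\tau) = -s_j(\tau) = \ol{\pi}_j\ol{\pi}_i(\tau)$. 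When both $i,j \in {\rm Des}(\tau)$ with both pairs non-attacking, the invariance ensures $j \in {\rm Des}(s_i(\tau))$ remains non-attacking, so
\[
\ol{\pi}_j\ol{\pi}_i(\tau) = s_j s_i(\tau) = s_i s_j(\tau) = \ol{\pi}_i\ol{\pi}_j(\tau),
\]
where the middle equality is commutativity of disjoint transpositions. The remaining mixed and both-attacking cases go through by the same template, with any $0$ absorbing the composition and any sign being tracked by Lemma \ref{squareNeg}.

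The only potential obstacle is ensuring that one may freely apply $\ol{\pi}_j$ to $s_i(\tau)$ (and vice versa), i.e. that $s_i(\tau)$ is actually a legal SRCT; this is guaranteed by the well-definedness of the $H_n(0)$-action on the span of ${\rm SRCT}(\alpha)$ in \cite{TW}. Beyond this, the argument is a mechanical enumeration reflecting the locality of each $\ol{\pi}_i$ in the positions of $i$ and $i+1$, and no deeper combinatorics of the triple rule is needed.
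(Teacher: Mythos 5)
Your proposal is correct and takes essentially the same route as the paper: a case analysis on the descent/attacking status of $i$ and $j$, driven by the observation that $|i-j|\geq 2$ makes $\{i,i+1\}$ and $\{j,j+1\}$ disjoint, so applying $s_i$ does not change whether $j$ is a descent or whether $j,j+1$ attack (and $s_i$, $s_j$ commute). The only difference is that you write out in full the cases the paper delegates to \cite{TW} (namely those where the modified generators $\ol{\pi}_i$ agree with the action there), which is a harmless, self-contained elaboration rather than a different argument.
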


\begin{proof}
If neither $i$ nor $j$ belong to ${\rm Des}(\tau)$ then $\ol{\pi}_i(\tau) = \ol{\pi}_j(\tau) = -\tau$ and so $\ol{\pi}_i \ol{\pi}_j(\tau) = \tau = \ol{\pi}_j \ol{\pi}_i(\tau)$.

Suppose that $i \in {\rm Des}(\tau)$. If $i$ and $i+1$ are attacking, then this is the same proof as in \cite{TW}. Otherwise we can assume $i$ and $i+1$ are non-attacking. This means $\ol{\pi}_i(\tau) = s_i(\tau)$. If $j \not\in {\rm Des}(\tau)$ then because $|i-j| \geq 2$ we also have $j \not\in {\rm Des}(s_i(\tau))$. As a result $\ol{\pi}_j \ol{\pi}_i(\tau) = \ol{\pi}_j(s_i(\tau)) = - s_i(\tau) = \ol{\pi}_i \ol{\pi}_j(\tau)$. If $j \in {\rm Des}(\tau)$ our generators agree with \cite{TW}. 
\end{proof}

\begin{lemma}
For $1 \leq i \leq n-2$, we have $\ol{\pi}_i \ol{\pi}_{i+1} \ol{\pi}_i = \ol{\pi}_{i+1} \ol{\pi}_i \ol{\pi}_{i+1}$.
\end{lemma}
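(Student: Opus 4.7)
The plan is to verify the braid relation by direct case analysis on the descent and attacking data of each standard reverse composition tableau $\tau$, following the template of the analogous proof for the sign-related generators $\tilde{\pi}_i$ in \cite{TW}. I would check the equality $\ol{\pi}_i \ol{\pi}_{i+1} \ol{\pi}_i(\tau) = \ol{\pi}_{i+1} \ol{\pi}_i \ol{\pi}_{i+1}(\tau)$ on each $\tau \in {\rm SRCT}(\alpha)$ and extend by linearity. The cases are indexed by the pair of truth values $(i \in {\rm Des}(\tau),\ i+1 \in {\rm Des}(\tau))$, further subdivided by whether the pairs $(i, i+1)$ and $(i+1, i+2)$ are attacking in the intermediate tableaux produced along the way.

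First I would dispose of the cases where at most one of $i, i+1$ lies in ${\rm Des}(\tau)$. In the generic non-descent case, each generator acts as negation on $\tau$, so both sides evaluate immediately to $-\tau$. In the mixed cases, when one generator annihilates due to an attacking descent, one side of the composition collapses to zero after a single step, and the other side is shown to vanish by tracking the descent status of the single intermediate tableau produced by an allowed swap. These verifications require only that applying $s_i$ (resp.~$s_{i+1}$) to $\tau$ flips the membership of $i$ (resp.~$i+1$) in the descent set, which follows immediately from the definition.

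The substantive cases occur when both $i, i+1 \in {\rm Des}(\tau)$ and the actions on each side genuinely effect the swaps $s_i$ or $s_{i+1}$. Here the three-term composition produces a signed sum of SRCTs obtained from $\tau$ by compositions of adjacent transpositions among the values $i, i+1, i+2$. The essential combinatorial input is the triple rule for SRCTs, which constrains the relative positions of $i, i+1, i+2$ and guarantees that the resulting fillings remain valid. This is precisely the content of the corresponding analysis in \cite{TW}. Our sign convention differs from theirs only in the non-descent case, where we produce a factor of $-1$ rather than zero, so the only new work is to verify that these extra signs appear symmetrically on both sides of the equation.

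The main obstacle will be the sheer bookkeeping of tracking how ${\rm Des}(s_i(\tau))$ and the attacking relations among $i, i+1, i+2$ evolve under each application of a generator; every such transition matches the proof of the analogous braid relation in \cite{TW} up to the controlled sign shifts noted above, so no genuinely new combinatorial argument is required. As an alternative route that sidesteps much of the casework, one could verify the braid relation for the positive generators $\pi_i := \ol{\pi}_i + 1$, which align more directly with the action studied in \cite{TW}, and then deduce the desired identity from the algebraic observation
\[
\pi_i \pi_{i+1} \pi_i - \pi_{i+1} \pi_i \pi_{i+1} = \ol{\pi}_i \ol{\pi}_{i+1} \ol{\pi}_i - \ol{\pi}_{i+1} \ol{\pi}_i \ol{\pi}_{i+1},
\]
which follows by expanding and applying the quadratic relation $\ol{\pi}_i^2 = -\ol{\pi}_i$ from Lemma \ref{squareNeg} together with the commutation relation for $|i-j| \geq 2$.
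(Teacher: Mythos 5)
Your primary route coincides with the paper's proof: a case analysis on which of $i,i+1$ lie in ${\rm Des}(\tau)$, subdivided by the attacking relations in the intermediate tableaux, doing by hand the cases where a non-descent occurs (exactly where the new sign convention intervenes) and citing \cite{TW} for the case $i,i+1\in{\rm Des}(\tau)$, where no non-descent ever arises along the computation; that is precisely how the paper argues, so your outline is sound. One caution: the operators of \cite{TW} fix $\tau$ when $i\notin{\rm Des}(\tau)$ (they do not send it to zero), so your description of the sign discrepancy is off, and the premise of your alternative route fails --- the generators $\pi_i=\ol{\pi}_i+1$ kill $\tau$ in the non-descent case, which is yet a third convention, so their braid relation cannot be outsourced to \cite{TW} and would require the same casework. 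The displayed identity $\pi_i\pi_{i+1}\pi_i-\pi_{i+1}\pi_i\pi_{i+1}=\ol{\pi}_i\ol{\pi}_{i+1}\ol{\pi}_i-\ol{\pi}_{i+1}\ol{\pi}_i\ol{\pi}_{i+1}$ is nonetheless correct, needing only the quadratic relations of Lemma \ref{squareNeg} (the commutation relation is irrelevant for adjacent indices), so the two braid relations are equivalent; it just does not save any work.
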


\begin{proof}
We will proceed in cases. The first case is if $i,i+1 \not\in {\rm Des}(\tau)$. This means $\ol{\pi}_i(\tau) = \ol{\pi}_{i+1}(\tau) = - \tau$ so the desired identity clearly holds.

If $i \not\in {\rm Des}(\tau)$ but $i+1 \in {\rm Des}(\tau)$ we have $\ol{\pi}_i(\tau) = -\tau$. If $i+1$ and $i+2$ are attacking, then $\ol{\pi}_{i+1}(\tau) = 0$. This implies that $\ol{\pi}_i \ol{\pi}_{i+1} \ol{\pi}_i(\tau) = \ol{\pi}_{i+1} \ol{\pi}_i \ol{\pi}_{i+1}(\tau) = 0$. As a result we may assume that $i+1$ and $i+2$ are non-attacking, or equivalently that $\ol{\pi}_{i+1}(\tau) = s_{i+1}(\tau)$. We then have three possibilities
\begin{enumerate}
\item If $i \not\in {\rm Des}(s_{i+1}(\tau))$ then $\ol{\pi}_i\ol{\pi}_{i+1}(\tau) = - s_{i+1}(\tau)$, so $\ol{\pi}_{i+1} \ol{\pi}_i \ol{\pi}_{i+1} = -\ol{\pi}_{i+1}\ol{\pi}_{i+1}(\tau) = s_{i+1}(\tau)$ by Lemma \ref{squareNeg}. By assumption $\ol{\pi}_i(\tau) = - \tau$ so $\ol{\pi}_i \ol{\pi}_{i+1} \ol{\pi}_i(\tau) = s_{i+1}(\tau)$ as well.
\item If $i \in {\rm Des}(s_{i+1}(\tau))$ with $i$ and $i+1$ attacking in $s_{i+1}(\tau)$ then $\ol{\pi}_{i+1}\ol{\pi}_i \ol{\pi}_{i+1}(\tau) = 0 = \ol{\pi}_i \ol{\pi}_{i+1} \ol{\pi}_i(\tau)$.
\item Finally if $i \in {\rm Des}(s_{i+1}(\tau))$ with $i$ and $i+1$ non-attacking in $s_{i+1}(\tau)$ then $\ol{\pi}_i \ol{\pi}_{i+1}(\tau) = s_i s_{i+1} \tau$. As in \cite{TW} $i+1$ is not a descent in $s_i s_{i+1}$ so $\ol{\pi}_{i+1} \ol{\pi}_i \ol{\pi}_{i+1}(\tau) = - s_i s_{i+1}(\tau)$. This is precisely $\ol{\pi}_i \ol{\pi}_{i+1} \ol{\pi}_i(\tau)$ since $\ol{\pi}_i(\tau) = -\tau$.
\end{enumerate}

The next case to consider is if $i \in {\rm Des}(\tau)$ and $i+1 \not\in{\rm Des}(\tau)$. In this case we have $\ol{\pi}_{i+1}(\tau) = -\tau$. For the exact same reason as above we may assume that $i$ and $i+1$ are non-attacking otherwise both identities vanish. This means $\ol{\pi}_i\ol{\pi}_{i+1} = - s_i(\tau)$. Again we have three posibilities
\begin{enumerate}
\item If $i+1 \not\in {\rm Des}(s_i(\tau))$ then $\ol{\pi}_{i+1}(s_i(\tau)) = - s_i(\tau)$. This implies that $\ol{\pi}_{i+1} \ol{\pi}_i \ol{\pi}_{i+1} = s_i(\tau) = \ol{\pi}_i \ol{\pi}_{i+1} \ol{\pi}_i(\tau)$ since $\ol{\pi}_i \ol{\pi}_{i+1} \ol{\pi}_i(\tau) = - \ol{\pi}_i(s_i(\tau)) = - \ol{\pi}_i^2(\tau) = s_i(\tau)$.
\item If $i+1 \in {\rm Des}(s_i(\tau))$ with $i+1$, $i+2$ attacking in $s_i(\tau)$ we have $\ol{\pi}_{i+1}\ol{\pi}_i(\tau) = \ol{\pi}_{i+1}(s_i(\tau)) = 0$. This implies that both identities are zero.
\item Finally, if $i+1 \in {\rm Des}(s_i(\tau))$ with $i+1$ and $i+2$ non-attacking in $s_i(\tau)$ this means $\ol{\pi}_{i+1}\ol{\pi}_i(\tau) = s_{i+1}s_i(\tau)$. Again notice $i \not\in {\rm Des}(s_{i+1}s_i(\tau))$ so $\ol{\pi}_i \ol{\pi}_{i+1} \ol{\pi}_i(\tau) = - s_{i+1}s_i(\tau)$. Now $\ol{\pi}_{i+1} \ol{\pi}_i \ol{\pi}_{i+1}(\tau) = - s_{i+1}s_i(\tau)$ as well because $\ol{\pi}_{i+1}(\tau) = -\tau$.
\end{enumerate}

For the final case, suppose $i, i+1 \in {\rm Des}(\tau)$. In this case, the proof in \cite{TW} applies to our case because there is never a point where an element is not in a descent set. 
\end{proof}

The above proves that our definition of the $\ol{\pi_i}$ defines a $H_n(0)$ action on ${\rm SRCT}(\alpha)$ with $\alpha \vDash n$. When defining our $\cH$-module, we prefer to use the generators $\pi_i$, which we describe explicitly in this case,
\[
\pi_i(\tau) =
\begin{cases}
0 & i \not\in {\rm Des}(\tau)\\
\tau & i \in {\rm Des}(\tau), \; i \; \text{and} \; i+1 \; \text{attacking}\\
s_i(\tau) + \tau & i \in {\rm Des}(\tau), \; i \; \text{and} \; i+1 \; \text{non-attacking}.
\end{cases}
\]
In \cite{TW} the authors then place a partial order on elements of ${\rm SRCT}(\alpha)$ called $\preceq_\alpha$, where $\tau_1 \preceq_\alpha \tau_2$ if and only if there exists an element $\pi_\sigma \in H_n(0)$ such that $\pi_\sigma(\tau_1)=\tau_2$. Notice, this would not be possible for {\bf FI}. An important part of establishing that partial order is well defined, i.e. the anti-symmetry, follows from the fact that $H_n(0)$ preserves upward symmetry. 

They then arbitrarily extend this partial order to a total order $\preceq^t_\alpha$ and define new modules $\cV_{\tau_i}$ for some SRCT $\tau_i \in {\rm SRCT}(\alpha)$ with
\[
\cV_{\tau_i} = {\rm span}\{\tau_j \; | \; \tau_i \preceq^t_\alpha \tau_j\} \qquad \text{for} \; 1 \leq i \leq m.
\]
One can think of this as the poset ideal generated by $\tau_i$. They prove that for any choice of $\tau_i$, $\cV_{\tau_i}$ is a $H_n(0)$-module. Once again, we note that there is no natural $\Sigma_n$ action on these modules, or on SRCTs in general. The reason for studying these $\cV_{\tau_i}$ is that if we take $\tau$ as the minimal element of ${\rm SRCT}(\alpha)$ with respect to $\preceq_\alpha^t$ then ${\rm ch}(\cV_\tau) = \cS_\alpha$ the quasisymmetric Schur function. 

We want to note that changing $\pi_i$ does not affect $\cV_{\tau_i}$ as defined in \cite{TW}. This follows immediately from the observation that we only get a new nonzero SRCT if we fall into the third case where $i \in {\rm Des}(\tau)$ with $i$ and $i+1$ non-attacking. So suppose we have
\[
\tilde{\pi}_\sigma(\tau_1) = \tau_2
\]
then $\pi_\sigma(\tau_1) = \tau_2 + \sum_i \pi_{\sigma_i}(\tau_1)$ where $\sigma_i$ is the first $i$ generators in $\sigma$. By definition this tail will be in $\cV_{\tau_1}$ and so $\tau_2 \in \cV_{\tau_1}$. Conversely suppose we have $\pi_{\sigma}(\tau_1) = \tau_2$, then $\tilde{\pi}_\sigma(\tau_1) = \tau_2 - \sum_i \pi_{\sigma_i}(\tau_1)$ so the definition of $\cV_\tau$ does not change if we change the $H_n(0)$ action as above.

Now with our definition of $V_\tau$ we claim there is a natural $H_n(0)$ equivariant embedding $\Phi_n \colon {\rm SRCT}(\alpha) \to {\rm SRCT}(\alpha')$ for $\alpha \vDash n$ and $\alpha' \vDash n+1$ where $\alpha' = \alpha + (1)$ given by filling in $\alpha'$ with the elements of $\alpha$ and placing $n+1$ in the new square. First, notice this is $H_n(0)$ equivariant because the descent set does not change and we do not shift the elements of $\alpha$ so $\pi_1,\dots,\pi_{n-1}$ act in the same way.

Now define a $\cH$-module $\cH(\cV_\tau)$ with $\tau \vDash n$ via the assignment $[i] \mapsto \cV_{\tau + (1^{i-n})}$ for $i \geq n$ and $[i] \mapsto 0$ for $i < n$. And define the map from degree $i$ to degree $i+1$ via the map in the previous paragraph. 

\begin{theorem}
For a composition $\tau \vDash n$, the assignment $[i] \mapsto \cV_{\tau + (1^{i-n})}$ for $i \geq n$ and $[i] \mapsto 0$ for $i < n$, with the order preserving injection $\iota_{i,i+1}$ from degree $i$ to degree $i+1$ defined via $\Phi_i$ is a $\cH$-module.
\end{theorem}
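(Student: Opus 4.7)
The plan is to apply the $\cH$-module criterion of Theorem \ref{criterion} to the sequence $W_i = \cV_{\tau+(1^{i-n})}$ with transition maps $\Phi_i$. Each $\cV_{\tau+(1^{i-n})}$ is already an $H_i(0)$-module by \cite{TW} (the sign change in our definition of $\pi_j$ does not alter the underlying subspace, as noted earlier in the excerpt), so two compatibilities with $\cH$ remain to verify: equivariance of $\Phi_i$ under the inclusion $H_i(0) \hookrightarrow H_{i+1}(0)$, and triviality of the stabilizer of $\iota_{n,m}$ on the image of $(\iota_{n,m})_\ast$.

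For equivariance, the key structural observation is that $\Phi_i$ leaves the entries $1,\dots,i$ in place and merely appends a new single-box row below the existing tableau containing $i+1$. Therefore, for any $1 \leq j \leq i-1$ and any basis SRCT $\sigma$, the descent status of $j$ and the attacking/non-attacking configuration of the pair $(j,j+1)$ in $\sigma$ agree with those in $\Phi_i(\sigma)$; running through the three cases in the definition of $\pi_j$ then yields $\Phi_i(\pi_j\sigma)=\pi_j\Phi_i(\sigma)$. As a byproduct, the distinguished generator $\tau+(1^{i-n})$ maps to $\tau+(1^{i+1-n})$, and equivariance propagates this compatibility across the whole $H_i(0)$-orbit, confirming that $\Phi_i$ actually lands inside the submodule $\cV_{\tau+(1^{i+1-n})}$ rather than merely in the ambient span of $\mathrm{SRCT}(\alpha+(1^{i+1-n}))$. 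For the stabilizer condition, take any $\sigma'=\Phi_{n,m}(\sigma)$ in the image: by construction, the entries $n+1,n+2,\dots,m$ in $\sigma'$ occupy column~$1$ of consecutive single-box rows directly beneath the original tableau. For each generator $\pi_j$ of the stabilizer of $\iota_{n,m}$, namely $j\in\{n+1,\dots,m-1\}$, the entries $j$ and $j+1$ lie in the same column in adjacent rows, so $j+1$ is weakly right of $j$ (placing $j\in{\rm Des}(\sigma')$) and $j,j+1$ share a column (making them attacking). The second case of the action formula then gives $\pi_j\sigma'=\sigma'$, so Theorem \ref{criterion} applies.

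The main obstacle is the last step: it is essential that the new single-box rows line up in column~$1$, since any other placement could separate consecutive dummy entries into distinct columns, render them non-attacking, and introduce an extra summand $s_j(\sigma')$ that would violate the criterion. The explicit choice of $\Phi_i$ places the new box precisely in column~$1$ of a new bottom row, which is exactly what forces the dummy generators to act as the identity and makes the criterion go through.
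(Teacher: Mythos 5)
Your proof is correct and follows essentially the same route as the paper: it invokes the $\cH$-module criterion (Theorem \ref{criterion}), checks that $\Phi_i$ is $H_i(0)$-equivariant because appending the box containing $i+1$ leaves all descent and attacking data among $1,\dots,i-1$ unchanged, and observes that the entries $n+1,\dots,m$ sit consecutively in the first column of any image tableau, so each stabilizer generator $\pi_j$ falls into the descent-and-attacking case and acts by the identity. Your extra remark that the image actually lands in the submodule $\cV_{\tau+(1^{i+1-n})}$ is a harmless bonus beyond what the paper records.
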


\begin{proof}
By Theorem \ref{criterion} it remains to check in degree $m > n$ that $\pi_{n+1},\dots,\pi_{m-1}$ act by the identity on the image of the above map. Notice that any SRCT in the image has $n+1,\dots,m-1$ descending in the first column. This implies that they are all in the descent set and attacking, so by definition $\pi_i$ acts by the identity for $i = n+1,\dots,m-1$.
\end{proof}

\begin{example}
In practice, consider the example
\[
\young(21,3,54) \; .
\]
If we apply $\Phi_{5}$ we get
\[
\young(21,3,54,6).
\]
If we wish to embed to a higher degree, say $9$ we get
\[
\young(21,3,54,6,7,8,9).
\]
\end{example}

We can view these modules $\cH(\cV_\tau)$ as analogues of $L_\lambda^{\geq D}$ in \cite{SS4}, but in this case rather than growth in the first row we have growth in the first column. 

\begin{proposition} \label{finitelyGenSchur}
The $\cH$-module $\cH(\cV_\tau)$ is finitely generated for any choice of composition $\tau$.
\end{proposition}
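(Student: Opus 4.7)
The plan is to show that $\cH(\cV_\tau)$ is generated, as an $\cH$-module, by the finite-dimensional space $\cV_\tau = \cH(\cV_\tau)_n$ sitting in $\cH$-degree $n$; this will immediately yield finite generation. Let $W$ denote the $\cH$-submodule of $\cH(\cV_\tau)$ generated by $\cV_\tau$. By Theorem \ref{criterion}, every morphism $f \in \hom_\cH([n],[i])$ factors as $T_w \iota_{n,i}$ with $T_w \in H_i(0)$, and the transition rule $(\iota_{i,i+1})_\ast = \Phi_i$ iterates to $(\iota_{n,i})_\ast(\sigma) = \sigma + (1^{i-n})$. Consequently $W_i = H_i(0) \cdot \Phi_{n,i}(\cV_\tau)$, and the problem reduces to establishing
\[
H_i(0) \cdot \Phi_{n,i}(\cV_\tau) = \cV_{\tau + (1^{i-n})} \qquad \text{for every } i \geq n.
\]

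For the inclusion $\subseteq$: on any $\sigma \in \cV_\tau$, the generator $\pi_j$ with $j \leq n-1$ acts on $\sigma + (1^{i-n})$ exactly as it does on $\sigma$, since the appended column-one cells are untouched; for $j \geq n+1$ the descent/attacking analysis already used to verify that $\cH(\cV_\tau)$ is a well-defined $\cH$-module shows $\pi_j$ acts as the identity. Hence $H_i(0) \cdot \Phi_{n,i}(\cV_\tau) \subseteq \cV_{\tau+(1^{i-n})}$.

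For the reverse inclusion, I would argue that every $\preceq_{\alpha + (1^{i-n})}$-minimum of the set $\{\sigma' : \tau + (1^{i-n}) \preceq^t \sigma'\}$ has the form $\sigma + (1^{i-n})$ for some $\preceq_\alpha$-minimum $\sigma$ of $\{\sigma : \tau \preceq^t \sigma\}$, which by definition lies in $\cV_\tau$. The key structural observation driving this is that the entries $n+1,\dots,i$ in $\tau + (1^{i-n})$ are pinned at the bottom of column one and are inert under every $\pi_j$ with $j \geq n+1$, so no new partial-order minima can arise in $\cV_{\tau+(1^{i-n})}$ beyond those coming from $\cV_\tau$. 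Combined with the fact that $H_i(0)$ acts by ascending along $\preceq_{\alpha + (1^{i-n})}$, this gives the desired equality.

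The principal obstacle is the subtlety that $\cV_\sigma$ uses the \emph{arbitrary} total-order extension $\preceq^t$ rather than the partial order $\preceq$ governing the $H_n(0)$-action, so verifying the reverse inclusion requires a careful comparison of these orders as the shape grows from $\alpha$ to $\alpha + (1^{i-n})$. If this direct argument proves too delicate, the backup plan is to invoke the noetherianity of $\cH$ (Theorem \ref{HNoeth}): embed $\cH(\cV_\tau)$ into a visibly finitely generated $\cH$-module --- for instance, the $\cH$-module generated in degree $n$ by all of $\mathrm{SRCT}(\alpha)$ in the ambient family of SRCTs whose extensions pin $n+1,\dots,i$ to the bottom of column one --- and deduce finite generation from the noetherian submodule property.
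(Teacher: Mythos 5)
There is a real gap here, and it sits exactly where you flagged it: the reverse inclusion $H_i(0)\cdot\Phi_{n,i}(\cV_\tau) \supseteq \cV_{\tau+(1^{i-n})}$ is never actually established. Your argument about $\preceq$-minima of $\{\sigma' : \tau+(1^{i-n}) \preceq^t \sigma'\}$ is asserted rather than proved, and as you yourself note, $\cV$ is defined via an \emph{arbitrary} total-order extension $\preceq^t$, chosen independently at each size, so there is no a priori compatibility between the order ideals in shape $\alpha$ and in shape $\alpha+(1^{i-n})$; your proposed comparison of minima is precisely the unresolved point, not a step you can cite. The fallback via noetherianity is also not a proof as written: to use Theorem \ref{HNoeth} you must exhibit an embedding of $\cH(\cV_\tau)$ into a module already known to be finitely generated, and the ambient object you describe (``the $\cH$-module generated in degree $n$ by all of $\mathrm{SRCT}(\alpha)$ \dots'') is essentially the module whose finite generation is in question, so the argument is circular.

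What is missing is the one combinatorial observation that makes the statement immediate, and which the paper's proof consists of: in any SRCT of shape $\tau+(1^d)$, the $d$ appended single-box rows are \emph{forced} to contain the $d$ largest entries. (The largest entry of any SRCT must lie in the first column, since rows decrease left to right, and hence at the bottom of the first column; now induct, peeling off the bottom box.) Consequently appending boxes gives a bijection between $\mathrm{SRCT}(\tau)$ and $\mathrm{SRCT}(\tau+(1^d))$, so the transition map $\Phi_{n,i}$ is already surjective on the spanning SRCTs: the degree-$n$ piece spans every graded piece as a vector space, and finite generation follows without any analysis of the $H_i(0)$-orbit or of the total orders. Your inclusion $\subseteq$ is fine but beside the point; once the forced-filling fact is in hand, the entire order-theoretic apparatus you propose can be discarded.
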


\begin{proof}
It is not hard to see that the images of the embeddings from the lowest degree generate the entire $\cH$-module. This is because in any SRCT of shape $(\tau,1^d)$ we must fill the $1^d$ section with the $d$ largest natural numbers in $[|\tau|+d]$, so the number of SRCTs of shape $(\tau,1^d)$ is actually the same as the number of SRCTs of shape $\tau$.
\end{proof}

\begin{corollary}
For any $n \geq 0$ and fixed composition $\tau \vDash n$, the sequence of $H_j(0)$-modules $\{\cV_{\tau + (1^{i-n})}\}_{i \geq n}$ satisfies representation stability.
\end{corollary}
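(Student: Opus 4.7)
The plan is to deduce this corollary immediately from the two results already assembled in the preceding subsection. By construction, the sequence $\{\cV_{\tau + (1^{i-n})}\}_{i \geq n}$ is precisely the underlying sequence of $H_i(0)$-representations of the $\cH$-module $\cH(\cV_\tau)$, padded by zeros in degrees $i < n$. Proposition \ref{finitelyGenSchur} states that $\cH(\cV_\tau)$ is finitely generated as an $\cH$-module (generated, in fact, in degree $n$). So the only thing to do is to invoke Theorem \ref{repStability} to obtain a unique finite decomposition
\[
[\cH(\cV_\tau)] = \sum_{i,j} c_{\alpha^i,k_j} [M(\alpha^i,k_j)]
\]
in $\cG({\rm Mod}_\cH)$, with coefficients $c_{\alpha^i,k_j}$ independent of the evaluation degree.

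Evaluated at degree $j \geq n$, this identity specializes to a decomposition of $[\cV_{\tau + (1^{j-n})}]$ in $\cG(H_j(0))$ as the corresponding finite $\bZ$-linear combination of the classes $[M(\alpha^i,k_j)_j]$, and by Remark \ref{repStabilitySimples} the finite list of pairs $\{(\alpha^i,k_j)\}$ completely controls which simple $H_j(0)$-modules appear in $\cV_{\tau + (1^{j-n})}$ for $j \gg 0$. This is exactly the notion of representation stability introduced in the paragraph immediately after Theorem \ref{repStability}.

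I expect no genuine obstacle: all of the work has been done in proving Theorem \ref{repStability} (which required the full machinery of padded induced modules, the Serre quotient analysis, and the construction of the basis of $\cG({\rm Mod}_\cH)$) and in verifying in Proposition \ref{finitelyGenSchur} that the generators in $\cH$-degree $n$ suffice, using the observation that any SRCT of shape $(\tau, 1^d)$ must place the $d$ largest entries in the appended column. Thus the proof amounts to citing these two results in sequence, and the corollary is essentially formal.
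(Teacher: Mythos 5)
Your proposal is correct and matches the paper's argument exactly: the corollary is obtained by combining Proposition \ref{finitelyGenSchur} (finite generation of $\cH(\cV_\tau)$) with Theorem \ref{repStability}. The extra detail you give about evaluating the Grothendieck-group decomposition in each degree is a faithful unpacking of what the paper leaves implicit.
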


\begin{proof}
This follows immediately from Proposition \ref{finitelyGenSchur} and Theorem \ref{repStability}.
\end{proof}

\begin{remark}
It is important to remember here that we follow the notation in \cite{TW, HLMW} where they encode compositions as standard reverse composition tableau. This should not be confused with the above encoding of compositions as ribbon tableau. The above states that for any $i \gg 0$, there is a finite fixed list of compositions $\{(\alpha^j, k_j)\}$ that completely encodes all the irreducible representations that will appear in $\cV_{\tau + (1^{i-n})}$. 
\end{remark}

\section{Further Questions}
This work was largely motivated by Sections \ref{CohomologySection}, \ref{HomologySection} and the work in \cite{GS, PSS}. In particular, the sequence of modules $H^i(B(n,q),\bF_q)$ are a natural object to study, but they do not have any natural ${\bf FI}$-module structure. This is largely due to the fact that there is no apparent complete symmetry, only an upward symmetry. There are other sequences of modules with this same property such as the quasisymmetric Schur modules seen in Section \ref{QuasiSchurSec} that have received a large amount of attention recently. Heuristically one can think of such modules as the quasisymmetric functions that are not symmetric. We believe there are many more natural examples of such modules and are already investigating a few. Some immediate further questions are summarized as follows,

\begin{enumerate}
\item Is the $\cH$-module $[n] \mapsto H^i(B(n,q),\bF_q)$ studied in Section \ref{CohomologySection} finite generated as a $\cH$-module? We believe it is, but have not been able to prove it.
\item What other properties of ${\bf FI}$-module are also satisfied by $\cH$-modules? We have a notion of representation stability and finite regularity, is there a notion of depth or weight?
\item Is there any hope of bounding the regularity? We suspect this is impossible because it seems like the ability to do this depends on the Gabriel-Krull dimension being finite, which it is not for $\cH$-modules.
\item Is there a refined version of representation stability if we only consider sequences of projective $H_n(0)$-modules?
\item Is there a more concrete connection between $\cH$ and the ring of quasisymmetric functions? The Grothendieck group of ${\bf FI}$-modules is isomorphic to two copies of the ring of symmetric functions \cite{SS3}, is there some analogue for $\cG({\rm Mod}_\cH)$? We suspect there might be an infinite analogue.
\item Symmetric function theory suggests a deeper connection between ${\bf FI}$ and $\cH$. In particular, every symmetric function is quasisymmetric, and every space with complete symmetry also has partial symmetry. It then becomes natural to ask, does every ${\bf FI}$-module has a natural $\cH$-module structure? 

To make this more concrete, there is another category ${\bf FI}_q$, the $q$-deformation of ${\bf FI}$, over the ring $\bC[q]$, where the $q=1$ fiber gives ${\bf FI}$ and the $q=0$ fiber recovers ${\cH}$.  In this way, we get a correspondence between Grothendieck groups $\bK({\rm Mod}_{\bf FI}) \leftarrow \bK({\rm Mod}_{{\bf FI}_q}) \rightarrow \bK({\rm Mod}_{\cH})$. Evidence suggests the first functor, taking the fiber at $q=1$, is close to an equivalence if one considers flat ${\rm Mod}_{{\bf FI}_q}$-modules. This would then give a map in the desired direction.
\item What can one say about the extensions of $\cH$-modules?
\end{enumerate}

\bibliographystyle{amsalpha}
\bibliography{HeckeBibliography}

\providecommand{\bysame}{\leavevmode\hbox to3em{\hrulefill}\thinspace}
\providecommand{\MR}{\relax\ifhmode\unskip\space\fi MR }
\providecommand{\MRhref}[2]{%
  \href{http://www.ams.org/mathscinet-getitem?mr=#1}{#2}
}
\providecommand{\href}[2]{#2}
\begin{thebibliography}{HLMvW11}

\bibitem[Big06]{Big}
Stephen Bigelow, \emph{Braid groups and {Iwahori-Hecke} algebras}, Problems on
  mapping class groups and related topics \textbf{74} (2006), 285--299.

\bibitem[CEF15]{CEF}
Thomas Church, Jordan~S Ellenberg, and Benson Farb, \emph{{FI}-modules and
  stability for representations of symmetric groups}, Duke Mathematical Journal
  \textbf{164} (2015), no.~9, 1833--1910.

\bibitem[CEFN14]{CEFN}
Thomas Church, Jordan~S Ellenberg, Benson Farb, and Rohit Nagpal,
  \emph{{FI}-modules over {Noetherian} rings}, Geometry \& Topology \textbf{18}
  (2014), no.~5, 2951--2984.

\bibitem[DE18]{DE}
Jan Draisma and Rob~H Eggermont, \emph{Pl{\"u}cker varieties and higher secants
  of {Sato's} grassmannian}, Journal f{\"u}r die reine und angewandte
  Mathematik (Crelles Journal) \textbf{2018} (2018), no.~737, 189--215.

\bibitem[Dra14]{Dra}
Jan Draisma, \emph{Noetherianity up to symmetry}, Combinatorial algebraic
  geometry, Springer, 2014, pp.~33--61.

\bibitem[FH13]{FH}
William Fulton and Joe Harris, \emph{Representation theory: a first course},
  vol. 129, Springer Science \& Business Media, 2013.

\bibitem[Gei85]{Gei}
Werner Geigle, \emph{The {Krull}-{Gabriel} dimension of the representation
  theory of a tame hereditary {Artin} algebra and applications to the structure
  of exact sequences}, manuscripta mathematica \textbf{54} (1985), no.~1-2,
  83--106.

\bibitem[GL19]{GL}
Wee~Liang Gan and Liping Li, \emph{Asymptotic behavior of representations of
  graded categories with inductive functors}, Journal of Pure and Applied
  Algebra \textbf{223} (2019), no.~1, 188--217.

\bibitem[GR14]{GrRe}
Darij Grinberg and Victor Reiner, \emph{Hopf algebras in combinatorics}, arXiv
  preprint arXiv:1409.8356 (2014).

\bibitem[GS18]{GS}
Sema G{\"u}nt{\"u}rk{\"u}n and Andrew Snowden, \emph{The representation theory
  of the increasing monoid}, arXiv preprint arXiv:1812.10242 (2018).

\bibitem[HLMvW11]{HLMW}
James Haglund, Kurt Luoto, Sarah Mason, and Stephanie van Willigenburg,
  \emph{Quasisymmetric {Schur} functions}, Journal of Combinatorial Theory,
  Series A \textbf{118} (2011), no.~2, 463--490.

\bibitem[HR99]{HR}
Tom Halverson and Arun Ram, \emph{Bitraces for {$GL_n(\bF_q)$} and the
  {Iwahori-Hecke} algebra of type {$A_{n-1}$}}, Indagationes Mathematicae
  \textbf{10} (1999), no.~2, 247--268.

\bibitem[HR16]{HuRh}
Jia Huang and Brendon Rhoades, \emph{Ordered set partitions and the 0-{Hecke}
  algebra}, arXiv preprint arXiv:1611.01251 (2016).

\bibitem[Hua15]{Hu}
Jia Huang, \emph{0-hecke algebra action on the {Stanley-Reisner} ring of the
  boolean algebra}, Annals of Combinatorics \textbf{19} (2015), no.~2,
  293--323.

\bibitem[KT97]{KT}
Daniel Krob and Jean-Yves Thibon, \emph{Noncommutative symmetric functions
  {IV}: {Quantum} linear groups and {Hecke} algebras at $q= 0$}, Journal of
  Algebraic Combinatorics \textbf{6} (1997), no.~4, 339--376.

\bibitem[Lau18]{Lau}
Robert~P Laudone, \emph{Syzygies of secant ideals of {Pl\"ucker}-embedded
  {Grassmannians} are generated in bounded degree}, arXiv preprint
  arXiv:1803.04259 (2018).

\bibitem[Lee09]{Le}
Min~Ho Lee, \emph{Hecke operators on cohomology}, Revista de la Uni{\'o}n
  Matem{\'a}tica Argentina \textbf{50} (2009), no.~1, 99--144.

\bibitem[LP08]{LP}
Thomas Lam and Pavlo Pylyavskyy, \emph{P-partition products and fundamental
  quasi-symmetric function positivity}, Advances in Applied Mathematics
  \textbf{40} (2008), no.~3, 271--294.

\bibitem[L{\"u}c06]{Luc}
Wolfgang L{\"u}ck, \emph{Transformation groups and algebraic {K}-theory}, vol.
  1408, Springer, 2006.

\bibitem[Mac98]{Mac}
Ian~Grant Macdonald, \emph{Symmetric functions and {Hall} polynomials}, Oxford
  university press, 1998.

\bibitem[Mas19]{Mas}
Sarah~K Mason, \emph{Recent trends in quasisymmetric functions}, Recent Trends
  in Algebraic Combinatorics, Springer, 2019, pp.~239--279.

\bibitem[Nor79]{Nor}
PN~Norton, \emph{0-{Hecke} algebras}, Journal of the Australian Mathematical
  Society \textbf{27} (1979), no.~3, 337--357.

\bibitem[NSS16]{NSS}
Rohit Nagpal, Steven~V Sam, and Andrew Snowden, \emph{Noetherianity of some
  degree two twisted commutative algebras}, Selecta Mathematica \textbf{22}
  (2016), no.~2, 913--937.

\bibitem[PSS17]{PSS}
Andrew Putman, Steven~V Sam, and Andrew Snowden, \emph{Stability in the
  homology of unipotent groups}, arXiv preprint arXiv:1711.11080 (2017).

\bibitem[Qui72]{Qui}
Daniel Quillen, \emph{On the cohomology and {K}-theory of the general linear
  groups over a finite field}, Annals of Mathematics (1972), 552--586.

\bibitem[Ram17]{Ram}
Eric Ramos, \emph{On the degree-wise coherence of {FI} {G}-modules.}, New York
  Journal of Mathematics \textbf{23} (2017).

\bibitem[RW70]{RW}
Young~Ho Rhie and G~Whaples, \emph{Hecke operators in cohomology of groups},
  Journal of the Mathematical Society of Japan \textbf{22} (1970), no.~4,
  431--442.

\bibitem[Sam17]{Sa1}
Steven~V Sam, \emph{Ideals of bounded rank symmetric tensors are generated in
  bounded degree}, Inventiones mathematicae \textbf{207} (2017), no.~1, 1--21.

\bibitem[Sno13]{Sn}
Andrew Snowden, \emph{Syzygies of {Segre} embeddings and $\delta $-modules},
  Duke Mathematical Journal \textbf{162} (2013), no.~2, 225--277.

\bibitem[SS12]{SS2}
Steven~V Sam and Andrew Snowden, \emph{Introduction to twisted commutative
  algebras}, arXiv preprint arXiv:1209.5122 (2012).

\bibitem[SS15]{SS3}
\bysame, \emph{Stability patterns in representation theory}, Forum of
  Mathematics, Sigma, vol.~3, Cambridge University Press, 2015.

\bibitem[SS16]{SS4}
\bysame, \emph{Gl-equivariant modules over polynomial rings in infinitely many
  variables}, Transactions of the American Mathematical Society \textbf{368}
  (2016), no.~2, 1097--1158.

\bibitem[SS17]{SS1}
\bysame, \emph{Gr{\"o}bner methods for representations of combinatorial
  categories}, Journal of the American Mathematical Society \textbf{30} (2017),
  no.~1, 159--203.

\bibitem[Tos18]{To}
Philip Tosteson, \emph{Stability in the homology of {Deligne-Mumford}
  compactifications}, arXiv preprint arXiv:1801.03894 (2018).

\bibitem[TVW14]{TW}
Vasu Tewari and Stephanie Van~Willigenburg, \emph{Quasisymmetric {Schur}
  functions and modules of the $0 $-{Hecke} algebra}, Discrete Mathematics and
  Theoretical Computer Science, Discrete Mathematics and Theoretical Computer
  Science, 2014, pp.~113--124.

\bibitem[WRW14]{WR}
Nathalie Wahl and Oscar Randal-Williams, \emph{Homological stability for
  automorphism groups}, arXiv preprint arXiv:1409.3541 (2014).

\end{thebibliography}

\end{document}